\setlist{nolistsep}
\newtheorem{thm}{Theorem}[section]
\newtheorem*{thm*}{Main Theorem}
\newtheorem*{thm**}{Theorem}
\newtheorem{cor}[thm]{Corollary}
\newtheorem{lem}[thm]{Lemma}
\newtheorem{prop}[thm]{Proposition}
\theoremstyle{definition}
\newtheorem{defn}[thm]{Definition}
\theoremstyle{definition}
\theoremstyle{definition}
\newtheorem{remark}[thm]{Remark}
\theoremstyle{definition}
\theoremstyle{definition}
\theoremstyle{definition}
\theoremstyle{definition}
\numberwithin{thm}{subsection}
\newcommand{\R}{\ensuremath{\mathbb{R}}}
\newcommand{\N}{\ensuremath{\mathbb{N}}} 
\newcommand{\C}{\ensuremath{\mathcal{C}}} 
\newcommand{\Q}{\ensuremath{\mathcal{Q}}} 
\newcommand{\F}{\ensuremath{\mathbb{F}}}
\newcommand{\<}{\langle} 
\renewcommand{\>}{\rangle} 
\newcommand{\Z}{{\Bbb Z}} 
\def\p{\partial}
\def\i{\infty}
\def\det{\it{det}} 
\def\supp{\it{supp}}
\def\diam{\emph{diam}} 
\def\fr{\emph{fr}}
\def\vol{\emph{vol}}
\def\cal{\mathcal}
\def\a{\alpha}
\def\b{\beta} 
\def\g{\gamma} 
\def\G{\Gamma}
\def\d{\delta}
\def\D{\Delta} 
\def\e{\epsilon}
\def\k{\kappa} 
\def\l{\lambda} 
\def\L{\Lambda} 
\def\s{\sigma} 
\def\o{\omega} 
\def\O{\Omega} 
\def\B{\mathcal{B}}
\def\P{\mathcal{P}}
\def\A{\mathcal{A}}
\def\H{\mathcal{H}}
\def\T{\mathcal{T}}
\def\F{\mathcal{F}}
\def\M{\mathcal{M}}
\def\V{\mathcal{V}}
\def\hB{\hat{\mathcal{B}}}
\def\Lie{\mathcal{L}} 
\def\sp{\mathcal{S}}
\def\tM{\widetilde{M}}
\def\Span{\mathfrak{S}}
\def\rotateminus{\reflectbox{\rotatebox[origin=c]{155}{\hspace{.6pt}-}}}
\def\Xint#1{\mathchoice
{\XXint\displaystyle\textstyle{#1}}%
{\XXint\textstyle\scriptstyle{#1}}%
{\XXint\scriptstyle\scriptscriptstyle{#1}}
{\XXint\scriptscriptstyle\scriptscriptstyle{#1}}%
\!\int}
\def\XXint#1#2#3{{\setbox0=\hbox{$#1{#2#3}{\int}$}
\vcenter{\hbox{$#2#3$}}\kern-.5\wd0}}
\def\cint{\Xint\rotateminus }
\renewcommand{\labelenumi}{(\alph{enumi})}
\renewcommand{\theenumi}{(\alph{enumi})}
\renewcommand{\labelenumi}{\theenumi}
\renewcommand{\p@enumii}{\theenumi}
\begin{document} 
	\author[J. Harrison \& H. Pugh]{J. Harrison \\Department of Mathematics \\University of California, Berkeley  \\ H. Pugh\\ Mathematics Department \\ Stony Brook University} 
	\title[Plateau's Problem]{Existence and Soap Film Regularity of Solutions to Plateau's Problem}

\begin{abstract}  
	Plateau's problem is to find a surface with minimal area spanning a given boundary. Our paper presents a theorem for codimension one surfaces in \( \R^n \) in which the usual homological definition of span is replaced with a novel algebraic-topological notion. In particular, our new definition offers a significant improvement over existing homological definitions in the case that the boundary has multiple connected components.  Let \( M \) be a connected, oriented compact manifold of dimension \( n-2 \) and \( \Span \)  the collection of compact sets spanning \( M \). Using Hausdorff spherical measure as a notion of ``size,'' we prove:

 	\begin{thm**}
		There exists an \( X_0 \) in \( \Span \) with smallest size. Any such \( X_0 \) contains a ``core'' \( X_0^*\in \Span \) with the following properties: It is a subset of the convex hull of \( M \) and is a.e. (in the sense of \( (n-1) \)-dimensional Hausdorff measure) a real analytic \( (n-1) \)-dimensional minimal submanifold. If \( n=3 \), then \( X_0^* \) has the local structure of a soap film.  Furthermore, set theoretic solutions are elevated to current solutions in a space with a rich continuous operator algebra.
	\end{thm**}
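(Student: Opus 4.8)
The plan is to apply the direct method of the calculus of variations not to the sets themselves but to the Radon measures they carry, and then to extract a rectifiable minimizer. Fix a minimizing sequence $X_j \in \Span$ with $\H^{n-1}(X_j) \downarrow m := \inf_{X\in\Span}\H^{n-1}(X)$, where $\H^{n-1}$ is the $(n-1)$-dimensional spherical Hausdorff measure used to measure size. After the convex-hull truncation described below we may assume all $X_j$ lie in a fixed ball, so the measures $\mu_j := \H^{n-1}\llcorner X_j$ have uniformly bounded mass; pass to a weak-$\ast$ subsequential limit $\mu_j \rightharpoonup \mu$. The first task is to see that $\mu = \H^{n-1}\llcorner K$ for a Borel $(n-1)$-rectifiable set $K$ with $\H^{n-1}(K)\le m$. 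The engine is a uniform lower-density estimate forced by spanning — a ``no small tunnels'' lemma giving $\overline{\Theta}^{\,n-1}(\mu,x)\ge c>0$ for $\mu$-a.e.\ $x$: if some $X_j$ were too sparse in a small ball $B(x,r)$, one could excise $X_j\cap B(x,r)$ and cap it off along $\partial B(x,r)$ with area strictly smaller than what was removed while staying in $\Span$, contradicting (near-)minimality. Feeding this lower bound and the obvious upper bound into Preiss's theorem — equivalently, into the rectifiability criterion for weak limits of almost-Ahlfors-regular measures in the spirit of De~Lellis--Ghiraldin--Maggi — yields $(n-1)$-rectifiability of $\mu$ with density $\ge 1$ a.e., hence $\H^{n-1}(K)\le\liminf_j\H^{n-1}(X_j)=m$.

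\textbf{The main obstacle is the second task: showing the new, algebraic-topological notion of span is stable under this limit}, which is exactly where it does work that the homological definition cannot. The spanning obstruction is detected by an \emph{open} condition on the behaviour, relative to $M$, of test curves (or cochains) in the complement; one argues, via a Besicovitch-type covering argument combined with the area estimates above, that the limit set $K$ cannot develop a ``gap'' through which $M$ unlinks, since such a gap would already be present in $X_j$ for $j$ large and would yield a competitor cheaper than $m$. Only the stated formal properties of $\Span$ — closure under limits of this kind, under deformations, and under removal of negligible debris — are invoked, not its internal definition. Once $K\in\Span$ we get $m\le\H^{n-1}(K)\le m$, so $K$ is a minimizer; this establishes the existence assertion with $X_0 = K$.

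Now let $X_0$ be \emph{any} minimizer; its core $X_0^*$ is produced by a reduction discarding redundant parts. First intersect with the convex hull $\overline{\mathrm{conv}}(M)$: the nearest-point projection $\pi$ onto $\overline{\mathrm{conv}}(M)$ is $1$-Lipschitz, fixes $M$, and is homotopic to the identity through maps moving each point along a segment, so $\pi(X_0)\in\Span$; minimality gives $\H^{n-1}(\pi(X_0))\ge m=\H^{n-1}(X_0)$, and since $\pi$ strictly shrinks $\H^{n-1}$ off the convex hull this forces $\H^{n-1}(X_0\setminus\overline{\mathrm{conv}}(M))=0$. Then take the closure of the set of points of positive upper $(n-1)$-density; that this passage preserves membership in $\Span$ rests again on the fact that the spanning obstruction is not carried by $\H^{n-1}$-null or zero-density material. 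Hence $X_0^*\subseteq X_0\cap\overline{\mathrm{conv}}(M)$, $X_0^*\in\Span$, and $\H^{n-1}(X_0^*)=m$.

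For interior regularity, observe that any Lipschitz deformation supported in a small ball and fixing $M$ carries $\Span$ into $\Span$, so $X_0^*$ is an Almgren minimal set ($(\mathbf{M},0,\delta)$-minimal for every $\delta>0$). The monotonicity formula for such sets and Almgren's dimensional stratification then show that the top stratum is relatively open, of full $\H^{n-1}$-measure, and there a $C^{1,\alpha}$ embedded minimal hypersurface; Schauder estimates followed by the analytic hypoellipticity of the minimal-surface operator (Morrey) upgrade $C^{1,\alpha}$ to real analyticity, the complementary singular set being $\H^{n-1}$-null. When $n=3$, $X_0^*$ is a $2$-dimensional Almgren minimal set in $\R^3$, so Jean Taylor's classification applies: each point has a neighbourhood $C^{1,\alpha}$-diffeomorphic to a plane, to three half-planes meeting pairwise at $120^\circ$ along a line, or to the cone over the $1$-skeleton of a regular tetrahedron — precisely the local models of a soap film. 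Finally, equipping the $(n-1)$-rectifiable set $X_0^*$ with its natural orientation and unit multiplicity realizes it as an element of Harrison's space of differential chains, on which $\partial$, pushforward, the Hodge star, wedging with differential forms, and Lie derivatives all act as continuous operators; that this assignment is compatible with $X_0^*$ as a minimizer and with $M$ as its boundary is a formal verification inside that operator calculus.
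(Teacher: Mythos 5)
Your proposal takes the De Lellis--Ghiraldin--Maggi route (weak-$*$ convergence of the measures $\sp^{n-1}\lfloor_{X_j}$ plus a Preiss-type rectifiability criterion), not the paper's route through differential chains: the compact linking map $\hat\B_n^0\hookrightarrow\hat\B_n^1$ (Theorem \ref{thm:compactinclusion}), the Whitney chain/measure isomorphism (Proposition \ref{prop:whitney}), haircutting (Theorem \ref{thm:haircut}), and Reifenberg-regular sequences. The paper explicitly credits DLGM with replacing its differential-chain machinery by Radon measures and geometric measure theory, so the alternative route is legitimate; both share the excise-and-cap competitor trick, the invariance of $\Span$ under deformations, and Almgren/Taylor regularity at the end. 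What the measure-theoretic route does not automatically give is a current solution, and that is where your last step goes wrong.

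Two steps do not close as written. First, ``Preiss plus the lower density bound yields density $\ge 1$'' is a gap. The excise-and-cap argument gives only $\Theta_*^{n-1}(\mu,\cdot)\ge c$ for a dimensional constant $c$ (the analogue of $\frak{a}/\a_{n-1}$ from Propositions \ref{prop:prelrmin} and \ref{lem:2*}), which is nowhere near $1$. A Preiss-type criterion then yields rectifiability of $\mu$ but not $\Theta\ge 1$, and without $\Theta\ge 1$ you cannot conclude $\sp^{n-1}(K)\le m$: lower semicontinuity collapses. Upgrading $c$ to $1$ is the hardest step in this circle of ideas; in the paper it occupies all of \S\ref{sub:lower_semicontinuity_of_hausdorff_measure} --- the weak geometric Lemma \ref{lem:6*}, the squashing Lemma \ref{lem:12}, $\e$-uniform concentration, culminating in Proposition \ref{prop:beta} that $\b \ge 1$ --- and in DLGM it is a separate cone-comparison argument. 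Second, your closing claim that one realizes $X_0^*$ as a differential chain ``with its natural orientation and unit multiplicity'' cannot be right: the minimizer may be a M\"obius strip or contain triple junctions, neither of which admits a consistent multiplicity-one orientation. This is exactly what the paper's film chains circumvent: the current attached to $X_0^*$ is the dipole chain $S_{X_0^*}=\p J + E_Y\widetilde{M}$ with $J=\Psi^{-1}(\sp^{n-1}\lfloor_{X_0^*})$ a positive $n$-chain (Definition \ref{def:filmchain}, Theorem \ref{thm:setchaincorrespondance}), not a multiplicity-one rectifiable $(n-1)$-current, and this elevation is a genuine construction (\S\ref{sec:film_chains}) rather than a ``formal verification.'' A smaller issue: the nearest-point projection $\pi$ onto $h(M)$ is neither a deformation in the sense of Definition \ref{def:competitor} nor a diffeomorphism, so $\pi(X_0)\in\Span$ does not follow from Theorem \ref{thm:lipspan}; the paper argues instead via the smooth contraction flow of Theorem \ref{thm:convexhull}.
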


\end{abstract}
 
\maketitle

\section*{Introduction}
	\label{sec:introduction}
	One of the classical problems in the Calculus of Variations is to prove the existence of a surface of least area spanning a given Jordan curve in Euclidian space. The problem was first posed by Lagrange \cite{lagrange} in 1760, and Lebesgue \cite{lebesgue} called it ``Plateau's Problem'' after Joseph Plateau \cite{plateauoriginal} who experimented with films of oil and wire frames.

	\subsection*{History}
		\label{par:history}
		In 1930, Douglas \cite{douglas} solved the problem for surfaces which arise as the image of a disk, via minimization of an energy functional\footnote{See \cite{rado} for an extensive report of work before 1930.}. His method was extended by Douglas and Courant \cite{courant} to surfaces of higher topological type bounding disjoint systems of Jordan curves. In 1960, Federer and Fleming   \cite{federerfleming} used integral currents with a given algebraic boundary to model films, and minimized current mass (see \S\ref{sub:mass_norm}) instead of energy or area of a surface. In dimension 7 and lower, their solutions turned out to be oriented submanifolds. 

		At the same time, Reifenberg \cite{reifenberg} approached the problem from an entirely different direction. For each \( M \) Reifenberg chose a subgroup \( L \) of the \v{C}ech homology of \( M \) with coefficients in a compact abelian group (e.g., \( \Z/2\Z \), \( \Z/3\Z \), or \( \R/\Z \).) A set \( X\supset M \) was said to be a surface with boundary \( \supseteq L \) if \( L \) was in the kernel of the homomorphism of homology induced by the inclusion \( M\hookrightarrow X \). He concluded there was a surface with boundary \( L \), depending on these two choices, that had smallest Hausdorff spherical measure.  
		
		Reifenberg's approach yielded a case-by-case analysis, and two cases have considerable historical interest. Using \( \Z/2\Z \) coefficients, he found a minimizer in a collection containing all orientable and non-orientable surfaces in \( \R^3 \) with a given boundary, excluding from consideration non-manifold surfaces with triple junctions or other such singularities. This theorem improved that of Douglas and Courant, because it considered surfaces of arbitrary genus simultaneously, and solved the non-orientable problem. His second special case used \( \R/\Z \) coefficients to deal with more complicated spanning sets, specifically those compact sets \( X\supset M \) with no retraction \( X\to M \). This collection contained surfaces with triple junctions and other singularities, but his theorem was limited in that the boundary was required to be a single closed curve (or in higher dimensions, a topological sphere.) For more general boundaries, he gave no single unifying result as he had for boundaries which were single topological spheres. For example, consider the disjoint union of a disk and a circle in \( \R^3 \). There is no retraction to the pair of circles, yet we would not want to consider this as an admissible spanning set. As another example, consider the surfaces \( X_i \), \( i=1,2,3 \), in Figure \ref{fig:catenoid}. Any one could be a surface with minimal area, depending on the distance between the circles, but a simple computation (see Proposition \ref{prop:noreifenberg}) shows there is no non-trivial collection of Reifenberg surfaces which contains all three simultaneously. Thus, one would have to find an appropriate group of coefficients and subgroup of homology which would produce the correct minimizer, and this task would change depending on the configuration of the circles in the ambient space. 
	
		Our goal was therefore to find a ``unifying solution'' similar to Reifenberg's \( \R/\Z \) solution for spherical boundaries, that simultaneously dealt with a large collection of reasonable surfaces, including those with soap-film singularities, spanning a boundary more general than a topological sphere.
 
		\begin{figure}[htbp]
			\centering
			\includegraphics[height=2in]{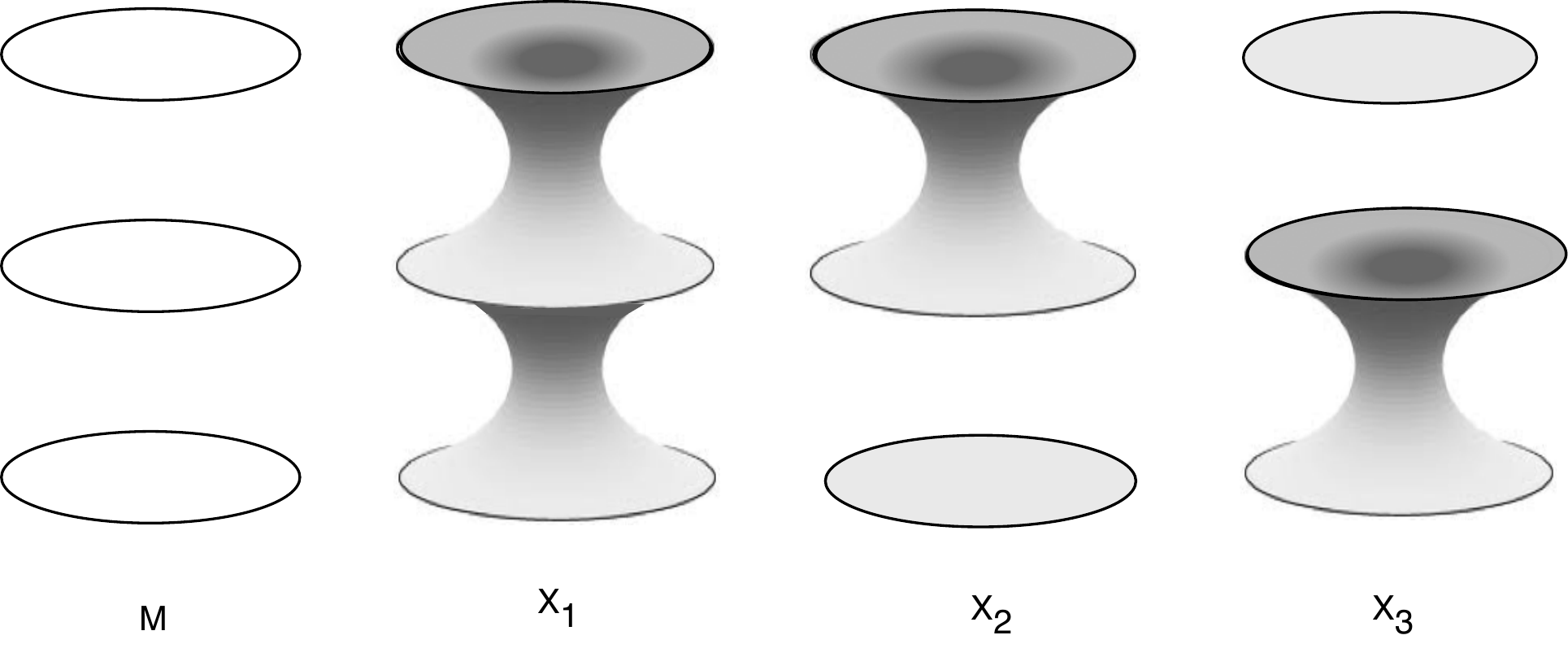}
			\caption{ \( M \) is the disjoint union of three circles. \( X_1 \) is homeomorphic to a cylinder, and \( X_2 \) and \( X_3 \) are both homeomorphic to a cylinder union a disc. All three sets span \( M \) and are thus in our collection. Each is a Reifenberg surface for some collection, but no non-trivial collection of Reifenberg surfaces spanning \( M \) contains \( X_1, X_2, \) and \( X_3 \) simultaneously (see Proposition \ref{prop:noreifenberg}.)
}
			\label{fig:catenoid}
		\end{figure}
						
		In \cite{almgrenmimeo} Almgren introduced varifolds as a new approach to solving Plateau's problem.  He announced a compactness theorem for stationary varifolds in \cite{varifolds} (see also \cite{almgrenmimeo} and \cite{allard}.)  This result produced a stationary varifold with minimal mass, and thus solved a version of Plateau's problem. However, solutions produced do not minimize mass over a given collection of surfaces, stationary and otherwise, only those that are a priori stationary. For example, if \( M \subset \R^3 \) is the unit circle in the \( xy \)-plane, the only competitors for area minimization are positive integer multiples of the unit disk. To greater effect, one could use his stationary varifold compactness theorem to minimize over mass- or size-minimizing solutions found using other techniques, since those solutions ought to give rise to stationary varifolds.  However, the limiting surface is only guaranteed to be a stationary varifold and may not necessarily span the boundary in any algebraic sense.  Related to this is the longstanding open problem to establish almost everywhere regularity of stationary varifolds of dimension at least two (see, e.g.,  \cite{bombieri}, \cite{delellis}.)

		 Almgren used \v{C}ech homology to define spanning sets in \cite{almgrenannals}, much as in \cite{reifenberg}, and made significant advances by minimizing size with respect to elliptic functionals and extending the coefficient groups to finitely generated abelian groups. His proof of both existence and a.e. regularity built upon a number of ideas in \cite{reifenberg}. For \( \Z \) coefficients, his a.e. regularity result for elliptic functionals \( F \) extended the celebrated result of De Giorgi \cite{degiorgi} where \( F = 1 \), while De Giorgi's results were influenced by methods developed by Caccioppoli \cite{caccio}.

		Differential chains (a type of current, see below) have a compactness theorem and a cut-and-paste procedure. A particular subtype, ``film chains,'' models all soap films and solves the boundary problem for triple junctions and M\"obius strips. These are not rectifiable currents, although size minimizing film chains have rectifiable supports. Furthermore, film chains avoid the problem of high multiplicity seen in mass minimizing problems as discussed in \cite{pauwsize}.

	\subsection*{A new definition of span using linking numbers}
		\label{par:_a_new_definitionstandard}  
		
		 Let \( M \) be an \( (n-2) \)-dimensional compact orientable submanifold of \( \R^n \), \( n\geq 2 \). We say that a circle \( S \) embedded in \( \R^n\setminus M \) is a \emph{\textbf{simple link of \( M \)}} if the absolute value of the linking number\footnote{Given some choice of orientation of \( M \) and \( S \), the definition being independent of this choice} \( L(S,M_i) \) of \( S \) with one of the connected components \( M_i \) of \( M \) is equal to one, and \( L(S,M_j)=0 \) for the other connected components \( M_j \) of \( M \), \( j\neq i \). We say that a compact subset \( X\subset \R^n \) \emph{\textbf{spans}} \( M \) if every simple link of \( M \) intersects \( X \).

		If \( M \) is a sphere, \( X\supset M \) and there is no retraction \( X\to M \), then \( X \) spans \( M \) (see Proposition \ref{prop:equivalences}.) If \( X \) is a manifold with boundary \( M \), then \( X \) spans \( M \). The sets \( X_1, X_2, X_3 \) in Figure \ref{fig:catenoid} span \( M \). See \S\ref{sec:spanning_sets} for more properties. One can drop the condition that \( M \) be a manifold by specifying particular \( (n-2) \)-cycles for which \( S \) to link. For example, \( M \) can be a frame such as the \( (n-2) \)-skeleton of an \( n \)-cube.

	\subsection*{Differential chains}
		\label{par:differential_chains}

		Our approach to Plateau's problem begins with a topological vector space of de Rham currents called ``differential chains\footnote{There is now a growing theory of differential chains with a number of applications unrelated to Plateau's problem in progress.}'' introduced in \cite{harrison1}, \cite{ravello}, \cite{OC2011}, and \cite{OC}, and developed by the authors in \cite{thesis} and \cite{topological}. The space of differential chains on a Riemannian manifold \( M \) is a differential graded topological vector space, defined as an inductive limit of a sequence of Banach spaces, beginning with the space of Whitney sharp chains, and such that the boundary operator takes one space to the next. These Banach spaces \( \B_k^r \) are completions of the space of finitely supported sections of the \( k \)-th exterior power of the tangent bundle of \( M \)\footnote{One may think of these as infinitesimal polyhedral chains.}, equipped with a decreasing sequence of geometrically defined norms \( \|\cdot\|_{B^r} \). The linking maps are inclusions, and the dual Banach spaces consist of increasingly differentiable bounded differential \( k \)-forms.
		
		If \( M \) is compact, the linking maps between these Banach spaces are compact, and this allows us to formulate compactness theorems which improve on the usual weak compactness theorems of currents. In other words, control of the \( B^r \)-norm gives relative compactness in \( \B^{r+1} \), and the topology on this space is strictly finer than the inductive limit topology on differential chains, which is again strictly finer than the weak topology on currents.
		
		A compact \( k \)-submanifold with boundary \( A \) is uniquely represented by an element \( \tilde{A} \) in the space \( \B_k^1 \) (the space of sharp \( k \)-chains,) and \( \p \tilde{A} \in \B_{k-1}^2 \) represents the manifold's boundary. However, \( \p \tilde{A} \) is again a sharp chain, so it is contained in the subspace \( B_{k-1}^1\subset B_{k-1}^2 \). The full space \( B_k^2 \) comes into play if we instead thicken \( \tilde{A} \) with the ``extrusion'' operator, dual to interior product, then apply boundary: the resulting current can be thought of as a ``dipole'' surface, and it is not a sharp chain. For example, consider a \( 2 \)-cell \( \s \) in \( \R^3 \). Its dipole version can be found as the limit \( \lim_{t \to 0} T_{tv}(\s)/t - \s/t \), where \( v \) is a unit vector normal to \( \s \), and \( T_{tv} \) is translation through the vector \( tv \). These dipole surfaces (see Figures \ref{fig:Figures_YProblem2} and \ref{fig:moebiusdipole}) which were first introduced in \cite{plateau10} play an important role in our solution to Plateau's problem.	
 	  
	\subsection*{Soap films and film chains}
		\label{par:soap_films_vs_film_chains}
		Away from triple junctions, actual soap films consist of three very thin layers: two layers consisting of soap on the outside and a layer of water on the inside \cite{bubbles}. Dipole surfaces \cite{plateau10}, generalized to ``film chains'' in this paper (see Definition \ref{def:filmchain},) nicely model the outer soapy surfaces. The algebraic boundary of a dipole surface  \( S \) spanning a closed loop is a ``dipole curve'' whose support is the loop, consistent with the way in which the two layers of soap might meet a wire. Application of a cone operator (see Definition \ref{def:cone}) to \( S \) produces a model of the layer of water inside. Application of the geometric Hodge star operator of \cite{OC} to a dipole surface \( S \) yields a current that can be visualized as oppositely oriented ``dipole normal vectors'' to \( S \). This models the hydrophobic/hydrophilic polarization of the two soap layers, as also seen in lipid bilayers of cells. Our dipole models bypass completely the problem of triple junctions contributing to the algebraic boundary (see Figure \ref{fig:Figures_YProblem2}.) Other branching structures similar to soap films include capillaries, lightning, highways, and fractures. Dipole surfaces and film chains can be defined in arbitrary dimension and codimension and are well suited to other minimization and maximization problems involving these structures. Note that while these currents are not rectifiable since they do not have finite mass, we do not use mass to measure area.

       \begin{figure}[htbp]
       	\centering
       		\includegraphics[height=3in]{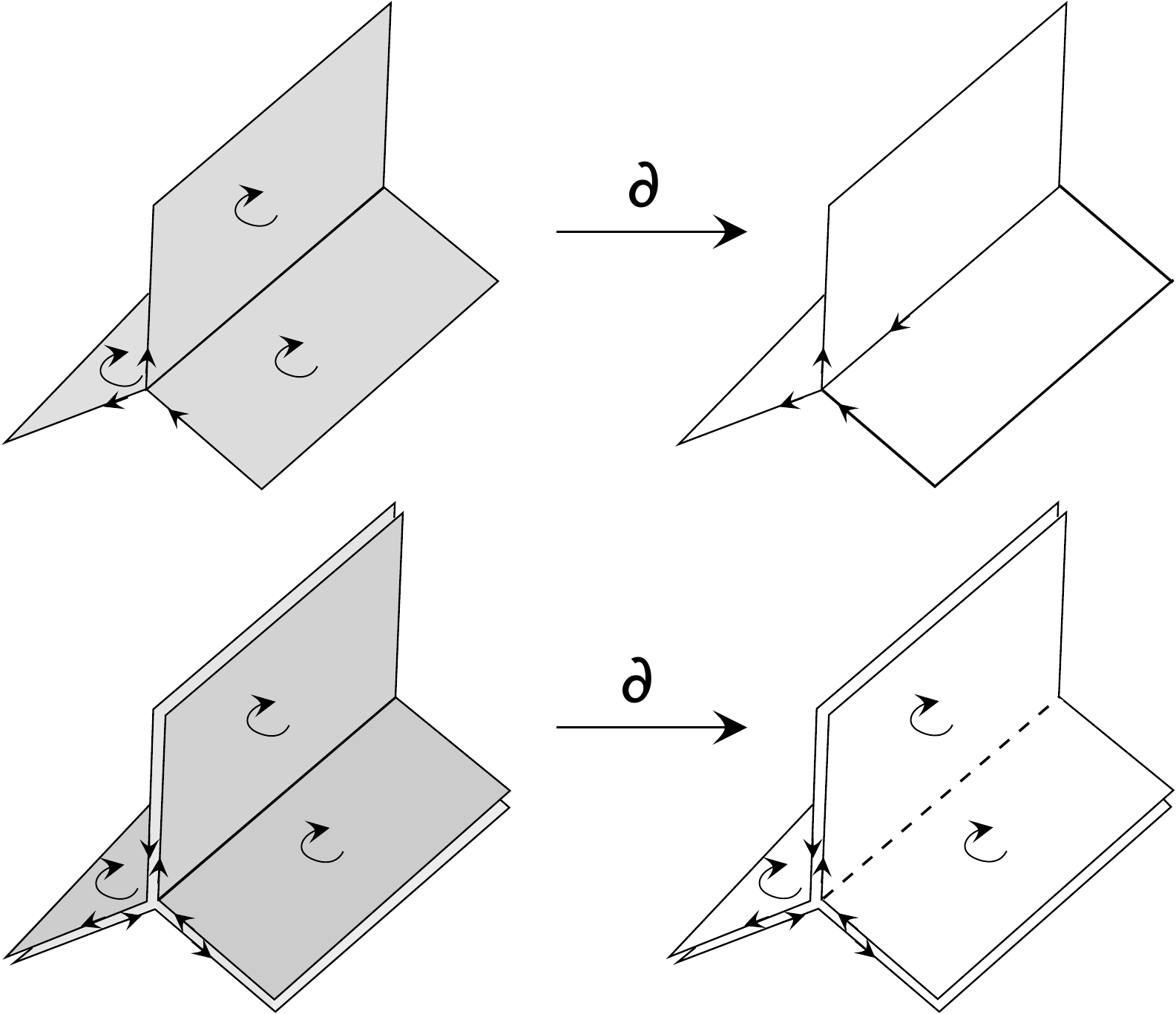}
       	\caption{A dipole surface with a triple junction and its dipole boundary. The triple junction is not part of the boundary. The two layers of a dipole surface have opposite orientation, and yet do not cancel. If one glues the two ``Y'' ends together with a twist, the algebraic boundary of the resulting dipole surface is a single dipole curve, and the support of this dipole surface is the triple m\"{o}bius band. The triple m\"{o}bius band, when scaled properly, minimizes Hausdorff measure among surfaces spanning its boundary curve. It is an example of a manifold boundary with a non-manifold minimizing spanning surface.} 
       	\label{fig:Figures_YProblem2}
       \end{figure}
       
	\begin{figure}[htbp]
		\centering
		\includegraphics[height=2in]{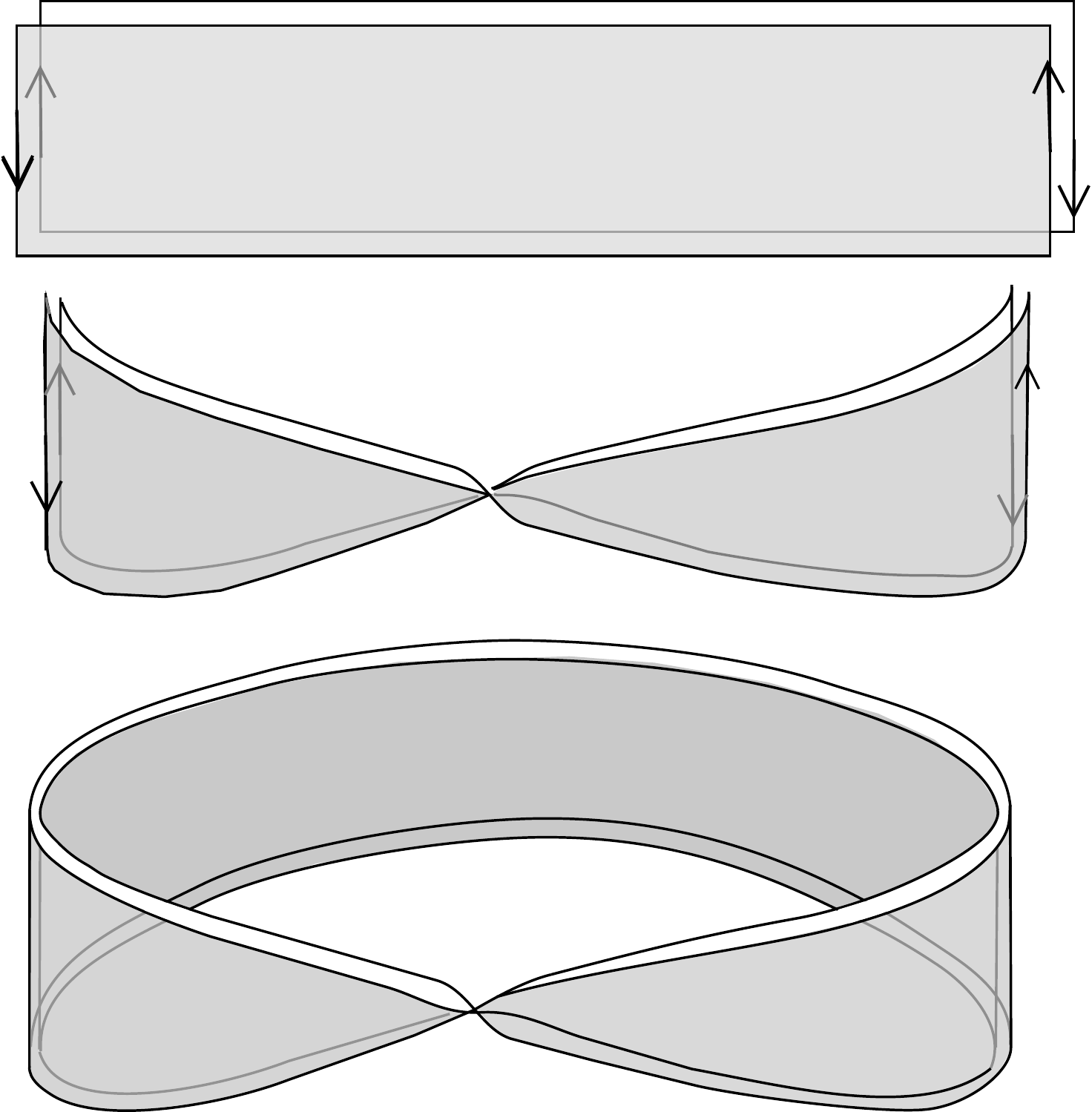}
		\caption{A dipole M\"obius strip. Here, the dipole surface looks like a double cover. This is not the case in general. Consider the triple junction in Figure \ref{fig:Figures_YProblem2}. There, the covering would be 3-to-1, and elsewhere 2-to-1.}
		\label{fig:moebiusdipole}
	\end{figure}

	\subsection*{Statement of our main result}
		\label{par:main_result}
		In our paper the word ``size'' means Hausdorff spherical measure\footnote{Hausdorff spherical measure is the same as Hausdorff measure except that coverings are required to consist of balls, rather than arbitrary sets. The two measures are proportional, and are equal on rectifiable sets. Hausdorff spherical measure works better for analysis however, since it has upper density one a.e. for sets with finite measure, whereas Hausdorff measure does not (see Lemmas \ref{lem:3} and \ref{lem:3H}.) Hausdorff spherical measure also satisfies the fundamental slicing inequality of Lemma \ref{lem:4}, while Hausdorff measure does not.} \( \sp^{n-1} \) of the support of a current. 

		Let \( \Span(M) \) denote the collection of all compact sets which span \( M \). In particular, \( \Span(M) \) includes orientable and non-orientable manifolds of all topological types, and manifolds with multiple junctions. In Section \ref{sec:film_chains} we define a collection \( \F(M) \) of film chains whose elements \( T \) satisfy \( \supp(T)=\supp(T)^*\in \Span(M) \) and \( \supp(\p T) = M \). We show that if \( X\in \Span(M) \) has finite Hausdorff measure, then its core\footnote{The \textbf{core} \( X^* \) of \( X \) is the support of the measure \( \H^{n-1}\lfloor_X \). I.e., \( X^* : = \{p \in X : \H^{n-1}(X \cap \O(p,r)) > 0 \) for all \( r > 0 \), where \( \O(p,r) \) is the open ball of radius \( r \) about \( p \) and \( \H^{n-1} \) is \( (n-1) \)-dimensional Hausdorff measure.} \( X^* \) supports a film chain \( T\in \F(M) \). Thus minimizing size in \( \F(M) \) solves the problem of minimizing \( (n-1) \)-dimensional Hausdorff spherical measure in \( \Span(M) \). 
		
 		Here is our statement and solution to Plateau's Problem for codimension one size minimizing surfaces:
		\begin{thm*}
			Suppose \( M \) is an \( (n-2) \)-dimensional compact orientable submanifold of \( \R^n \), \( n\geq 3 \). There exists an element \( S_0\in \F(M) \) with minimal size. If \( S \) is any such minimizer, its support \( X \) is contained in the convex hull of \( M \), has minimal \( (n-1) \)-dimensional Hausdorff spherical measure \( \sp^{n-1}(X) \) in \( \Span(M) \), and is almost everywhere a real analytic \( (n-1) \)-dimensional minimal submanifold. If \( n=3 \), \( X \) has the local structure of a soap film\footnote{In fact, \( X \) is ``restricted'' in the sense of Almgren (a.k.a. Almgren minimal) and this implies \( X \) is almost everywhere a real analytic \( (n-1) \)-dimensional minimal submanifold by \cite{almgrenannals} (1.7.) Soap film regularity for \( n = 3 \) follows from \cite{taylor}.}.
		\end{thm*}
		
		We can restrict the collection of spanning sets \( \Span(M) \) to smaller sub-collections as follows: Let \( \ell \ge 1 \). We say that a Jordan curve \( S \) is an \emph{\textbf{\( \ell \)-link of \( M \)}} if \( L(M_i, S)=\ell \) and \( L(M_j,S)=0 \), where \( M_i \) and \( M_j \) are as above. We say that \( X \) \emph{\textbf{\( \ell \)-spans}} \( M \) if every \( \ell \)-link of \( M \) intersects \( X \). Let \( \Span(M, \ell) \) be the collection of compact subsets of \( \R^n \) which \( \ell \)-span \( M \). Clearly, \( \Span(M, \ell) \subset \Span(M,k) \) if \( k \) is a factor of \( \ell \). By varying \( \ell \), we obtain specialized Plateau problems. For example, if \( M \) is the boundary circle of the M\"obius strip \( S \) in \( \R^3 \), then \( S\in \Span(M,1) \) but \( S\notin \Span(M,2) \). In our proof that follows, we assume \( \ell = 1 \), but the proof for \( \ell > 1 \) is essentially the same.
		
		The theorem also remains true if we allow a ``simple link'' to have multiple connected components. In this case, when \( M \) is a sphere, the spanning sets are exactly those which do not have a retract onto \( M \).
		
		In a sequel, we demonstrate that our results extend to arbitrary dimension and codimension, as well as to a class of continuous functionals including Almgren's elliptic functionals in \cite{almgrenannals}. We use the following generalized definition of span for higher codimension, loosely dual to Reifenberg's homological definition: if \( A\subset \R^n \), \( n\geq m\geq 1 \), \( G \) is a group of coefficients (in the sense of Eilenberg-Steenrod,) and \( L \) is a \emph{subset} of \( \check{H}^{m-1}(A;G) \), then \( X\supset A \) is a \emph\textbf{{surface with coboundary \( \subseteq L \)}} if the image of the homomorphism \( \check{H}^{m-1}(X;G)\to \check{H}^{m-1}(A;G) \) induced by the inclusion \( A\hookrightarrow X \) is disjoint from \( L \). If \( A \) is an \( (m-1) \)-dimensional compact oriented manifold, and \( G \) has a single generator, then there is a natural choice for \( L \), namely the collection of those cocycles which evaluate to \( 1 \) on the fundamental cycle of a particular connected component of \( M \), and \( 0 \) on the others. In the case that \( G=Z \) and \( m=n-1 \), this choice for \( L \) yields, via Alexander duality, our original linking number definition if one allows a ``simple link'' to have multiple components. 
 		      	
	\subsection*{Other new concepts and methods}
		\label{par:new_definitions_and_methods}
		Several other concepts are new and may be of use for other applications:
		\begin{enumerate}
 			\item \emph{\textbf{Strong compactness}} The compact inclusion of Theorem \ref{thm:compactinclusion} is an alternative to the usual weak compactness result for currents with bounded mass \cite{federerfleming}. The topology is fine enough to eliminate pathologies and achieve regularity, yet coarse enough to have good compactness properties.

			\item A new \emph{\textbf{hair cutting}} procedure Theorem \ref{thm:haircut} is used to remove tentacles as in Figure \ref{fig:tentacles} and uses most of the results in this paper. Our procedure replaces a minimizing sequence of film chains \( S_i\to S_0 \) with a modified sequence \( \tilde{S_i}\to S_0 \) whose supports converge in the Hausdorff metric to the support of \( S_0 \).
			
			\begin{figure}[htbp]
				\centering
				\includegraphics[height=2in]{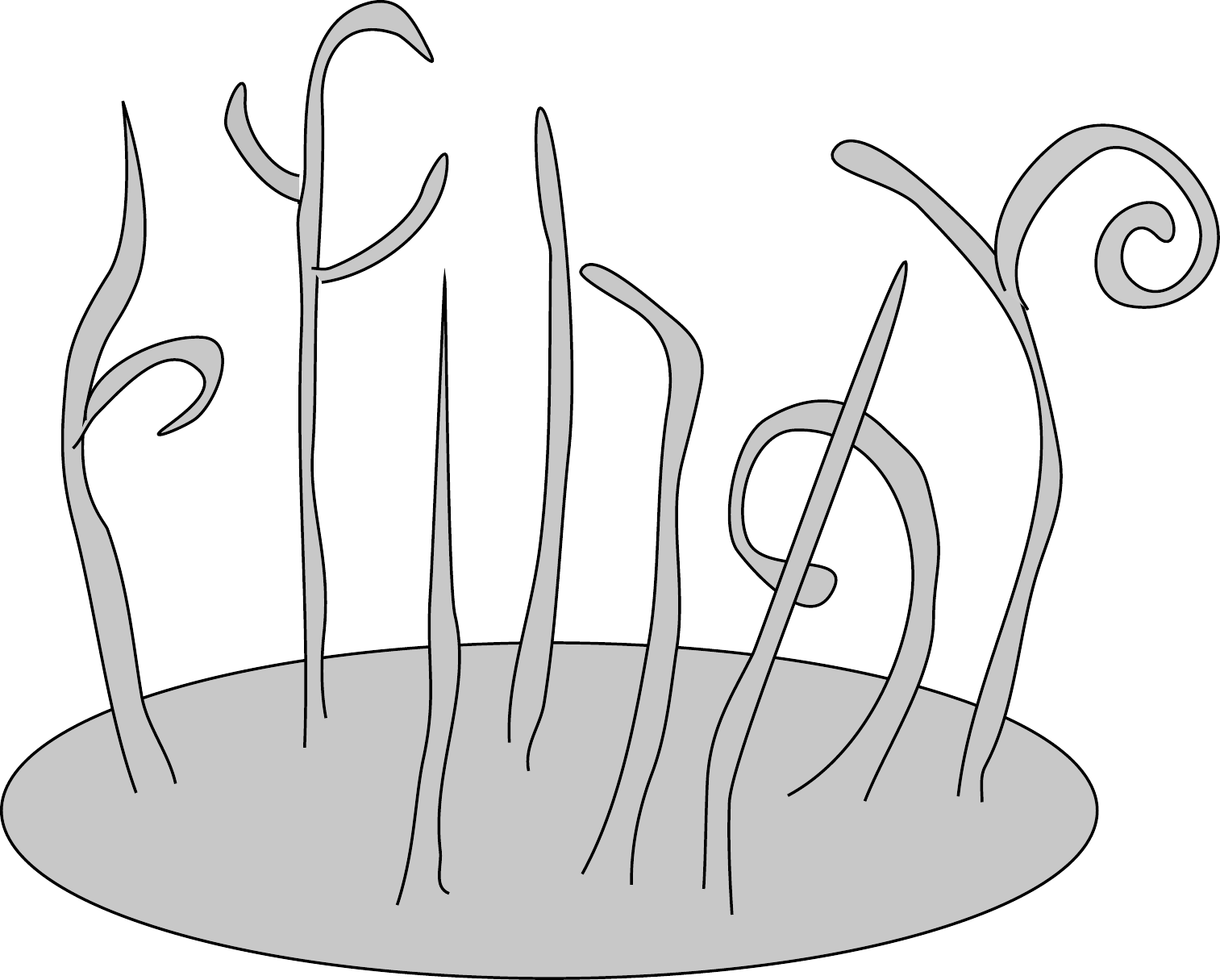}
				\caption{Long, thin tentacles with small Hausdorff \( 2 \)-measure can converge in the Hausdorff metric to a set of infinite Hausdorff \( 2 \)-measure.}
				\label{fig:tentacles}
			\end{figure}	 
		
			\item Our \emph{\textbf{deformation theorem}} \ref{thm:lipspan} is fundamental to this paper and used numerous times. This shows that if a spanning set is deformed, then it remains a spanning set.

			\item Reifenberg defines a type of sequence we call ``regular minimizing'' which satisfies lower semicontinuity of Hausdorff spherical measure due to Reifenberg and Besicovitch\footnote{In \cite{reifenbergbulletin} Reifenberg states, ``Lower semicontinuity in a suitable subsequence is proved by methods reminiscent of A. S. Besicovitch's work here.''} in \cite{reifenberg}. (See Theorem \ref{thm:lsc2} for a proper statement and proof for film chains.) This important result is rather buried in \cite{reifenberg}. It is closely related to the main result of \cite{davidAQM} which shows that Almgren's ``uniformly quasiminimal sequences'' satisfy lower semicontinuity of Hausdorff measure.
		\end{enumerate}
		
	\subsection*{Outline of our proof}
		\label{par:outline}
		
		The first section reviews the results about differential chains which we need in this paper. We refer readers to \cite{thesis}, \cite{OC} and \cite{topological} for figures and more details. Readers unfamiliar with differential chains  might wish to skip this section for the first reading and use dually defined operators to have a decent idea of the rest of our paper.  The second section is a brief application of certain operators of differential chains to set up some simple constructions used to define film chains. Section three establishes a correspondence between finite Borel measures and ``positive chains.'' Section four lists classical results of Hausdorff measures of Besicovitch \cite{besicovitch, besicovitchI}. Section five provides entirely new results about spanning sets, defined via linking numbers. Film chains are introduced for the first time in section six. Corollary \ref{cor:existenceagain} proves the existence of a differential chain \( S_0 \) minimizing our area functional \( \A \) (Definition \ref{def:area}) on film chains. Our methods proving Corollary \ref{cor:existenceagain} build upon, develop, and go beyond those found in \cite{plateau} and \cite{plateau10}, where linking numbers were first used to define spanning currents and a related compactness theorem was established. Theorem \ref{thm:measureequalsm} shows that any such \( S_0 \) is a film chain. This result is the second part of our existence theorem, and we deduce from it that \( S_0 \) is a size minimizing current. Its proof uses most of the results of this paper, and adapts some methods of \cite{reifenberg}.
		
		Part of the proof of Theorem \ref{thm:measureequalsm} ``removes tentacles'' from film chains approximating our solution, and this we do with Theorem \ref{thm:haircut}, a new method which improves on Federer and Fleming projections. In particular, we prove that any minimizer in \( \T \) of our area functional \( \A \) can be approximated by a sequence in \( \F \) converging simultaneously in the topology of differential chains, and in the Hausdorff metric of their supports. Finally, regularity follows from Theorem \ref{thm:lipspan} and a regularity theorem of Almgren.
 
   The authors are grateful to Ulrich Menne for his numerous exchanges regarding the history of varifolds and the work of Frederick Almgren.  
   
   \section*{Subsequent works}
After this paper was posted on the arxiv October 1, 2013 \cite{hpplateau}, several papers appeared which have built upon it: To avoid confusion of priority, we review the results here. 
\begin{itemize}
	\item The first author's paper \cite{plateau10} with the original linking number definition for a smoothly embedded closed curve in \( \R^3 \) was posted on the arxiv in 2011 \cite{plateau10arxiv}. 
	\item This was extended to codimension one smoothly embedded compact submanifolds with finitely many components in \( \R^n \) in the authors' 2013 arxiv post \cite{hpplateau}.  Instead of using Milnor invariants, which are generalizations of linking numbers to finitely many curves, the authors required test loops to link once a given component and not link the rest.  
	\item De Lellis, Maggi, and Ghiraldin \cite{delellisandmaggi} relied upon the definition of linking numbers in \cite{hpplateau} to define spanning sets and replaced methods of differential chains with methods of Radon measures and geometric measure theory to provide a different proof to the main result of our paper. They also used linking numbers to find solutions to sliding minimizers, important in continuum mechanics \cite{Podio-Guidugli} (see also \cite{david2014}), an extension which we do not consider in this paper. 
	\item The authors extended the definition of linking numbers to arbitrary dimension and codimension using cohomological spanning conditions in \cite{cohomology}. They pointed out that one could modify results of Reifenberg \cite{reifenberg} and Almgren \cite{almgrenannals} using cohomological spanning conditions, but no details were provided.   
	
	\item De Philippis, De Rosa, and Ghiraldin \cite{threeguys} extended results of \cite{delellisandmaggi} to arbitrary dimension and codimension using ``linking spheres.''
	
	\item The authors posted an extension of \cite{hpplateau} to Lipschitz integrands with cohomological spanning conditions \cite{lipschitz}.
\end{itemize}

\section*{Notation}
	\label{sec:notation}
	If \( X\subset \R^n \),
	\begin{itemize}
		\item \( \fr\,X \) is the frontier of \( X \);
		\item \( \bar{X} \) is the closure of \( X \);
		\item \( \mathring{X} \) is the interior of \( X \);
		\item \( X^c \) is the complement of \( X \) in \( \R^n \);
		\item \( h(X) \) is the convex hull of \( X \);
		\item \( \diam(X) \) is the diameter of \( X \);
		\item \( \O(X,\e) \) is the open epsilon neighborhood of \( X \);
		\item \( \bar{\O}(X,\e) \) is the closed epsilon neighborhood of \( X \);
		\item \( C_q X \) is the inward cone over \( X \subset \R^n \) with vertex \( q\in \R^n \).
		\item \( \H^m(X) \) is the \( m \)-dimensional (normalized) Hausdorff measure of \( X \);
		\item \( \sp^m(X) \) is the \( m \)-dimensional (normalized) Hausdorff spherical measure of \( X \);
		\item If the Hausdorff dimension of \( X \) is \( m \), then \( X^*:= \{p \in X \,|\, \H^m(X \cap \O(p,r)) > 0 \text{ for all } r > 0 \} \) is the core of \( X \);
		\item \( X(p,r) = X \cap \bar{\O}(p,r) \);
		\item \( x(p,r) = X \cap \fr\,\O(p,r) \);
		\item \( F(p,r) = \int_0^r \sp^{n-2}(x(p,t))dt \);
		\item \( F_k(p,r) = F(p,r) \) if \( X = X_k \).
		\item \( \supp(J) \) is the support of a differential chain \( J \) (Definition \ref{def:support});
		\item \( L(M,N) \) is the linking number of \( M \) and \( N \);
		\item \( \a_m \) is the Lebesgue measure of the unit \( m \)-ball in \( \R^m \);
		\item \( \Span(M) \) is the collection of all compact sets which span  \( M \);
		\item \( \Span^*(M,U) \) is the collection of reduced compact subsets of an open set \( U \) with finite \( \sp^{n-1} \)-measure and which span \( M \).
	\end{itemize}

\section{Differential chains}
	\label{sec:differential_chains}
	In this preliminary section we provide a quick survey of the space of differential chains, its topology, and the operators used throughout the paper\footnote{Readers primarily interested in set theoretic solutions, without currents, can use Whitney's sharp norm to prove a compactness theorem to replace Theorem \ref{thm:compactinclusion}.  The sharp norm can be found in \S\ref{diracchains}-\S\ref{sub:support_of_a_chain} by setting  \( r = 1 \) in \S\ref{sub:difference_chains}.  The remaining subsections \S\ref{sub:pushforward}-\S\ref{ssub:cone_operator} are used to elevate our set theoretic solutions to current solutions.}.
 
	\subsection{Dirac chains}
		\label{Mackey}
		\begin{defn}
			\label{diracchains}
			For \( U \) open in \( \R^n \), let \( \A_k(U) \) denote the vector space of finitely supported functions \( U \to \L^k(\R^n) \). We call \( \A_k(U) \) the space of \emph{Dirac \( k \)-chains in \( U \)}.
		\end{defn}

		We write an element \( A  \in \A_k(U) \) using formal sum notation \( A=\sum_{i=1}^N (p_i; \a_i) \) where \( p_i \in U \) and \( \a_i \in \L^k(\R^n) \). We call \( (p;\a) \) a \( k \)-\emph{element in \( U \)} if \( \a \in \L^k(\R^n) \) and \( p \in U \).

	\subsection{Mass norm}
		\label{sub:mass_norm}
		\begin{defn}
			An inner product \( \<\cdot,\cdot\> \) on \( \R^n \) determines the mass norm on \( \L^k(U) \) as follows: Let \( \<u_1 \wedge \cdots \wedge u_k, v_1 \wedge \cdots \wedge v_k \> := \det(\<u_i,v_j\>) \). The \emph{\textbf{mass}} of a simple \( k \)-vector \( \a = v_1 \wedge \cdots \wedge v_k \) is defined by \( \|\a\| := \sqrt{\<\a,\a\>} \). The \emph{mass} of a \( k \)-vector \( \a \) is \( \|\a\| := \inf\left\{\sum_{j=1}^{N} \|(\a_i)\| : \a_i \mbox{ are simple, } \a = \sum_{j=1}^N \a_i \right\}. \) Define the \emph{mass} of a \( k \)-element \( (p;\a) \) by \(\|(p;\a)\|_{B^0} := \|\a\| \).

			The \emph{mass} of a Dirac \( k \)-chain \( A = \sum_{i=1}^N (p_i; \a_i) \in \A_k(U) \) is given by \[ \|A\|_{B^0} := \sum_{i=1}^N \|(p_i; \a_i)\|_{B^0}. \]
		\end{defn}

	\subsection{Difference chains and the \( B^r \) norm}
		\label{sub:difference_chains}
		\begin{defn}
			\label{def:seminorms} 
			Let \( h(X) \) denote the convex hull of \( X \).
			Given \( u\in \R^n \) and a \( k \)-element \( (p;\a)\in \A_k(\R^n) \), let \( T_u(p;\a) := (p+u;\a) \) be \emph{\textbf{translation}} through \( u \), and \( \D_u(p;\a) := (T_u -I)(p; \a) \). Extend both operators linearly to \( \A_k(\R^n) \). If \( u_1,\dots,u_j\in \R^n \), define the \emph{\textbf{\( j \)-difference \( k \)-chain}} \( \D_{\{u_1,\dots,u_j\}}(p; \a) := \D_{u_j}\circ\dots\circ\D_{u_1}(p; \a) \). The operators \( \D_{u_i} \) and \( \D_{u_k} \) commute, so \( \D_{\{u_1,\dots,u_j\}} \) depends only on the set \( \{u_1,\dots , u_j\} \) and not on the ordering. Say \( \D_{\{u_1,\dots,u_j\}} (p;\a) \) is \emph{inside} \( U \) if \( h(\supp(\D_{\{u_1,\dots,u_j\}} (p;\a)))\subset U \).
		\end{defn}
						
		To simplify our next definition, let \( \D_{\emptyset} (p;\a) := (p;\a) \), let \( s^j:=\{u_1,\dots,u_j\} \) denote a set of \( j \) vectors in \( \R^n \), and let \( s^0=\emptyset \). Let \( \|s^j\|=\|u_1\|\cdots\|u_j\| \), let \( \|s^0\|=1 \), and let \( |\D_{s^j}(p;\a)|_{B^j}=\|s^j\| \|\a\| \).

		\begin{defn}
			\label{def:norms}
			For \( A \in \A_k(U) \) and \( r\ge 0 \), define the norm \[ \|A\|_{B^{r,U}} := \inf \left\{ \sum_{i=0}^w |\D_{s_i^{j_i}}(p_i;\a_i)|_{B^{j_i}} : A=\sum_{i=0}^w \D_{s_i^{j_i}}(p_i;\a_i),\,\, 0\le j_i\le r \mbox { and } \D_{s_i^{j_i}}(p_i,\a_i) \mbox { is inside } U \right\}. \]
		\end{defn}

		That is, the infimum is taken over all ways of writing \( A \) as a finite sum of \( j \)-difference \( k \)-chains, for \( j \) between \( 0 \) and \( r \). It is shown in (\cite{OC}, Theorem 3.2.1) that \( \|\cdot\|_{B^{r,U}} \) is indeed a norm on \( \A_k(U) \). For simplicity, we often write \( \|\cdot\|_{B^r}:=\|\cdot\|_{B^{r,U}} \) if \( U \) is understood from the context.

		\begin{defn}
			\label{differential}
			Let \( \hB_k^r = \hB_k^r(U) \) be the Banach space obtained by completing the normed space \( (\A_k(U), \|\cdot\|_{B^{r,U}}) \). Elements of \( \hB_k^r(U), 0 \le r < \i \), are called \emph{\textbf{differential \( k \)-chains of class \( B^r \) in \( U \)}}.
		\end{defn}

		The maps \( \hB_k^r(U_1) \to \hB_k^r(U_2) \) induced by inclusions \( U_1\hookrightarrow U_2 \) are continuous. Moreover, if \( r\le s \), the identity map \( (\A_k(U), \|\cdot\|_r)\to (\A_k(U),\|\cdot\|_s) \) is continuous and thus extends to a continuous map \( u_k^{r,s}: \hB_k^r(U) \to \hB_k^s(U) \) called the \emph{\textbf{linking map}}.

		\begin{prop}[\cite{OC}, Lemma 5.0.2, Corollary 5.0.5]
			\label{prop:inclusion}
			The linking maps \[ u_k^{r,s}: \hB_k^r(U) \to \hB_k^s(U) \] satisfy
			\begin{enumerate}
				\item \( u_k^{r,r} = Id \);
				\item \( u_k^{s,t} \circ u_k^{r,s} = u_k^{r,t} \) for all \( r \le s \le t \);
				\item The image \( u_k^{r,s}(\hB_k^r(U) \) is dense in \( \hB_k^s(U) \);
				\item \( u_k^{r,s} \) is injective for each \( r \le s \).
			\end{enumerate}
		\end{prop}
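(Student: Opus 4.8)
The plan is to obtain the first three properties for free from the fact that each $u_k^{r,s}$ is, by construction, the continuous extension of the identity on $\A_k(U)$, and to reserve the real work for the injectivity statement.

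First I would record the one inequality that makes the linking maps exist: for $r\le s$ and $A\in\A_k(U)$ one has $\|A\|_{B^{s,U}}\le\|A\|_{B^{r,U}}$, because any representation $A=\sum_i\D_{s_i^{j_i}}(p_i;\a_i)$ with every $j_i\le r$ is still admissible when the constraint is loosened to $j_i\le s$, so the infimum of Definition~\ref{def:norms} is taken over a larger family. Hence $\mathrm{id}\colon(\A_k(U),\|\cdot\|_{B^r})\to(\A_k(U),\|\cdot\|_{B^s})$ is $1$-Lipschitz and $u_k^{r,s}$ is its extension to completions. Then (a) is trivial, since $u_k^{r,r}$ extends the identity map; for (b), the two continuous maps $u_k^{s,t}\circ u_k^{r,s}$ and $u_k^{r,t}$ from $\hB_k^r(U)$ to $\hB_k^t(U)$ both restrict to the identity on the dense subspace $\A_k(U)$, hence agree; and for (c), $u_k^{r,s}\bigl(\hB_k^r(U)\bigr)\supseteq u_k^{r,s}(\A_k(U))=\A_k(U)$, which is dense in $\hB_k^s(U)$ by construction of the completion.

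The content is in (d), which I would argue by duality --- the reference to the Mackey topology in \S\ref{Mackey} suggests this is the right framework. Let $\omega$ be any $C^\infty$ differential $k$-form with all derivatives bounded. Evaluating forms on $k$-elements via $(p;\a)\mapsto\omega_p(\a)$ and extending linearly, $\omega$ determines a bounded functional on $(\A_k(U),\|\cdot\|_{B^t})$ for every $t\ge 0$: this uses the identification, from \cite{OC}, of $(\hB_k^t(U))'$ with the Banach space of differential forms of class $C^{t-1,1}$ (bounded continuous forms when $t=0$), to which $\omega$ belongs for all $t$. Thus $\omega$ extends to bounded functionals $\omega^{(r)}$ on $\hB_k^r(U)$ and $\omega^{(s)}$ on $\hB_k^s(U)$, and $\omega^{(s)}\circ u_k^{r,s}=\omega^{(r)}$ because the two sides agree on the dense subspace $\A_k(U)$. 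Hence if $u_k^{r,s}(J)=0$ then $\omega^{(r)}(J)=\omega^{(s)}(u_k^{r,s}(J))=0$; that is, $J$ is annihilated by every smooth form with bounded derivatives.

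It then remains to prove the separation lemma: smooth forms with bounded derivatives are weak-$*$ dense in $(\hB_k^r(U))'$, so a chain annihilated by all of them must be $0$. Given $\phi\in(\hB_k^r(U))'$, corresponding to a form $\eta$ of class $C^{r-1,1}$, I would take mollifications $\eta_\e=\eta*\rho_\e$ --- after a Whitney extension of $\eta$ to $\R^n$ when $U\ne\R^n$. Each $\eta_\e$ is smooth with derivatives bounded uniformly in $\e$, so $\{\eta_\e\}$ is a bounded net in $(\hB_k^r(U))'$; since $\eta_\e\to\eta$ pointwise, hence as functionals on the dense subspace $\A_k(U)$, a bounded net converging on a dense subspace converges weak-$*$, so $0=\langle\eta_\e,J\rangle\to\langle\eta,J\rangle=\phi(J)$, and as $\phi$ was arbitrary, $J=0$. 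I expect this separation lemma to be the main obstacle, since it is exactly where the genuinely nontrivial inputs from \cite{OC} enter: the identification $(\hB_k^r(U))'\cong\{C^{r-1,1}\text{-forms}\}$ (equivalently, that the difference-chain norm of Definition~\ref{def:norms} is comparable to the dual norm defined by these forms), and the control of mollification near $\fr\,U$ so as not to spoil pairings with chains whose support meets the frontier. Everything else --- the monotonicity of the $B^r$-norms, the extension of the identity map, and properties (a)--(c) --- is formal.
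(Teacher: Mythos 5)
The paper does not prove this proposition; it is cited verbatim from \cite{OC} (Lemma 5.0.2 and Corollary 5.0.5), so there is no in-text proof to compare against. Evaluated on its own terms, your argument is sound in outline and is almost certainly the argument behind the cited result.

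Your treatment of (a)--(c) is exactly right: the monotonicity \( \|A\|_{B^s}\le\|A\|_{B^r} \) for \( r\le s \) (loosening the constraint in Definition~\ref{def:norms} can only shrink the infimum) makes each \( u_k^{r,s} \) the continuous extension of the identity from \( \A_k(U) \), and (a)--(c) are then the standard consequences of extending a map from a dense subspace. For (d), your duality strategy is the right one, and you are honest about where the weight lies: you invoke the isomorphism \( (\hB_k^r(U))'\cong\B_k^r(U) \) (Proposition~\ref{thm:isomorphism}, itself cited from \cite{OC}), and the genuinely delicate step is showing that smooth bounded forms --- which live in \( \B_k^t(U) \) for every \( t \) --- are weak-\( * \) dense in \( \B_k^r(U) \). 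Two remarks on that step. First, there is an alternative packaging already implicit in the paper that avoids Whitney extension entirely: if \( J\ne 0 \), then \( \supp J\ne\emptyset \) (cited as \cite{OC} Theorem 6.3.4 under Definition~\ref{def:support}), so there is some \( \eta\in\B_k^r \) compactly supported in \( U \) with \( \cint_J\eta\ne 0 \); the smoothing result \cite{OC} Theorem 5.0.3 (used explicitly in the paper's proof of Lemma~\ref{lem:linesandboundary}) then replaces \( \eta \) by a smooth compactly supported form with the same nonvanishing pairing, and such a form belongs to \( \B_k^s \) for every \( s \), giving \( u_k^{r,s}(J)\ne 0 \). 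Second, your mollification-after-Whitney-extension route has the hidden hypothesis that the intrinsic metric \( d_U \) in the Lipschitz seminorm of Definition~\ref{def:forms} be comparable to the ambient one; for a slit domain a form Lipschitz for \( d_U \) need not extend continuously to \( \R^n \), so Whitney extension is unavailable. You flag ``control of mollification near \( \fr\,U \)'' as the obstacle, which is the correct worry; the local smoothing route above sidesteps it and is closer to how the machinery in \cite{OC} is actually deployed in this paper. The logic of your proposal is otherwise correct, so this is a refinement rather than a gap.
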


		In \cite{OC} and \cite{topological} we study the inductive limit space \( \hB_k^\i(U) := \varinjlim \hB_k^r(U) \).

		\begin{thm}
			\label{thm:compactinclusion}
			If \( U\subset \R^n \) is bounded, the linking map \( u_n^{0,1}:\hB_n^0(U) \to \hB_n^1(U) \) is compact.
		\end{thm}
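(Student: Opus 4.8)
The plan is to reduce the claim to a total boundedness statement and then to exploit that, in the top degree, the \( B^1 \)-norm is strictly weaker than the mass norm precisely because it lets atoms be translated at small cost.

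Since \( \hB_n^1(U) \) is complete, a subset is relatively compact if and only if it is totally bounded; and since the finitely supported Dirac chains are dense in \( \hB_n^0(U) \) while \( \|\cdot\|_{B^{1,U}}\le\|\cdot\|_{B^{0,U}} \), it is enough to show that the image under \( u_n^{0,1} \) of the Dirac-chain unit ball \( \{A\in\A_n(U):\|A\|_{B^{0,U}}\le 1\} \) is totally bounded in the \( B^{1,U} \)-norm (this then gives relative compactness of the image of the full unit ball of \( \hB_n^0(U) \)). The engine is the elementary estimate that translating an atom is cheap in the sharp norm: if \( p,q\in U \) and the segment \( [p,q] \) lies in \( U \), then \( (q;\a)-(p;\a)=\D_{q-p}(p;\a) \) is a single \( 1 \)-difference chain that is inside \( U \), so \( \|(q;\a)-(p;\a)\|_{B^{1,U}}\le\|q-p\|\,\|\a\| \). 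This is the structural feature of the sharp norm responsible for the compactness.

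Now fix \( \e>0 \). As \( \bar U \) is compact, partition it into finitely many Borel cells \( E_1,\dots,E_N \) of diameter less than \( \e \), and choose for each cell meeting \( U \) a base point \( q_j\in E_j\cap U \) with \( [p,q_j]\subset U \) for every \( p\in E_j\cap U \). Given \( A=\sum_i(p_i;\a_i)\in\A_n(U) \) with \( \sum_i\|\a_i\|\le 1 \), let \( RA:=\sum_j\bigl(q_j;\sum_{i:\,p_i\in E_j}\a_i\bigr) \) be the rounded chain. Applying the key estimate atom by atom,
\[
\|A-RA\|_{B^{1,U}}\ \le\ \sum_i\|p_i-q_{j(i)}\|\,\|\a_i\|\ \le\ \e\sum_i\|\a_i\|\ \le\ \e,
\]
where \( j(i) \) is the index with \( p_i\in E_{j(i)} \). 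Moreover \( RA \) ranges over the bounded set \( \{\sum_j(q_j;\b_j):\sum_j\|\b_j\|\le 1\} \), which lies in the finite-dimensional subspace spanned by \( (q_1;\o),\dots,(q_N;\o) \) --- here \( \o \) spans the one-dimensional space \( \L^n(\R^n) \) --- and is therefore totally bounded in \( B^{1,U} \). Covering this set by finitely many \( B^{1,U} \)-balls of radius \( \e \) and combining with the displayed bound, we cover the image of the Dirac-chain unit ball by finitely many balls of radius \( 2\e \). Since \( \e>0 \) was arbitrary, that image is totally bounded, so \( u_n^{0,1} \) is compact.

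The step I expect to be the real obstacle is the one used quietly above: choosing the base points \( q_j \) near the frontier of \( U \) so that the connecting segments stay inside \( U \) --- a constraint built into the very definition of \( \|\cdot\|_{B^{1,U}} \). When \( U \) is convex, which is the situation relevant to this paper (open balls and interiors of convex bodies), every segment between two points of \( U \) lies in \( U \) and the construction above goes through verbatim; in general one must replace the Euclidean length \( \|q-p\| \) by the intrinsic path distance \( d_U(p,q) \) and check that \( (U,d_U) \) is totally bounded, which is exactly where the boundary behaviour of \( U \) genuinely matters. The same argument can be phrased dually: by Schauder's theorem it suffices that the adjoint \( (u_n^{0,1})^{*} \) --- the inclusion of the space of bounded Lipschitz \( n \)-forms on \( U \) into the space of bounded measurable \( n \)-forms on \( U \) --- be compact, and since an \( n \)-form on a subset of \( \R^n \) is determined by a single scalar coefficient, a bounded family of such Lipschitz forms is uniformly bounded and (for convex \( U \)) equicontinuous for the Euclidean metric, so Arzel\`a--Ascoli on the compact set \( \bar U \) concludes.
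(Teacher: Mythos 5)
Your argument is correct and runs essentially parallel to the paper's: both proofs reduce to total boundedness of the image of the Dirac-chain unit ball and both exploit that translating an atom costs at most \( \|q-p\|\,\|\a\| \) in the \( B^{1} \)-norm, moving each atom to a base point of a fixed finite cover of \( U \). The one genuine difference is the finishing step: the paper goes on to quantize the resulting coefficient masses into a finite grid \( \Q(k) \), exhibiting an explicit finite \( \e \)-net \( {\cal Z}(k) \); you instead observe that the rounded chains lie in the bounded set of a finite-dimensional subspace of \( \hB_n^1(U) \) (using that \( \L^n(\R^n) \) is one-dimensional), which is automatically totally bounded. Your version is slightly slicker, at the cost of being nonconstructive. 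You also correctly flag the one point the paper leaves implicit: the bound \( \|(q;\a)-(p;\a)\|_{B^{1,U}}\le\|q-p\|\,\|\a\| \) requires the segment \( [p,q] \) to lie in \( U \), which is automatic when \( U \) is convex (the only case used downstream in the paper) but would require switching to the intrinsic metric \( d_U \) in general — the paper's statement ``\( U \) bounded'' glosses over this. Finally, the dual route you sketch (Schauder plus Arzel\`a--Ascoli applied to the inclusion \( \B_n^1(U)\hookrightarrow\B_n^0(U) \), via the isomorphism of Proposition \ref{thm:isomorphism}) is a genuinely different and equally valid proof, and arguably the most transparent one, again modulo the same convexity caveat.
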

		
		\begin{proof} 
			Since \( \hB_n^1(U) \) is a Banach space, and since \( \A_n(U) \) is dense in \( \hB_n^0(U) \), it suffices to show that the image of \( \Theta := \{ J \in \A_n(U): \|J\|_{B^0} \le 1\} \) is totally bounded.

			For \( k \in \N \) let \( \Xi(k) \) be a covering of \( U \) by finitely many balls \( \O(x_j, 2^{-k}) \) centered at points \( x_j\in U \), \( 1\leq j \leq N_k \). Let \( \Q(k)=\{ y 2^{-k} N_k^{-1} : y\in\Z,\, 0 \le y \le 2^k N_k \} \) and let \[{\cal Z}(k) = \left\{ \sum_{j=1}^{N_k} (x_j; \b_j) \in u_n^{0,1}(\Theta) : \|\b_j\|  \in \Q(k) \text{ for all } 1\leq j\leq N_k\right\}. \]

			Let \( A = \sum_{s=1}^r (p_s; \a_s)\in u_n^{0,1}(\Theta) \). We approximate \( A \) with an element of \( {\cal Z}(k) \) as follows:

		 	Fix \( 1\leq s\leq r \). Since \( \Xi(k) \) covers \( U \), we have \( p_s\in \O(x_{j_s}, 2^{-k}) \) for some \( 1\leq j_s \leq N_k \). Set \( p_s'=x_{j_s} \). Put \( A'=\sum_{s=1}^r (p_s'; \a_s) \). By summing the \( n \)-vectors \( \a_s \) at the same point \( x_j \) and inserting zeros as necessary, we can write \( A'=\sum_{j=1}^{N_k}(x_j; \mu_j) \), where \( \sum_{j=1}^{N_k} \|\mu_j\|\leq 1 \).

			Now let \( \mu_j' \) be the \( n \)-vector whose mass \( \|\mu_j'\|\in \Q(k) \) satisfies \( 0\le \|\mu_j\|- \|\mu_j'\| < 2^{-k}\cdot N_k^{-1}. \) Set \( A'' = \sum_{j=1}^{N_k} (x_j; \mu_j') \in {\cal Z}(k)  \). It follows that
			\begin{align*}
				\|A - A''\|_{B^1} &\leq \sum_{s=1}^r \|(p_s;\a_s)-(p_s';\a_s)\|_{B^1} + \sum_{j=1}^{N_k}\|(x_j; \mu_j-\mu_j')\|_{B^1}\\
				&\leq \sum_{s=1}^r \|p_s-p_s'\|\cdot \|\a_s\| + \sum_{j=1}^{N_k} \|\mu_j-\mu_j'\|\\
				&\leq 2^{1-k}.
			\end{align*}
		\end{proof}

		\begin{defn}
			\label{def:forms}
			Let \( \B_k^0(U) \) be the Banach space of bounded \( k \)-forms on \( U \) equipped with the comass norm \( \|\o\|_{B^0} = \sup_{\|\a\|=1}\o(\a) \). For each \( r \ge 1 \), let \( \B_k^r(U) \) be the Banach space of \( (r-1) \)-times differentiable \( k \)-forms with comass bounds on the \( s \)-th order directional derivatives for \( 0 \le s \le r-1 \) with the \( (r-1) \)-st derivatives satisfying a bounded Lipschitz condition\footnote{Here, \( U \) is equipped with the intrinsic metric induced by the standard metric on \( \R^n \). That is, \( d_U(p,q)=\inf L(\g) \), where the infimum is taken over all paths \( \g \) in \( U \) from \( p \) to \( q \). We say that a form \( \o \) is Lipschitz if \( |\o|_{Lip}:=\sup \frac{|\o(p;\a)-\o(q;\a)|}{\|\a\|d_U(p,q)}< \infty \). C.f. Whitney's \emph{Lipschitz comass constant}, \cite{whitney}, V, \S 10.}, with norm given by \( \|\o\|_{B^r} = \sup_{|\ell| \le r-1}\{\| D^{\ell}\o\|_{B^0}, |D^{r-1} \o|_{Lip} \} \). Elements of \( \B_k^r(U) \) are called \emph{\textbf{differential \( k \)-forms of class \( B^r \) in \( U \)}}.
		\end{defn}

		We always denote differential forms by lower case Greek letters such as \( \o, \eta \) and differential chains by upper case Roman letters such as \( J, K \), so there is no confusion when we write \( \|\o\|_{B^r} \) or \( \|J\|_{B^r} \).

		\begin{prop}[Isomorphism Theorem, \cite{OC} Theorem 3.5.2]
			\label{thm:isomorphism}
	 		For \( r\ge 0 \), the continuous dual space \( \hB_k^r(U)' \) equipped with the dual norm is isometric to \( \B_k^r(U) \) via the restriction of covectors in \( \hB_k^r(U)' \) to \( k \)-elements.
		\end{prop}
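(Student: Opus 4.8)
The plan is to exhibit the isometry explicitly. For \( \varphi \in \hB_k^r(U)' \), define \( \o = \Phi(\varphi) \) on \( U \) by restricting \( \varphi \) to \( k \)-elements: \( \o(p)(\a) := \varphi((p;\a)) \). This is linear in \( \a \), so each \( \o(p) \) is a \( k \)-covector and \( \o \) is a candidate differential \( k \)-form. Conversely, any \( k \)-form \( \o\in\B_k^r(U) \) pairs with a Dirac chain \( A = \sum_i(p_i;\a_i) \) via \( \langle A,\o\rangle := \sum_i \o(p_i)(\a_i) \), and I claim this is a bounded functional on \( (\A_k(U),\|\cdot\|_{B^{r,U}}) \), hence extends uniquely to some \( \varphi_\o\in\hB_k^r(U)' \). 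I would then verify: (i) \( \Phi(\varphi)\in\B_k^r(U) \) with \( \|\Phi(\varphi)\|_{B^r}\le\|\varphi\| \); (ii) \( \|\varphi_\o\|\le\|\o\|_{B^r} \); and (iii) \( \Phi \) and \( \o\mapsto\varphi_\o \) are mutually inverse — one composite is immediate from \( \langle(p;\a),\o\rangle=\o(p)(\a) \), the other because \( \varphi_{\Phi(\varphi)} \) and \( \varphi \) agree on the dense subspace \( \A_k(U) \). Then \( \Phi \) is a linear bijection, and feeding \( \o=\Phi(\varphi) \) into (ii) gives \( \|\varphi\|\le\|\Phi(\varphi)\|_{B^r} \), which together with (i) makes \( \Phi \) an isometry.

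Both (i) and (ii) rest on the single inequality
\[ \bigl|\langle\D_{s^j}(p;\a),\o\rangle\bigr| \ \le\ \|\o\|_{B^r}\,\|s^j\|\,\|\a\| \qquad (0\le j\le r), \]
valid whenever \( \D_{s^j}(p;\a) \) is inside \( U \). Its left side is the \( j \)-th iterated finite difference \( \D_{u_1}\cdots\D_{u_j}f(p) \) of the scalar function \( f(q):=\o(q)(\a) \), taken over the parallelepiped \( h(\supp(\D_{s^j}(p;\a)))\subset U \). For \( j\le r-1 \), \( \o \) is \( (r-1) \)-times (in particular \( j \)-times) differentiable, and the integral form of the mean value theorem bounds this difference by \( \|D^j\o\|_{B^0}\,\|u_1\|\cdots\|u_j\|\,\|\a\| \); for \( j=r \) one splits off the last difference and applies the Lipschitz bound \( |D^{r-1}\o|_{Lip} \) on the \( (r-1) \)-st derivative — the segments involved lie in the parallelepiped, so the intrinsic and Euclidean lengths agree there. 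Both bounds are dominated by \( \|\o\|_{B^r}\|s^j\|\|\a\| \). For (ii), summing the inequality over any decomposition \( A=\sum_i\D_{s_i^{j_i}}(p_i;\a_i) \) as in Definition \ref{def:norms} and taking the infimum gives \( |\langle A,\o\rangle|\le\|\o\|_{B^r}\|A\|_{B^{r,U}} \). For (i), read it backwards: the one-term decomposition gives \( \|\D_{s^j}(p;\a)\|_{B^{r,U}}\le\|s^j\|\|\a\| \), so \( |\varphi(\D_{s^j}(p;\a))|\le\|\varphi\|\|s^j\|\|\a\| \) for every \( j\le r \); that is, the mixed finite differences of orders \( 1,\dots,r \) of \( q\mapsto\varphi((q;\a)) \) are each bounded by the product of the step lengths times \( \|\varphi\|\|\a\| \).

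What remains for (i) is to upgrade these finite-difference bounds into the smoothness built into \( \B_k^r(U) \): the comass bound on \( \o \) (the case \( j=0 \)), and the assertion that \( p\mapsto\o(p)(\a) \) is \( (r-1) \)-times differentiable with comass-bounded derivatives through order \( r-1 \) and Lipschitz \( (r-1) \)-st derivative, all with constants \( \le\|\varphi\| \). This is the classical principle that a continuous function whose mixed finite differences of orders \( 1,\dots,r \) are controlled by the products of the step lengths is of class \( C^{r-1,1} \). Already the passage to differentiability is not purely formal: it uses a dyadic Cauchy argument — from the second-difference bounds, the difference quotients \( (f(p+te)-f(p))/t \) form a Cauchy net as \( t\to0 \) — and the higher derivatives are handled inductively in the same spirit, together with the routine checks that the directional derivatives are linear in the direction and that mixed partials agree. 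The only place the shape of \( U \) intervenes is the passage from estimates along segments contained in \( U \) to the comass, Lipschitz, and derivative norms with respect to the intrinsic metric \( d_U \): there one joins two points of \( U \) by a rectifiable path, inscribes a polygonal path with vertices in \( U \), and telescopes. Differentiability itself is infinitesimal and needs only arbitrarily short segments, which always remain in \( U \).

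The main obstacle is precisely this last upgrade — the implication ``bounded mixed finite differences through order \( r \) \( \Rightarrow \) \( C^{r-1,1} \)'' — which, though classical, is the one ingredient that is not a formal manipulation of the definitions and must be run with care about the order of the limits and about staying inside \( U \). Everything else is bookkeeping with Definitions \ref{def:seminorms}, \ref{def:norms}, and \ref{def:forms} together with the density of \( \A_k(U) \) in \( \hB_k^r(U) \).
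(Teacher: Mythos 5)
The paper does not prove this proposition — it is imported verbatim from \cite{OC}, Theorem 3.5.2 — so there is no in-paper argument to compare against. That said, your outline is the natural one and is almost certainly the route taken in the cited source: it is the direct generalization of Whitney's proof that the dual of the sharp-norm completion is the space of sharp forms (the paper itself notes in Remark \ref{sharpnorm} that the \( B^1 \) norm is comparable to Whitney's sharp norm, and identical at degree \( 0 \)).

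Your two ``easy'' directions are sound. The inequality
\( |\langle \D_{s^j}(p;\a),\o\rangle| \le \|\o\|_{B^r}\|s^j\|\|\a\| \)
is exactly what the iterated integral form of Taylor's theorem gives for \( j\le r-1 \), plus one Lipschitz increment for \( j=r \); the observation that the relevant parallelepiped \( h(\supp\,\D_{s^j}(p;\a)) \) lies in \( U \) precisely because the difference chain is ``inside \( U \)'' (Definition \ref{def:seminorms}) — so intrinsic and Euclidean lengths agree along the segments used — is the right way to reconcile the Euclidean finite differences with the \( d_U \)-intrinsic norm in Definition \ref{def:forms}. Summing over a decomposition and infimizing then gives \( \|\varphi_\o\|\le\|\o\|_{B^r} \); the trivial one-term decompositions give the reverse finite-difference bounds on \( q\mapsto\varphi((q;\a)) \).

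You have correctly identified where the real work lies: converting the bounds on mixed finite differences of orders \( 1,\dots,r \) into membership in \( C^{r-1,1} \) with the right constant. The dyadic argument you gesture at is the standard one, but be aware it has two non-formal ingredients you would need to carry out in full. First, the limit \( L(x)=\lim_n 2^n\big(f(p+2^{-n}xe)-f(p)\big) \) along the dyadic subsequence must be shown to be independent of the starting scale (equivalently, that the full net of difference quotients is Cauchy, not just the dyadic subnet); this follows by showing \( L \) is additive and bounded, hence linear, using the second-difference bound, and that the difference quotient differs from \( L(x)/x \) by \( O(x) \). Second, the Lipschitz estimate on the \( (r-1) \)-st derivative has to be established for arbitrary displacements, not just displacements parallel to the differentiation directions, and the symmetry of mixed partials needs to be checked from the commutativity of the \( \D_{u_i} \). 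None of this is surprising, but it is genuinely the bulk of the proof rather than bookkeeping, and your write-up leaves it as a cited principle rather than an argument. As an outline matching what \cite{OC} does, this is fine; as a self-contained proof it has an acknowledged but substantial gap at exactly this point.
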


		Thus there is a jointly continuous bilinear pairing \( \hB_k^r\times \B_k^r \to \R \) given by \( (J,\o) \to \cint_J \o := \o(J) \) satisfying \( |\o(J)|\le \|o\|_{B^r} \|J\|_{B^r} \). By a slight abuse of notation, we write \( \o(J) \) to mean the evaluation on \( J \) of the covector corresponding to \( \o \) given by the isomorphism of Proposition \ref{thm:isomorphism}.

	\subsection{Support of a chain}
 		\label{sub:support_of_a_chain}
		\begin{defn}
			\label{def:support}
			If \( J \in \hB_k^r(U) \), define the \emph{\textbf{support}} of \( J \) to be \[ \supp(J) := \left\{ p \in \R^n \,:\, \forall \e > 0, \exists\, \eta \in \B_k^r(U) \mbox{ with }  \eta \mbox{ supported in } \O(p,\e) \mbox{ s.t. } \cint_J \eta \ne 0 \right\}. \] The support of a nonzero differential chain is a nonempty closed subset of \( \overline{U} \) (\cite{OC}, Theorems 6.3.4). This definition also agrees with the usual notion of support on the subspace \( \A_k(U) \) of \( \hB_k^r(U) \).
		\end{defn}

		\begin{lem}
			\label{lem:converge}
			If \( J_i \to J \) in \( \hB_k^r(U) \) and \( p \in \supp(J) \), then there exists \( p_i \in \supp(J_i) \) such that  \( p = \lim_{i \to \i} p_i \).
		\end{lem}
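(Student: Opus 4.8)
The plan is to argue by contradiction using the definition of support. Suppose $p \in \supp(J)$ but that there is no sequence $p_i \in \supp(J_i)$ with $p_i \to p$. Then there is a radius $\e_0 > 0$ and a subsequence (which I will relabel as $J_i$) such that $\supp(J_i) \cap \O(p, \e_0) = \emptyset$ for all $i$. First I would use this to show that every form $\eta \in \B_k^r(U)$ supported in $\O(p, \e_0)$ annihilates each $J_i$: indeed, if $\cint_{J_i} \eta \ne 0$ for some such $\eta$, then by the definition of $\supp(J_i)$ there would be a point of $\supp(J_i)$ in the support of $\eta \subset \O(p,\e_0)$, contradicting disjointness. (One should be slightly careful here: $\supp(\eta) \subset \O(p,\e_0)$ does not immediately say $\eta$ is ``supported in $\O(q,\e)$ for small $\e$'' around each of its points, but the standard partition-of-unity / locality argument for $\supp$ — or directly the characterization that $\cint_K \eta$ depends only on the germ of $\eta$ near $\supp(K)$ — gives $\cint_{J_i}\eta = 0$.)

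Next I would pass to the limit. Since $J_i \to J$ in $\hB_k^r(U)$ and, for each fixed $\eta \in \B_k^r(U)$ supported in $\O(p,\e_0)$, the functional $K \mapsto \cint_K \eta$ is continuous on $\hB_k^r(U)$ by the bound $|\eta(K)| \le \|\eta\|_{B^r}\|K\|_{B^r}$ of Proposition \ref{thm:isomorphism}, we get
\[
\cint_J \eta = \lim_{i\to\i} \cint_{J_i} \eta = 0.
\]
Thus every $\eta \in \B_k^r(U)$ supported in $\O(p,\e_0)$ kills $J$. But this directly contradicts $p \in \supp(J)$ applied with $\e = \e_0$: that would produce an $\eta$ supported in $\O(p,\e_0)$ with $\cint_J \eta \ne 0$.

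Finally, to extract the full sequence (not just a subsequence) I would invoke a standard diagonal argument: for each $m$, the negation of the desired conclusion at scale $1/m$ fails only for finitely many $i$, so one can choose $p_i \in \supp(J_i)$ with $\|p_i - p\| \to 0$; more cleanly, define $r_i := \dist(p, \supp(J_i)) = \inf\{\|p - q\| : q \in \supp(J_i)\}$ (finite and attained since $\supp(J_i)$ is a nonempty closed set), and the argument above shows $r_i \to 0$, whence picking $p_i \in \supp(J_i)$ realizing (or nearly realizing) this distance gives $p_i \to p$. The only real subtlety — and the step I would be most careful with — is the locality claim that $\cint_K \eta = 0$ whenever $\supp(\eta) \cap \supp(K) = \emptyset$; everything else is continuity of the pairing plus unwinding the definition of support.
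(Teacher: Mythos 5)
Your proof is correct and follows the same contradiction argument the paper uses: assume a subsequence of supports avoids a fixed ball $\O(p,\e_0)$, produce a form $\eta$ supported in that ball with $\cint_J\eta\ne 0$ while $\cint_{J_{i_j}}\eta=0$ for the subsequence, and derive a contradiction from continuity of the pairing $K\mapsto\cint_K\eta$. The paper's version is terser, leaving both the locality claim ($\supp(\eta)\cap\supp(K)=\emptyset\Rightarrow\cint_K\eta=0$) and the extraction of an actual sequence $p_i\to p$ implicit; you spell these out, which is fine and does not change the argument.
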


		\begin{proof}
			Each \( \O(p,r) \) must intersect all \( \supp(J_i) \) for sufficiently large \( i \), for if not, there exists a differential form \( \eta \) supported in \( \O(p,r) \) with \( \cint_J \eta \ne 0 \), and a subsequence \( i_j\to\i \) such that \( \cint_{J_{i_j}} \eta = 0 \) for each \( i_j \), contradicting continuity of the integral \( \cint \).
		\end{proof}

	\subsection{Pushforward}
		\label{sub:pushforward}
		\begin{defn}
			\label{def:push}
			Suppose \( U_1 \subset \R^n\) and \( U_2 \subset \R^w \) are open and \( F:U_1 \to U_2 \) is a differentiable map. For \( p \in U_1 \) define \( F_*(p; \a) := (F(p), F_{p*}\a) \) for all \( k \)-elements \( (p;\a) \) and extend to a linear map \( F_*:\A_k(U_1) \to \A_k(U_2) \) called \emph{\textbf{pushforward}}.
		\end{defn}

		\begin{defn}
			\label{def:Mr}
			For \( r \ge 1 \) let \( \M^r(U, \R^w) \) be the vector space of differentiable maps \( F:U \to \R^w \) whose coordinate functions \( F_i \) satisfy \( \p F_i/\p e_j \in B^{r-1}(U) \) for all \( j \). Define the seminorm \[ \rho_r(F) := \max_{i,j} \{\| \p F_i/\p e_j\|_{B^{r-1, U}}\}. \] Let \( \M^r(U_1, U_2) := \{F \in \M^r(U_1, \R^w) : F(U_1) \subset U_2 \subset \R^w\} \).
		\end{defn}

		A map \( F \in \M^1(U, \R^w) \) may not be bounded, but its directional derivatives must be. An important example is the identity map \( x \mapsto x \) which is an element of \( \M^1(\R^n,\R^n) \).

		\begin{prop}[\cite{OC}, Theorem 7.0.6, Corollary 7.0.18]
			\label{prop:A}
	 		If \( F \in \M^r(U_1, U_2) \), then \( F_* \) satisfies \[ \|F_*(A)\|_{B^{r,U_2}} \le n2^r \max\{1, \rho_r(F)\} \|A\|_{B^{r,U_1}} \] for all \( A \in \A_0(U_1) \). It follows that \( F_* \) extends to a well defined continuous linear map \( F_*: \hB_k^r(U_1) \to \hB_k^r(U_2) \), whose dual map \( F^*:\B_k^r(U_2) \to \B_k^r(U_1) \) is the usual pullback of forms. Thus \( \cint_{F_* J} \o = \cint_J F^* \o \) for all \( J \in \hB_k^r(U_1) \) and \( \o \in \B_k^r(U_2) \).
		\end{prop}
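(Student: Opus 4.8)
The plan is to establish the operator bound first on Dirac chains, then extend by density, and finally to identify the adjoint of $F_*$ with the classical pullback. Since $F_*$ is linear and $\|\cdot\|_{B^{r,U_2}}$ is a norm, the infimum definition of $\|\cdot\|_{B^{r,U_1}}$ reduces the estimate to the pointwise bound
\[
\|F_*(\D_{s^j}(p;\a))\|_{B^{r,U_2}} \le n2^r\max\{1,\rho_r(F)\}\,\|s^j\|\,\|\a\|
\]
for a single $j$-difference $k$-chain $\D_{s^j}(p;\a)$ lying inside $U_1$, for each $0\le j\le r$. In fact I would produce $F_*(\D_{s^j}(p;\a))$ explicitly as a finite sum of difference chains of order at most $r$ lying inside $U_2$ and check that the total weight $\sum\|t^{j'}\|\,\|\b\|$ of the pieces obeys this bound; since any such decomposition competes in the infimum defining $\|\cdot\|_{B^{r,U_2}}$, the estimate follows at once. (To keep the pieces inside $U_2$ one first subdivides the vectors $u_i$ finely, which is possible by continuity of $F$ and openness of $U_2$ and does not change the weights.) The statement is phrased for $A\in\A_0(U_1)$, i.e.\ $k=0$, where $F_{p*}$ acts trivially on $0$-vectors; the argument below is written to cover general $k$.

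For the pointwise bound I would induct on the structure of the iterated difference. The base case $j=1$, $k=0$ is elementary: $F_*(\D_u(p;\a))=\D_v(F(p);\a)$ with $v=F(p+u)-F(p)$, and since the segment $[p,p+u]$ lies in $h(\supp\D_u(p;\a))\subset U_1$, the fundamental theorem of calculus gives $v=\int_0^1 DF(p+tu)\,u\,dt$; writing the target translation by $v$ as a telescoping sum of translations along the coordinate directions of $\R^w$ and bounding the entries of $DF$ by $\rho_1(F)$ yields $\|F_*(\D_u(p;\a))\|_{B^{1,U_2}}\lesssim\rho_1(F)\,\|u\|\,\|\a\|$. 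For $k>0$ one must also control the variation of the Jacobian; I would split
\[
F_*(\D_u(p;\a))=\D_v\big(F(p);\,F_{(p+u)*}\a\big)+\big(F(p);\,(F_{(p+u)*}-F_{p*})\a\big),
\]
treating the first summand as above (with $\|F_{(p+u)*}\a\|\lesssim\rho_1(F)^k\|\a\|$) and the second, a point mass, via the Lipschitz estimate for $q\mapsto F_{q*}$ when $r\ge2$ and directly from the definition of $\|\cdot\|_{B^1}$ when $r=1$. For $1<j\le r$ I would write $\D_{s^j}=\D_{u_j}\circ\D_{s^{j-1}}$ and Taylor-expand $F$ (and, for $k>0$, $q\mapsto F_{q*}$) along the relevant segments, using that the components of $DF$ lie in $B^{r-1}(U_1)$, so that the order-$r$ remainder, rewritten via the fundamental theorem of calculus, is a superposition — an integral — of difference chains of order $\le r$ inside $U_2$. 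Accounting for the $2^j\le2^r$ vertices of a $j$-fold difference together with the coordinate splittings in $\R^w$ produces the asserted constant $n2^r\max\{1,\rho_r(F)\}$.

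Granting the estimate on $\A_k(U_1)$, which is dense in $\hB_k^r(U_1)$, $F_*$ extends uniquely to a continuous linear map $F_*:\hB_k^r(U_1)\to\hB_k^r(U_2)$ with the same bound. By the Isomorphism Theorem (Proposition \ref{thm:isomorphism}) the Banach-space adjoint of $F_*$ corresponds to a bounded linear map $\B_k^r(U_2)\to\B_k^r(U_1)$; evaluating on a $k$-element gives $\langle\text{adjoint}(\o),(p;\a)\rangle=\langle\o,F_*(p;\a)\rangle=\o(F(p))(F_{p*}\a)$, which is exactly the classical pullback $F^*\o$ evaluated at $(p;\a)$, while a chain-rule and Leibniz computation using the regularity of $\o$ and of $DF$ shows $F^*\o\in\B_k^r(U_1)$. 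Since Dirac chains are dense and both maps are continuous, the adjoint of $F_*$ equals $F^*$, whence $\cint_{F_*J}\o=(F_*J)(\o)=J(F^*\o)=\cint_J F^*\o$ for all $J\in\hB_k^r(U_1)$ and $\o\in\B_k^r(U_2)$. I expect the main obstacle to be the pointwise estimate for $1<j\le r$: redistributing $F_*$ of a $j$-fold difference into difference chains of order at most $r$ while keeping the constant in hand, and especially the low-regularity case $r=1$, where the split into a translation term and a Jacobian-variation term does not by itself furnish a factor proportional to $\|u\|$.
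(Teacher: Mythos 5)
The paper does not prove this proposition; it is imported as a citation from \cite{OC} (Theorem~7.0.6, Corollary~7.0.18), so there is no in-paper argument to compare yours against. Judged on its own, your plan is the natural one: reduce the norm estimate to a single $j$-difference $k$-chain, subdivide so that the image decomposition competes in the infimum defining $\|\cdot\|_{B^{r,U_2}}$, extend by density, and identify the Banach-space adjoint with pullback through Proposition~\ref{thm:isomorphism}. The $j=1$, $k=0$ base step and the duality step at the end are sound.

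The gap is exactly the one you flag, and it is the technical heart of the theorem rather than a detail: you must exhibit $F_*(\D_{s^j}(p;\a))$, for $1<j\le r$, as a finite or integral superposition of difference $k$-chains of order at most $r$ lying inside $U_2$ whose total weight is bounded by $n2^r\max\{1,\rho_r(F)\}\,\|s^j\|\,\|\a\|$, with the constant tracked through the Taylor expansion of $F$ (and, for $k>0$, of $q\mapsto F_{q*}$) and the coordinate splittings in $\R^w$. Your sketch does not carry this out, and it is not obvious how the factor $n2^r$ (rather than something depending on $k$, $w$, or higher powers of $\rho_r$) emerges. You also note a real difficulty in the low-regularity corner: for $r=1$, $j=1$, $k>0$ the Jacobian-variation term $(F(p);(F_{(p+u)*}-F_{p*})\a)$ is a point mass with no visible factor of $\|u\|$ when one only has $B^1$ control of $F$. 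Until that redistribution lemma is proved with the asserted constant, the argument is a plausible plan, not a proof; granting the estimate, the density extension and adjoint identification steps you give are routine and correct.
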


		\begin{defn}
			A \emph{\textbf{\( k \)-cell \( \s \) in \( U\subseteq \R^k \)}} is a bounded finite intersection of closed half-spaces, such that \( \s \subset U \). An \emph{\textbf{oriented affine \( k \)-cell in \( U\subseteq \R^n \)}} is a \( k \)-cell in \( U \) which is also contained in some affine \( k \)-subspace \( K \) of \( \R^n \), and which is equipped with a choice of orientation of \( K \).
		\end{defn}

		\begin{defn}
			We say that \( J \in \hB_k^r(U)  \) \emph{\textbf{represents}} an oriented \( k \)-dimensional submanifold (possibly with boundary) \( M \) of \( U \) if \( \cint_J \o = \int_M \o \) for all \( \o \in \B_k^r(U) \).
		\end{defn}

		\begin{prop}[Representatives of \( k \)-cells, \cite{OC}, Theorem 4.2.2]
			\label{thm:B}
			If \( \s \) is an oriented affine \( k \)-cell \( \s \) in \( U \), then there exists a unique differential \( k \)-chain \( \widetilde{\s} \in \hB_k^1(U) \) which represents \( \s \).
		\end{prop}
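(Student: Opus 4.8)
The plan is to produce $\widetilde{\sigma}$ as the limit in $\hB_k^1(U)$ of Dirac $k$-chains coming from finer and finer subdivisions of $\sigma$, to verify the one convergence estimate that makes this work, to check the representing property by passing Riemann sums to the limit, and to get uniqueness directly from the Isomorphism Theorem (Proposition~\ref{thm:isomorphism}). To set up: let $K$ be the affine $k$-plane containing $\sigma$, let $e\in\L^k(\R^n)$ be the unit simple $k$-vector determined by the orientation of $K$, and fix affine coordinates on $K$. For $m\ge 1$ let $\mathcal{S}_m$ be the subdivision of $\sigma$ into the convex cells $\sigma\cap Q$, where $Q$ runs over the closed dyadic cubes of side $2^{-m}$ in these coordinates; then $\mathcal{S}_{m+1}$ refines $\mathcal{S}_m$ and $\diam(c)\le C2^{-m}$ for all $c\in\mathcal{S}_m$, with $C$ depending only on $K$. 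In each cell $c$ pick a point $x_c\in c$, set $V_c:=\pm\H^k(c)\,e$ with the sign of the orientation, and put $P_m:=\sum_{c\in\mathcal{S}_m}(x_c;V_c)\in\A_k(U)$; this is legitimate since $c\subset\sigma\subset U$.

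The one real step is to show $(P_m)$ is Cauchy in $\|\cdot\|_{B^1}$. Since $\mathcal{S}_{m+1}$ refines $\mathcal{S}_m$, each $c\in\mathcal{S}_m$ is a union of cells $c_1,\dots,c_{n_c}\in\mathcal{S}_{m+1}$, and $k$-volume is additive, so $V_c=\sum_j V_{c_j}$; hence
\[
  P_{m+1}-P_m \;=\; \sum_{c\in\mathcal{S}_m}\sum_{j=1}^{n_c}\bigl[(x_{c_j};V_{c_j})-(x_c;V_{c_j})\bigr] \;=\; \sum_{c}\sum_{j}\D_{\,x_{c_j}-x_c}(x_c;V_{c_j}).
\]
Each summand is a $1$-difference $k$-chain, and it lies inside $U$ because $\sigma$ is convex, so the segment $[x_c,x_{c_j}]\subset\sigma\subset U$. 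From the definition of $\|\cdot\|_{B^1}$ and $|\D_u(p;\a)|_{B^1}=\|u\|\,\|\a\|$,
\[
  \|P_{m+1}-P_m\|_{B^1}\;\le\;\sum_c\sum_j\|x_{c_j}-x_c\|\,\|V_{c_j}\|\;\le\;\sum_c\diam(c)\sum_j\H^k(c_j)\;=\;\sum_c\diam(c)\,\H^k(c)\;\le\;C2^{-m}\,\H^k(\sigma),
\]
so $\sum_m\|P_{m+1}-P_m\|_{B^1}<\infty$ and $(P_m)$ converges in $\hB_k^1(U)$ to some $\widetilde{\sigma}$.

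For the representing property, fix $\omega\in\B_k^1(U)$. Then $\omega(P_m)=\sum_c\omega(x_c;V_c)$ is a Riemann sum for $\int_\sigma\omega=\int_\sigma\omega(x;e)\,d\H^k(x)$, and since $\omega$ is bounded and Lipschitz (with $d_U$ equal to Euclidean distance inside each convex cell), $|\omega(P_m)-\int_\sigma\omega|\le|\omega|_{Lip}\,(\max_c\diam c)\,\H^k(\sigma)\to 0$. By joint continuity of the pairing $\hB_k^1\times\B_k^1\to\R$, $\omega(\widetilde{\sigma})=\lim_m\omega(P_m)=\int_\sigma\omega$, so $\widetilde{\sigma}$ represents $\sigma$ (in particular the limit is independent of all choices). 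For uniqueness, if $J$ also represents $\sigma$ then $\omega(J-\widetilde{\sigma})=0$ for every $\omega\in\B_k^1(U)$; by Proposition~\ref{thm:isomorphism} these functionals exhaust $\hB_k^1(U)'$ and hence separate points of $\hB_k^1(U)$, so $J=\widetilde{\sigma}$.

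I expect the $B^1$-estimate to be the only place where care is needed: one must write $P_{m+1}-P_m$ as a sum of difference chains that are simultaneously inside $U$ — which is exactly why convexity of $\sigma$ and $\sigma\subset U$ get used — and of total $B^1$-seminorm bounded by $(\mathrm{mesh})\times(\mathrm{volume})$, which is what forces the nested, telescoping choice of subdivisions rather than a direct comparison of two unrelated subdivisions. A variant avoiding any bespoke estimate is to construct the representative of the standard $k$-cube by the dyadic scheme above, triangulate a general affine $k$-cell into affine images of the standard $k$-simplex, and push each summand forward by Proposition~\ref{prop:A} (affine maps lie in $\M^1$), the interior faces cancelling when paired against forms.
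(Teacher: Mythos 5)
The paper does not prove Proposition~\ref{thm:B}; it imports it from \cite{OC} (Theorem 4.2.2), so there is no in-paper argument to compare against. Your blind reconstruction is correct and is essentially the standard argument one would expect to find in the cited source. A few small confirmations: the cells $\sigma\cap Q$ are indeed $k$-cells (finite intersections of half-spaces, hence convex), and since $\sigma\subset U$ the segment $[x_c,x_{c_j}]\subset c\subset\sigma\subset U$, so each $1$-difference chain in the telescoping decomposition of $P_{m+1}-P_m$ is admissible for the $B^{1,U}$-infimum; the bound $\sum_c\diam(c)\,\H^k(c)\le C2^{-m}\H^k(\sigma)$ then gives summability and Cauchyness. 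In the Riemann-sum estimate, the subtlety you gloss over — that $|\omega|_{Lip}$ controls oscillation with respect to the \emph{intrinsic} metric $d_U$, not the Euclidean one — is harmless here for exactly the reason you give: since each cell $c$ is convex and contained in $U$, the straight segment witnesses $d_U(x_c,x)=\|x_c-x\|\le\diam(c)$. Uniqueness via the Isomorphism Theorem is clean, since $\B_k^1(U)$ exhausts the continuous dual and hence separates points. Your suggested variant (build the standard cube by the dyadic scheme, triangulate a general affine cell, push forward by an affine map via Proposition~\ref{prop:A}) would also work, but note it still needs the representing property of a simplex to start the induction, which is the same telescoping estimate in disguise; and one would have to verify that interior-face contributions cancel after passing to the $B^1$-limit, which is easiest to do dually via $\cint_{F_*J}\omega=\cint_J F^*\omega$ and the additivity of $\int_\sigma\omega$ over a triangulation.
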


		\begin{defn}
			\label{defn:poly}
			Let \[ \P_k(U):= \left\{ J\in \hB_k^1(U) : J=\sum_{i=1}^N a_i \widetilde{\sigma_i}, \,\,\sigma_i \mbox{ oriented affine k-cell }, a_i\in \R \right\}. \]  Elements of \( \P_k(U) \) are called \emph{\textbf{polyhedral \( k \)-chains in \( U \)}}.
		\end{defn}

		The subspace \( \P_k(U) \subset \hB_k^1(U) \) is dense in \( \hat{B}_k^r(U) \), for \( r\ge 1 \) (\cite{OC}, Theorem 4.2.5).

		\begin{remark}
			\label{sharpnorm}
			Polyhedral chains and Dirac chains are both dense subspaces of Whitney's sharp chains and \( \hB_k^r \), \( r \ge 1 \) (\cite{whitney}, VII, \S 8, Theorem 8A). The sharp norm of Whitney is comparable to the \( B^1 \) norm for \( k \)-chains, and they are identical for \( k = 0 \).
		\end{remark}

		\begin{defn}
			\label{algebraic}
			If \( \s \) is an oriented affine \( k \)-cell in \( U \) and \( F \in \M^1(U,W) \), then \( F_* \widetilde{\s} \in \hB_k^1(W) \), and is called an \emph{\textbf{algebraic \( k \)-cell}}.
		\end{defn}

		An algebraic \( k \)-cell \( F_* \widetilde{\s} \) is not the same as a singular \( k \)-chain \( F\s \). For example, if \( F(x) = x^2 \) and \( \s = (-1, 1) \), then the algebraic \( 1 \)-cell \( F_* \widetilde{\s} = 0 \), but the singular \( 1 \)-cell \( F \s \ne 0 \). In particular, singular chains have no relations, whereas algebraic \( k \)-cells inherit relations from the topology on \( \hB_k^r \).

		\begin{lem}[\cite{OC} Theorem 4.2.2, Corollary 7.0.18, Proposition 7.0.19]
			\label{rep}
			If \( \s \) is an oriented \( k \)-cell in \( U\subseteq \R^k \) and \( F \in \M^1(U,W) \) is a smooth embedding, then \( \cint_{F_*\widetilde{\s}} \o = \int_{F(\s)} \o \) for all \( k \)-forms \( \o\in \B_k^1(W) \). Moreover, \( \supp({F_*\widetilde{\s}}) = F(\s) \).
		\end{lem}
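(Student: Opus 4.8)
This statement is essentially an amalgam of Proposition \ref{thm:B}, Proposition \ref{prop:A}, and the classical change-of-variables formula, so the plan is just to chain these together. For the integral identity, I would first apply Proposition \ref{prop:A} with \( J=\widetilde{\s} \): it provides the continuous extension \( F_*\colon\hB_k^1(U)\to\hB_k^1(W) \) whose adjoint is the ordinary pullback \( F^*\colon\B_k^1(W)\to\B_k^1(U) \), so that \( \cint_{F_*\widetilde{\s}}\o=\cint_{\widetilde{\s}}F^*\o \) for every \( \o\in\B_k^1(W) \). Since \( F^*\o \) again lies in \( \B_k^1(U) \) and \( \widetilde{\s} \) represents \( \s \) (Proposition \ref{thm:B}), the right-hand side equals \( \int_\s F^*\o \). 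Finally, \( F \) being a smooth embedding, it restricts to an orientation-preserving diffeomorphism of \( \s \) onto the embedded \( k \)-submanifold-with-corners \( F(\s)\subset W \) --- where \( F(\s) \) carries the push-forward of the orientation of \( \s \) that \( \widetilde{\s} \) carries --- so the elementary change-of-variables formula for integrals of \( k \)-forms gives \( \int_\s F^*\o=\int_{F(\s)}\o \), which is the first assertion.

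For the support I would prove the two inclusions separately. If \( p\notin F(\s) \), then \( F(\s) \) being compact there is \( \e>0 \) with \( \O(p,\e)\cap F(\s)=\emptyset \); for any \( \eta\in\B_k^1(W) \) supported in \( \O(p,\e) \) the pulled-back form \( F^*\eta \) vanishes on all of \( \s \) (its value at \( q\in\s \) is \( \eta \) evaluated at \( F(q)\notin\O(p,\e) \)), hence \( \cint_{F_*\widetilde{\s}}\eta=\cint_{\widetilde{\s}}F^*\eta=\int_\s F^*\eta=0 \) and \( p\notin\supp(F_*\widetilde{\s}) \). Conversely, given \( q\in\s \) and \( p=F(q) \) and \( \e>0 \), continuity of \( F \) and \( q\in\overline{\mathring{\s}} \) let me choose \( \hat q \) in the interior of \( \s \) with \( F(\hat q)\in\O(p,\e) \), and then \( \delta>0 \) so small that \( \O(F(\hat q),\delta)\subset\O(p,\e) \) and \( \O(F(\hat q),\delta) \) is disjoint from the compact set \( F(\mathrm{fr}\,\s) \). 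Taking a smooth bump \( k \)-form \( \eta \) supported in \( \O(F(\hat q),\delta) \) whose integral over the embedded \( k \)-disk \( D:=\O(F(\hat q),\delta)\cap F(\s) \) is positive, injectivity of \( F \) forces \( F^*\eta \) to be supported in \( F^{-1}(\O(F(\hat q),\delta))\cap\s\subset\mathring{\s} \), which \( F \) maps diffeomorphically onto \( D \); hence \( \cint_{F_*\widetilde{\s}}\eta=\int_\s F^*\eta=\int_D\eta>0 \) and \( p\in\supp(F_*\widetilde{\s}) \).

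The only step carrying genuine content is the appeal to Proposition \ref{prop:A}: that pullback along an \( \M^1 \) map sends \( \B_k^1 \) to \( \B_k^1 \) --- so that the forms \( F^*\o \) and \( F^*\eta \) may legitimately be inserted into the ``represents'' identity for \( \widetilde{\s} \) --- and is adjoint to the continuously extended \( F_* \). Everything else is definitional (\( \widetilde{\s} \) represents \( \s \)), topological (compactness of \( F(\s) \) and \( F(\mathrm{fr}\,\s) \), injectivity and openness of \( F \) onto its image), or classical multivariable calculus. One bookkeeping caveat I would flag: the support identity forces \( \s \) to be nondegenerate --- if \( \s \) had empty interior then \( \widetilde{\s}=0 \) while \( F(\s)\neq\emptyset \) --- so throughout one takes \( \s \) to be a full-dimensional \( k \)-cell, as the conventions for cells intend.
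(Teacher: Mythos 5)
The paper gives no proof here at all---Lemma \ref{rep} is stated as a direct citation of three results from \cite{OC} (Theorem 4.2.2, Corollary 7.0.18, Proposition 7.0.19). Your proposal re-derives it self-containedly from machinery the paper has already restated, and it is correct. The chain \( \cint_{F_*\widetilde\s}\o=\cint_{\widetilde\s}F^*\o=\int_\s F^*\o=\int_{F(\s)}\o \) uses exactly the right pieces in the right order: Proposition \ref{prop:A} for the adjunction and for the fact that \( F^* \) carries \( \B_k^1(W) \) into \( \B_k^1(U) \) (without which the middle step is illegitimate), Proposition \ref{thm:B} for the representation identity, and classical change of variables for a smooth embedding, with \( F(\s) \) given the push-forward orientation. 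The two-sided support argument via test \( k \)-forms is also sound: compactness of \( F(\s) \) handles \( \supp(F_*\widetilde\s)\subset F(\s) \), and a bump form localized in a small ball around an interior image point \( F(\hat q) \), together with injectivity of \( F \) (to keep \( F^*\eta \) supported in \( \mathring\s \) and to make \( F \) a diffeomorphism onto \( D \)), handles the reverse inclusion. Your caveat about degenerate cells is a genuine catch and not mere pedantry---the paper's definition of a \( k \)-cell as a bounded finite intersection of closed half-spaces does admit cells with empty interior, for which \( \widetilde\s=0 \) while \( F(\s)\neq\emptyset \), so the support identity must be read with \( \s \) nondegenerate, as the conventions presumably intend.
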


		The next result is a consequence of Lemma \ref{rep}.

		\begin{prop}
			\label{prop:submanifold}
			If \( M \) is a compact oriented \( k \)-submanifold with boundary, smoothly embedded in \( U\subseteq \R^n \), then there exists \( \widetilde{M} \in \hB_k^1(U) \) with \[ \cint_{\widetilde{M}} \o = \cint_M \o \] for all \( \o \in \B_k^1(U) \). Furthermore \( \supp(\widetilde{M}) = M. \) If \( U \subset \R^n \) is a bounded open set equipped with an orientation, then there exists a unique differential \( n \)-chain \( \widetilde{U} \in \hB_n^1(U) \) which represents \( U \).
		\end{prop}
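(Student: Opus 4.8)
The plan is to reduce everything to the case of algebraic $k$-cells already treated in Lemma~\ref{rep}, by cutting $M$ into finitely many smoothly embedded simplices. Choose a smooth triangulation of $M$ (possible for a compact manifold with boundary by Whitehead's theorem): finitely many smooth embeddings $g_i\colon \Delta^k \to M$, $1\le i\le N$, where $\Delta^k\subset\R^k$ is the standard $k$-simplex — a $k$-cell in the sense of Definition preceding Proposition~\ref{thm:B} — such that $M=\bigcup_i g_i(\Delta^k)$ and the images meet only in $\H^k$-null sets, and each $g_i$ extends to a smooth embedding of an open neighborhood $V_i\supset\Delta^k$ in $\R^k$. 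Composing with $M\hookrightarrow U$ and shrinking $V_i$ so that its image lies in $U$, we obtain smooth embeddings $F_i\in\M^1(V_i,U)$ with $F_i(\Delta^k)=g_i(\Delta^k)$, oriented by the orientation of $M$. By Proposition~\ref{thm:B} and Definition~\ref{algebraic}, each $F_{i*}\widetilde{\Delta^k}$ is an algebraic $k$-cell in $\hB_k^1(U)$; by Lemma~\ref{rep}, $\cint_{F_{i*}\widetilde{\Delta^k}}\o=\int_{F_i(\Delta^k)}\o$ for all $\o\in\B_k^1(U)$ and $\supp(F_{i*}\widetilde{\Delta^k})=F_i(\Delta^k)$. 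Set $\widetilde M:=\sum_{i=1}^N F_{i*}\widetilde{\Delta^k}\in\hB_k^1(U)$. Summing the cell-by-cell identities and using that the $F_i(\Delta^k)$ tile $M$ up to $\H^k$-null overlaps gives $\cint_{\widetilde M}\o=\sum_i\int_{F_i(\Delta^k)}\o=\int_M\o$, i.e.\ $\widetilde M$ represents $M$.

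From Definition~\ref{def:support}, the support of a finite sum is contained in the union of the supports, so $\supp(\widetilde M)\subseteq\bigcup_i F_i(\Delta^k)=M$. For the reverse inclusion one must rule out cancellation: given $p\in M$ and $\e>0$ it suffices to produce $\eta\in\B_k^1(U)$ supported in $\O(p,\e)$ with $\cint_{\widetilde M}\eta\ne 0$. Take a tubular neighborhood of $M$ inside $U$ with projection $\pi$ onto $M$, let $\mathrm{vol}_M$ be the Riemannian volume form of $M$ for the induced metric, and set $\eta=\phi\cdot\pi^*(\mathrm{vol}_M)$ where $\phi\ge 0$ is a smooth bump supported in $\O(p,\e)$ with $\phi(p)>0$; after shrinking the tube and smoothing, $\eta\in\B_k^1(U)$. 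Then $\cint_{\widetilde M}\eta=\int_M\eta>0$, since the integrand is a non-negative multiple of the volume form of $M$ that is positive on an $\H^k$-positive neighborhood of $p$ in $M$, whether or not $p\in\p M$. Hence $p\in\supp(\widetilde M)$, so $\supp(\widetilde M)=M$.

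For the last assertion, fix a Whitney decomposition of the bounded open set $U$ into countably many closed dyadic cubes $\{Q_j\}_{j\ge1}$ with pairwise disjoint interiors and $Q_j\subset U$. By Proposition~\ref{thm:B} each $\widetilde{Q_j}\in\hB_n^1(U)$, and approximating by Riemann sums of Dirac chains gives $\|\widetilde{Q_j}\|_{B^1}\le\mathrm{vol}(Q_j)$. Since $\sum_j\mathrm{vol}(Q_j)=\mathrm{vol}(U)<\infty$, the partial sums $\sum_{j\le m}\widetilde{Q_j}$ are Cauchy in the $B^1$ norm (indeed in $B^0$, and the linking map $u_n^{0,1}$ of Proposition~\ref{prop:inclusion} has norm $\le 1$), so $\widetilde U:=\sum_j\widetilde{Q_j}$ converges in $\hB_n^1(U)$. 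For $\o\in\B_n^1(U)$, dominated convergence (with $\|\o\|_{B^0}$ bounded and $\sum_j\mathrm{vol}(Q_j)<\infty$) yields $\cint_{\widetilde U}\o=\sum_j\int_{Q_j}\o=\int_U\o$, so $\widetilde U$ represents $U$. Uniqueness of $\widetilde U$ — and likewise of $\widetilde M$ within $\hB_k^1(U)$ — follows from the Isomorphism Theorem (Proposition~\ref{thm:isomorphism}): since $\B_k^1(U)$ is the full continuous dual of $\hB_k^1(U)$, differential forms separate points of $\hB_k^1(U)$, so two chains representing the same object coincide.

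The only genuinely delicate point I expect is the reverse support inclusion $M\subseteq\supp(\widetilde M)$: the representation identity and the inclusion $\supp(\widetilde M)\subseteq M$ are bookkeeping on top of Lemma~\ref{rep}, but excluding cancellation forces one to construct, for an arbitrary (possibly boundary) point of $M$, a genuinely class-$B^1$ test form that is localized near that point and pairs nontrivially with $\widetilde M$ — which is where the tubular-neighborhood construction and the regularity of the extended form must be handled with care.
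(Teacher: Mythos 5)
Your proof is correct and follows exactly the route the paper indicates: the paper offers no written proof but introduces Proposition \ref{prop:submanifold} with ``the next result is a consequence of Lemma \ref{rep},'' and your argument is precisely that reduction, cutting $M$ into smoothly embedded cells by a smooth triangulation, applying Lemma \ref{rep} cell-by-cell, summing, and handling the support equality and the existence/uniqueness of $\widetilde U$ by the representation identity together with Proposition \ref{thm:isomorphism}. The details you supply (the tubular-neighborhood test form to rule out cancellation, the Whitney decomposition with the mass bound for $\widetilde U$) are exactly the bookkeeping the paper leaves to the reader, and they are sound.
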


	\subsection{Vector fields}
		\label{sub:vector_fields}
		Let \( \V^r(U) \) be the Banach space of vector fields \( X \) on \( U \) such that \( X^\flat\in \B_1^r(U) \), equipped with the norm \( \|X\|_{B^r}:=\|X^\flat\|_{B^r} \).

	\subsection{Extrusion}
		\label{sub:extrusion}
		The interior product \( i_X \) of differential forms \( \B_k^r(U) \) with respect to a vector field \( X\in \V^r(U) \) is dual to an operator \( E_X \) on differential chains \( \hB_k^r(U) \).

		\begin{defn}
			\label{def:extrusion}
			Let \( X \in \V^r(U) \). Define the graded operator \emph{\textbf{extrusion}} \( E_X: \A_k(U) \to \A_{k+1}(U) \) by \( E_X (p;\a) := (p; X(p) \wedge \a) \) for all \( p \in U \) and \( \a \in \L^k(\R^n) \).
		\end{defn}

		\begin{thm}[\cite{OC}, Theorems 8.2.2, 8.2.3]
			\label{thm:C}
			If \( X \in \V^r(U) \) and \( A \in \A_k(U) \), then \[ \|E_X(A)\|_{B^r} \le n^22^r\|X\|_{B^r}\|A\|_{B^r}. \]  Furthermore,  \( E_X: \hB_k^r(U) \to \hB_{k+1}^r(U) \) and \( i_X: \B_{k+1}^r(U) \to \B_k^r(U) \) are continuous graded operators satisfying
			\begin{equation}
				\label{eq:EX}
				\cint_{E_X J} \o = \cint_J i_X \wedge \o
			\end{equation}
			for all \( J \in \hB_k^r(U) \) and \( \o \in \B_{k+1}^r(U) \).
		\end{thm}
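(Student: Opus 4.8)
The plan is to establish the norm estimate first on Dirac chains, then let the definition of $\|\cdot\|_{B^r}$ together with density do the rest, and finally obtain the duality statement by testing against forms. First I would reduce to a single difference chain: since $\|A\|_{B^r}$ is by definition the infimum of $\sum_i\|s_i^{j_i}\|\|\a_i\|$ over all ways of writing $A=\sum_i\D_{s_i^{j_i}}(p_i;\a_i)$ with $0\le j_i\le r$ and each term inside $U$, it suffices to produce a constant $C(n,r)$ with
\[ \|E_X(\D_{s^j}(p;\a))\|_{B^r}\ \le\ C(n,r)\,\|X\|_{B^r}\,\|s^j\|\,\|\a\| \]
for every $j$-difference $k$-element inside $U$ with $0\le j\le r$. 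Applying this termwise to a near-optimal decomposition of a given $A\in\A_k(U)$ gives $\|E_X(A)\|_{B^r}\le C(n,r)\|X\|_{B^r}\|A\|_{B^r}$, and since $\A_k(U)$ is dense in $\hB_k^r(U)$ and $\hB_{k+1}^r(U)$ is complete, this extends $E_X$ to a continuous linear map $\hB_k^r(U)\to\hB_{k+1}^r(U)$. The case $j=0$ is immediate from $\|X(p)\wedge\a\|\le\|X\|_{B^0}\|\a\|$.

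For $j\ge 1$ the obstruction is that $E_X$ does not commute with translation, because $X$ is not constant; the remedy is a discrete Leibniz rule. Writing $E_Y$ for the operator $(q;\b)\mapsto(q;Y(q)\wedge\b)$ attached to a section $Y$, and $\D_uX$ for the difference section $q\mapsto X(q+u)-X(q)$, one checks directly that $E_XT_u=T_u(E_X+E_{\D_uX})$, hence $E_X\D_u=\D_uE_X+T_uE_{\D_uX}$. Iterating over $s^j=\{u_1,\dots,u_j\}$ (the operators $\D_u$, $T_u$ and the iterated differences of $X$ commuting appropriately) yields
\[ E_X\D_{s^j}(p;\a)\ =\ \sum_{S\subseteq s^j}\Bigl(\,\prod_{u\in S}T_u\Bigr)\,\D_{s^j\setminus S}\bigl(p;\,(\D_SX)(p)\wedge\a\bigr), \]
where $\D_SX$ is the iterated difference of $X$ along the vectors of $S$. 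This is a sum of $2^j$ honest $(j-|S|)$-difference $(k+1)$-chains, and each stays inside $U$ because all the translated base points lie in the convex hull of $\D_{s^j}(p;\a)$, which is inside $U$. Next I would estimate each summand: since $X^\flat\in\B_1^r(U)$ and $|S|\le j\le r$, an $|S|$-fold difference of $X$ obeys $\|(\D_SX)(p)\|\le c_n\|s^S\|\|X\|_{B^r}$ --- integrate directional derivatives of $X$ of order up to $\min(|S|,r-1)$, and when $|S|=r$ use the Lipschitz bound on the $(r-1)$-st derivative; the constant $c_n$ records the passage from arbitrary directions to the coordinate directions appearing in $\|\cdot\|_{B^r}$. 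Combined with $\|v\wedge\a\|\le\|v\|\|\a\|$, the $S$-summand has $B^{j-|S|}$-size at most $\|s^{j\setminus S}\|\cdot c_n\|s^S\|\|X\|_{B^r}\|\a\|=c_n\|s^j\|\|X\|_{B^r}\|\a\|$; summing over the $2^j\le 2^r$ subsets and bookkeeping the $n$-dependence produces the estimate of the first paragraph, with constant dominated by $n^22^r$.

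For the second assertion, on a $k$-element the pairing is computed by pure linear algebra: $\cint_{E_X(p;\a)}\o=\o\bigl(p;X(p)\wedge\a\bigr)=(i_X\o)(p;\a)=\cint_{(p;\a)}i_X\o$, where $i_X\o$ is the interior product. The left-hand side is continuous in the chain because $E_X$ is now a bounded operator $\hB_k^r\to\hB_{k+1}^r$; the right-hand side is continuous because, under the isometry $\hB_\bullet^r(U)'\cong\B_\bullet^r(U)$ of Proposition \ref{thm:isomorphism}, the Banach-space adjoint of the bounded operator $E_X$ is a bounded operator $\B_{k+1}^r(U)\to\B_k^r(U)$ which agrees with $i_X$ on $k$-elements, hence (again by density of $\A_k(U)$ and the fact that forms in $\B_k^r(U)$ are determined by their values on $k$-elements) equals $i_X$ throughout. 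In particular $i_X$ does map $\B_{k+1}^r(U)$ continuously into $\B_k^r(U)$, and the identity $\cint_{E_XJ}\o=\cint_Ji_X\o$ passes from Dirac chains to all $J\in\hB_k^r(U)$ by continuity. That $E_X$ raises and $i_X$ lowers degree by one, compatibly over all $k$, makes both operators graded.

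The main obstacle is the middle step: the non-commutation of $E_X$ with translations forces the Leibniz expansion, and the real work is twofold --- verifying that every term produced is a legitimate difference chain whose convex hull still lies in $U$, and showing that iterated differences of $X$ of order at most $r$ are controlled by $\|X\|_{B^r}$ up to only a dimensional loss. It is precisely the hypothesis $X\in\V^r(U)$, i.e.\ the $B^r$-regularity of $X^\flat$, that makes the latter possible exactly up to difference-order $r$, and this is what pins down both the conclusion and the shape of the constant. Everything else --- the reduction to one difference chain, the density extension, and the dual identity --- is routine once this estimate is in hand.
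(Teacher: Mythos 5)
Your proof is essentially correct, and it is worth noting up front that the paper does not contain a proof of this statement at all: Theorem~\ref{thm:C} is imported verbatim from \cite{OC}, Theorems 8.2.2 and 8.2.3, so there is nothing in the present source to compare against line by line. That said, your argument is the canonical one for the $B^r$-norm framework and is almost certainly the route the reference takes.

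The decisive step is the discrete Leibniz identity $E_X\D_u=\D_uE_X+T_uE_{\D_uX}$ and its iterated form
\[
E_X\D_{s^j}(p;\a)\ =\ \sum_{S\subseteq s^j}\Bigl(\prod_{u\in S}T_u\Bigr)\,\D_{s^j\setminus S}\bigl(p;\,(\D_SX)(p)\wedge\a\bigr),
\]
which I checked by expanding both sides over the vertices $p_T=p+\sum_{u\in T}u$ of the difference cube and using the inclusion--exclusion identity $\sum_{S\subseteq T}(\D_SX)(p)=X(p_T)$. Each summand is a legitimate $(j-|S|)$-difference $(k+1)$-chain whose support is a sub-cube of the original one, so the ``inside $U$'' requirement in Definition~\ref{def:norms} is preserved; and since $\|s^j\|=\|s^{j\setminus S}\|\cdot\|s^S\|$ while $\|v\wedge\a\|\le\|v\|\|\a\|$, the estimate reduces exactly as you say to bounding $\|(\D_SX)(p)\|$ by $\|s^S\|\,\|X\|_{B^r}$ up to a dimensional constant. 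This is where the hypothesis $X^\flat\in\B_1^r(U)$ enters: an $|S|$-fold iterated difference with $|S|\le r$ is controlled by the $(|S|-1)$-st directional derivative of $X$ and its Lipschitz modulus, all of which $\|X\|_{B^r}$ dominates. I have not chased the precise dimensional constant to confirm it lands below $n^2$, but the $2^r$ from the subset sum and the dimensional factor from converting directional to coordinate derivatives are the correct sources, and the shape of the bound is right. The reduction from a general $A$ to a single difference chain via a near-optimal decomposition, the extension by density and completeness, and the identification of the Banach adjoint of $E_X$ with $i_X$ via the isometry of Proposition~\ref{thm:isomorphism} together with the fact that forms are determined pointwise on $k$-elements, are all correct and routine. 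One cosmetic remark: the theorem as printed reads $\cint_J i_X\wedge\o$, which is a typo for the interior product $\cint_J i_X\o$; you compute with the latter, which is the right thing.
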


		Therefore, \( E_X \) extends to a continuous linear map \( E_X: \hB_k^r(U) \to \hB_{k+1}^r(U) \), and the dual operator \( i_X: \B_{k+1}^r(U)\to \B_k^r(U) \) is also continuous.

	\subsection{Retraction}
		\label{sub:retraction}
		\begin{defn}
			\label{def:retraction}
			For \( \a = v_1 \wedge \cdots \wedge v_k \in \L^k(\R^n) \), let \( \hat{\a}_i := v_1 \wedge \cdots \wedge\hat{v_i}\wedge \cdots \wedge v_k   \in \L^{k-1}(\R^n) \). For \( X\in \V^r(U) \) define the graded operator \emph{\textbf{retraction}} \( E_X^\dagger: \A_k(U) \to \A_{k-1}(U) \) by \( (p; \a) \mapsto \sum_{i=1}^k (-1)^{i+1} \<X(p),v_i\> (p; \hat{\a}_i), \) for \( p \in U \) and \( \a \) simple, extending linearly to all of \( \A_k(U) \).
		\end{defn}

		A straightforward calculation in \cite{OC} shows this to be well-defined. The dual operator on forms is wedge product with the \( 1 \)-form \( X^\flat \). That is to say, it is the operator \( X^\flat \wedge \cdot: \A_{k-1}(U)^* \to \A_k(U)^* \).

		\begin{thm}[\cite{OC}, Theorem 8.3.3, 8.3.4]
			\label{thm:Cdagger}
			If \( r\ge 1 \), \( X\in \V^r(U) \) and \( J \in \hB_k^r(U) \), then \[ \|E_X^\dagger(J)\|_{B^r} \le k {n \choose k} \|X\|_{B^r}\|J\|_{B^r}. \]  Furthermore,  \( E_X^\dagger: \hB_k^r(U) \to \hB_{k-1}^r(U) \) and \( X^\flat \wedge \cdot: \B_{k-1}^r(U) \to \B_k^r(U) \) are continuous graded operators satisfying
			\begin{equation}
				\label{eq:EXD}
				\cint_{E_X^\dagger J} \o = \cint_J X^\flat \wedge \o
			\end{equation}
			for all \( J \in \hB_k^r(U) \) and \( \o \in \B_{k-1}^r(U) \).
		\end{thm}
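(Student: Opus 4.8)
The plan is to prove the norm estimate on Dirac chains, extend it to \( \hB_k^r(U) \) by density, and then read off both the identity \eqref{eq:EXD} and the continuity of \( X^\flat \wedge \cdot \) almost for free. Write \( P_X \) for the operator of Definition \ref{def:retraction} acting on \( k \)-elements, \( P_X(p;\a) = \sum_{i=1}^k (-1)^{i+1}\<X(p),v_i\>\,(p;\hat{\a}_i) \) for \( \a = v_1\wedge\cdots\wedge v_k \) simple. The key point is that \( P_X \) fails to commute with translation only by a first-order term: writing \( \D_u X(p) := X(p+u)-X(p) \),
\[
E_X^\dagger\big(\D_u(p;\a)\big) = \sum_{i=1}^k (-1)^{i+1}\Big[\<X(p+u),v_i\>\,\D_u(p;\hat{\a}_i) + \<\D_u X(p),v_i\>\,(p;\hat{\a}_i)\Big],
\]
that is, ``the difference lands on the chain'' plus ``the difference lands on \( X \).'' Iterating this across the \( j \) directions of a \( j \)-difference \( k \)-element \( \D_{s^j}(p;\a) \), \( j\le r \), expands \( E_X^\dagger(\D_{s^j}(p;\a)) \) as a sum, over subsets \( S\subseteq s^j \) and indices \( i \), of terms \( \pm\,\big(\D_S\<X(\cdot),v_i\>\text{ at a shift of }p\big)\cdot\D_{s^j\setminus S}(p';\hat{\a}_i) \), where \( \#S=\ell \) and \( p' \) is \( p \) shifted by a partial sum of the \( u \)'s. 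Since no point is moved outside \( \{p,p+u_1,\dots\} \), every difference chain occurring has convex hull inside \( U \), so nothing is lost in the \( B^{r,U} \) norm.

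For the estimate, choose \( v_1,\dots,v_k \) orthogonal (legitimate since \( \a \) is simple), so \( \|v_i\|\,\|\hat{\a}_i\| = \|\a\| \). An \( \ell \)-fold finite difference of the scalar function \( p\mapsto\<X(p),v_i\> \) with \( 1\le\ell\le r \) is bounded by \( C(n,\ell)\,\|X\|_{B^r}\,\|v_i\|\prod_{u\in S}\|u\| \); this is where \( r\ge 1 \) enters, via the \( (r-1) \) comass-bounded derivatives and the Lipschitz bound on the top derivative packaged into \( \|X\|_{B^r} \) (together with \( \|X\|_{B^\ell}\le\|X\|_{B^r} \)). The complementary factor \( \D_{s^j\setminus S}(p';\hat{\a}_i) \) is a \( (j-\ell) \)-difference \( (k-1) \)-chain with \( j-\ell\le r \), hence \( \|\D_{s^j\setminus S}(p';\hat{\a}_i)\|_{B^r}\le\prod_{u\in s^j\setminus S}\|u\|\cdot\|\hat{\a}_i\| \). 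Multiplying, summing over \( i \) and \( S \), and using \( \|v_i\|\,\|\hat{\a}_i\|=\|\a\| \) gives \( \|E_X^\dagger(\D_{s^j}(p;\a))\|_{B^r}\le C(n,k,r)\,\|s^j\|\,\|\a\|\,\|X\|_{B^r} \); optimizing the combinatorics as in \cite{OC} sharpens the constant to \( k\binom{n}{k} \) (the \( \binom{n}{k} \) being the price of passing between a general \( k \)-vector and simple ones). Taking the infimum in Definition \ref{def:norms} over representations \( A=\sum_i\D_{s_i^{j_i}}(p_i;\a_i) \) with \( j_i\le r \) yields
\[
\|E_X^\dagger(A)\|_{B^r}\le k\binom{n}{k}\,\|X\|_{B^r}\,\|A\|_{B^r}\qquad\text{for all }A\in\A_k(U).
\]

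Because \( \A_k(U) \) is dense in \( \hB_k^r(U) \) and \( \hB_{k-1}^r(U) \) is complete, \( E_X^\dagger \) extends uniquely to a continuous linear map \( \hB_k^r(U)\to\hB_{k-1}^r(U) \) with the same bound. For \eqref{eq:EXD}, verify it on a \( k \)-element directly: \( (X^\flat\wedge\o)(p;\a) = \sum_i(-1)^{i+1}X^\flat(v_i)\,\o(\hat{\a}_i) = \sum_i(-1)^{i+1}\<X(p),v_i\>\,\o(\hat{\a}_i) = \o(P_X(p;\a)) = \cint_{E_X^\dagger(p;\a)}\o \). Both sides are continuous linear functionals of the \( k \)-element in \( \hB_k^r(U) \) — the left-hand side by the Isomorphism Theorem \ref{thm:isomorphism} — so the identity propagates to all \( J\in\hB_k^r(U) \) and \( \o\in\B_{k-1}^r(U) \) by linearity and density. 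Finally, continuity of \( X^\flat\wedge\cdot:\B_{k-1}^r(U)\to\B_k^r(U) \) is automatic: by the Isomorphism Theorem it is (the restriction to forms of) the Banach-space adjoint of the bounded operator \( E_X^\dagger \).

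The crux is the second paragraph: organizing the iterated Leibniz expansion so that each summand is genuinely a difference chain of order \( \le r \) carrying the correct scalar weight, and bounding all the \( \ell \)-fold finite differences of \( \<X(\cdot),v_i\> \), \( 1\le\ell\le r \), uniformly by \( \|X\|_{B^r} \). The density extension and the duality identity are then routine.
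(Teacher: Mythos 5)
The paper does not prove this theorem; it is a bare citation to \cite{OC} (Theorems 8.3.3, 8.3.4), so there is no internal proof to compare against and you have reconstructed one from scratch. The architecture of your reconstruction is the natural one, and the parts that are spelled out are correct: the Leibniz-type commutator formula for $E_X^\dagger\circ\D_u$ checks out (both of the obvious ways of distributing the difference give the same thing), the orthogonalization trick $\|v_i\|\,\|\hat{\a}_i\|=\|\a\|$ is legitimate for simple $k$-vectors, the density extension to $\hB_k^r(U)$ is standard, the pointwise identity $(X^\flat\wedge\o)(p;\a)=\cint_{E_X^\dagger(p;\a)}\o$ is a direct unwinding of the definition of $\wedge$ on covectors, and continuity of $X^\flat\wedge\cdot$ as the Banach adjoint via Proposition \ref{thm:isomorphism} is immediate.

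The genuine gap is the constant. Iterating your expansion over the $j$ directions in $s^j$ distributes each $\D_u$ onto either the scalar $\<X(\cdot),v_i\>$ or the remaining difference chain, producing a sum over subsets $S\subseteq s^j$ --- that is $2^j$ terms --- each bounded after orthogonalization by roughly $\|s^j\|\,\|\a\|\,\|X\|_{B^r}$. Summing and adding the sum over $i$, the constant your argument actually delivers is of order $k\cdot 2^r$, not the $r$-independent $k\binom{n}{k}$ that the theorem states. (It is suggestive that the companion extrusion estimate, Theorem \ref{thm:C}, \emph{does} carry an explicit factor $2^r$; why retraction escapes it is precisely what your argument does not explain.) Your sentence ``optimizing the combinatorics as in \cite{OC} sharpens the constant to $k\binom{n}{k}$'' is an honest acknowledgement, not a derivation. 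If the precise constant matters, this step is missing; if only boundedness of $E_X^\dagger$ and the duality \eqref{eq:EXD} are needed --- which is all the present paper uses --- your argument suffices as written. One small caution: you reuse $P_X$ for the raw retraction map on Dirac chains, but the paper reserves $P_X$ for prederivative (Definition \ref{def:preder}, central to the boundary operator \eqref{eq:bd}); stick with $E_X^\dagger$ throughout to avoid collision.
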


		The commutation relation
		\begin{equation}
			\label{eq:cr}
			E_V E_W^\dagger + E_W^\dagger E_V = \<V,W\> Id
		\end{equation}
		can be found in  \cite{OC} Proposition 8.3.5(d).

	\subsection{Boundary}
		\label{sub:boundary}
		There are several equivalent ways to define the boundary operator \( \p:\hB_k^r(U) \to \hB_{k-1}^{r+1}(U) \) for \( r \ge 1 \). We have found it very useful to define boundary on Dirac chains directly. For \( v \in \R^n \), and a \( k \)-element \( (p; \a) \) with \( p \in U \), let \( P_v (p; \a) := \lim_{t \to 0} (p+tv;\a/t) - (p;\a/t) \). It is shown in (\cite{OC}, Lemma 8.4.1 that this limit exists as a well-defined element of \( \hB_k^2(U) \). We may then linearly extend this to a map \( P_v: \A_k(U) \to \hB_k^2(U) \) called \emph{\textbf{prederivative}}. Moreover, the inequality \( \|P_v (A)\|_{B^{r+1}} \le \|v\|\|A\|_{B^r} \) holds for all \( A \in \A_k(U) \) (\cite{OC}, Lemma 8.4.3(a)). For an orthonormal basis \( \{e_i\} \) of \( \R^n \), set
		\begin{equation}
			\label{eq:bd}
			\p := \sum P_{e_i} E_{e_i}^\dagger
		\end{equation}

		Since \( P_{e_i} \) and \( E_{e_i}^\dagger \) are continuous, \emph{\textbf{boundary}} \( \p \) is a well-defined continuous operator \( \p:\hB_k^r(U) \to \hB_{k-1}^{r+1}(U) \) that restricts to the classical boundary operator on  polyhedral \( k \)-chains \( \P_k(U) \), and is independent of choice of \( \{e_i\} \). Furthermore, \( \p \circ F_* = F_* \circ \p \) (\cite{OC}, Theorems 8.5.1 and Proposition 8.5.6).

		\begin{thm}[Stokes' Theorem, \cite{OC} Theorems 8.5.1, 8.5.2,  8.5.4]
			\label{thm:D}
			The bigraded operator boundary \( \p:\hB_k^r(U)\to\hB_{k-1}^{r+1}(U) \) is continuous with \( \p \circ \p = 0 \), and \( \|\p J\|_{B^{r+1}} \le k n\|J\|_{B^r} \) for all \( J \in \hB_k^r \) and \( r \ge 1 \). Furthermore, if \( \o \in \B_{k-1}^{r+1}(U) \) and \( J \in \hB_k^r(U) \), then \[ \cint_{\p J} \o = \cint_J d \o. \]
		\end{thm}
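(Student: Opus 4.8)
The plan is to work from the explicit formula \( \p = \sum_i P_{e_i} E_{e_i}^\dagger \) (equation \eqref{eq:bd}) on Dirac chains, establish the continuity estimate there, extend by density to all of \( \hB_k^r(U) \), and then verify the algebraic identity \( \p\circ\p = 0 \) and Stokes' formula by dualizing and reducing to a dense subspace. The inputs are: the prederivative bound \( \|P_v(A)\|_{B^{r+1}} \le \|v\|\,\|A\|_{B^r} \), the retraction estimate of Theorem \ref{thm:Cdagger}, the duality \( \cint_{E_X^\dagger J}\o = \cint_J X^\flat\wedge\o \), and the Isomorphism Theorem \ref{thm:isomorphism} together with density of \( \P_k(U) \) (equivalently \( \A_k(U) \)) in \( \hB_k^r(U) \).

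First I would prove continuity. For the standard orthonormal basis, \( e_i^\flat = dx_i \) has comass \( 1 \) and vanishing higher derivatives, so \( \|e_i\|_{B^r} = 1 \); composing the bound \( \|E_{e_i}^\dagger J\|_{B^r} \le k\binom{n}{k}\|J\|_{B^r} \) with \( \|P_{e_i}K\|_{B^{r+1}}\le\|e_i\|\,\|K\|_{B^r} \) and summing over the \( n \) indices gives a crude bound \( \|\p J\|_{B^{r+1}} \le nk\binom{n}{k}\|J\|_{B^r} \), already enough for continuity of \( \p:\hB_k^r(U)\to\hB_{k-1}^{r+1}(U) \). Sharpening this to the stated constant \( kn \) requires estimating \( \p \) directly on a \( j \)-difference \( k \)-chain \( \D_{s^j}(p;\a) \) rather than composing the two operator bounds; this is the technical heart and the step I expect to be the main obstacle, since one must track how \( P_{e_i} \) interacts with the difference-chain decomposition defining \( \|\cdot\|_{B^{r+1}} \) without losing a binomial factor. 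I would also check here that \( \p \) restricts to the classical boundary on \( \P_k(U) \) by evaluating the formula on a single oriented affine \( k \)-cell.

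Next, \( \p\circ\p = 0 \): by continuity it suffices to check this on Dirac chains. On \( \A_k(U) \) we have \( \p\p = \sum_{i,j} P_{e_i} E_{e_i}^\dagger P_{e_j} E_{e_j}^\dagger \). Since \( E_{e_j}^\dagger \) multiplies the multivector part by the constants \( \langle e_j, v_\bullet\rangle \), which do not depend on the base point, it commutes with translation and hence with the prederivative \( P_{e_i} \); and the prederivatives commute with one another because they are derivatives of the commuting family of translations \( T_{tv} \). Thus \( \p\p = \sum_{i,j} (P_{e_i}P_{e_j})(E_{e_i}^\dagger E_{e_j}^\dagger) \). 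The composition \( E_{e_i}^\dagger E_{e_j}^\dagger \) is a double contraction of the multivector, hence antisymmetric in \( i,j \) (this also follows formally from the commutation relation \eqref{eq:cr}), while \( P_{e_i}P_{e_j} \) is symmetric in \( i,j \); the double sum therefore vanishes term by term.

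Finally, Stokes' formula. For fixed \( \o\in\B_{k-1}^{r+1}(U) \) both sides of \( \cint_{\p J}\o = \cint_J d\o \) are continuous in \( J \) by Theorems \ref{thm:D} and \ref{thm:isomorphism}, so it is enough to prove it on a dense subspace. The dual of \( P_v \) on forms is the directional derivative \( \nabla_v:\B_k^{r+1}(U)\to\B_k^r(U) \) (differentiating the coefficients, keeping the covector part fixed — immediate from the defining limit in Definition \ref{def:support}'s preceding paragraph), and the dual of \( E_{e_i}^\dagger \) is \( e_i^\flat\wedge\cdot \) by Theorem \ref{thm:Cdagger}. Hence
\begin{equation*}
\cint_{\p J}\o = \sum_i \cint_{P_{e_i}E_{e_i}^\dagger J}\o = \sum_i \cint_{E_{e_i}^\dagger J}\nabla_{e_i}\o = \sum_i \cint_J e_i^\flat\wedge\nabla_{e_i}\o = \cint_J d\o,
\end{equation*}
where the last equality is the coordinate formula \( d\o = \sum_i dx_i\wedge\p_i\o \) valid on \( \R^n \). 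Alternatively, one can bypass the duality computation and deduce Stokes' formula on polyhedral chains from the classical Stokes theorem (using \( \p\widetilde\s \) agrees with the classical boundary of \( \s \)), then extend by density; I would present whichever is shorter given the lemmas already available.
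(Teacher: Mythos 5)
The paper does not prove Theorem~\ref{thm:D}; it is cited directly from \cite{OC} (Theorems 8.5.1, 8.5.2, 8.5.4), so there is no in-paper argument to compare against, and your reconstruction from the operator machinery the paper does supply is the natural one. Your $\p\p = 0$ argument is correct: $E_{e_j}^\dagger$ acts only on the multivector part while translations (hence $P_{e_i}$) act only on base points, so they commute, and the double sum $\sum_{i,j}P_{e_i}P_{e_j}E_{e_i}^\dagger E_{e_j}^\dagger$ vanishes by symmetry of $P_{e_i}P_{e_j}$ against antisymmetry of $E_{e_i}^\dagger E_{e_j}^\dagger$. One small inaccuracy: that antisymmetry is cleanest via duality, $\cint_{E_{e_i}^\dagger E_{e_j}^\dagger J}\o = \cint_J e_j^\flat\wedge e_i^\flat\wedge\o$, and does not follow from \eqref{eq:cr}, which relates $E$ to $E^\dagger$ rather than two $E^\dagger$'s. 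Your Stokes derivation --- $\cint_{P_{e_i}E_{e_i}^\dagger J}\o = \cint_J e_i^\flat\wedge\Lie_{e_i}\o$ followed by the coordinate identity $d\o = \sum_i dx_i\wedge\p_i\o$, reduced to a dense subspace once continuity of $\p$ is in hand --- is likewise correct. The genuine gap is exactly the one you flag: composing $\|P_{e_i}K\|_{B^{r+1}}\le\|K\|_{B^r}$ with $\|E_{e_i}^\dagger J\|_{B^r}\le k{n\choose k}\|J\|_{B^r}$ yields only $\|\p J\|_{B^{r+1}}\le nk{n\choose k}\|J\|_{B^r}$, which suffices for continuity but not for the stated constant $kn$. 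Obtaining $kn$ requires estimating $\p\D_{s^j}(p;\a)$ directly, exploiting that $E_{e_i}^\dagger$ commutes with $\D_u$ so the difference-chain decomposition can be pushed through $\p$ without the binomial loss; that is the one substantive step left open. Since the paper never invokes the precise constant, your crude bound would serve every downstream purpose here, but the theorem as stated does assert $kn$.
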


		\begin{lem}
			\label{lem:supportboundary}
			If \( J \in \hB_k^r(U) \), then \( \supp(\p J) \subset \supp(J) \).
		\end{lem}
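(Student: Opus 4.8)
The plan is to argue by contraposition on Definition \ref{def:support}: I would show that if $p \notin \supp(J)$, then $p \notin \supp(\p J)$. Unwinding the definition of support, the hypothesis $p \notin \supp(J)$ furnishes a radius $\e_0 > 0$ such that $\cint_J \eta = 0$ for every $\eta \in \B_k^r(U)$ that is supported in $\O(p,\e_0)$.

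Next I would invoke Stokes' Theorem \ref{thm:D}, which gives $\cint_{\p J}\o = \cint_J d\o$ for every $\o \in \B_{k-1}^{r+1}(U)$; here $d\o \in \B_k^r(U)$ because $\o$ is $r$-times differentiable with a bounded Lipschitz condition on its $r$-th order derivatives, so $d\o$ is $(r-1)$-times differentiable with a bounded Lipschitz condition on its $(r-1)$-st order derivatives, exactly as required for class $B^r$ in Definition \ref{def:forms}. The one elementary observation needed is that exterior differentiation does not enlarge support: if $\o$ is supported in $\O(p,\e_0)$, then so is $d\o$.

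Combining these facts finishes the argument: for any $\o \in \B_{k-1}^{r+1}(U)$ supported in $\O(p,\e_0)$, the form $d\o \in \B_k^r(U)$ is again supported in $\O(p,\e_0)$, so $\cint_{\p J}\o = \cint_J d\o = 0$ by the choice of $\e_0$. Hence the same radius $\e_0$ witnesses $p \notin \supp(\p J)$, and since $p$ was an arbitrary point outside $\supp(J)$, we conclude $\supp(\p J) \subset \supp(J)$.

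I do not expect any real obstacle here once the duality pairing, Stokes' theorem, and the definition of support are in hand. The only point requiring a moment's care is the bookkeeping of function-space classes — verifying that $d$ maps $\B_{k-1}^{r+1}(U)$ into $\B_k^r(U)$ so that the pairing $\cint_J d\o$ is legitimate — but this is already encoded in the statement of Theorem \ref{thm:D} and Definition \ref{def:forms}, so it costs nothing.
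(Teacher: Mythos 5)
Your proof is correct and takes essentially the same route as the paper: both hinge on Stokes' Theorem together with the observation that exterior differentiation does not enlarge support. The paper states it as a short contradiction argument while you phrase it as contraposition, but the content is identical; your bookkeeping of the form classes ($d$ sending $\B_{k-1}^{r+1}$ into $\B_k^r$) is the same fact the paper implicitly relies on in invoking Theorem \ref{thm:D}.
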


		\begin{proof}
			Suppose \( p \in \supp(\p J) \) and \( p \notin \supp(J) \).  For sufficiently small \( r > 0 \),  \( \O(p,r) \cap \supp(J) = \emptyset \). Let \( \eta \) be a smooth \( k \)-form supported in \( \O(p,r) \) and with \( \cint_{\p J} \o \ne 0 \). Then \( \cint_J d\o \ne 0 \). But \( d\o \) is also supported in \( \O(p,r) \), yielding a contradiction.
		\end{proof}

		\begin{lem}
			\label{lem:linesandboundary}
			Suppose\footnote{This can be made more general. Namely, it is not necessary for some open sets \( U \) to assume that \( p\in U \).} \( J \in \hB_n^r(U) \), \( p \in \supp(J)\cap U \), and \( q \notin \supp(J) \). If \( \a:[0,1]\to \R^n \) is any smoothly embedded path connecting \( p \) and \( q \), then \( \a([0,1]) \cap \supp(\p J) \ne \emptyset \).
		\end{lem}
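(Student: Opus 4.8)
The plan is to argue by contradiction, exploiting the classical principle that a top-dimensional chain whose boundary has no support on an open set is, on that set, a locally constant multiple of the ambient volume chain. Assume \( \alpha([0,1]) \cap \supp(\p J) = \emptyset \). Since \( \supp(J) \) is closed, contains \( p = \alpha(0) \), and excludes \( q = \alpha(1) \), the number \( t_0 := \sup\{ t \in [0,1] : \alpha(t) \in \supp(J) \} \) satisfies \( t_0 < 1 \), \( x := \alpha(t_0) \in \supp(J) \), and \( \alpha(t) \notin \supp(J) \) for \( t > t_0 \). By the contradiction hypothesis \( x \notin \supp(\p J) \); assuming first \( x \in U \) (the case \( x \in \fr\,U \) needs only a minor modification, replacing the ball below by its intersection with \( U \)), choose \( \e > 0 \) with \( B := \O(x,\e) \subset U \) and \( B \cap \supp(\p J) = \emptyset \).

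Next I extract the local structure of \( J \) on \( B \). For any smooth compactly supported \( (n-1) \)-form \( \eta \) on \( B \), Stokes' Theorem \ref{thm:D} gives \( \cint_J d\eta = \cint_{\p J} \eta = 0 \), because \( \supp(\eta) \subset B \) is disjoint from \( \supp(\p J) \). Since the compactly supported de Rham cohomology \( H^n_c(B) \cong \R \) is detected by integration, every smooth compactly supported \( n \)-form \( \omega \) on \( B \) with \( \int_B \omega = 0 \) equals \( d\eta \) for such an \( \eta \); hence the linear functional \( \omega \mapsto \cint_J \omega \) annihilates these and therefore factors through \( \omega \mapsto \int_B \omega \), i.e.\ \( \cint_J \omega = c \int_B \omega \) for a single constant \( c \in \R \) and all smooth compactly supported \( n \)-forms \( \omega \) on \( B \). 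If \( c \ne 0 \), then testing \( J \) near any point \( y \in B \) with a smooth bump \( n \)-form of nonzero integral shows \( y \in \supp(J) \), so \( B \subseteq \supp(J) \); but \( B \) contains \( \alpha(t) \) for \( t \) slightly larger than \( t_0 \), contradicting the maximality of \( t_0 \). If \( c = 0 \), then \( \cint_J \) annihilates every smooth \( n \)-form supported in \( B \), and — since testing against smooth forms suffices to detect the support (a mollification argument) — this forces \( x \notin \supp(J) \), contradicting \( x \in \supp(J) \). Either way we obtain a contradiction, so \( \alpha([0,1]) \cap \supp(\p J) \ne \emptyset \).

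The routine parts are the sup-of-\( t \) reduction and the bump-form test; the real content — and the main obstacle — is the local structure step: promoting ``\( \p J \) has no support on \( B \)'' to ``\( J \) is a constant multiple of \( \widetilde{B} \) on \( B \)'' within the topology of differential chains. This rests on the compactly supported Poincaré lemma (to get \( H^n_c(B) \cong \R \)), on checking that a primitive \( \eta \) of an exact smooth \( n \)-form can be taken supported in \( B \) and regular enough to be a legitimate test form for \( \p J \in \hB_{n-1}^{r+1}(U) \), and on the fact that smooth test forms alone detect \( \supp(J) \), which is what closes the case \( c = 0 \). All three are standard in this framework but are where the genuine work lies.
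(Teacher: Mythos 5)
Your argument is correct in its core idea and takes a genuinely different route from the paper's. The paper does not localize at a single point: it takes a tubular neighborhood \( T \) of the \emph{entire} path disjoint from \( \supp(\p J) \), picks a smooth \( n \)-form \( \eta \) supported in \( \O(p,\e)\subset T\cap U \) with \( \cint_J\eta\ne 0 \), builds a second smooth \( n \)-form supported in \( \O(q,\e) \) (disjoint from \( \supp(J) \), so it pairs to zero with \( J \)) having the same integral over \( T \), writes the difference as \( d\o \) with \( \o \) supported in \( T \) by compactly supported de Rham theory, and concludes that \( \cint_J d\o\ne 0 \) contradicts \( T\cap\supp(\p J)=\emptyset \). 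Your version instead proves the stronger local statement that \( J \) is a constant multiple of the volume chain on a ball avoiding \( \supp(\p J) \), and closes via the dichotomy \( c\ne 0 \) versus \( c=0 \), after first passing to the ``last exit point'' \( x=\a(t_0) \). Both proofs rest on the same two ingredients that you correctly flag as the real work: the compactly supported Poincar\'e lemma, and the fact that smooth test forms alone suffice to detect support, which the paper cites as Theorem 5.0.3 of \cite{OC}.

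There is one genuine gap. You localize at \( x=\a(t_0)\in\supp(J)\subset\bar U \), but the hypotheses only give \( p\in U \), so \( x \) may well lie on \( \fr\,U \). Your proposed fix of replacing \( \O(x,\e) \) by \( \O(x,\e)\cap U \) is not a minor modification: \( \O(x,\e)\cap U \) can be disconnected or otherwise fail to have \( H^n_c\cong\R \), in which case \( \o\mapsto\cint_J\o \) no longer factors through a single integral; and since \( x\notin U \) in this case, the nearby path points \( \a(t) \) with \( t>t_0 \) need not lie in \( U \) at all, so the \( c\ne 0 \) branch does not close as written. The paper's argument sidesteps this entirely by always testing \( J \) at \( p \), where \( \O(p,\e)\subset U \) is guaranteed by hypothesis, and transporting the test form to a neighborhood of \( q \) where \( \cint_J \) vanishes for support reasons, so it never needs to localize at a frontier point. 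If you want to keep your structure, you should either show \( x\in U \) can be arranged (e.g.\ by truncating the path at its first exit from \( U \) and arguing separately there), or revert to the tubular-neighborhood strategy for the boundary case.
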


		\begin{proof}
			Suppose there exists such a path \( \a \), but \( \a([0,1]) \cap \supp(\p J) = \emptyset \). Let \( T \) be a tubular neighborhood of \( \a([0,1]) \) disjoint from \( \supp(\p J) \). Let \( \e>0 \) such that \( \O(p,\e) \subset T\cap U \), \( \O(q,\e)\subset T \) and \( \O(q,\e)\cap \supp(J)=\emptyset \). Since \( p \in \supp(J) \), there exists \( \eta\in \B_n^r(U) \) supported in \( \O(p,\e) \) with \( \cint_J \eta \neq 0 \). Since \( \O(p,\e)\subset U \), we may extend \( \eta \) by \( 0 \) to all of \( \R^n \). By Theorem 5.0.3 in \cite{OC}, we can assume that \( \eta \) is smooth. Let \( \a \) be a smooth \( n \)-form supported in \( \O(q,\e) \) such that \( \int_T \a=\int_T \eta \).

			Then \( \int_T \eta -\a=0 \), and so by compactly supported de Rham theory, \( \eta-\a=d\o \) for some smooth \( (n-1) \)-form \( \o \) supported in \( T \). But \( \cint_J \a = 0 \), and so \( \cint_J d\o\neq 0 \), yielding a contradiction with the assumption that \( T \) is disjoint from \( \supp(\p J) \).
		\end{proof}

		\begin{cor}
			\label{cor:threechains}
			Suppose \( J \in \hB_n^r(U) \) satisfies \( \supp(J)\subset U \) and \( \supp(J) \) has empty interior. Then \( \supp(\p J) = \supp(J) \).
		\end{cor}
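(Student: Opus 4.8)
The plan is to combine the two preceding lemmas: Lemma \ref{lem:supportboundary} already gives the inclusion \( \supp(\p J) \subseteq \supp(J) \), so the entire content of the corollary is the reverse inclusion \( \supp(J) \subseteq \supp(\p J) \), and for that I would use Lemma \ref{lem:linesandboundary} together with a limiting argument that exploits the hypothesis that \( \supp(J) \) has empty interior.

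Concretely, fix \( p \in \supp(J) \). Since \( \supp(J) \subset U \) and \( U \) is open, I would first choose \( \e_0 > 0 \) with \( \O(p,\e_0) \subset U \). Because \( \supp(J) \) is closed with empty interior, its complement is open and dense, so for every \( 0 < \e < \e_0 \) there is a point \( q_\e \in \O(p,\e) \setminus \supp(J) \); note \( q_\e \in U \) and \( q_\e \neq p \). Let \( \a_\e \colon [0,1] \to \R^n \) be the straight-line segment from \( p \) to \( q_\e \); since \( \O(p,\e) \) is convex and contains both endpoints, the segment lies in \( \O(p,\e) \subset U \), and since \( p \neq q_\e \) it is a smoothly embedded path. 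Applying Lemma \ref{lem:linesandboundary} with this \( J \), with \( p \in \supp(J)\cap U \), with \( q = q_\e \notin \supp(J) \), and with the path \( \a_\e \), I get a point \( x_\e \in \a_\e([0,1]) \cap \supp(\p J) \). Then \( \|x_\e - p\| < \e \).

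Letting \( \e \to 0 \) produces points \( x_\e \in \supp(\p J) \) converging to \( p \); since \( \supp(\p J) \) is closed (Definition \ref{def:support}), this forces \( p \in \supp(\p J) \). As \( p \in \supp(J) \) was arbitrary, \( \supp(J) \subseteq \supp(\p J) \), and combined with Lemma \ref{lem:supportboundary} this yields the claimed equality.

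There is no real obstacle here; the only points requiring a moment's care are that the chosen witness point \( q_\e \) and the connecting segment stay inside \( U \) (guaranteed by openness of \( U \) around \( p \)) and that the segment is genuinely a smoothly embedded path (guaranteed by \( q_\e \neq p \), which holds since one point is in \( \supp(J) \) and the other is not). Everything else is an immediate consequence of Lemma \ref{lem:linesandboundary} and the closedness of supports.
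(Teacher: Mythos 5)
Your proof is correct and follows essentially the same route as the paper's: use Lemma \ref{lem:supportboundary} for one inclusion, and for the reverse inclusion take a nearby point outside \( \supp(J) \) (which exists because the support has empty interior), connect it to \( p \) by a short segment inside \( U \), and apply Lemma \ref{lem:linesandboundary} to find a point of \( \supp(\p J) \) arbitrarily close to \( p \). The paper states this more tersely, but the argument is the same.
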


		\begin{proof}
			We know that \( \supp(\p J) \subseteq \supp(J) \) by Lemma \ref{lem:supportboundary}. Let \( p \in \supp(J) \). By hypothesis, every \( \O(p,\e)\subset U \) contains a point \( q\notin \supp(J) \), and hence a point \( q'\in \supp(\p J) \) by \ref{lem:linesandboundary}.
		\end{proof}

		\begin{cor}
			If \( \widetilde{U}\in \hB_n^1(U) \) is as in Proposition \ref{prop:submanifold} (or more generally, if we consider \( \widetilde{U} \) as the representative of \( U \) in \( \hB_n^1(W) \) for any \( U\subseteq W \),) then \( \supp(\p \widetilde{U})=\fr\,U \).
		\end{cor}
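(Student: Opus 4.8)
The plan is to prove the two inclusions $\supp(\p\widetilde U)\subseteq\fr\,U$ and $\fr\,U\subseteq\supp(\p\widetilde U)$ separately, in the spirit of Corollary~\ref{cor:threechains}, but inserting one extra step to excise the interior of $U$ (which Corollary~\ref{cor:threechains} sidestepped by hypothesis). First I would record the auxiliary fact that $\supp(\widetilde U)=\overline U$: every $p\in U$ lies in $\supp(\widetilde U)$ because a bump $n$-form $\eta$ supported in a small ball about $p$ inside $U$ has $\cint_{\widetilde U}\eta=\int_U\eta\ne 0$; no point off $\overline U$ lies in $\supp(\widetilde U)$, since a form supported in a ball disjoint from $U$ pairs to $0$ with $\widetilde U$; and $\supp(\widetilde U)$ is closed, so it equals $\overline U$.

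For the inclusion $\supp(\p\widetilde U)\subseteq\fr\,U$: Lemma~\ref{lem:supportboundary} gives $\supp(\p\widetilde U)\subseteq\supp(\widetilde U)=\overline U$, so it remains to keep points of $U$ out of $\supp(\p\widetilde U)$. Given $p\in U$, choose $\e>0$ with $\bar{\O}(p,\e)\subseteq U$. Any form $\eta$ supported in $\O(p,\e)$ then has support a closed subset of the open ball $\O(p,\e)$, hence (the ball's frontier being compact) a compact subset of $U$, so by Stokes' Theorem~\ref{thm:D} and the defining property of $\widetilde U$,
\[
\cint_{\p\widetilde U}\eta=\cint_{\widetilde U}d\eta=\int_U d\eta=\int_{\R^n}d\eta=0 .
\]
Thus $\O(p,\e)\cap\supp(\p\widetilde U)=\emptyset$ and $p\notin\supp(\p\widetilde U)$; combined with the previous line, $\supp(\p\widetilde U)\subseteq\overline U\setminus U=\fr\,U$.

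For the reverse inclusion $\fr\,U\subseteq\supp(\p\widetilde U)$, I would regard $\widetilde U$ as an element of $\hB_n^1(W)$ for a bounded open $W\supseteq\overline U$ (available since $U$ is bounded), so that still $\supp(\widetilde U)=\overline U$. Fix $p\in\fr\,U$ and $\e>0$; since $\supp(\p\widetilde U)$ is closed it suffices to find a point of $\supp(\p\widetilde U)$ inside $\O(p,\e)$. Because $p\in\overline U$ and $U$ is open, $\O(p,\e/2)\cap U$ is nonempty and open, so pick $a\in\O(p,\e/2)\cap U\subseteq\supp(\widetilde U)$; because $p$ lies on the frontier and (in the relevant cases) not in $\mathring{\overline U}$, $\O(p,\e/2)$ also meets $W\setminus\overline U$, so pick $q\in\O(p,\e/2)\setminus\overline U\subseteq W\setminus\supp(\widetilde U)$. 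Join $a$ to $q$ by a smoothly embedded path $\gamma\colon[0,1]\to\O(p,\e)$ with image contained in the ball $\O(p,\e)$, and apply Lemma~\ref{lem:linesandboundary} to $J=\widetilde U$: then $\gamma([0,1])\cap\supp(\p\widetilde U)\ne\emptyset$, and since $\gamma([0,1])\subseteq\O(p,\e)$ this gives $\O(p,\e)\cap\supp(\p\widetilde U)\ne\emptyset$. Letting $\e\to 0$ yields $p\in\supp(\p\widetilde U)$.

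The substantive step is the reverse inclusion, and its only real obstacle is twofold: one needs, near every $p\in\fr\,U$, points lying off $\overline U$ (so the path $\gamma$ has a free endpoint in $W\setminus\supp(\widetilde U)$), and one needs $\gamma$ confined to $\O(p,\e)$, which is precisely what localizes the global conclusion of Lemma~\ref{lem:linesandboundary} to the single point $p$. The confinement is automatic, and the first requirement holds whenever $U=\mathring{\overline U}$ — in particular for all the open sets (neighborhoods of compact sets, complements of reduced sets, convex hulls) to which the corollary is applied in this paper. Finally, the statement for $\widetilde U\in\hB_n^1(U)$ coincides with the one proved in $\hB_n^1(W)$, since $\p$ commutes with the pushforward $\hB_{n-1}^2(U)\to\hB_{n-1}^2(W)$ induced by the embedding $U\hookrightarrow W$ and this pushforward carries $\widetilde U$ to its representative in $W$ without moving supports.
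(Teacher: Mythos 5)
Your proof is correct and takes the only natural route --- adapting the argument of Corollary~\ref{cor:threechains}, which immediately precedes the statement. The forward inclusion combines Lemma~\ref{lem:supportboundary} with Stokes' theorem (your step $\cint_{\p\widetilde U}\eta=\cint_{\widetilde U}d\eta=\int_{\R^n}d\eta=0$ for $\eta$ compactly supported inside $U$ is exactly right), and the reverse inclusion applies Lemma~\ref{lem:linesandboundary} to a short path from a point of $U$ to a nearby point outside $\overline U$.

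The extra hypothesis $U=\mathring{\overline U}$ that you insert for the reverse inclusion is not a defect in your argument but a correction the stated corollary actually needs. For the parenthetical version in $\hB_n^1(W)$ with $\overline U\subset W$, the representative $\widetilde U$ depends only on the functional $\omega\mapsto\int_U\omega$ on $\B_n^1(W)$; if $\mathring{\overline U}\setminus U$ is Lebesgue-null (for instance $U$ a slit disk), then $\widetilde U$ coincides with the representative of $\mathring{\overline U}$ in $\hB_n^1(W)$, so $\supp(\p\widetilde U)=\fr\bigl(\mathring{\overline U}\bigr)$ strictly misses the points of $\fr\,U$ lying interior to $\overline U$. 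Thus the corollary silently assumes $U$ is a regular open set. A related caveat: your closing remark that the inclusion-pushforward $\hB_{n-1}^2(U)\to\hB_{n-1}^2(W)$ preserves supports is not automatic for irregular $U$ --- the intrinsic metric on $U$ lets $\B_{n-1}^2(U)$-forms detect a slit that $\B_{n-1}^2(W)$-forms cannot, so $\supp(\p\widetilde U)$ computed in $\hB_{n-1}^2(U)$ can be strictly larger --- but under the regularity assumption you impose, this too is harmless.
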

		
	\subsection{Prederivative}
		\label{sub:prederivative}
		The topological dual to Lie derivative \( \Lie_X \) of differential forms \( \B_k^r(U) \) restricts to a continuous operator \( P_X \) on differential chains \( \hB_k^r(U) \).

		\begin{defn}
			\label{def:preder}
			Suppose \( X \in \V^r(U), r \ge 1 \). Define the continuous graded linear operator \emph{prederivative} \( P_X: \hB_k^r(U) \to \hB_k^{r+1}(U) \) by \[ P_X := E_X \p + \p E_X. \]
		\end{defn}

		This agrees with the previous definition of prederivative \( P_v \) for \( v \in \R^n \) in the first paragraph of \( \S\ref{sub:boundary} \) since \[ E_v \p + \p E_v = \sum_i P_{e_i}( E_v E_{e_i}^\dagger - E_{e_i}^\dagger E_v) = \sum_i P_{e_i} \<v,e_i\> I = P_v. \] Its dual operator is Lie derivative \( \Lie_X \) by Cartan's formula, and \( \Lie_X: \B_k^r(U) \to \B_k^{r-1}(U) \) is continuous. Furthermore, \( P_X \p = \p P_X \).

		\begin{prop}[\cite{OC},Theorem 8.6.2]
			\label{thm:E}
			If \( 1 \le r < \i \) and \( X \in \V^r(U) \), then \[ \|P_X(J)\|_{B^{r+1}} \le 2k n^3 2^r \|X\|_{B^{r}} \|J\|_{B^r} \] for all \( J \in \hB_k^r(U) \).
		\end{prop}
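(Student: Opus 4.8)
The plan is to work from the definition $P_X = E_X \p + \p E_X$ (Definition \ref{def:preder}) and estimate the two compositions, using Theorem \ref{thm:C} (extrusion), Theorem \ref{thm:Cdagger} (retraction), Theorem \ref{thm:D} (boundary, which raises the $B$-class by one), and the elementary estimate $\|P_v(A)\|_{B^{r+1}} \le \|v\|\,\|A\|_{B^r}$ for the prederivative along a constant vector $v$ recorded in \S\ref{sub:boundary}. Since $\A_k(U)$ is dense in $\hB_k^r(U)$, it suffices to establish the bound for Dirac chains $A \in \A_k(U)$; the bound then shows $P_X$ extends continuously to $\hB_k^r(U) \to \hB_k^{r+1}(U)$, compatibly with its definition where $E_X\p + \p E_X$ makes sense. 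Moreover, because $\|\cdot\|_{B^{r+1}}$ is subadditive while $\|\cdot\|_{B^r}$ is an infimum over decompositions into $j$-difference $k$-chains with $0 \le j \le r$, it is in fact enough to establish $\|P_X A\|_{B^{r+1}} \le 2kn^3 2^r\,\|X\|_{B^r}\,|A|_{B^j}$ when $A = \D_{s^j}(p;\a)$ is a single $j$-difference $k$-chain with $0 \le j \le r$.

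The summand $\p E_X A$ is immediate: by Theorem \ref{thm:C}, $\|E_X A\|_{B^r} \le n^2 2^r \|X\|_{B^r}\|A\|_{B^r}$ (and $E_X A$ is a $(k+1)$-chain of class $B^r$), so Theorem \ref{thm:D} applied to $E_X A$ gives $\|\p E_X A\|_{B^{r+1}} \le (k+1)n\,\|E_X A\|_{B^r} \le (k+1)n^3 2^r \|X\|_{B^r}\|A\|_{B^r}$. The summand $E_X \p A$ is the difficult one, because $\p A \in \hB_{k-1}^{r+1}$, and extruding a chain of class $B^{r+1}$ would, at face value, require $X \in \V^{r+1}$ rather than $X \in \V^r$. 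The resolution is that the excess smoothness requirement cancels against $\p E_X A$: expanding $\p = \sum_i P_{e_i} E_{e_i}^\dagger$ (equation \ref{eq:bd}), using the algebraic identity $E_X E_{e_i}^\dagger + E_{e_i}^\dagger E_X = \<X,e_i\>\,\mathrm{Id}$ (equation \ref{eq:cr}) and the limit identity $E_X P_{e_i} = P_{e_i} E_X + E_{\nabla_{e_i}X}$ (coming from $E_X P_{e_i}(p;\a) = \lim_{t\to0} t^{-1} E_X \D_{te_i}(p;\a)$), one obtains on $\A_k(U)$ the pointwise formula
\[
	P_X(p;\a) = P_{X(p)}(p;\a) + \sum_{l=1}^{k} (-1)^{l+1}\bigl(p;\; (\nabla_{v_l}X)(p) \wedge \hat{\a}_l\bigr), \qquad \a = v_1 \wedge \cdots \wedge v_k,
\]
in which the contributions of $\p E_X A$ have cancelled and only one derivative of $X$ survives. (For $r = 1$, where $X$ is merely Lipschitz, one proves the formula for smooth $X$ and passes to the limit.)

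It remains to estimate the right-hand side for $A = \D_{s^j}(p;\a)$. Summing the formula over the $2^j$ elements of $\D_{s^j}(p;\a)$ with their signs, the ``translation part'' contributes $\sum_S (\pm) P_{X(p_S)}(p_S;\a)$ with $p_S = p + \sum_{i \in S} u_i$; splitting $X(p_S) = X(p) + (X(p_S) - X(p))$ and using linearity of $v \mapsto P_v$, one piece is $P_{X(p)}\bigl(\D_{s^j}(p;\a)\bigr)$, bounded by $\|X\|_{B^0}\|s^j\|\,\|\a\|$, while the other is an alternating sum whose increments $X(p_S) - X(p)$, together with the outer alternating structure, assemble into a $j$-th order difference expression built from $X$, so that the full product $\|u_1\| \cdots \|u_j\|$ (and not merely $\max_i \|u_i\|$) is gained and this piece is bounded by a constant times $\|X\|_{B^r}\|s^j\|\,\|\a\|$; the ``derivative part'' $\sum_l (-1)^{l+1}(p;(\nabla_{v_l}X)(p) \wedge \hat{\a}_l)$ is handled in the same way (with $\nabla X$ in place of $X$ and $\|\nabla_{e_i}X\|_{B^{r-1}} \le \|X\|_{B^r}$). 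Combining the three estimates, absorbing the $n$-dependent combinatorial constants, and extending from Dirac chains to $\hB_k^r(U)$ by density completes the argument. The main obstacle is exactly this last step — the \emph{quantitative} cancellation, i.e. that $P_X$ applied to a $j$-difference chain still picks up the small factor $\|u_1\| \cdots \|u_j\|$ — which requires the infimum structure of $\|\cdot\|_{B^r}$ and a careful difference-calculus argument in the spirit of the proofs of Theorems \ref{thm:C}--\ref{thm:E} in \cite{OC}, together with the bookkeeping of $B$-classes needed to land in $B^{r+1}$ with the stated constant $2kn^3 2^r$.
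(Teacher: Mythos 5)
The paper does not actually prove Proposition~\ref{thm:E}: it is cited verbatim from \cite{OC}, Theorem 8.6.2, so there is no in-paper proof to compare against. Your sketch is therefore a reconstruction, and its overall strategy is sound: you start from the definition $P_X = E_X\partial + \partial E_X$; you correctly notice that the $E_X\partial$ summand is not directly meaningful for $X\in\V^r(U)$ (since $\partial$ lands in $\hB^{r+1}$, and $E_X$ on $\hB^{r+1}$ would require $X\in\V^{r+1}$); and you correctly derive, from $\partial=\sum_i P_{e_i}E_{e_i}^\dagger$, the commutation relation \eqref{eq:cr}, and $E_X P_{e_i}-P_{e_i}E_X = E_{\nabla_{e_i}X}$, the pointwise formula
\[
 P_X(p;\a)=P_{X(p)}(p;\a)+\sum_{l=1}^k(-1)^{l+1}\bigl(p;(\nabla_{v_l}X)(p)\wedge\hat\a_l\bigr),
\]
with $\a=v_1\wedge\cdots\wedge v_k$. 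I re-derived this and the cancellation works exactly as you describe: the terms $\pm\sum_l(-1)^{l+1}P_{v_l}E_{X(p)}(p;\hat\a_l)$ that appear in the separate expansions of $E_X\partial$ and $\partial E_X$ cancel, and only one derivative of $X$ survives, so the operator is well-defined for $X\in\V^r$ with $r\ge 1$.

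That said, there is a genuine gap at precisely the place you flag. The proposition is a quantitative norm estimate, and everything quantitative happens in the last paragraph of your argument, which you describe rather than prove: applying the pointwise formula to a $j$-difference chain $\D_{s^j}(p;\a)$, splitting $X(p_S)=X(p)+(X(p_S)-X(p))$, and asserting that the remaining alternating sum ``assembles into a $j$-th order difference expression built from $X$.'' This is loose: working out $j=2$ one finds the piece is actually a sum of terms $P_{\D_{s^{j_1}}X}\D_{s^{j_2}}(\cdot)$ with $j_1+j_2=j$ and $j_1\ge 1$, that is, a mixed telescoping of differences of $X$ against difference chains in position, not a single $j$-th difference. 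The same delicacy arises for the derivative part, where one also has to track the mass of $\nabla_{v_l}X\wedge\hat\a_l$ against $\|\a\|$ and check that the $\|v_l\|$ factors pair off correctly. These computations are very plausibly doable in the spirit of the $B^r$-norm estimates for $E_X$, $E_X^\dagger$, $\partial$ in \cite{OC}, but as written they are not carried out, and in particular the stated constant $2kn^32^r$ cannot be confirmed from the sketch. So: right decomposition, right cancellation identity, right plan for the estimate, but the estimate itself — the actual content of the proposition — is deferred rather than proved.
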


		\begin{prop}[\cite{OC},Theorem 8.6.5]
			\label{thm:F}	
			If \( r\ge 1 \), \( X \in \V^{r+1}(U) \) and \( J \in \hB_k^r(U), r \ge 0 \) has compact support, then \[ P_X J = \lim_{t \to 0}( \phi_{t*} J/t - J/t) \] where \( \phi_t \) is the time-\( t \) map of the flow of \( X \).
		\end{prop}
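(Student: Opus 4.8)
The plan is to verify the formula first on a single $k$-element $(p;\a)$ by a second-order Taylor expansion of the flow, to identify the resulting chain with $P_X(p;\a)$ by a duality argument, and then to pass to a general compactly supported $J$ by density using a uniform-in-$t$ estimate on the difference quotient.

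\emph{The $k$-element case.} Fix a $k$-element $(p;\a)$ with $\a=v_1\wedge\cdots\wedge v_k$. Taylor expanding the flow of $X$ gives $\phi_t(p)=p+tX(p)+O(t^2)$ and $(d\phi_t)_p=I+t(DX)(p)+O(t^2)$, hence $\Lambda^k(d\phi_t)_p\a=\a+t\,\mathcal{D}_X\a+O(t^2)$, where $\mathcal{D}_X\a:=\sum_{i=1}^{k}v_1\wedge\cdots\wedge(DX)(p)v_i\wedge\cdots\wedge v_k$. Separating off the displacement of the base point,
\[ \frac{\phi_{t*}(p;\a)-(p;\a)}{t}=\frac{(p+tX(p);\a)-(p;\a)}{t}+(\phi_t(p);\mathcal{D}_X\a)+R_t, \]
where $R_t$ collects the $O(t^2)$ corrections and has $\|R_t\|_{B^{r+1}}=O(t)$, by the elementary bound $|\D_u(q;\b)|_{B^{r+1}}\le\|u\|\,\|\b\|$ and $\|(x;\gamma)\|_{B^{r+1}}\le\|\gamma\|$. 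The first term converges in $\hB_k^{r+1}(U)$ to $P_{X(p)}(p;\a)$ by the very definition of the prederivative $P_v$ in the first paragraph of \S\ref{sub:boundary}, and $(\phi_t(p);\mathcal{D}_X\a)\to(p;\mathcal{D}_X\a)$, so $\lim_{t\to0}(\phi_{t*}(p;\a)-(p;\a))/t=P_{X(p)}(p;\a)+(p;\mathcal{D}_X\a)$. To recognize the right-hand side as $P_X(p;\a)=(E_X\p+\p E_X)(p;\a)$, rather than expand $\p=\sum_jP_{e_j}E_{e_j}^\dagger$ by hand I would use duality: for $\o\in\B_k^{r+1}(U)$, a direct computation (the product rule, the definition of $P_v$, and $\phi_t(p)=p+tX(p)+O(t^2)$) gives
\begin{align*}
\cint_{P_{X(p)}(p;\a)+(p;\mathcal{D}_X\a)}\o
&=\left.\tfrac{d}{dt}\right|_{t=0}\big((\phi_t^*\o)_p(\a)\big)=(\Lie_X\o)_p(\a)\\
&=\cint_{(p;\a)}\Lie_X\o=\cint_{P_X(p;\a)}\o,
\end{align*}
the last equality because $\Lie_X$ is dual to $P_X$ (Cartan's formula, following Definition \ref{def:preder}). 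Since forms in $\B_k^{r+1}(U)$ separate the points of $\hB_k^{r+1}(U)$ (Isomorphism Theorem \ref{thm:isomorphism}), this yields $\lim_{t\to0}(\phi_{t*}(p;\a)-(p;\a))/t=P_X(p;\a)$, and by linearity the same for every Dirac chain.

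\emph{The uniform estimate.} Since $\supp(J)$ is compact in $U$ and $X$ is bounded (as $X^\flat\in\B_1^{r+1}$), there are $\d>0$ and a neighborhood $V$ of $\supp(J)$ with $\overline{V}$ compact in $U$ such that, for $|t|<\d$, $\phi_t$ is defined on $V$ and $\phi_t(V)\subset U$; the variational equation $\tfrac{d}{dt}(d\phi_t)=(DX\circ\phi_t)\,d\phi_t$ shows $\phi_t\in\M^{r+1}(V,U)$ with $\rho_{r+1}(\phi_t)$ bounded uniformly for $|t|<\d$. The step I expect to be the main obstacle is the \emph{uniform difference-quotient bound}: a constant $C$ with
\[ \left\|\frac{\phi_{t*}A-A}{t}\right\|_{B^{r+1}}\le C\,\|A\|_{B^{r}} \]
for all Dirac chains $A$ supported in a fixed compact $K\subset V$ and all $0<|t|<\d$. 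I would prove it by a homotopy (cylinder) argument: with $C_tA:=H_*(A\times\widetilde{[0,t]})$ and $H(q,s):=\phi_s(q)$, one has the prism formula $\phi_{t*}A-A=\pm(\p C_tA-C_t\p A)$; then Proposition \ref{prop:A} and $\|\widetilde{[0,t]}\|_{B^{r+1}}=O(t)$ give $\|C_tA\|_{B^{r}}=O(t)\,\|A\|_{B^r}$ and $\|C_t\p A\|_{B^{r+1}}=O(t)\,\|\p A\|_{B^{r+1}}$, and the boundary bound $\|\p\,\cdot\,\|_{B^{r+1}}\le kn\|\cdot\|_{B^r}$ of Theorem \ref{thm:D}, applied to the first summand and to $\p A$, yields the claim. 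It is here that the hypothesis $X\in\V^{r+1}$, rather than $\V^{r}$, is used: pushing $\p A\times\widetilde{[0,t]}$ forward — a chain living one regularity level up, since $\p A$ does — requires $H\in\M^{r+1}$.

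\emph{Globalization.} Choose Dirac chains $A_n\to J$ in $\hB_k^{r}(U)$ with supports in a fixed compact $K\subset V$ (possible since $\supp(J)$ is compact), identifying $J$ with an element of $\hB_k^r(V)$. Then, for $0<|t|<\d$,
\begin{align*}
\left\|\frac{\phi_{t*}J-J}{t}-P_XJ\right\|_{B^{r+1}}
&\le\left\|\frac{\phi_{t*}(J-A_n)-(J-A_n)}{t}\right\|_{B^{r+1}}\\
&\quad+\left\|\frac{\phi_{t*}A_n-A_n}{t}-P_XA_n\right\|_{B^{r+1}}+\|P_X(A_n-J)\|_{B^{r+1}}.
\end{align*}
By the uniform difference-quotient bound the first term is $\le C\|J-A_n\|_{B^r}$, and by continuity of $P_X:\hB_k^r\to\hB_k^{r+1}$ (Proposition \ref{thm:E}) the third is $O(\|J-A_n\|_{B^r})$; both are small for large $n$, uniformly in $t$. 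Fixing such an $n$, the middle term tends to $0$ as $t\to0$ by the $k$-element case. Hence $(\phi_{t*}J-J)/t\to P_XJ$ in $\hB_k^{r+1}(U)$, which is the assertion.
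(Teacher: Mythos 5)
The paper itself does not prove Proposition \ref{thm:F}; it is quoted verbatim from \cite{OC}, Theorem 8.6.5, so there is no in-text argument to compare your proposal against. Judged on its own, your outline is a reasonable and essentially correct route to the result: a Taylor expansion of $\phi_t$ to identify the limit on $k$-elements, a duality argument (via $\Lie_X$ and the Isomorphism Theorem \ref{thm:isomorphism}) to recognize that limit as $P_X(p;\a)$, and a density/equicontinuity argument to globalize. The three-term estimate at the end is the right structure, and you are right that the hypothesis $X\in\V^{r+1}$ is used precisely to push forward the $(k-1)$-chain $\p A$, which lives one regularity level higher.

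Two places deserve tightening. First, the prism identity $\phi_{t*}A-A=\pm(\p C_tA-C_t\p A)$ with $C_tA=H_*(\widetilde{[0,t]}\hat\times A)$ and $H(s,q)=\phi_s(q)$ is \emph{not} the cone identity in Equation \eqref{eq:cone}, which requires $F(1,p)\equiv q$ for a fixed $q$; the homotopy version needs to be stated and justified (it can be reduced to \eqref{eq:cone} by an auxiliary cone, or proved directly on Dirac chains and extended by continuity), since it is the engine of your uniform bound. Second, the claim that $H\in\M^{r+1}$ with $\rho_{r+1}(H)$ uniformly bounded for $|t|<\d$ deserves an explicit bootstrap: the variational equation gives $\partial(d\phi_s)/\partial s=(DX\circ\phi_s)\,d\phi_s$ with $DX\in\B^r$, so one must iterate the regularity of $\phi_s$ upward to $\M^{r+1}$ rather than just asserting it; this step is where the gain from $\V^r$ to $\V^{r+1}$ on $X$ is actually spent. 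With those two points filled in the argument is complete.
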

		
		The next lemma is proved similarly to Lemma \ref{lem:supportboundary}:
		
		\begin{lem}
			\label{lem:supportPE}
			The supports of \( E_X J \) and \( P_X J\) satisfy \( \supp(E_X J)\subset \supp(J) \) and \( \supp(P_X J)\subset \supp(J) \).
		\end{lem}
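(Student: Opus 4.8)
The plan is to mimic the proof of Lemma \ref{lem:supportboundary} verbatim, the only new ingredient being that the operators dual to $E_X$ and $P_X$ — namely interior product $i_X$ and Lie derivative $\Lie_X$ — do not enlarge the support of a differential form.

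First I would handle $E_X$. Suppose toward a contradiction that $p \in \supp(E_X J)$ but $p \notin \supp(J)$. Then, by Definition \ref{def:support}, there is $\e > 0$ so that $\cint_J \o = 0$ for every $\o \in \B_k^r(U)$ supported in $\O(p,\e)$. Since $p \in \supp(E_X J)$, choose $\eta \in \B_{k+1}^r(U)$ supported in $\O(p,\e)$ with $\cint_{E_X J}\eta \ne 0$. By the pairing \eqref{eq:EX} of Theorem \ref{thm:C}, $\cint_J i_X\eta = \cint_{E_X J}\eta \ne 0$; but $i_X\eta$ is again a form supported in $\O(p,\e)$, since interior product is pointwise and algebraic in the form. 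This contradicts the choice of $\e$, proving $\supp(E_X J) \subset \supp(J)$.

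For $P_X$ I would then use $P_X = E_X \p + \p E_X$ from Definition \ref{def:preder}. Combining Lemma \ref{lem:supportboundary} ($\supp(\p K) \subset \supp(K)$), the case just proved ($\supp(E_X K) \subset \supp(K)$), and the elementary containment $\supp(A+B)\subset \supp(A)\cup\supp(B)$, one gets
\[
	\supp(P_X J) \subset \supp(E_X \p J) \cup \supp(\p E_X J) \subset \supp(\p J) \cup \supp(E_X J) \subset \supp(J).
\]
(Alternatively, one can argue directly as for $E_X$, using that $\Lie_X = d\,i_X + i_X\,d$ by Cartan's formula is support-non-increasing on forms.) There is no real obstacle here: the proof is entirely routine, the only points requiring a moment's care being the quantifier order in the definition of support and the observation that $i_X$ (and $d$) do not spread support — both immediate.
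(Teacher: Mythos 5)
Your proof is correct and matches the paper's intent: the paper gives no written argument, merely remarking that the lemma ``is proved similarly to Lemma \ref{lem:supportboundary},'' i.e.\ by pairing against the dual operator and noting it does not spread the support of a test form ($i_X$ for $E_X$, $\Lie_X$ for $P_X$), which is exactly what you do for $E_X$ and offer as the parenthetical alternative for $P_X$. Your primary route for $P_X$ via the identity $P_X = E_X\p + \p E_X$, Lemma \ref{lem:supportboundary}, and $\supp(A+B)\subset\supp(A)\cup\supp(B)$ is a clean, equally valid repackaging of the same facts.
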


	\subsection{Cone operator}
		\label{ssub:cone_operator}

		Let \( U\subset \R^n \) and \( W\subset \R^w \) be open sets, and for \( q\in W \) let \( \iota_1^q : U\to U\times W  \) the map \( p\mapsto (p,q) \).  Similarly define \( \iota_2^p: W\to U\times W \) for \( p\in U \). If \( P=\sum_i (p_i;\a_i)\in \A_k(U) \) and \( Q=\sum_j (q_j;\b_j)\in \A_l(W) \) define the \emph{\textbf{Cartesian wedge product}} \( \hat{\times}: \A_k(U)\times \A_l(W)\to \A_{k+l}(U\times W) \) by \( P\hat{\times}Q:= \sum_{i,j}((p_i,q_j);\iota_{1*}^{q_j}\a_i \wedge \iota_{2*}^{p_i}\b_j).\) A calculation shows \( \|P\hat{\times}Q\|_{B^{r+s}}\leq \|P\|_{B^r}\|Q\|_{B^s} \), and so \( \hat{\times} \) extends to a jointly continuous bilinear map \( \hat{\times}:\hB_k^r(U)\times \hB_l^s(W)\to \hB_{k+l}^{r+s}(U\times W) \). We can improve on the order \( r+s \) in some cases. In particular, if \( \widetilde{(a,b)}\in \hB_1^1((a,b)) \) represents the interval \( (a,b)\subset \R\), and \( J\in \hB_k^r(U) \), then \( \widetilde{(a,b)}\hat{\times}J\in \hB_{k+1}^r((a,b)\times U) \), and as Proposition 11.1.9 in \cite{OC} shows,
		
		\begin{align}
			\label{prop:GG}
			\|\widetilde{(a,b)} \hat{\times} J \|_{B^r} \le (b-a) \|J\|_{B^r}.
		\end{align}
		
		\begin{defn}
			\label{def:cone}
			Suppose \( F:[0,1]\times U\to U \) and \( q\in U \) satisfy \( F(0,p) = p \) and \( F(1,p) = q \) for all \( p \in U \). Suppose further that \( F\lfloor_{(0,1)\times U}\in \M^r((0,1) \times U,U) \). Then define the \emph{\textbf{cone operator}} \( \k_F: \hB_k^r(U) \to \hB_{k+1}^r(U) \) by \( \k_F(J) := F_*(\widetilde{(0,1)} \hat{\times} J) \).  
		\end{defn}
		
		According to Proposition \ref{prop:A} and \eqref{prop:GG}, \( \k_F \) is continuous, with \[ \|\k_F(J) \|_{B^r} \le (n+1)2^r\max\{1, \rho_r(F) \} \|J\|_{B^r}. \] Moreover, by Theorem 5.1.1 of \cite{plateau10} we have
		\begin{equation}
			\label{eq:cone}
			\k_F \p + \p \k_F = Id.
		\end{equation}

\section{Constructions with embedded submanifolds}
	\label{sub:constructions_with_curves}
	\setcounter{thm}{0}

	Let \( M \subset \R^n \) be a compact oriented, embedded \( (n-2) \)-dimensional submanifold, and let \( U\supset M \) be convex, bounded and open. Let \( Y \) be a smooth vector field on \( U \) supported in a neighborhood of \( M \) such that \( Y(p) \) is unit and normal to \( M \) for all \( p\in M \). Such a vector field exists since \( M \) necessarily has a trivial normal bundle (see, e.g. \cite{massey}.) Let \( F:[0,1]\times U\to U \) be a smooth null-homotopy to \( q\in U \) satisfying \( F_p'(0)=Y(p) \) for all \( p\in M \). Let \( \k_Y:=\k_F \) (See Definition \ref{def:cone}.)  
 
 	\begin{lem}
		\label{lem:gamm}
		The supports of \( P_Y\tM \in \hB_{n-2}^2(U)\) and \( E_Y\tM\in \hB_{n-1}^1(U) \) satisfy \( \supp(P_Y \tM) = \supp(E_Y \tM) = \supp(\tM) = M \).
	\end{lem}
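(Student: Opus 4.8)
The plan is to establish the three equalities $\supp(P_Y\tM) \subseteq \supp(\tM) = M$, $\supp(E_Y\tM) \subseteq \supp(\tM) = M$, and the reverse inclusions $M \subseteq \supp(P_Y\tM)$ and $M \subseteq \supp(E_Y\tM)$, in that order. The forward inclusions are immediate: $\supp(\tM) = M$ is the content of Proposition \ref{prop:submanifold}, and $\supp(E_Y\tM) \subseteq \supp(\tM)$ together with $\supp(P_Y\tM) \subseteq \supp(\tM)$ is exactly Lemma \ref{lem:supportPE}. So everything reduces to showing that no point of $M$ gets ``killed'' by applying $E_Y$ or $P_Y$.

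For the reverse inclusions, fix $p \in M$ and $\e > 0$; I want to produce forms $\eta$ supported in $\O(p,\e)$ with $\cint_{E_Y\tM}\eta \neq 0$ and (separately) $\cint_{P_Y\tM}\eta \neq 0$. The natural move is to dualize: by Theorem \ref{thm:C}, $\cint_{E_Y\tM}\eta = \cint_{\tM} i_Y\eta$, and by the definition of prederivative $P_Y = E_Y\p + \p E_Y$ together with Theorems \ref{thm:C}, \ref{thm:D}, one computes $\cint_{P_Y\tM}\eta = \cint_{\tM}\Lie_Y\eta = \cint_{\tM}(d\,i_Y\eta + i_Y d\eta)$ (Cartan's formula, as noted after Definition \ref{def:preder}). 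Since $p \in M = \supp(\tM)$ and $\tM$ represents the submanifold $M$, we have $\cint_{\tM}\omega = \int_M \omega$ for all $\omega \in \B_{n-2}^1(U)$, so it suffices to choose $\eta$ — an $(n-1)$-form, resp.\ for $P_Y$ also checking the $n$-form case, all supported in $\O(p,\e)$ — such that $\int_M i_Y\eta \neq 0$, resp.\ $\int_M(d\,i_Y\eta + i_Y d\eta) \neq 0$. Here is where the hypotheses on $Y$ enter decisively: $Y(q)$ is a unit normal to $M$ at every $q \in M$. Working in a small coordinate chart around $p$ adapted to $M$ (say $M$ locally $\{x_1 = \dots = x_{n-2} = 0\}$ wait — $M$ has dimension $n-2$, so $M$ locally $\{x_{n-1} = x_n = 0\}$) with $Y = \p/\p x_n$ along $M$, pick a bump $(n-1)$-form $\eta = \phi(x)\,dx_1\wedge\cdots\wedge dx_{n-2}\wedge dx_n$ with $\phi \geq 0$, $\phi(p) > 0$, supported in $\O(p,\e)$; then $i_Y\eta = \phi\,dx_1\wedge\cdots\wedge dx_{n-2}$ restricts on $M$ to $\phi|_M\,dx_1\wedge\cdots\wedge dx_{n-2}$, which has strictly positive integral over $M$ (choosing orientations consistently). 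For $P_Y$ one picks instead a form whose Lie derivative pairs nontrivially; e.g.\ one can show $i_Y d\eta$ contributes a term integrating to something nonzero over $M$ for suitable $\eta$, or more cleanly use Proposition \ref{thm:F}, which identifies $P_Y\tM$ with $\lim_{t\to 0}(\phi_{t*}\tM/t - \tM/t)$ where $\phi_t$ is the flow of $Y$ — since $Y$ is transverse to $M$, the flowed copies $\phi_t(M)$ are disjoint from $M$ for small $t\neq 0$ yet accumulate on $M$, giving $p \in \supp(P_Y\tM)$ by a limiting argument in the style of Lemma \ref{lem:converge}.

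The main obstacle is the $P_Y$ case: unlike $E_Y$, which is algebraic and pointwise, $P_Y$ is a derivative-type operator and a priori the leading-order term could vanish after restriction to $M$. The cleanest route is to avoid form-juggling entirely and argue geometrically via Proposition \ref{thm:F}: because $Y$ is a unit normal field along $M$, the flow $\phi_t$ moves $M$ off itself transversally, so for a form $\eta$ supported near $p$ with $\cint_{\tM}\eta \neq 0$ one gets $\cint_{\phi_{t*}\tM}\eta = \cint_{\tM}\phi_t^*\eta \to \cint_{\tM}\eta$ while the difference quotient detects the transverse displacement; a careful choice of $\eta$ (again a bump form whose $x_n$-dependence is nontrivial at $p$) makes $\cint_{P_Y\tM}\eta \neq 0$. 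One must also confirm the regularity bookkeeping — $P_Y\tM \in \hB_{n-2}^2(U)$, so test forms live in $\B_{n-2}^2(U)$, which still contains plenty of compactly supported bump forms, so this causes no trouble. Assembling the four inclusions completes the proof.
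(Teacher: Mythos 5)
Your scaffolding matches the paper's: the forward inclusions follow from Proposition \ref{prop:submanifold} and Lemma \ref{lem:supportPE}, and the reverse inclusions reduce to producing forms near each \( p\in M \) that pair nontrivially. Your \( E_Y \) construction is explicit and correct. But the \( P_Y \) case --- which you yourself single out as the main obstacle --- is left at the level of ``one can show'' / ``a careful choice of \( \eta \)'' in both of the routes you sketch, and the second route contains a genuine argumentative misstep. The observation that \( \phi_t(M) \) is disjoint from \( M \) yet accumulates on it does \emph{not}, via Lemma \ref{lem:converge} or any limiting argument, give \( p\in\supp(P_Y\tM) \): \( P_Y\tM \) is a difference quotient, and if \( \eta \) were locally constant along the flow of \( Y \) the quantity \( \cint_{\phi_{t*}\tM}\eta - \cint_{\tM}\eta \) would be \( o(t) \) even though the supports converge. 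What you must produce is a form with \( \int_M \Lie_Y\eta\neq 0 \), and this cannot be replaced by a soft accumulation argument.

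The paper closes this cleanly and in a way your first route only gestures toward. Starting from an \( \omega\in\B_{n-2}^1(U) \) supported near \( p \) with \( \cint_{\tM}\omega\neq 0 \) (which exists because \( p\in\supp(\tM) \)), one builds \( \eta\in\B_{n-2}^2(U) \) so that \( \iota_Y d\eta \) agrees with \( \omega \) on \( \L^{n-2}TM \); in a chart with \( M=\{x_{n-1}=x_n=0\} \) and \( Y=\p/\p x_{n-1} \) on \( M \) one may take \( \eta = x_{n-1}\,\omega_{(1,\dots,n-2)}\,dx_1\wedge\cdots\wedge dx_{n-2} \). Then \( \cint_{\tM}d\iota_Y\eta = 0 \) by Stokes (as \( \p M=\emptyset \)), so \( \cint_{P_Y\tM}\eta = \cint_{\tM}\Lie_Y\eta = \cint_{\tM}\iota_Yd\eta = \cint_{\tM}\omega\neq 0 \) with no further verification needed. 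That construction is the step your proposal is missing. (Also a small slip worth correcting: \( P_Y\tM\in\hB_{n-2}^2(U) \), so its test forms are \( (n-2) \)-forms rather than the ``\( n \)-form case'' you mention mid-argument --- you do state the correct degree in your closing sentence.)
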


	\begin{proof}
		Since \( \tM \) represents \( M \),	it follows from Proposition \ref{prop:submanifold} that \( M = \supp(\tM) \). Furthermore, \( \supp(P_Y \tM) \subset \supp(\tM) \) by Lemma \ref{lem:supportPE}. Conversely, let \( p \in M \), and let \( r>0 \). Since \( p\in \supp(\tM) \), there exists \( \o\in \B_{n-2}^1(U) \) with \( \supp(\o)\subset \O(p,r) \) and \( \cint_{\tM} \o\neq 0 \). A computation in coordinates \( (x_1, \dots, x_n) \) on \( \O(p,r) \) such that \( M\cap \O(p,r) \) can be written as the set \( x_{n-1} = x_n=0 \) (if necessary, shrink \( r \),) and such that \( Y(x_1, \dots, x_{n-2}, 0, 0) = \frac{\p}{\p x_{n-1}} \) shows that there exists \( \eta\in \B_{n-2}^2(U) \) supported in \( \O(p,r) \) such that \( \iota_Y d\eta (p', v)= \o(p',v)\) for any \( p'\in M \) and \( v\in \L^{n-2}T_{p'}M \). In particular, this shows by Stokes' Theorem that \( 0\neq\cint_{\tM} \o = \cint_{\tM} \iota_Y d\eta= \cint_{\tM} \Lie_Y \eta = \cint_{P_Y \tM}\eta \), and so \( p\in \supp(P_Y \tM) \). A similar computation shows \( \supp(E_Y \tM) =\supp(\tM) \).
	\end{proof}

	\begin{lem}
		\label{lem:fc}
		If \( \p S = P_Y \tM \), then \( S = \p\k_Y S + E_Y \tM \).
	\end{lem}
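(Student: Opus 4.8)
The plan is to apply the cone identity \eqref{eq:cone}, namely $\k_Y \p + \p \k_Y = Id$, directly to $S$. This gives $S = \k_Y(\p S) + \p(\k_Y S)$. Substituting the hypothesis $\p S = P_Y \tM$ yields $S = \k_Y(P_Y \tM) + \p \k_Y S$, so it suffices to identify $\k_Y(P_Y \tM)$ with $E_Y \tM$. The heart of the argument is therefore the computation of $\k_Y$ applied to the prederivative $P_Y \tM$.

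To carry this out, first recall from Definition \ref{def:preder} that $P_Y = E_Y \p + \p E_Y$ when viewed on differential chains, but here $Y$ is only defined near $M$; alternatively, one uses that $\tM$ is a cycle (it represents the closed manifold $M$, so $\p \tM = 0$) to write $P_Y \tM = \p(E_Y \tM)$. Now apply $\k_Y$ and use \eqref{eq:cone} again, this time to the chain $E_Y \tM$: we get $\k_Y \p (E_Y \tM) = E_Y \tM - \p \k_Y (E_Y \tM)$. So the remaining task is to show $\p \k_Y(E_Y \tM) = 0$, equivalently that $\k_Y(E_Y \tM)$ is a cycle, or more directly that $\k_Y \circ E_Y$ annihilates $\tM$. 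Here one exploits the geometry of the cone construction: $\k_Y(J) = F_*(\widetilde{(0,1)} \hat{\times} J)$, and the null-homotopy $F$ was chosen with $F_p'(0) = Y(p)$ on $M$, so the "extra" direction $Y$ introduced by $E_Y$ is exactly the initial cone direction; the resulting $(k{+}2)$-element at each point of $M$ involves $Y(p) \wedge Y(p) \wedge \cdots = 0$ in the relevant top slot after pushforward, forcing $\k_Y E_Y \tM = 0$. A careful coordinate computation as in the proof of Lemma \ref{lem:gamm}, choosing coordinates near each $p \in M$ with $Y = \p/\p x_{n-1}$ and $F$ flowing initially along $Y$, makes this vanishing precise.

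I expect the main obstacle to be the last step: verifying $\k_Y(E_Y \tM) = 0$ (equivalently $\p \k_Y E_Y \tM = 0$) rigorously, because $Y$ and $F$ are only required to have the normal-vector property on $M$ itself, not in a full neighborhood, so one cannot simply invoke an algebraic identity like $E_Y E_Y = 0$ globally. The argument must localize near $M$ and use that $\supp(E_Y \tM) = M$ (Lemma \ref{lem:gamm}) together with the compatibility $F_p'(0) = Y(p)$ for $p \in M$, so that the pushforward $F_*$ of the product chain $\widetilde{(0,1)} \hat{\times} E_Y \tM$ has, at points of $M$, a tangent $(n)$-vector containing two parallel copies of $Y$ and hence vanishes. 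Everything else — the two applications of \eqref{eq:cone} and the identity $P_Y \tM = \p E_Y \tM$ — is formal manipulation using the continuity of the operators established in Section \ref{sec:differential_chains}.
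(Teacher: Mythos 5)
Your proof follows the same route as the paper's: the two applications of the cone identity \( \k_Y\p + \p\k_Y = Id \) (to \( S \) and to \( E_Y\tM \)), the use of \( \p\tM = 0 \) together with \( P_Y = E_Y\p + \p E_Y \), and the reduction of the whole lemma to the single vanishing \( \k_Y E_Y\tM = 0 \) all match what the paper does. Your geometric account of that vanishing --- the initial cone direction agrees with \( Y \) on \( M \), so the pushed-forward \( n \)-vector picks up \( Y\wedge Y \) --- is the same observation the paper compresses into ``\( E_Y^2 = 0 \), write \( \tM \) as a limit of Dirac chains supported on \( M \) and compute directly,'' and the subtlety you flag at the end (that \( F_p'(0)=Y(p) \) only controls the \( t=0 \) slice of the cone, while \( \widetilde{(0,1)}\hat{\times}E_Y\tM \) lives over all of \( (0,1)\times M \) and degeneracy of \( F_{(t,p)*}\bigl(\partial_t\wedge Y(p)\wedge\alpha\bigr) \) for \( t>0 \) needs \( \partial_t F(t,p) \) to remain in \( F_{(t,p)*}(\R Y(p)\oplus T_pM) \) along the entire cone) is equally present in the paper's terse proof, so you have faithfully reproduced its level of rigor rather than introduced a new gap.
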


	\begin{proof}
		It follows from the continuity of \( \k_Y \) and \( E_Y \) and since \( E_Y^2 = 0 \) that \( \k_Y E_Y \tM = 0 \) (write \( \tM \) as a limit of Dirac chains \( A_i \) supported on \( M \) and compute directly \( \k_Y E_Y A_i=0 \).) The result then follows from the relations \( P_Y = E_Y \p + \p E_Y \) (Definition \eqref{def:preder}), and \( \k_Y \p + \p \k_Y = Id \) (Equation \eqref{eq:cone}).
	\end{proof}

	\begin{cor}
		\label{cor:SS}
		If \( \p S = P_Y \tM \), then \( M \subseteq \supp(S) \) and \( \supp(\p\k_Y S) \subset \supp(S) \).
	\end{cor}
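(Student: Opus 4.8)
The plan is to read everything off Lemma \ref{lem:fc} together with the basic support facts already established. Since the hypothesis is exactly \( \p S = P_Y \tM \), Lemma \ref{lem:fc} applies verbatim and gives the decomposition \( S = \p \k_Y S + E_Y \tM \). This single identity, rearranged, will yield both assertions.

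First I would prove \( M \subseteq \supp(S) \). By hypothesis \( \supp(\p S) = \supp(P_Y \tM) \), and Lemma \ref{lem:gamm} identifies the latter as \( M \). On the other hand Lemma \ref{lem:supportboundary} gives \( \supp(\p S) \subseteq \supp(S) \). Chaining these, \( M = \supp(P_Y \tM) = \supp(\p S) \subseteq \supp(S) \), as desired.

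For the second assertion I would rewrite the identity of Lemma \ref{lem:fc} as \( \p \k_Y S = S - E_Y \tM \). Support is subadditive under sums of differential chains (if \( p \) lies outside \( \supp(A) \cup \supp(B) \), pick \( \e>0 \) so that \( \O(p,\e) \) misses both supports; then every form supported in \( \O(p,\e) \) integrates to zero over \( A \) and over \( B \), hence over \( A-B \), so \( p\notin\supp(A-B) \)). Therefore \( \supp(\p\k_Y S) \subseteq \supp(S) \cup \supp(E_Y \tM) \). By Lemma \ref{lem:gamm}, \( \supp(E_Y \tM) = M \), and by the first part \( M \subseteq \supp(S) \); hence \( \supp(\p\k_Y S) \subseteq \supp(S) \cup M = \supp(S) \).

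There is no real obstacle here: the corollary is a formal consequence of Lemmas \ref{lem:gamm}, \ref{lem:supportboundary}, and \ref{lem:fc} plus the elementary subadditivity of the support of a differential chain under addition. The only thing to be slightly careful about is invoking that subadditivity, which is immediate from Definition \ref{def:support} but is not stated as a separate lemma in the excerpt, so I would include the one-line justification above rather than cite it.
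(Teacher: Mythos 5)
Your proof is correct and follows the paper's own route almost verbatim: the first inclusion is exactly the chain \( M = \supp(P_Y\tM) = \supp(\p S) \subseteq \supp(S) \) via Lemmas~\ref{lem:gamm} and~\ref{lem:supportboundary}, and the second inclusion uses Lemma~\ref{lem:fc} rearranged as \( \p\k_Y S = S - E_Y\tM \) together with subadditivity of supports, which is precisely what the paper cites. Your added one-line justification of \( \supp(A+B)\subseteq\supp(A)\cup\supp(B) \) is a harmless bit of extra care; the paper treats it as a known fact.
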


	\begin{proof}
		The first inclusion follows from Lemmas \ref{lem:supportboundary} and \ref{lem:gamm}. The second inclusion follows from Lemma \ref{lem:fc} and the inclusion \( \supp(A+B)\subseteq \supp(A)\cup\supp(B) \).
	\end{proof}

	\begin{cor}
		\label{cor:key}
		Suppose \( S \) satisfies \( \p S = P_Y \tM \), \( \supp(\k_Y S)\subset U \) has empty interior, and \( M \subset \overline{\supp(\k_Y S)\setminus M} \). Then \( \supp(S) = \supp(\k_Y S) \).
	\end{cor}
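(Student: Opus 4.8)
The plan is to assemble the statement from the structural identity of Lemma~\ref{lem:fc} together with the support calculus already in place. Write $K:=\k_Y S$. Since $\p S = P_Y\tM\in\hB_{n-2}^2(U)$, the chain $S$ has degree $n-1$, so $K=\k_Y S\in\hB_n^r(U)$ is an $n$-chain. By hypothesis $\supp(K)\subset U$ has empty interior, so Corollary~\ref{cor:threechains} applies to $K$ and yields the key identity $\supp(\p K)=\supp(K)$. (Note also $K\ne 0$: otherwise $\overline{\supp(K)\setminus M}=\emptyset$, contradicting the nonempty $M\subset\overline{\supp(K)\setminus M}$.)

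First I would record that $M\subseteq\supp(K)$. Indeed, $\supp(K)$ is a closed subset of $\overline U$ (Definition~\ref{def:support}), hence $\overline{\supp(K)\setminus M}\subseteq\overline{\supp(K)}=\supp(K)$; the standing hypothesis $M\subset\overline{\supp(K)\setminus M}$ then forces $M\subseteq\supp(K)$. Next, for the inclusion $\supp(K)\subseteq\supp(S)$: Corollary~\ref{cor:SS} (whose hypothesis $\p S=P_Y\tM$ is exactly what is assumed) gives $\supp(\p K)\subset\supp(S)$, and combining with $\supp(\p K)=\supp(K)$ from the previous step yields $\supp(K)\subseteq\supp(S)$.

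For the reverse inclusion $\supp(S)\subseteq\supp(K)$, apply Lemma~\ref{lem:fc}: $S=\p K+E_Y\tM$, so $\supp(S)\subseteq\supp(\p K)\cup\supp(E_Y\tM)$. By Lemma~\ref{lem:gamm}, $\supp(E_Y\tM)=M$, and we have $\supp(\p K)=\supp(K)$; hence $\supp(S)\subseteq\supp(K)\cup M=\supp(K)$, using $M\subseteq\supp(K)$. The two inclusions give $\supp(S)=\supp(\k_Y S)$. The argument is really just a chaining together of earlier results, so I do not expect a serious obstacle; the only points requiring care are the degree bookkeeping — verifying that $\k_Y S$ is an $n$-chain so that Corollary~\ref{cor:threechains} (stated for $n$-chains whose support has empty interior) genuinely applies — and the minor observation that the third hypothesis implicitly excludes $\k_Y S=0$.
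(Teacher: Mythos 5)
Your proof is correct and, for the forward inclusion $\supp(\k_Y S)\subseteq\supp(S)$, coincides with the paper's (both combine Corollary~\ref{cor:threechains} with Corollary~\ref{cor:SS}). For the reverse inclusion, however, you take a genuinely cleaner route. The paper argues pointwise directly from the definition of support: for $p\in\supp(S)\setminus M$ it shows $p\in\supp(S-E_Y\tM)$ by producing a form $\eta$ near $p$ with $\cint_S\eta\neq0$ and noting that $\cint_{S-E_Y\tM}\eta=0$ would force $p\in\supp(E_Y\tM)=M$; then for $p\in M$ it invokes the density hypothesis $M\subset\overline{\supp(\k_Y S)\setminus M}$ to pass to the limit. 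You instead observe upfront that the density hypothesis, combined with closedness of $\supp(\k_Y S)$, already gives $M\subseteq\supp(\k_Y S)$, and then you conclude in one line from $S=\p\k_Y S+E_Y\tM$ (Lemma~\ref{lem:fc}) via subadditivity of supports and Lemma~\ref{lem:gamm}: $\supp(S)\subseteq\supp(\p\k_Y S)\cup M=\supp(\k_Y S)\cup M=\supp(\k_Y S)$. This avoids the two-case $\e$-argument entirely; what it buys is brevity and a more structural use of the decomposition from Lemma~\ref{lem:fc}, at the cost of nothing — the hypothesis $M\subset\overline{\supp(\k_Y S)\setminus M}$ is used in a weaker form (merely to get $M\subseteq\supp(\k_Y S)$), which suggests the corollary as stated could in fact be proved under the weaker hypothesis $M\subseteq\supp(\k_Y S)$. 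Your side remarks (degree bookkeeping so that Corollary~\ref{cor:threechains} applies to the $n$-chain $\k_Y S$, and the observation that the hypotheses force $\k_Y S\neq0$) are correct but, as you suspected, not load-bearing.
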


	\begin{proof}
		We know \( \supp(\p\k_Y S) = \supp(\k_Y S) \) by Corollary \ref{cor:threechains} and \( \supp(\p\k_Y S) \subseteq \supp(S) \) by Corollary \ref{cor:SS}.

		Suppose \( p \in \supp(S)\setminus M \) and \( p \notin \supp(S - E_Y\tM) \). Then for each \( \e>0 \) such that \( \O(p,\e) \cap \supp(S - E_Y\tM) = \emptyset \), there exists an \( (n-1) \)-form \( \eta \) supported in \( \O(p,\e) \) such that \( \cint_S \eta \ne 0 \). But \( \cint_{S - E_Y\tM} \eta = 0 \) implies \( \cint_{E_Y\tM} \eta \ne 0 \). This shows that \( p \in \supp(E_Y\tM) = M \) by Lemma \ref{lem:gamm}, a contradiction.

		Now suppose \( p \in M \). By assumption, there exist \( p_i \in \supp(S) \setminus M \) with \( p_i \to p \). By the preceding argument, it follows that \( p_i \in \supp(S - E_Y\tM) \). Since this set is closed, we have \( p \in \supp(S - E_Y\tM) \).
	\end{proof}

\section{Borel measures and positive chains}
	\label{sub:positive_borel_measures}
	
	\subsection{The convex cone of positive chains}
		\label{sub:positive_chains}
		
		\begin{defn}
			\label{mass}
			Define the \emph{\textbf{mass}} of \( J \in \hB_k^1(U) \) to be the (possibly infinite) quantity \[ M(J) := \inf\{\liminf \|A_i\|_{B^0}: A_i \to J \mbox{ in } B^1, A_i \in \A_k(U) \}. \]
		\end{defn}

		\begin{prop}
			\label{prop:mass}
			If \( J \in \hB_k^1(U) \), then \( M(J) = \sup\left\{ \cint_J \o: \o \in \B_k^1(U), \|\o\|_{B^0} \le 1 \right\} \).
		\end{prop}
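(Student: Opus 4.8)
The claim is the standard duality formula $M(J) = \sup\{\cint_J \o : \o \in \B_k^1(U),\ \|\o\|_{B^0} \le 1\}$ for the mass of a differential chain. I would prove the two inequalities separately. For the inequality $M(J) \ge \sup\{\cint_J \o : \|\o\|_{B^0} \le 1\}$, suppose $A_i \to J$ in $B^1$ with $A_i \in \A_k(U)$. Given $\o \in \B_k^1(U)$ with $\|\o\|_{B^0} \le 1$, continuity of the pairing $\hB_k^1 \times \B_k^1 \to \R$ (Proposition \ref{thm:isomorphism} and the ensuing remark) gives $\cint_J \o = \lim_i \cint_{A_i} \o$. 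Writing $A_i = \sum_s (p_s; \a_s)$, we have $\cint_{A_i} \o = \sum_s \o(p_s; \a_s) \le \sum_s \|\a_s\| = \|A_i\|_{B^0}$ by definition of comass. Hence $\cint_J \o = \lim_i \cint_{A_i}\o \le \liminf_i \|A_i\|_{B^0}$, and taking the infimum over all such approximating sequences $A_i$ gives $\cint_J \o \le M(J)$; then take the supremum over $\o$.

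For the reverse inequality $M(J) \le \sup\{\cint_J \o : \|\o\|_{B^0} \le 1\}$, I would argue that if the right-hand side — call it $c$ — is finite, then $J$ lies in the $B^1$-closure of the set $\{A \in \A_k(U) : \|A\|_{B^0} \le c\}$, from which $M(J) \le c$ follows by definition (approximate $J$ by Dirac chains of $B^0$-mass at most $c$). The key point is a Hahn–Banach separation argument carried out inside the Banach space $\hB_k^1(U)$: the set $C := \overline{\{A \in \A_k(U) : \|A\|_{B^0} \le c\}}^{B^1}$ is a closed, convex, balanced subset of $\hB_k^1(U)$. If $J \notin C$, Hahn–Banach produces a continuous linear functional on $\hB_k^1(U)$ — i.e., by Proposition \ref{thm:isomorphism}, a form $\o \in \B_k^1(U)$ — separating $J$ from $C$, say $\cint_J \o > 1 \ge \sup_{A \in C} \cint_A \o$. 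Since $C$ contains $\{(p;\a) : \|\a\| \le c,\ p \in U\}$, the bound $\sup_{A \in C}\cint_A \o \le 1$ forces $\o(p;\a) \le 1/c$ whenever $\|\a\| \le 1$ (for every $p \in U$), so $\|\o\|_{B^0} \le 1/c$; rescaling to $\tilde\o := c\,\o$ gives $\|\tilde\o\|_{B^0} \le 1$ yet $\cint_J \tilde\o = c\,\cint_J \o > c$, contradicting the definition of $c$. Hence $J \in C$, and $M(J) \le c$.

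The main obstacle is the bookkeeping in the separation step: one must be careful that the supremum defining mass is over $B^1$-convergent Dirac chains (not arbitrary nets), so the relevant convex set $C$ must be taken as a norm-closure in the Banach space $\hB_k^1(U)$, and one must verify that a point of $C$ actually arises as a $B^1$-limit of Dirac chains with $B^0$-mass $\le c$ (which is immediate since $\A_k(U)$ is dense in that set by construction). A secondary subtlety is the case $c = \infty$, where the inequality $M(J) \le c$ is vacuous, so nothing needs proving; and one should note the infimum in the definition of $M(J)$ is over a nonempty set precisely because $\A_k(U)$ is dense in $\hB_k^1(U)$, so $M(J)$ is well-defined (possibly $+\infty$) for every $J$. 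Everything else is a routine unwinding of the comass definition and the isometry of Proposition \ref{thm:isomorphism}.
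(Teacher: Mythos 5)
The paper does not prove this result: it simply cites Whitney, \cite{whitney}, V, \S 16, Theorem 16A. You have supplied a self-contained argument, so the two ``proofs'' are by nature different.

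Your argument is correct. The easy inequality \( M(J) \ge \sup_{\|\o\|_{B^0}\le 1} \cint_J \o \) follows from continuity of the pairing and the pointwise comass bound \( |\o(p;\a)| \le \|\o\|_{B^0}\|\a\| \), exactly as you write. For the reverse direction, the Hahn--Banach separation of \( J \) from the closed, convex, balanced set \( C := \overline{\{A\in\A_k(U):\|A\|_{B^0}\le c\}}^{\,B^1} \) does the work: if \( J\notin C \) one gets \( \o\in\B_k^1(U) \) (using the isomorphism \( \hB_k^1(U)' \cong \B_k^1(U) \)) with \( \cint_J\o > 1 \ge \sup_{A\in C}\cint_A\o \); testing against the Dirac chains \( \pm(p;\a) \) with \( \|\a\|\le c \) (and using balancedness) yields \( \|\o\|_{B^0}\le 1/c \), and rescaling contradicts the definition of \( c \). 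Hence \( J\in C \) and \( M(J)\le c \), with the degenerate cases \( c=\i \) (vacuous) and \( c=0 \) (forcing \( J=0 \)) handled as you note. Whitney's own proof (the one invoked by the paper) runs through the structure theory of sharp chains of finite mass, representing them by \( k \)-vector-valued set functions of bounded variation and reading off the duality from the Riesz-type representation; your route bypasses that representation theory entirely and uses only the Banach-space duality already recorded in Proposition \ref{thm:isomorphism}, which is arguably cleaner in this context since that isometry is precisely what the paper has at hand. One minor polish: rather than saying the separation gives ``\( \cint_J\o > 1 \ge \sup_{A\in C}\cint_A\o \)'' outright, it is safest to cite the form of Hahn--Banach for a closed convex \emph{balanced} set disjoint from a point, which directly produces \( |\cint_A \o| \le 1 < \cint_J\o \) for \( A \in C \); this sidesteps having to justify the normalization when the raw separating value is not visibly positive.
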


		\begin{proof}
			See \cite{whitney}, V, \S 16, Theorem 16A.
		\end{proof}

		\begin{cor}
			\label{cor:massdirac}
			If \( J \in \hB_k^0(U) \), then \( M(J) = \|J\|_{B^0} \).
		\end{cor}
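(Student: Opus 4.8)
Corollary (to be proved): If $J \in \hB_k^0(U)$, then $M(J) = \|J\|_{B^0}$.

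The plan is to read off both inequalities from Proposition \ref{prop:mass} and the Isomorphism Theorem (Proposition \ref{thm:isomorphism}). Interpreting $M(J)$ as $M(u_k^{0,1}(J))$ for $J\in\hB_k^0(U)$, Proposition \ref{prop:mass} gives
\[
M(J)=\sup\left\{\cint_J\o \;:\; \o\in\B_k^1(U),\ \|\o\|_{B^0}\le 1\right\},
\]
while the case $r=0$ of Proposition \ref{thm:isomorphism} identifies $\B_k^0(U)$ isometrically with the continuous dual of $\hB_k^0(U)$, so by the Hahn--Banach characterization of the norm,
\[
\|J\|_{B^0}=\sup\left\{\cint_J\o \;:\; \o\in\B_k^0(U),\ \|\o\|_{B^0}\le 1\right\}.
\]
Since $\B_k^1(U)\subset\B_k^0(U)$ and the comass norm $\|\cdot\|_{B^0}$ restricts correctly, the first supremum runs over a subset of the index set of the second, giving $M(J)\le\|J\|_{B^0}$ immediately.

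For the reverse inequality it suffices to approximate every bounded form $\o$ with $\|\o\|_{B^0}\le 1$, \emph{in its pairing against the fixed chain $J$}, by forms of class $B^1$ with comass at most $1$: given $\e>0$, if we produce $\eta\in\B_k^1(U)$ with $\|\eta\|_{B^0}\le 1$ and $\left|\cint_J\o-\cint_J\eta\right|<\e$, then $\cint_J\o<M(J)+\e$, and taking the supremum over $\o$ and letting $\e\to 0$ yields $\|J\|_{B^0}\le M(J)$. To build $\eta$, first use density of $\A_k(U)$ in $\hB_k^0(U)$ to choose a Dirac chain $A=\sum_{i=1}^N(p_i;\a_i)$ with $\|J-A\|_{B^0}<\e/2$. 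The $p_i$ are finitely many distinct points of the open set $U$; pick smooth bump functions $\phi_i$ with pairwise disjoint supports inside $U$, with $0\le\phi_i\le1$ and $\phi_i(p_i)=1$, and set $\eta:=\sum_{i=1}^N\phi_i\,\widetilde{\o(p_i)}$, where $\widetilde{\o(p_i)}$ denotes the constant-coefficient $k$-form on $U$ with value the covector $\o(p_i)$. Then $\eta$ is smooth with compact support, hence lies in $\B_k^1(U)$; the disjoint supports together with $\|\o(p_i)\|\le\|\o\|_{B^0}\le1$ force $\|\eta\|_{B^0}=\sup_p\|\eta(p)\|\le1$; and $\eta(p_i)=\o(p_i)$, so $\cint_A\eta=\sum_i\o(p_i)(\a_i)=\cint_A\o$. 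Using $\left|\cint_J\zeta-\cint_A\zeta\right|=\left|\cint_{J-A}\zeta\right|\le\|\zeta\|_{B^0}\|J-A\|_{B^0}$ for $\zeta=\o$ and $\zeta=\eta$,
\[
\left|\cint_J\o-\cint_J\eta\right|\le\left|\cint_J\o-\cint_A\o\right|+\left|\cint_A\eta-\cint_J\eta\right|<\tfrac{\e}{2}+\tfrac{\e}{2}=\e,
\]
as needed.

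The one step that is not pure bookkeeping is the construction of $\eta$: it amounts to the assertion that smooth (a fortiori $B^1$) forms with comass $\le1$ are weak-$*$ dense in the unit ball of $\B_k^0(U)=(\hB_k^0(U))'$, and the disjoint-support interpolation is precisely what lets one match $\o$ at the finitely many points carrying the approximating Dirac chain while respecting the global comass bound. Everything else — the two supremum formulas, the pairing estimate $|\cint_J\zeta|\le\|\zeta\|_{B^0}\|J\|_{B^0}$ at $r=0$, and density of Dirac chains in $\hB_k^0(U)$ — is already available above.
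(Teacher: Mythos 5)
Your proof is correct; the paper states Corollary \ref{cor:massdirac} without an explicit argument, and your write-up is a complete rendering of what is left implicit. The easy inequality \( M(J)\le\|J\|_{B^0} \) follows either by your subset-of-index-set observation or, even more directly from Definition \ref{mass}, by taking a sequence \( A_i\to J \) in \( B^0 \) (hence also in \( B^1 \) via the linking map) and using continuity of the \( B^0 \) norm, so that \( M(J)\le\liminf\|A_i\|_{B^0}=\|J\|_{B^0} \). The real content of the corollary is the reverse inequality, and you correctly identify it as a weak-\( * \) density statement: \( B^1 \) forms of comass \( \le 1 \) must suffice to realize the supremum in the Hahn--Banach dual formula for \( \|J\|_{B^0} \). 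Your disjoint-bump-function interpolation against a Dirac approximant \( A \) is a clean way to get this, and the triangle-inequality estimate through \( A \) is right. One small point worth making explicit: after choosing \( A=\sum_{i=1}^N(p_i;\a_i) \), you should note that WLOG the points \( p_i \) are pairwise distinct (otherwise combine coincident terms), since the disjointness of the \( \phi_i \) supports requires this. With that remark added, the argument is airtight.
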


		\begin{defn}
			A Dirac \( n \)-chain \( A = \sum (p_i;\a_i) \) is \emph{\textbf{positive}} if each \( n \)-vector \( \a_i \) is positively oriented with respect to the standard orientation of \( \R^n \).  We say \( J \in \hB_n^r(U) \) is \emph{\textbf{positive}} if \( J \) is a limit of positive Dirac \( n \)-chains \( A_i \to J \).
		\end{defn}

		\begin{lem}
			\label{lem:fdV}
			If \( r\ge 0 \) and \( J \in \hB_n^r(U) \) is positive, then \( \|J\|_{B^r} = \cint_J dV \).  If \( r = 1 \), then \( M(J) = \cint_J dV \).
		\end{lem}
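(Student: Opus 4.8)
The plan is to reduce both claims to the definitions of the $B^r$-norm, the mass norm, and the integral pairing, exploiting that on a \emph{positive} chain the volume form $dV$ acts as a ``norming functional.'' First I would handle the inequality $\cint_J dV \le \|J\|_{B^r}$, which is the easy direction and holds for every $J\in\hB_n^r(U)$, positive or not: since $dV\in\B_n^0(U)\subset\B_n^r(U)$ has comass $\|dV\|_{B^0}=1$, the continuity estimate $|\cint_J dV|\le\|dV\|_{B^r}\|J\|_{B^r}=\|J\|_{B^r}$ from Proposition \ref{thm:isomorphism} (or the pairing bound just after it) gives it immediately, and the same bound shows $\cint_J dV\le M(J)$ when $r=1$ via Proposition \ref{prop:mass}.

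For the reverse inequality $\|J\|_{B^r}\le\cint_J dV$, I would argue on positive Dirac chains first and then pass to the limit. Take positive Dirac chains $A_i\to J$ in $\hB_n^r(U)$ with each $A_i=\sum_s(p_s;\a_s)$, $\a_s$ positively oriented. For an $n$-vector $\a$ positively oriented with respect to the standard orientation, $\|\a\|_{B^0}=dV(\a)$ (the mass of a top-dimensional vector equals its signed volume when the sign is positive); hence $\|A_i\|_{B^0}=\sum_s\|\a_s\|=\sum_s dV(\a_s)=\cint_{A_i}dV$. Since $dV$ has comass $1$, we trivially have $\|A_i\|_{B^r}\le\|A_i\|_{B^0}=\cint_{A_i}dV$ for every $r\ge 0$ (refining a decomposition by inserting more differences only lowers the infimum, and the trivial decomposition as a sum of $0$-difference chains already realizes $\|A_i\|_{B^0}$). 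Now let $i\to\i$: the left side converges to $\|J\|_{B^r}$ by continuity of the norm, and the right side $\cint_{A_i}dV\to\cint_J dV$ by continuity of the integral pairing $\hB_n^r\times\B_n^r\to\R$. This yields $\|J\|_{B^r}\le\cint_J dV$, and combined with the first paragraph gives equality. For the $r=1$ mass statement, Corollary \ref{cor:massdirac} (or directly $M(A_i)=\|A_i\|_{B^0}=\cint_{A_i}dV$) plus lower semicontinuity of mass under $B^1$-convergence gives $M(J)\le\liminf\|A_i\|_{B^0}=\cint_J dV$, and the opposite inequality is the easy direction above; alternatively, once $\|J\|_{B^1}=\cint_J dV$ is known, one observes the minimizing sequence in Definition \ref{mass} may be taken to be the positive $A_i$, so $M(J)=\cint_J dV$ as well.

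The only genuinely delicate point is the elementary linear-algebra fact that a positively oriented $n$-vector $\a$ in $\R^n$ satisfies $\|\a\|=dV(\a)$, together with the bookkeeping that summing positively oriented vectors at possibly distinct points keeps $\|A_i\|_{B^0}=\cint_{A_i}dV$ — this is immediate from additivity of both sides over the formal sum. The passage to the limit is routine given the continuity statements already recorded (Proposition \ref{thm:isomorphism}, Theorem \ref{thm:compactinclusion}'s ambient setup, and the pairing bound), so I do not expect any obstacle there; the main thing to be careful about is that positivity is genuinely used — for a non-positive chain $\cint_J dV$ can be strictly smaller than $\|J\|_{B^r}$, so the argument must invoke the approximating \emph{positive} Dirac chains and cannot be shortcut.
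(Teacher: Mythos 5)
Your proposal is correct and takes essentially the same route as the paper: establish the equality for positive Dirac chains, then pass to the limit by continuity of the norm and of the pairing $\cint$. The only cosmetic difference is how the inequality $\|A\|_{B^r}\le\cint_A dV$ is obtained on Dirac chains — you invoke the monotonicity $\|A\|_{B^r}\le\|A\|_{B^0}$ (the $B^r$ infimum ranges over a superset of the $B^0$ decompositions), while the paper squeezes both sides against $\sup_{\|\o\|_{B^r}=1}\cint_A\o$ via the isometric duality of Proposition \ref{thm:isomorphism}; both are one-line consequences of the definitions, so this is not a genuine divergence. The $r=1$ mass statement is handled the same way in both.

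One small slip worth fixing: the inclusion you wrote, $\B_n^0(U)\subset\B_n^r(U)$, runs the wrong way — the form spaces \emph{shrink} as $r$ grows (more regularity is demanded). The correct observation is simply that the constant form $dV$ lies in $\B_n^r(U)$ for every $r$ with $\|dV\|_{B^r}=\|dV\|_{B^0}=1$, since all its derivatives vanish. This does not affect the argument.
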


		\begin{proof}
			Let \( A \) be a positive Dirac \( n \)-chain. By Proposition \ref{thm:isomorphism} and Corollary \ref{cor:massdirac}, \[ \|A\|_{B^r} = \sup_{\|\o\|_{B^r}=1}\, \cint_A \o \le \cint_A dV \le \|A\|_{B^r}. \] Since \( J \) is positive there exists a sequence positive Dirac \( n \)-chains \( \{A_i\} \) with \( J = \lim_{i \to \i} A_i \). Thus,
			\begin{align}
				\label{jdv}
				\cint_J dV = \lim_{i \to \i} \cint_{A_i} dV = \lim_{i \to \i} \|A_i\|_{B^r} = \|J\|_{B^r}.
			\end{align}

			For the second part let \( r = 1 \). By Definition \ref{mass}, Proposition \ref{prop:mass}, and Corollary \ref{cor:massdirac} we have \[ M(J) \le \liminf \|A_i\|_{B^0} = \liminf \int_{A_i} dV = \cint_J dV \le M(J). \]
		\end{proof}

		In particular, if \( J \in \hB_n^1(U) \) is positive, then \( M(J) < \i \).
		
		\begin{cor}
			\label{cor:positivecone}
			If \( U \) is bounded and \( J\in \hB_n^r(U) \) is positive, then \( J=u_n^{1,r}(J') \) where \( J'\in \hB_n^1(U) \) is positive.
		\end{cor}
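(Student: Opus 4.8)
The plan is to combine the compactness theorem (Theorem \ref{thm:compactinclusion}) with the fact that the $B^0$- and $B^r$-norms agree on \emph{positive} Dirac $n$-chains. By definition of positivity there is a sequence of positive Dirac $n$-chains $A_i \in \A_n(U)$ with $A_i \to J$ in $\hB_n^r(U)$ (we take $r \ge 1$, so that $u_n^{1,r}$ is defined). For a positive Dirac $n$-chain $A = \sum(p_i;\a_i)$ one has $\cint_A dV = \sum \|\a_i\| = \|A\|_{B^0}$, since every $n$-vector in $\R^n$ is a scalar multiple of $e_1\wedge\cdots\wedge e_n$ and positivity forces that scalar to be nonnegative. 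On the other hand $dV \in \B_n^r(U)$ (it is constant, hence smooth with vanishing derivatives), so the jointly continuous pairing $\hB_n^r\times\B_n^r\to\R$ of Proposition \ref{thm:isomorphism} gives $\cint_{A_i} dV \to \cint_J dV < \infty$. Hence $\|A_i\|_{B^0} = \cint_{A_i} dV$ is a bounded sequence, i.e. $\{A_i\}$ lies in a bounded subset of $\hB_n^0(U)$.

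Since $U$ is bounded, Theorem \ref{thm:compactinclusion} says $u_n^{0,1}:\hB_n^0(U)\to\hB_n^1(U)$ is compact, so $\{u_n^{0,1}(A_i)\}$ is relatively compact in $\hB_n^1(U)$; passing to a subsequence we may assume $u_n^{0,1}(A_{i_k}) \to J'$ in $\hB_n^1(U)$ for some $J'$. Each $A_{i_k}$ is a positive Dirac $n$-chain and $J'$ is their limit in the $B^1$-norm, so $J'$ is positive by definition.

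Finally, by the composition law $u_n^{1,r}\circ u_n^{0,1} = u_n^{0,r}$ and continuity of the linking map $u_n^{1,r}$ (Proposition \ref{prop:inclusion}), $u_n^{1,r}(J') = \lim_k u_n^{1,r}(u_n^{0,1}(A_{i_k})) = \lim_k u_n^{0,r}(A_{i_k}) = J$, the last equality because $\{A_{i_k}\}$ is a subsequence of a sequence converging to $J$ in $\hB_n^r(U)$. Thus $J = u_n^{1,r}(J')$ with $J'$ positive, as claimed (and $J'$ is in fact unique, by injectivity of $u_n^{1,r}$, Proposition \ref{prop:inclusion}). The one point requiring care is that one cannot simply complete $\{A_i\}$ in the $B^0$-norm: a difference $A_i - A_j$ of positive chains need not be positive, so the norm identity fails for it and $\{A_i\}$ need not be $B^0$-Cauchy; the compactness of $u_n^{0,1}$ is exactly what substitutes for Cauchyness, at the cost of passing to a subsequence.
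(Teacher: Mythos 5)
Your proof is correct and follows essentially the same route as the paper: both use the identity $\|A_i\|_{B^0} = \cint_{A_i}\,dV$ for positive Dirac chains (Lemma \ref{lem:fdV}) to bound the approximating sequence in $\hB_n^0(U)$, invoke the compactness of $u_n^{0,1}$ (Theorem \ref{thm:compactinclusion}) to extract a $B^1$-convergent subsequence with limit $J'$, and identify $u_n^{1,r}(J')=J$ by continuity of the linking maps. Your closing remark explaining why $B^0$-completeness cannot be used directly and that compactness substitutes for Cauchyness is a nice clarification, though not strictly needed.
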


		\begin{proof}
			Let \( A_i\to J \) be a sequence of positive Dirac chains converging to \( J \) in the \( B^r \) norm. By Lemma \ref{lem:fdV}, \( \|A_i\|_{B^r}=\|A_i\|_{B^0}=\cint_J dV\to \|J\|_{B^r} \), so \( \{A_i\}\subset \hB_n^0(U) \) is a bounded sequence. By Theorem \ref{thm:compactinclusion}, there exists a subsequence \( A_{i_j}\to J' \) in \( \hB_n^1(U) \). But \( u_n^{1,r}(A)=A \) for any Dirac chain \( A \), hence \( u_n^{1,r}(J')=J \) by continuity of \( u_n^{1,r} \).
		\end{proof}
		
		This lemma shows that all the positive chains lie in \( \hB_n^1(U) \). A similar argument shows:
		
 		\begin{cor}
			\label{cor:positivesequence}
 			If \( U \) is bounded and if \( J_i\to J \) in \( \hB_n^r(U) \) where \( J_i \) are positive, then \( J \) is positive, and writing \( J_i=u_n^{1,r}J_i' \), \( J=u_n^{1,r}J' \) as in Corollary \ref{cor:positivecone}, there is a subsequence \( J_{i_j}'\to J' \) in \( \hB_n^1(U) \).
 		\end{cor}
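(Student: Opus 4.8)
The plan is to follow the proof of Corollary \ref{cor:positivecone} almost verbatim; the only new wrinkle is that the natural lifts of the \( J_i \) live a priori in \( \hB_n^1(U) \) rather than in \( \hB_n^0(U) \), where Theorem \ref{thm:compactinclusion} could be applied directly, so I would first replace them by nearby positive Dirac chains with controlled \( B^0 \) norm.

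First I would apply Corollary \ref{cor:positivecone} to each \( J_i \) to get positive chains \( J_i' \in \hB_n^1(U) \) with \( u_n^{1,r}(J_i') = J_i \), and record the chain of identities \( \|J_i'\|_{B^1} = \cint_{J_i'} dV = \cint_{J_i} dV = \|J_i\|_{B^r} \): the outer two equalities are Lemma \ref{lem:fdV} applied to the positive chains \( J_i' \) and \( J_i \) respectively, while the middle one holds because the linking map \( u_n^{1,r} \) restricts to the identity on Dirac chains and \( dV \) is smooth, so \( \cint_{u_n^{1,r}(J_i')} dV = \cint_{J_i'} dV \). Since \( J_i \to J \) in \( \hB_n^r(U) \) forces \( \|J_i\|_{B^r} \) to converge, this shows the sequence \( \{\|J_i'\|_{B^1}\} \) is bounded.

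Next, for each \( i \) I would choose, using the definition of positivity, a positive Dirac \( n \)-chain \( B_i \in \A_n(U) \) with \( \|B_i - J_i'\|_{B^1} < 1/i \). For a positive Dirac chain one has \( \|B_i\|_{B^0} = \cint_{B_i} dV \) (Lemma \ref{lem:fdV} with \( r = 0 \)), and since \( |\cint_{B_i} dV - \cint_{J_i'} dV| \le \|dV\|_{B^1}\|B_i - J_i'\|_{B^1} < \|dV\|_{B^1}/i \), the sequence \( \{\|B_i\|_{B^0}\} \) is bounded in \( \hB_n^0(U) \). As \( U \) is bounded, Theorem \ref{thm:compactinclusion} lets me pass to a subsequence with \( B_{i_j} \to K \) in \( \hB_n^1(U) \) for some \( K \); then \( \|J_{i_j}' - K\|_{B^1} \le \|J_{i_j}' - B_{i_j}\|_{B^1} + \|B_{i_j} - K\|_{B^1} \to 0 \), so \( J_{i_j}' \to K \) in \( \hB_n^1(U) \) as well, and \( K \) is positive as a limit of positive Dirac chains. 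Since \( u_n^{1,r} \) is continuous (Proposition \ref{prop:inclusion}) and is the identity on Dirac chains, \( u_n^{1,r}(K) = \lim_j B_{i_j} = \lim_j u_n^{1,r}(J_{i_j}') = \lim_j J_{i_j} = J \) in \( \hB_n^r(U) \); in particular \( J = \lim_j B_{i_j} \) is positive, Corollary \ref{cor:positivecone} applies to \( J \), and by injectivity of \( u_n^{1,r} \) the lift \( J' \) it furnishes must equal \( K \). Hence \( J_{i_j}' \to J' \) in \( \hB_n^1(U) \), which is the assertion.

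The step I expect to be the (mild) obstacle is exactly this transfer from \( \hB_n^1(U) \) back to \( \hB_n^0(U) \): boundedness of \( \{J_i'\} \) in the Banach space \( \hB_n^1(U) \) yields no convergent subsequence on its own, and Theorem \ref{thm:compactinclusion} supplies compactness only for \( u_n^{0,1} \), so one is forced to exploit positivity of the \( J_i' \) — namely that they are well approximated by positive Dirac chains whose \( B^0 \) norms equal \( \cint dV \) and are therefore controlled — in order to land in the regime where the compact inclusion actually applies. The remaining content is bookkeeping with the norm identity of Lemma \ref{lem:fdV} and with continuity and injectivity of the linking maps.
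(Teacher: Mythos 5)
Your proof is correct and follows essentially the route the paper has in mind when it says ``a similar argument shows'' after Corollary \ref{cor:positivecone}: approximate the positive lifts \( J_i' \) by positive Dirac chains, observe via Lemma \ref{lem:fdV} that the resulting \( B^0 \) norms are controlled by \( \cint dV \) and hence bounded, invoke the compact inclusion of Theorem \ref{thm:compactinclusion}, and finish with continuity and injectivity of \( u_n^{1,r} \). The bookkeeping with \( \|J_i'\|_{B^1} = \cint_{J_i'} dV = \cint_{J_i} dV = \|J_i\|_{B^r} \), the transfer of the Dirac approximants from \( B^1 \) to \( B^0 \), and the identification \( K = J' \) are all handled correctly.
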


	\subsection{An isomorphism of convex cones}
		\label{sub:an_isomorphism_of_convex_cones}
		The next result is a special case of Theorem 11A of (\cite{whitney}, XI) which produces an isomorphism between sharp \( k \)-chains with finite mass and \( k \)-vector valued countably additive set functions (see \cite{whitney}, XI, \S 2). We specialize to positive chains and Borel measures in the case \( k = n \).

		Let \( \C(U) \) be the convex cone consisting of positive elements \( J \) of \( \hB_n^1(U) \) such that \( \supp(J)\subset U \). Let \( \M(U) \) be the convex cone consisting of all finite Borel measures \( \mu \) on \( U \) such that \( \supp(\mu)\subset U\subset \R^n \) is closed as a subset of \( \R^n \).

 		\begin{prop}
			\label{prop:whitney}
			There exists a bijection \( \Psi: \C(U) \to \M(U) \) with \( J \mapsto \mu_J \) satisfying
			\begin{enumerate}
				\item \label{samenorms} \( \mu_J(U) = \|J\|_{B^1} = \cint_J dV = M(J) \) for all \( J\in \C(U) \);
				\item \label{positivelinear} \( \mu_{aJ+bK}=a\mu_J+b\mu_K \) for all non-negative \( a,b\in \R \) and \( J,K\in \C(U) \);
				\item \label{mumass} \( \mu_A(X) = M(A\lfloor_X) \) for all \( A \in \A_n(U) \) and Borel sets \( X \subset U \);
				\item \label{sameintegrals} \( \int_U \, f\, d \mu_J =  \cint_J \, f\, dV  \) for all \( f \in \B_0^1(U) \) and \( J\in \C(U) \);  
				\item \label{samesupports} \( \supp(J) = \supp(\mu_J) \) for all \( J\in \C(U) \).
			\end{enumerate}
 		\end{prop}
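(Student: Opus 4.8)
The plan is to realize the bijection as the special case $k=n$ of Whitney's Theorem 11A of \cite{whitney}, XI, and then to check that, under this correspondence, the positive cone $\C(U)$ is carried exactly onto the cone $\M(U)$ of non-negative finite Borel measures, together with the five listed properties; the feature that makes the case $k=n$, $U\subseteq\R^n$, special is that every $n$-form on $U$ is of the form $f\,dV$ for a function $f$, so pairing an $n$-chain with an $n$-form amounts, after this identification, to pairing a chain with a function. Concretely, I would build $\Psi$ by hand as follows. Given $J\in\C(U)$, Lemma \ref{lem:fdV} gives $M(J)=\cint_J dV=\|J\|_{B^1}<\infty$, and Proposition \ref{prop:mass} then yields $|\cint_J f\,dV|\le\|f\|_\infty\,M(J)$ for every $f\in\B_0^1(U)$ (note $f\,dV\in\B_n^1(U)$ has comass $\|f\|_\infty$), so $f\mapsto\cint_J f\,dV$ is a bounded linear functional on the sup-norm-dense subspace of $C_c(U)$ consisting of compactly supported Lipschitz functions. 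It is \emph{positive}: writing $J=\lim A_i$ with $A_i=\sum_j(p_j^i;\a_j^i)$ positive Dirac $n$-chains, we have $\cint_{A_i}f\,dV=\sum_j f(p_j^i)\|\a_j^i\|\ge0$ for $f\ge0$, and $\cint_{A_i}f\,dV\to\cint_J f\,dV$ by continuity of the pairing $\hB_n^1(U)\times\B_n^1(U)\to\R$. By the Riesz--Markov theorem this functional is represented by a unique non-negative finite Radon measure $\mu_J$ with $\mu_J(U)\le M(J)$, and we set $\Psi(J):=\mu_J$.

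Properties \ref{sameintegrals} and \ref{samenorms} are then essentially free: \ref{sameintegrals} is the defining identity, and \ref{samenorms} follows by testing against $f\equiv1$ together with Lemma \ref{lem:fdV}. Property \ref{positivelinear} follows from \ref{sameintegrals}, the linearity of $J\mapsto\cint_J f\,dV$, and the uniqueness clause of Riesz--Markov. Property \ref{mumass} is a direct computation on Dirac chains: for positive $A=\sum_j(p_j;\a_j)$ one has $\mu_A=\sum_j\|\a_j\|\,\delta_{p_j}$, so $\mu_A(X)=\sum_{p_j\in X}\|\a_j\|=\|A\lfloor_X\|_{B^0}=M(A\lfloor_X)$ by Corollary \ref{cor:massdirac}. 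For \ref{samesupports}: if $p\notin\supp(J)$ then, for some $\e>0$, no $n$-form of class $B^1$ supported in $\O(p,\e)$ has nonzero period against $J$; in particular $\cint_J f\,dV=0$ for every Lipschitz $f\ge0$ supported there, so $\mu_J(\O(p,\e))=0$ and $p\notin\supp(\mu_J)$; conversely, if $\mu_J(\O(p,\e))=0$ then every $n$-form $\o=f\,dV$ of class $B^1$ supported in $\O(p,\e)$ satisfies $|\cint_J\o|\le\|f\|_\infty\,\mu_J(\O(p,\e))=0$, so $p\notin\supp(J)$. Hence $\supp(\mu_J)=\supp(J)\subset U$ is closed in $\R^n$, which in particular shows $\mu_J\in\M(U)$.

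Finally, bijectivity. Injectivity is immediate from the Isomorphism Theorem (Proposition \ref{thm:isomorphism}): if $\mu_J=\mu_K$ then $\cint_J\o=\cint_K\o$ for all $\o=f\,dV\in\B_n^1(U)\cong\hB_n^1(U)'$, whence $J=K$. For surjectivity, given $\mu\in\M(U)$ I would discretize $\mu$: choose finitely supported non-negative measures $\nu_i=\sum_j c_j^i\delta_{p_j^i}$ with $p_j^i\in U$, $\nu_i(U)=\mu(U)$, and $\nu_i\to\mu$ in the weak-$*$ topology (assign the $\mu$-mass of each cell of a fine grid to a point of $U$ near that cell's intersection with $\supp(\mu)$). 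These $\nu_i$ are realized by the positive Dirac $n$-chains $A_i=\sum_j(p_j^i;\,c_j^i\,e_1\wedge\cdots\wedge e_n)$, for which $\|A_i\|_{B^0}=\mu(U)$; when $U$ is bounded, Theorem \ref{thm:compactinclusion} extracts a subsequence $A_{i_k}\to J$ in $\hB_n^1(U)$ with $J$ positive, and combining continuity of the pairing with the weak-$*$ convergence of the $\nu_{i_k}$ gives $\cint_J f\,dV=\int f\,d\mu$ for every bounded Lipschitz $f$, hence $\mu_J=\mu$; property \ref{samesupports} then yields $\supp(J)=\supp(\mu)\subset U$, so $J\in\C(U)$ (the general, possibly unbounded, case being contained in Whitney's theorem). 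The step I expect to be the main obstacle is exactly this compatibility between $B^1$-convergence of chains and weak-$*$ convergence of the associated measures --- $B^1$-convergence being a priori the weaker of the two --- where the equality $\|A_i\|_{B^0}=M(\lim A_i)$ available for \emph{positive} chains via Lemma \ref{lem:fdV} is what prevents any loss of mass in the limit; apart from that, property \ref{samesupports}, though short, is the only other place where the hypothesis that every top-degree form is $f\,dV$ is really used.
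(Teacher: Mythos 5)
Your proof takes a genuinely different route from the paper. The paper does not actually argue for Proposition \ref{prop:whitney} at all: it simply records that the statement is a specialization of Whitney's Theorem 11A (\cite{whitney}, XI), which gives an isomorphism between sharp $k$-chains of finite mass and $k$-vector--valued countably additive set functions, restricted to positive chains and to $k=n$. You instead build the correspondence directly, using Riesz--Markov for $J\mapsto\mu_J$ and the paper's own Theorem \ref{thm:compactinclusion} (compactness of $u_n^{0,1}$) plus a discretization argument for surjectivity. What your route buys is a proof internal to the paper's framework that also explains exactly why $k=n$ is the useful case (top-degree forms are $f\,dV$, so the chain--form pairing becomes a chain--function pairing). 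What the paper's route buys is brevity and generality: Whitney's theorem does not need $U$ bounded, whereas your surjectivity argument leans on Theorem \ref{thm:compactinclusion}, which is stated only for bounded $U$ — a hypothesis you flag but don't resolve other than by deferring back to Whitney.

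One point that should be tightened, because it recurs throughout your argument: Riesz--Markov produces $\mu_J$ from the restriction of $f\mapsto\cint_J f\,dV$ to $C_c(U)$, but \ref{samenorms}, \ref{sameintegrals} and the injectivity step all need the identity $\int_U f\,d\mu_J=\cint_J f\,dV$ for \emph{every} $f\in\B_0^1(U)$, including $f\equiv 1$, which is neither compactly supported nor approximable by such in $C_0(U)$. The fix is available in the paper's setup: $J\in\C(U)$ means $\supp(J)$ is a closed subset of $\R^n$ contained in the open set $U$, and (for $U$ bounded) is therefore compact; so one can choose a compactly supported Lipschitz cutoff $\chi\equiv1$ on a neighborhood of $\supp(J)$, observe $\cint_J(f-\chi f)\,dV=0$ because $(f-\chi f)\,dV$ vanishes near $\supp(J)$, and reduce to the compactly supported case. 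You should make this step explicit, both because it closes the logical gap between the $C_c$ pairing and the $\B_0^1$ pairing, and because the same compact-support observation is exactly what lets you conclude in the surjectivity step that $\supp(J)\subset U$, hence $J\in\C(U)$, from $\supp(\mu)\subset U$ via \ref{samesupports}.

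Everything else is sound: the positivity of the functional via positive Dirac approximants, \ref{positivelinear} via Riesz uniqueness, the computation \ref{mumass} from $\mu_A=\sum_j\|\a_j\|\delta_{p_j}$, the two implications in \ref{samesupports} (with the inner-regularity argument you only sketch), and injectivity via the Isomorphism Theorem \ref{thm:isomorphism}.
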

		\begin{prop}
			\label{prop:maggi}
		  	Let \( J_i \to J \in \C(U).\) Then
			\begin{enumerate}
				\item \( \mu_{J_i} \) converges weakly to \( \mu_J \);
				\item If \( F \) is closed and \( G \) is open, then \( \mu_J(F) \ge \limsup \mu_{J_i}(F) \) and \( \mu_J(G) \le \liminf \mu_{J_i}(G) \);  
				\item If \( E \) is a Borel set with \( \mu_J(\fr\,E) = 0 \), then \( \mu_J(E) = \lim \mu_{J_i}(E) \).
			\end{enumerate} 
		\end{prop}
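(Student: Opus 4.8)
The plan is to reduce all three assertions to the classical portmanteau theorem for weak convergence of finite Borel measures on a metric space; the only new ingredient is the passage from the chain convergence $J_i\to J$ in $\hB_n^1(U)$ to convergence of integrals against a determining class of test functions, and this comes for free from the isomorphism $\Psi$ of Proposition \ref{prop:whitney}. Indeed, for $f\in\B_0^1(U)$ the $n$-form $f\,dV$ lies in $\B_n^1(U)$ (multiplying a bounded Lipschitz function by the smooth, bounded volume form leaves one in the class of Definition \ref{def:forms}), so by continuity of the jointly continuous bilinear pairing $\hB_n^1(U)\times\B_n^1(U)\to\R$ attached to Proposition \ref{thm:isomorphism} --- explicitly, $|\cint_{J_i-J}f\,dV|\le\|f\,dV\|_{B^1}\,\|J_i-J\|_{B^1}$ --- together with the identity $\int_U f\,d\mu_J=\cint_J f\,dV$ from Proposition \ref{prop:whitney},
\[ \int_U f\,d\mu_{J_i}=\cint_{J_i}f\,dV\;\longrightarrow\;\cint_J f\,dV=\int_U f\,d\mu_J\qquad\text{for every }f\in\B_0^1(U). \]

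Next I would record that the total masses converge: by Proposition \ref{prop:whitney}, $\mu_{J_i}(U)=\|J_i\|_{B^1}$ and $\mu_J(U)=\|J\|_{B^1}$, and $\|\cdot\|_{B^1}$ is continuous on $\hB_n^1(U)$, so $\mu_{J_i}(U)\to\mu_J(U)$. Since $\B_0^1(U)$ is precisely the space of bounded $d_U$-Lipschitz functions on $U$ --- in particular it contains the restrictions to $U$ of all compactly supported smooth functions on $\R^n$, and such functions form a convergence-determining class on the metric space $U$ --- the displayed limit together with the convergence of masses is exactly the statement that $\mu_{J_i}\to\mu_J$ weakly, which is part (1).

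Parts (2) and (3) then follow by the standard portmanteau arguments, which I would include since they are short. For an open set $G\subseteq U$, inner regularity of the finite measure $\mu_J$ and a smooth Urysohn construction give $\mu_J(G)=\sup\{\int_U f\,d\mu_J:f\in\B_0^1(U),\ 0\le f\le 1,\ \text{supported in } G\}$; for each such $f$, part (1) gives $\int_U f\,d\mu_J=\lim_i\int_U f\,d\mu_{J_i}\le\liminf_i\mu_{J_i}(G)$, and taking the supremum over $f$ yields $\mu_J(G)\le\liminf_i\mu_{J_i}(G)$. For a closed $F\subseteq U$, apply this to $G=U\setminus F$ and subtract from $\mu_{J_i}(U)\to\mu_J(U)$ to get $\limsup_i\mu_{J_i}(F)\le\mu_J(F)$. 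Finally, if $E$ is Borel with $\mu_J(\fr\,E)=0$ then $\mu_J(\mathring{E})=\mu_J(\overline{E})=\mu_J(E)$, and applying the open inequality to $\mathring{E}\subseteq E$ and the closed inequality to $E\subseteq\overline{E}$ gives $\mu_J(E)\le\liminf_i\mu_{J_i}(E)\le\limsup_i\mu_{J_i}(E)\le\mu_J(E)$, so $\mu_{J_i}(E)\to\mu_J(E)$.

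I do not expect any real obstacle: the substance is entirely in Proposition \ref{prop:whitney} and the continuity of the chain-form pairing. The only points needing a line of care are that $f\,dV\in\B_n^1(U)$ for $f\in\B_0^1(U)$ (immediate from Definition \ref{def:forms}) and the production of the $\B_0^1(U)$ bump functions in the open-set step, invoking the classical fact that bounded Lipschitz functions determine weak convergence --- both routine, using mollification and the inequality $d_U(p,q)\ge\|p-q\|$, so that Euclidean-Lipschitz functions are automatically $d_U$-Lipschitz. Alternatively one could simply cite the portmanteau theorem and skip the explicit verification of (2) and (3).
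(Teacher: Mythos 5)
Your proof follows the paper's approach exactly: use Proposition~\ref{prop:whitney}\ref{sameintegrals} together with continuity of the pairing to get $\int_U f\,d\mu_{J_i}\to\int_U f\,d\mu_J$ for all $f\in\B_0^1(U)$, and then invoke the Portmanteau theorem (which the paper cites from \cite{maggi}, Proposition 4.26, whereas you spell out the standard argument). The separate appeal to continuity of $\|\cdot\|_{B^1}$ for the total masses is redundant, since the constant function $1$ already lies in $\B_0^1(U)$, but this does no harm.
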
 

		\begin{proof}
		 It follows from Proposition \ref{prop:whitney} \ref{sameintegrals} that \( \int_U \, f\, d \mu_{J_i} \to \int_U \, f\, d \mu_J \) for all \( f\in \B_0^1(U) \). Thus, the Portmanteau theorem\footnote{See, for example, \cite{maggi}, Proposition 4.26.} gives (a)-(c).
		\end{proof}
		
		\begin{defn}
			\label{def:compatible}
			Let \( J \in \C(U) \). A Borel set \( B\subset U \) is \emph{\textbf{\( J \)-compatible}} if \( B \) is a continuity set for \( \mu_J \), i.e. if \( \mu_J(\fr\, B) = 0 \).
		\end{defn}

		Since \( \mu_J \) is finite, it follows that for every \( p\in U \), all but countably many cubes and balls centered at \( p \) are \( J \)-compatible.

		If \( f \) is a bounded positive Borel function and \( J \in \C(U) \) let \( f \cdot J := \Psi^{-1} (f \mu_J) \in \C(U). \) Then \( \|f \cdot J\|_{B^1} = \int_U f d\mu_J \) by Proposition \ref{prop:whitney} \ref{samenorms}. If \( f \in \B_0^1(U) \), then \( f \cdot J = m_f J \) where \( m_f \) is the continuous operator ``multiplication by a function'' (\cite{whitney}, VII, \S 1 (4), XI \S 12 (3), \cite{OC} Theorem 6.1.3.)
  
\section{Basic results of Hausdorff and Hausdorff spherical measures}
	\label{sec:standard_results}
	\setcounter{thm}{0}
	Most of the results in the section are due to Besicovitch \cite{besicovitch,besicovitchI} and are reproduced in the first set of lemmas in \cite{reifenberg}, pp. 11-12. Let \( \diam(X) \) denote the \emph{\textbf{diameter}} of \( X \subset \R^n \) and let \( \a_m \) denote the Lebesgue measure of the unit \( m \)-ball in \( \R^m \). For any \( X \subset \R^n \), \( 0 \le m \le n \) and \( \d > 0 \), define \[ \H_\d^m(X) := \inf \left\{\sum_{i=1}^\i \a_m\left(\frac{\diam(Y_i)}{2}\right)^m: X \subset \bigcup_{i=1}^\i Y_i, Y_i \subset \R^n, \diam(Y_i) < \d \right\}. \] Then \( \H^m(X) := \lim_{\d \to 0}\H_\d^m(X) \) is the \textbf{\emph{(normalized) Hausdorff \( m \)-measure}} of \( X \). Closely related to Hausdorff measure is \emph{\textbf{(normalized) Hausdorff spherical measure}} \( \sp(X) \). This is defined in the same way, except that the sets \( Y_i \) are required to be \( n \)-balls. In particular, \( \H^m(X) \le \sp^m(X) \le 2^m \H^m(X) \). Both are metric outer measures, and so the Borel sets are measurable. 
 	
	For \( m \)-rectifiable sets (see \cite{mattila} Definition 15.3), \( \H^m \) and \( \sp^m \) are the same (Theorem 3.2.26 of \cite{federer}.) Most results which hold for one of these measures also holds for the other. There are two notable exceptions, giving Hausdorff spherical measure the edge for the calculus of variations in situations where it might be undesirable to assume that sets being considered are rectifiable a priori (See Lemmas \ref{lem:3} and \ref{lem:3H}, as well as Lemma \ref{lem:4}.)
 
	\begin{lem}[Vitali Covering Theorem for Hausdorff Measure]
		\label{lem:1}
		Suppose \( \H^m(X)<\i \) and \( \frak{B} \) is a collection of closed \( n \)-balls such that for each \( x\in X \) and \( \e>0 \) there exists a ball \( \bar{\O}(x,\d)\in \frak{B} \) with \( \d<\e \). Then there are disjoint balls \( B_i\in \frak{B} \) such that \( \H^m(X\setminus \cup_i B_i) = 0 \).
	\end{lem}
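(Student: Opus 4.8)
The plan is to run the classical greedy (Vitali) selection of a disjoint subfamily, to trap the uncovered part of \( X \) inside the fivefold enlargements of a tail of the selected balls, and to use the hypothesis \( \H^m(X)<\i \) --- through the classical upper-density estimate of Besicovitch --- to make that tail \( \H^m \)-negligible. First discard from \( \frak B \) every ball of radius \( \ge 1 \); this preserves the fine-cover hypothesis. Then select \( B_1,B_2,\dots \) inductively: given disjoint \( B_1,\dots,B_{k-1} \), put \( X_k:=X\setminus\bigcup_{i<k}B_i \), which is relatively open in \( X \) since the \( B_i \) are closed; if \( \H^m(X_k)=0 \), stop (then \( X\setminus\bigcup_{i<k}B_i=X_k \) is \( \H^m \)-null and we are done); otherwise let \( d_k \) be the supremum of the radii of balls of \( \frak B \) centered at a point of \( X_k \) and disjoint from \( B_1,\dots,B_{k-1} \) --- a positive number, since each point of \( X_k \) lies at positive distance from the closed set \( B_1\cup\cdots\cup B_{k-1} \) and carries arbitrarily small balls of \( \frak B \) --- and choose such a ball \( B_k=\bar{\O}(x_k,\d_k) \) with \( \d_k>d_k/2 \). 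Assume the process never terminates, and note \( d_1\ge d_2\ge\cdots \).

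Suppose first that \( \sum_k\d_k^m<\i \); then \( \d_k\to 0 \), hence \( d_k\to 0 \), and the argument closes. Writing \( 5B_k \) for the ball concentric with \( B_k \) of radius \( 5\d_k \), the claim is that
\[ X\setminus\bigcup_k B_k\ \subseteq\ \bigcup_{k>N} 5B_k\qquad\text{for every }N. \]
Indeed, if \( x \) lies in the left-hand set, then \( x\notin\bigcup_{i\le N}B_i \) (a closed set), so some \( \bar{\O}(x,\rho)\in\frak B \) misses \( B_1,\dots,B_N \); this ball cannot miss every \( B_k \), for otherwise it would be admissible at every stage and \( 0<\rho\le d_k\to 0 \) is absurd; letting \( k \) be least with \( \bar{\O}(x,\rho)\cap B_k\ne\emptyset \) we get \( k>N \), and since \( \bar{\O}(x,\rho) \) misses \( B_1,\dots,B_{k-1} \) we have \( \rho\le d_k<2\d_k \), so \( |x-x_k|\le\rho+\d_k<3\d_k \) and \( x\in 5B_k \). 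Because \( \diam(5B_k)=10\d_k\to 0 \), for each fixed \( \d>0 \) the balls \( \{5B_k\}_{k>N} \) (with \( N \) large) cover \( X\setminus\bigcup_k B_k \) by sets of diameter \( <\d \), whence \( \H_\d^m\big(X\setminus\bigcup_k B_k\big)\le\sum_{k>N}\a_m(5\d_k)^m \); since the left side does not depend on \( N \) while the right side tends to \( 0 \), we get \( \H_\d^m\big(X\setminus\bigcup_k B_k\big)=0 \), and then \( \d\to 0 \) yields \( \H^m\big(X\setminus\bigcup_k B_k\big)=0 \).

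The one genuine obstacle is therefore the summability \( \sum_k\d_k^m<\i \) --- equivalently, ruling out the degenerate ``\( \sum_k(\diam B_k)^m=\i \)'' alternative of the general Vitali covering theorem --- and this is precisely where \( \H^m(X)<\i \) must be used. The device is to strengthen the selection rule so that each chosen \( B_k \) also satisfies \( \H^m(X\cap B_k)\ge c_m\,\a_m\,\d_k^m \) for a dimensional constant \( c_m>0 \). This is legitimate because, by the classical density theorem of Besicovitch (among the first lemmas reproduced on pp.\ 11--12 of \cite{reifenberg}; cf.\ \cite{besicovitch,besicovitchI}), \( \H^m \)-almost every point of the finite-measure set \( X \) has upper \( m \)-density at least \( 2^{-m} \), so after deleting an \( \H^m \)-null subset of \( X \) we may assume every point of \( X \) admits, at arbitrarily small scales, balls of \( \frak B \) realizing such a mass bound (and the containment argument of the previous paragraph goes through verbatim with ``admissible ball'' meaning one with this extra property); the genuinely delicate point is reconciling the radii actually present in \( \frak B \) with the radii that realize the density bound, which is exactly what Besicovitch's covering lemmas are designed to do. With this property in force, disjointness of the \( B_k \) inside \( X \) gives
\[ \sum_k\d_k^m\ \le\ \frac{1}{c_m\a_m}\sum_k\H^m(X\cap B_k)\ \le\ \frac{\H^m(X)}{c_m\a_m}\ <\ \i, \]
which closes the argument. (For bounded \( X \) --- the case relevant to this paper --- one at least gets \( \d_k\to 0 \) for free since the disjoint \( B_k \) all lie in \( \bar{\O}(X,1) \); when \( m=n \) this already bounds \( \sum_k\d_k^n \) by \( \vol(\bar{\O}(X,1))/\a_n \), but for \( m<n \) the density refinement is still needed.)
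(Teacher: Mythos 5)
The paper's own proof is a one-line citation: apply Mattila's Vitali covering theorem for Radon measures (Theorem 2.8 of \cite{mattila}) to the finite Borel --- hence Radon --- measure $\H^m\lfloor_X$. You instead attempt to reprove that theorem from scratch via a greedy $5r$-covering argument. The skeleton is sound: you correctly reduce the problem to the summability $\sum_k\d_k^m<\i$, and your containment $X\setminus\bigcup_kB_k\subseteq\bigcup_{k>N}5B_k$ combined with $\d_k\to0$ does force $\H^m\bigl(X\setminus\bigcup_kB_k\bigr)=0$ once that sum converges. But the summability itself is never established.

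Your device --- deleting an $\H^m$-null set so that every remaining point has, at arbitrarily small scales, balls of $\frak B$ satisfying $\H^m(X\cap B)\ge c_m\a_m\d^m$ --- does not follow from Besicovitch's density theorem (Lemma \ref{lem:3H}). That theorem says $\H^m$-a.e.\ $p$ has $\limsup_{r\downarrow0}\H^m(X\cap\bar{\O}(p,r))/(\a_m r^m)\ge 2^{-m}$: it produces \emph{some} arbitrarily small radii at which the density is large, but says nothing about whether those radii are ones for which $\bar{\O}(p,\d)\in\frak B$. The fine-cover hypothesis only guarantees that $\frak B$ contains \emph{some} small balls centered at $p$, not balls at all small scales, and the ratio $\H^m(X\cap\bar{\O}(p,\d))/\d^m$ can be arbitrarily small at intermediate $\d$ even when the upper density at $p$ equals $1$. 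You flag this reconciliation as ``the genuinely delicate point'' and defer to ``Besicovitch's covering lemmas,'' but the covering lemma Mattila actually deploys to prove his Theorem 2.8 is the bounded-overlap Besicovitch covering theorem (his Theorem 2.7) --- a different device, which controls how many selected balls can stack over a point rather than how much mass each ball of $\frak B$ must capture --- and invoking it abandons the $5r$ route you set out on. So the summability step is a genuine gap; the $5r$ argument suffices for doubling measures such as Lebesgue, but $\H^m\lfloor_X$ is not doubling in general, and the paper sidesteps the difficulty entirely by citing the theorem rather than reproving it.
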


	\begin{proof}
		Apply Theorem 2.8 of \cite{mattila} to the measure \( \mu=\H^m\lfloor_X \), which is a finite Borel measure, hence a Radon measure by Theorem 7.8 of \cite{folland}.
	\end{proof}

	Since \( \H^m \) and \( \sp^m \) are comparable, one can replace \( \H^m \) with \( \sp^m \) in Lemma \ref{lem:1}.
	
	\begin{lem}
		\label{lem:2}
		Suppose \( X\subset \O(X,\e)\subset Y\subset \R^n \) for some \( \e>0 \), and \( f:Y \to Y \) is Lipschitz with Lipschitz constant \( \l \). Then \( \H^m(f(X)) \le \l^m \H^m(X) \) and \( \sp^m(f(X)) \le \l^m \sp^m(X) \).
	\end{lem}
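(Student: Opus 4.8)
The plan is to argue directly from the definitions of \( \H^m_\d \) and \( \sp^m_\d \) as infima over \( \d \)-fine covers, using only the elementary fact that a Lipschitz map with constant \( \l \) scales diameters by at most \( \l \) (and maps a ball of radius \( r \) into a ball of radius \( \l r \)). The hypothesis \( \O(X,\e)\subset Y \) — which looks vacuous since always \( X\subset\O(X,\e) \) — does real work only in the spherical case: it is what lets us keep whole covering balls (rather than intersecting with \( X \)) while still having \( f \) defined on them.

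For the Hausdorff measure bound I would fix \( \d>0 \) and a countable cover \( X\subset\bigcup_i Y_i \) with \( \diam(Y_i)<\d \). Since \( X\subset\O(X,\e)\subset Y \), each \( X\cap Y_i\subset Y \), so \( f \) is defined on it and \( f(X)\subset\bigcup_i f(X\cap Y_i) \) with \( \diam(f(X\cap Y_i))\le\l\,\diam(X\cap Y_i)\le\l\,\diam(Y_i)<\l\d \). Hence \( \H^m_{\l\d}(f(X))\le\l^m\sum_i\a_m(\diam(Y_i)/2)^m \); taking the infimum over all such covers gives \( \H^m_{\l\d}(f(X))\le\l^m\H^m_\d(X)\le\l^m\H^m(X) \), and letting \( \d\to0 \) yields \( \H^m(f(X))\le\l^m\H^m(X) \).

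For the spherical bound I would fix \( 0<\d<\e \) and a cover of \( X \) by balls \( B_i=\O(c_i,r_i) \) with \( \diam(B_i)<\d \); after discarding balls disjoint from \( X \) we may assume each \( B_i \) contains a point \( x_i\in X \). Then every \( z\in B_i \) satisfies \( |z-x_i|\le\diam(B_i)<\d<\e \), so \( B_i\subset\O(X,\e)\subset Y \); in particular \( c_i\in Y \), \( f \) is defined on \( B_i \), and \( f(B_i)\subset\O(f(c_i),\l r_i) \). Thus \( \{\O(f(c_i),\l r_i)\}_i \) covers \( f(X) \) by balls of diameter \( <\l\d \), giving \( \sp^m_{\l\d}(f(X))\le\l^m\sum_i\a_m r_i^m=\l^m\sum_i\a_m(\diam(B_i)/2)^m \); taking the infimum over covers and then \( \d\to0 \) gives \( \sp^m(f(X))\le\l^m\sp^m(X) \).

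I do not expect a genuine obstacle; the proof is routine. The one point meriting care — and the reason for the hypothesis \( \O(X,\e)\subset Y \) — is that in the spherical case one cannot replace a covering ball \( B_i \) by \( B_i\cap X \) (which need not be a ball), so \( f \) must be defined on all of \( B_i \); restricting attention to covers of mesh \( \d<\e \) arranges exactly this, and since \( \sp^m \) is the supremum (equivalently the \( \d\to0 \) limit) of the \( \sp^m_\d \), the restriction costs nothing. If \( \l=0 \) the statement is trivial for \( m\ge1 \), so one may otherwise assume \( \l>0 \) and \( \l\d\to0 \) as \( \d\to0 \).
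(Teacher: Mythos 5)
Your proof is correct, and it is the standard elementary argument; the paper itself gives no proof of this lemma, treating it as a classical fact from Besicovitch/Reifenberg. Both halves are routine and sound: for $\H^m$ you push forward the sets $X\cap Y_i$ directly, and for $\sp^m$ you use the neighborhood hypothesis to keep whole covering balls inside the domain of $f$, so that Lipschitz images of balls land in balls of the correctly scaled radius. Your remark about why the hypothesis $\O(X,\e)\subset Y$ is needed precisely in the spherical case is the right observation: without it one must replace $B_i$ by $f(X\cap B_i)$, which has small diameter but need not sit inside a ball of that diameter, costing a dimensional constant (e.g.\ via Jung's theorem) and losing the sharp factor $\l^m$. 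The only typographical slip is immaterial: in the $\H^m$ part the step ``$X\cap Y_i\subset Y$'' already follows from $X\subset Y$ and does not actually invoke the $\e$-neighborhood, as you implicitly acknowledge.
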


	\begin{lem}
		\label{lem:3}
		Suppose \( \sp^m(X) < \i \). For \( \sp^m \) almost every \( p \in X \), \[ \limsup_{r \downarrow 0}\frac{\sp^m(X \cap \bar{\O}(p,r))}{\a_mr^m} = 1. \]
	\end{lem}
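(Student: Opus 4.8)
The plan is to prove the upper density bound $\limsup_{r\downarrow 0}\sp^m(X\cap\bar\O(p,r))/(\a_m r^m)=1$ for $\sp^m$-a.e.\ $p\in X$ by establishing two complementary inequalities. That the $\limsup$ is $\le 1$ a.e.\ is the subtle direction and is exactly where the \emph{spherical} nature of the measure is used; that it is $\ge 1$ a.e.\ is the soft direction. For the lower bound I would argue by contradiction: let $E_t:=\{p\in X:\limsup_{r\downarrow 0}\sp^m(X\cap\bar\O(p,r))/(\a_m r^m)<t\}$ for some $t<1$, and show $\sp^m(E_t)=0$. The idea is that at every point of $E_t$ one has, for all sufficiently small $r$, a ball $\bar\O(p,r)$ with $\sp^m(X\cap\bar\O(p,r))<t\,\a_m r^m$; such balls form a Vitali-type fine cover of $E_t$. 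Using a covering argument (Lemma~\ref{lem:1} applied to the finite Borel measure $\sp^m\lfloor_X$, or directly the $5r$-covering lemma together with an almost-disjoint subfamily whose enlargements still cover), one covers $E_t$ up to a null set by disjoint such balls, then estimates $\sp^m(E_t)\le\sum_i\sp^m(X\cap\bar\O(p_i,r_i))<t\sum_i\a_m r_i^m$. Since the balls themselves form an admissible spherical cover of $E_t$ of arbitrarily small mesh, $\sum_i\a_m r_i^m$ can be taken within $(1+\e)$ of $\sp^m(E_t)$ (or of $\sp^m_\d(E_t)$, then let $\d\to0$), giving $\sp^m(E_t)\le t(1+\e)\sp^m(E_t)$, hence $\sp^m(E_t)=0$ once $t(1+\e)<1$. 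Letting $t\uparrow 1$ through a sequence completes the lower bound.

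For the upper bound, the point is to control $\sp^m(X\cap\bar\O(p,r))$ from above using an \emph{efficient} spherical cover of $X$ near $p$. Fix $\e>0$ and choose $\d$ small so that $\sp^m_\d(X)<\sp^m(X)+\e$; pick a covering of $X$ by balls $\bar\O(x_i,r_i)$ with $r_i<\d/2$ and $\sum_i\a_m r_i^m<\sp^m(X)+\e$. For each ball let $\mu(\bar\O(x_i,r_i)):=\a_m r_i^m$; spreading this mass around produces a competitor measure $\nu$ on $X$ with $\nu(X)\le\sp^m(X)+\e$ and $\nu\ge\sp^m\lfloor_X$ on a cofinal family of balls in a suitable sense. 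The classical Besicovitch argument (this is precisely the content of the first lemmas in \cite{reifenberg}, pp.~11--12, and of Theorem~2.10.18 in \cite{federer}) then shows that the set of $p\in X$ where $\limsup_{r\downarrow0}\sp^m(X\cap\bar\O(p,r))/(\a_m r^m)>1+\e$ can be covered by balls drawn from this efficient cover with total $\sum\a_m r^m$ bounded, on the one hand, by the efficiency estimate $\le\sp^m(X)+\e$, and on the other hand forced to exceed $(1+\e)$ times the outer measure of that set; comparing gives that this set has measure $\le\frac{\e}{1+\e}\big(\sp^m(X)+\e\big)$, and letting $\e\downarrow0$ shows the exceptional set is $\sp^m$-null. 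It is here that one uses crucially that the covering sets are balls: an admissible spherical cover of $X$ restricts to an admissible spherical cover of $X\cap\bar\O(p,r)$, so the mass assigned to balls meeting $\bar\O(p,r)$ controls $\sp^m(X\cap\bar\O(p,r))$ from above, whereas for general Hausdorff measure no such comparison is available (this is exactly the asymmetry the paper flags in the footnote and in Lemma~\ref{lem:3H}).

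I expect the main obstacle to be the upper bound, specifically the bookkeeping in the Besicovitch-type covering step: one must pass correctly between $\sp^m$ and its pre-measures $\sp^m_\d$, handle the overlaps of balls that meet a fixed ball $\bar\O(p,r)$ without losing the constant (the Besicovitch covering theorem in $\R^n$, with its dimensional overlap bound, is the right tool, but one must check the constants do not spoil the limit $\e\to0$), and verify measurability of the exceptional sets $E_t$ and their upper-density analogues so that the outer-measure estimates are legitimate. The lower bound and the extraction of disjoint Vitali balls are comparatively routine given Lemma~\ref{lem:1}. If a cleaner route is wanted, one can instead cite Theorem~2.10.18(1) of \cite{federer} for the general density bounds and then observe that for Hausdorff \emph{spherical} measure the covering class is exactly balls, which is what upgrades the general ``$\Theta^{*m}\in[2^{-m},1]$''-type statement to the sharp value $1$; but I would prefer to give the self-contained Besicovitch argument since it is short and is the one reproduced in \cite{reifenberg}.
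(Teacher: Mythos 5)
The paper does not prove this lemma; it simply cites \cite{mattila}, Theorem 6.6. Your attempt to give a self-contained Besicovitch-type argument is in the spirit of what Reifenberg does in the first lemmas of \cite{reifenberg}, so comparison is still worthwhile.

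However, you have the roles of the two inequalities reversed. The upper bound $\limsup_{r\downarrow 0}\sp^m(X\cap\bar\O(p,r))/(\a_m r^m)\le 1$ a.e.\ is \emph{not} where the spherical nature of the measure enters; the same bound holds for $\H^m$ (see the paper's Lemma~\ref{lem:3H}, which gives $2^{-m}\le\limsup\le 1$ for Hausdorff measure). What the spherical measure improves is the \emph{lower} bound: for $\H^m$ one only gets $\ge 2^{-m}$, with the $2^{-m}$ coming from the mismatch between arbitrary covering sets of diameter $d$ and balls of radius $d$, whereas for $\sp^m$ the covering class is already balls and the comparison is sharp, giving $\ge 1$. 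You declare the lower bound ``the soft direction'' and invest your care in the upper bound, which is exactly backwards.

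Compounding this, the lower-bound sketch contains a genuine gap. After extracting disjoint Vitali balls $\{B_i=\bar\O(p_i,r_i)\}$ covering $E_t$ up to an $\sp^m$-null set, you assert that ``$\sum_i\a_m r_i^m$ can be taken within $(1+\e)$ of $\sp^m(E_t)$.'' Disjointness bounds $\sum_i\sp^m(X\cap B_i)$ by $\sp^m(X)$, not $\sum_i\a_m r_i^m$; and the fact that the $B_i$ form an admissible spherical cover of (almost all of) $E_t$ gives $\sp^m_\d(E_t)\le\sum_i\a_m r_i^m$, an inequality in the wrong direction. A single disjoint ball in the cover could have $\a_m r_i^m$ far larger than $\sp^m(E_t)$. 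The correct route here is the sharp spherical form of the density comparison theorem (the analogue of Federer 2.10.19 or Mattila Theorem~6.9, with the factor $2^m$ removed because the covering sets are balls and can be compared directly against the balls witnessing low density); that is precisely what Mattila's Theorem 6.6 packages up. Without either invoking that theorem or reproducing its proof, the chain of inequalities you write does not close.
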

	See \cite{mattila} Theorem 6.6.

	Contrast this with the corresponding result for Hausdorff measure:

	\begin{lem}
		\label{lem:3H}
		Suppose \( \H^m(X) < \i \). For \( \H^m \) almost every \( p \in X \), \[ 2^{-m} \le  \limsup_{r \downarrow 0}\frac{\H^m(X \cap \bar{\O}(p,r))}{\a_mr^m} \le 1. \]
	\end{lem}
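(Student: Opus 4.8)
The plan is to prove the two one-sided density bounds separately, each by a covering argument and a contradiction, using nothing beyond the definition of $\H^m$ and the Vitali Covering Theorem (Lemma \ref{lem:1}); this is the classical argument of Besicovitch \cite{besicovitch}, reproduced in \cite{mattila}, Theorem 6.2. Write $\Theta(p) := \limsup_{r \downarrow 0} \H^m(X \cap \bar{\O}(p,r))/(\a_m r^m)$ for the upper density of $\H^m\lfloor_X$ at $p$. It is enough to show that $\H^m\big(\{p \in X : \Theta(p) > t\}\big) = 0$ for each rational $t > 1$ and that $\H^m\big(\{p \in X : \Theta(p) < s\}\big) = 0$ for each rational $s < 2^{-m}$; taking countable unions over such $t$ and $s$ then yields the lemma.

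For the upper bound I would fix $t > 1$, set $A_t := \{p \in X : \Theta(p) > t\}$ and $\d > 0$, and first invoke outer regularity of the finite Borel (hence Radon, as in the proof of Lemma \ref{lem:1}) measure $\mu := \H^m\lfloor_X$ to choose an open $U \supseteq A_t$ with $\H^m(X \cap U) < \H^m(A_t) + \d$. For a parameter $\d' > 0$, the family $\frak{B}$ of closed balls $\bar{\O}(p,r)$ with $p \in A_t$, $r < \d'$, $\bar{\O}(p,r) \subset U$ and $\H^m(X \cap \bar{\O}(p,r)) > t\,\a_m r^m$ is a fine cover of $A_t$, since at each $p \in A_t$ such balls occur with arbitrarily small radii. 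Applying Lemma \ref{lem:1} to $A_t$ gives disjoint $B_i = \bar{\O}(p_i,r_i) \in \frak{B}$ with $\H^m(A_t \setminus \bigcup_i B_i) = 0$; covering $A_t$ by these balls (each of diameter $2r_i < 2\d'$) and using that $\H_{2\d'}^m$ vanishes on $\H^m$-null sets yields
\[
\H_{2\d'}^m(A_t) \le \sum_i \a_m r_i^m < \frac{1}{t}\sum_i \H^m(X \cap B_i) \le \frac{1}{t}\,\H^m(X \cap U) < \frac{1}{t}\big(\H^m(A_t) + \d\big),
\]
where the third inequality uses that the $B_i$ are disjoint subsets of $U$, so that the Borel measure $\mu$ is additive over them. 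Letting $\d' \downarrow 0$ and then $\d \downarrow 0$ gives $\H^m(A_t) \le \tfrac{1}{t}\H^m(A_t)$, whence $\H^m(A_t) = 0$ since $\H^m(A_t) \le \H^m(X) < \i$ and $t > 1$.

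For the lower bound I would fix $s < 2^{-m}$, set $B_s := \{p \in X : \Theta(p) < s\}$, and write $B_s \subseteq \bigcup_{k \ge 1} B_{s,k}$ with $B_{s,k} := \{p \in X : \H^m(X \cap \bar{\O}(p,r)) < s\,\a_m r^m \text{ for all } 0 < r < 1/k\}$, so it suffices to show each $\H^m(B_{s,k}) = 0$. Given $\d > 0$, cover $B_{s,k}$ by sets $Y_i$ with $\diam(Y_i) < 1/k$, each meeting $B_{s,k}$, and with $\sum_i \a_m(\diam(Y_i)/2)^m < \H^m(B_{s,k}) + \d$. Choosing $p_i \in Y_i \cap B_{s,k}$ we have $Y_i \subset \bar{\O}(p_i,\diam(Y_i))$ with $\diam(Y_i) < 1/k$, so the defining property of $B_{s,k}$ gives $\H^m(X \cap Y_i) \le \H^m\big(X \cap \bar{\O}(p_i,\diam(Y_i))\big) < s\,2^m\,\a_m\big(\diam(Y_i)/2\big)^m$; summing and using subadditivity,
\[
\H^m(B_{s,k}) \le \sum_i \H^m(X \cap Y_i) < s\,2^m\big(\H^m(B_{s,k}) + \d\big).
\]
Letting $\d \downarrow 0$ and using $s\,2^m < 1$ together with $\H^m(B_{s,k}) < \i$ forces $\H^m(B_{s,k}) = 0$, and the lower bound follows.

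I expect the main obstacle to be purely bookkeeping rather than conceptual: the regularity step in the upper bound (producing $U$ and controlling $\H^m(X \cap U)$, which relies on $\mu = \H^m\lfloor_X$ being Radon on $\R^n$), together with keeping the Vitali balls of radius $< \d'$ so the estimate can be phrased at the level of $\H_{2\d'}^m$ before passing to the limit. The only geometric content is the inclusion $Y_i \subset \bar{\O}(p_i,\diam Y_i)$ and the factor $2^m$ it costs — which is precisely why the lower constant is $2^{-m}$ rather than $1$.
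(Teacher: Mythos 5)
Your proof is correct and follows exactly the classical Besicovitch density argument that the paper invokes by citing \cite{mattila}, Theorem 6.2, without reproducing it: the upper bound by Vitali covering plus outer regularity of the Radon measure $\H^m\lfloor_X$, and the lower bound by enclosing covering sets $Y_i$ in balls $\bar{\O}(p_i,\diam Y_i)$ at the cost of the factor $2^m$. The only point worth making explicit when you invoke Lemma \ref{lem:1} for $A_t$ is that the balls in your fine cover are centered at points of $A_t\subset X$ and that Mattila's Theorem 2.8 is applied with $\mu=\H^m\lfloor_X$ and $A=A_t$, yielding $\H^m(A_t\setminus\bigcup_i B_i)=0$ since $A_t\subset X$; with that noted, the argument matches the cited source step for step.
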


	See \cite{mattila} Theorem 6.2.

	\begin{lem}
		\label{lem:4}
		Let \( X_h \) be the set of points of \( X \) at a distance \( h \) from a fixed affine subspace of dimension \( m \) in \( \R^n \), \( 0\leq m<n \). Then
		\[ \int_0^\i \sp^{m-1}(X_h) dh \le \sp^m(X). \]
	\end{lem}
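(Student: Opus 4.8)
\textbf{Proof proposal for Lemma \ref{lem:4}.}

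The plan is to reduce the statement to a one-dimensional slicing estimate for Hausdorff spherical measure, exploiting the special feature of spherical coverings. Fix the affine $m$-subspace $V$ and let $\pi:\R^n\to V^\perp$ denote orthogonal projection onto a complementary $(n-m)$-plane through the base point, so that $X_h=\{x\in X:\|\pi(x)\|=h\}$ is the portion of $X$ lying on the sphere of radius $h$ about $V$. First I would set up the comparison at the level of $\d$-approximating measures: given $\e>0$, choose a countable cover $X\subset\bigcup_i B_i$ by balls $B_i=\bar{\O}(c_i,\rho_i)$ with $\diam(B_i)<\d$ and $\sum_i\a_m\rho_i^m\le \sp_\d^m(X)+\e$. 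The key geometric observation is that for each ball $B_i$ and each $h$, the set $B_i\cap X_h$ is contained in $B_i$ intersected with a sphere about $V$, hence projects under any orthonormal coordinates on $V^\perp$ into a set whose diameter is controlled; more to the point, one should cover $B_i\cap X_h$ by balls in $\R^n$ whose radii sum, after integrating in $h$, to something comparable to $\a_m\rho_i^m$.

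The cleanest route is a Fubini/co-area style bookkeeping. For a single ball $B=\bar{\O}(c,\rho)$ write $c=c'+c''$ with $c'\in V$, $c''\in V^\perp$, and $t=\|c''\|$ the distance of the center from $V$. The slice $B\cap X_h$ is nonempty only for $h\in[\,t-\rho,\,t+\rho\,]$, an interval of length $2\rho$, and for such $h$ the slice lies in the set $\{x: \|\pi(x)\|=h\}\cap B$, which has diameter at most $2\rho$ and can be covered by a single $n$-ball of radius $\rho$. Thus $\sp_{\d}^{m-1}(B\cap X_h)\le \a_{m-1}\rho^{m-1}$ for each admissible $h$, whence
\begin{align*}
\int_0^\i \sp_\d^{m-1}(B_i\cap X_h)\,dh \;\le\; 2\rho_i\cdot \a_{m-1}\rho_i^{m-1}\;=\;2\a_{m-1}\rho_i^m.
\end{align*}
Summing over $i$, using $X_h\subset\bigcup_i(B_i\cap X_h)$ and subadditivity of $\sp_\d^{m-1}$, and applying the monotone convergence theorem as $\d\downarrow 0$ gives $\int_0^\i \sp^{m-1}(X_h)\,dh\le \tfrac{2\a_{m-1}}{\a_m}\sp^m(X)$ up to the normalizing constants. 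The remaining work is to verify that the normalization constants in the definitions of $\sp^m$ and $\sp^{m-1}$ are exactly such that the factor collapses to $1$; this amounts to checking that $2\a_{m-1}=\a_m$ is replaced by the correct identity once one does the slicing with the true extremal spherical caps rather than the crude ``one ball of radius $\rho$'' bound — in other words, one should cover $B\cap X_h$ more efficiently by balls of radius $\approx\sqrt{\rho^2-(h-t)^2}$ and integrate $\int_{t-\rho}^{t+\rho}\a_{m-1}(\rho^2-(h-t)^2)^{(m-1)/2}\,dh=\a_m\rho^m$, which is precisely the classical identity relating the volume of the $m$-ball to the integral of its $(m-1)$-dimensional slice radii.

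The main obstacle I anticipate is exactly this last point: getting a \emph{sharp} constant (namely $1$, not merely a dimensional constant) is where the spherical restriction in the definition is essential, and it is the reason the lemma fails for ordinary Hausdorff measure $\H^m$ — an arbitrary covering set $Y_i$ of small diameter need not slice into pieces whose $(m-1)$-content integrates back up to $(\diam Y_i/2)^m$. So the crux is to argue that, without loss of generality, one may take the competitors $Y_i$ in the definition of $\sp^m$ to be balls (which is given), then perform the slicing of a ball by parallel spheres about $V$ and invoke the elementary integral identity above. I would also need a small measurability remark — that $h\mapsto\sp^{m-1}(X_h)$ is Lebesgue measurable — which follows since $\sp_\d^{m-1}(X_h)$ is, for fixed $\d$, an infimum over countable covers and is monotone in $\d$, so the limit is measurable, or alternatively one bypasses measurability entirely by phrasing the conclusion with the upper integral.
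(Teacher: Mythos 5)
Your overall strategy --- cover $X$ by balls of radius $<\delta/2$, slice each ball by the level sets $\{d(\cdot,V)=h\}$, cover each slice by a single ball, and integrate in $h$ --- is the right shape, and it works cleanly when the level sets are affine hyperplanes. But the paper invokes the lemma precisely when they are \emph{not}: in Lemma~\ref{lem:halfradius} one applies it with $V=\{p\}$ a single point (so $m=n-1$ while $\dim V = 0$, which incidentally shows the ``dimension $m$'' in the statement must be a typographical slip), and then the level sets are spheres and the slices $B\cap X_h$ are curved caps. Such a cap is \emph{not} contained in any ball of radius $\sqrt{\rho^2-(h-t)^2}$. Concretely, take $n=3$, $V=\{0\}$, $B=\bar{\O}\bigl((2,0,0),1\bigr)$, so $t=2$, $\rho=1$, and $h=\sqrt 5$. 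Then $B\cap\fr\,\O(0,\sqrt5)=\{x:\lvert x\rvert=\sqrt5,\ x_1\ge 2\}$, a spherical cap whose boundary is the full circle $\{(2,x_2,x_3):x_2^2+x_3^2=1\}$ of radius exactly $1$; no ball of radius less than $1$ contains it, while your formula yields $\sqrt{1-(\sqrt5-2)^2}\approx 0.97$. More generally the circumscribing ball of the cap can have radius as large as $\rho$ (attained at $h=\sqrt{t^2+\rho^2}$), so the ``classical slice-the-ball identity'' you invoke simply does not apply to the true covering radii.

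The actual circumscribing radius (when $t\ge\rho$, so the cap is at most a hemisphere) is $r(h)=\sqrt{\rho^2-(u(h)-t)^2}$ with $u(h)=(h^2+t^2-\rho^2)/(2t)$, and substituting $u$ for $h$ introduces a Jacobian $dh=(t/h)\,du$ whose factor $t/h$ is sometimes larger and sometimes smaller than $1$ on the integration range. Establishing $\int_{t-\rho}^{t+\rho}\alpha_{m-1}\,r(h)^{m-1}\,dh\le\alpha_m\rho^m$ therefore requires a separate, non-obvious inequality (after the shift $w=h-t$ it reduces to $\int_{-\rho}^{\rho}(\rho^2-w^2)^{(m-1)/2}\bigl[\phi(w)^{(m-1)/2}-1\bigr]\,dw\le 0$ with $\phi(w)=\bigl((2t+w)^2-\rho^2\bigr)/(4t^2)$, a function that exceeds $1$ near $w=\rho$), and in higher codimension one must also handle the case $t<\rho$ and the cylindrical level sets of a positive-dimensional $V$. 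None of this appears in your sketch, so the proposal has a genuine gap at exactly the step you flagged as ``the crux.'' Note finally that the paper itself supplies no proof here --- it only cites Reifenberg --- so the issue lies entirely in the blind reconstruction, not in a disagreement with the paper's argument.
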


	This is Lemma 4 in \cite{reifenberg}, p. 12. The inequality does not hold if we replace Hausdorff spherical measure with Hausdorff measure. A counterexample and discussion can be found in \cite{ward}.

	\begin{lem}
		\label{lem:5}
		If \( X \subset \bar{\O}(p,r) \), then the cone \( C_p(X) \) satisfies \[ \sp^m(C_p(X)) \le r 2^m \frac{\a_m}{\a_{m-1}}\sp^{m-1}(X). \] If \( X \) is an \( (m-1) \)-dimensional polyhedron, then \[ \H^m(C_p(X)) \le \frac{r}{m}\H^{m-1}(X). \]  
	\end{lem}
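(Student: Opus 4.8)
The two inequalities call for unrelated arguments. The polyhedral one is an exact computation: triangulate $X$ and use the elementary volume formula for a simplex. The spherical one is a covering argument: push a fine ball covering of $X$ through the cone construction and count the balls needed to cover each resulting ``ice-cream cone.'' I treat them separately.

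\emph{The polyhedral inequality.} Subdivide the $(m-1)$-dimensional polyhedron $X$ into finitely many $(m-1)$-simplices $\s_1,\dots,\s_N$ meeting only along lower-dimensional faces (cells of $X$ of dimension $<m-1$ may be discarded: their cones are $\H^m$-null and they contribute nothing to $\H^{m-1}$), so that $\H^{m-1}(X)=\sum_j\H^{m-1}(\s_j)$. Since the cone of a union is the union of cones, $C_p(X)=\bigcup_j C_p(\s_j)$, and each $C_p(\s_j)$ is the (possibly degenerate) $m$-simplex spanned by $p$ and the vertices of $\s_j$; so subadditivity of $\H^m$ gives $\H^m(C_p X)\le\sum_j\H^m(C_p(\s_j))$. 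The standard volume formula for an $m$-simplex with facet $\s_j$ and opposite vertex $p$ reads $\H^m(C_p(\s_j))=\tfrac1m\,\mathrm{dist}(p,\mathrm{aff}\,\s_j)\,\H^{m-1}(\s_j)$, and $\mathrm{dist}(p,\mathrm{aff}\,\s_j)\le|p-v|\le r$ for any vertex $v$ of $\s_j$, because $\s_j\subset X\subset\bar\O(p,r)$. Summing over $j$ yields $\H^m(C_p X)\le\tfrac rm\sum_j\H^{m-1}(\s_j)=\tfrac rm\H^{m-1}(X)$.

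\emph{The spherical inequality.} We may assume $\sp^{m-1}(X)<\i$. Fix $\d>0$ and a covering $X\subset\bigcup_i\bar\O(x_i,\rho_i)$ with $2\rho_i<\d$ and each ball meeting $X$; then $|x_i-p|\le r+\rho_i$ since $X\subset\bar\O(p,r)$. For $y\in\bar\O(x_i,\rho_i)$ and $t\in[0,1]$ we have $(1-t)p+ty=\bigl((1-t)p+tx_i\bigr)+t(y-x_i)$, which lies within $\rho_i$ of the segment $[p,x_i]$; hence $C_p(\bar\O(x_i,\rho_i)\cap X)\subset C_p(\bar\O(x_i,\rho_i))$ is contained in the $\rho_i$-neighborhood of $[p,x_i]$. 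Putting $N_i+1$ equally spaced points $q_0=p,\dots,q_{N_i}=x_i$ on that segment with $N_i=\lceil|x_i-p|/\rho_i\rceil$ (so consecutive spacing $\le\rho_i$), the triangle inequality shows every point of that $\rho_i$-neighborhood lies within $\tfrac32\rho_i$ of some $q_j$; thus the balls $\bar\O(q_j,\tfrac32\rho_i)$, $0\le j\le N_i$, cover $C_p(\bar\O(x_i,\rho_i)\cap X)$. Their total contribution is $(N_i+1)\a_m(\tfrac32\rho_i)^m\le(\tfrac32)^m\a_m\bigl(r\,\rho_i^{m-1}+3\rho_i^m\bigr)$. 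Summing over $i$, using $\rho_i<\d/2$ to absorb the error, and then taking the infimum over all such coverings of $X$, we obtain $\sp^m_{3\d/2}(C_p X)\le(\tfrac32)^m\a_m\a_{m-1}^{-1}(r+\tfrac32\d)\,\sp^{m-1}(X)$. Letting $\d\downarrow0$ and noting $(\tfrac32)^m<2^m$ gives $\sp^m(C_p X)\le 2^m\,\a_m\a_{m-1}^{-1}\,r\,\sp^{m-1}(X)$.

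\textbf{Main obstacle.} There is essentially no hard step; the only point that needs care is bookkeeping of the error terms $\sum_i\rho_i^m$ — one coming from the tube-length bound $|x_i-p|\le r+\rho_i$, one from the rounding in $N_i=\lceil|x_i-p|/\rho_i\rceil$. These are negligible relative to the surviving terms $r\sum_i\rho_i^{m-1}$ precisely because $\rho_i<\d/2$ forces $\sum_i\rho_i^m<(\d/2)\sum_i\rho_i^{m-1}$, so the errors contribute $O(\d)\,\sp^{m-1}(X)$ and vanish in the limit $\d\downarrow0$. This leaves the stated constant $2^m\a_m/\a_{m-1}$ with room to spare — in fact $(3/2)^m\a_m/\a_{m-1}$ already works, and optimizing the spacing of the $q_j$ along each segment (where the ratio $\a_m/\a_{m-1}$ appears as a cross-sectional volume) would improve it further, but this is not needed.
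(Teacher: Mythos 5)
Your proof is correct, and it supplies a self-contained argument where the paper gives none — the paper just cites Lemmas 5 and 6 of Reifenberg's 1960 Acta paper without reproducing the proof. Both halves of your argument are sound.

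For the polyhedral half, the reduction to $(m-1)$-simplices, the discarding of lower-dimensional cells (whose cones are $\H^m$-null), the subadditivity step, the simplex volume formula $\H^m(C_p\s_j)=\tfrac1m\,\mathrm{dist}(p,\mathrm{aff}\,\s_j)\,\H^{m-1}(\s_j)$, and the height bound $\mathrm{dist}(p,\mathrm{aff}\,\s_j)\le r$ all check out, giving the exact constant $r/m$.

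For the spherical half, the covering argument is the expected one: push a $\d$-cover of $X$ by balls through the cone, tube each $C_p(\bar\O(x_i,\rho_i))$ by $N_i+1\le r/\rho_i+3$ balls of radius $\tfrac32\rho_i$, and observe that the $\rho_i^m$ error terms are $O(\d)$ against the surviving $r\rho_i^{m-1}$ terms because $\rho_i<\d/2$. The bookkeeping you flag is exactly the only place care is needed, and you handle it correctly; the resulting constant $(\tfrac32)^m\a_m/\a_{m-1}$ is even slightly sharper than the stated $2^m\a_m/\a_{m-1}$, which is harmless since the lemma only asserts an upper bound. One cosmetic point: to pass from $\sum_i\a_{m-1}\rho_i^{m-1}$ to $\sp^{m-1}(X)$ you should explicitly choose the initial covering to realize $\sp^{m-1}_\d(X)$ up to an arbitrary $\e>0$ (and use $\sp^{m-1}_\d\le\sp^{m-1}$), but this is the standard infimum dance and does not affect the conclusion. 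Since the paper does not give its own proof, there is no divergence of method to report; your argument is the natural elementary one and is almost certainly the same covering/simplex proof that Reifenberg uses.
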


	See Lemmas 5 and 6 in \cite{reifenberg}. More generally:
	
	\begin{lem}
		\label{lem:6}
		Suppose \( \Pi \) is a \( k \)-dimensional affine subspace of \( \R^n \), \( 0\leq k <n \), and \( X \subset \R^n \) a compact set. Let \( C = \cup_{x \in X} I_x \) where \( I_x \) is the line connecting \( x \) to its orthogonal projection on \( \Pi \). If \( X \subset \O(\Pi,r) \), then \[ \sp^m(C) \le 2^m \frac{\a_m}{\a_{m-1}} r\sp^{m-1}(X). \]
	\end{lem}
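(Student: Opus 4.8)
The plan is a direct covering argument in the spirit of Lemma~\ref{lem:5}. We may assume \( \sp^{m-1}(X)<\infty \), since otherwise there is nothing to prove. After a translation, which changes none of the quantities involved, I would take \( \Pi \) to pass through the origin and write \( \R^n=\Pi\oplus\Pi^\perp \) with \( \dim\Pi=k \); with a point denoted \( (a,b) \), \( a\in\Pi \), \( b\in\Pi^\perp \), the orthogonal projection onto \( \Pi \) is \( \pi(a,b)=(a,0) \), the segment \( I_{(a,b)} \) is \( \{(a,tb):t\in[0,1]\} \), and the hypothesis \( X\subset\O(\Pi,r) \) says precisely that \( \mathrm{dist}(x,\Pi)<r \) for every \( x\in X \).

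For \( \l\in[0,1] \) let \( \Phi_\l:\R^n\to\R^n \) be the linear map \( \Phi_\l(a,b):=(a,\l b) \). The two facts I would build on are that \( \Phi_\l \) is \( 1 \)-Lipschitz (it is the orthogonal sum of the identity on \( \Pi \) and the contraction \( b\mapsto\l b \) on \( \Pi^\perp \)) and that \( C=\bigcup_{\l\in[0,1]}\Phi_\l(X) \). Fix \( \e>0 \) and \( \d>0 \). Using \( \sp^{m-1}(X)=\lim_{\d\to0}\sp_\d^{m-1}(X) \), I would choose a countable cover \( \{B_i=\bar{\O}(c_i,\rho_i)\}_i \) of \( X \) by balls with \( 2\rho_i<\d \) and \( \sum_i\a_{m-1}\rho_i^{m-1}\le\sp^{m-1}(X)+\e \), discarding any \( B_i \) disjoint from \( X \).

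The geometric heart of the proof is that each piece \( C_i:=\bigcup_{x\in X\cap B_i}I_x \) lies in a thin tube. Since \( I_x=\{\Phi_t(x):t\in[0,1]\}\subset\bigcup_t\Phi_t(B_i) \) for \( x\in B_i \), and \( \Phi_t \) is \( 1 \)-Lipschitz, we get \( C_i\subset\bigcup_{t\in[0,1]}\bar{\O}(\Phi_t(c_i),\rho_i) \); moreover \( t\mapsto\Phi_t(c_i) \) parametrizes the straight segment \( \s_i \) from \( \pi(c_i) \) to \( c_i \), whose length is \( \mathrm{dist}(c_i,\Pi)\le\mathrm{dist}(x,\Pi)+\rho_i<r+\rho_i \) for any \( x\in X\cap B_i \). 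Thus \( C_i \) lies in the \( \rho_i \)-neighborhood of a segment of length \( <r+\rho_i \), which is covered by at most \( \lceil(r+\rho_i)/(2\rho_i)\rceil+1 \) balls of radius \( 2\rho_i<\d \) spaced along \( \s_i \). Because \( C=\bigcup_iC_i \), summing over \( i \) the number of these balls times \( \a_m(2\rho_i)^m \), and using \( \sum_i\rho_i^{m-1}\le\a_{m-1}^{-1}(\sp^{m-1}(X)+\e) \) together with \( \sum_i\rho_i^m\le\tfrac{\d}{2}\sum_i\rho_i^{m-1} \), one obtains
\[ \sp^m_{2\d}(C)\ \le\ \frac{2^{m-1}\a_m}{\a_{m-1}}\Big(r+\tfrac{\d}{2}\Big)\big(\sp^{m-1}(X)+\e\big)\ +\ \frac{2^{m}\a_m}{\a_{m-1}}\,\d\,\big(\sp^{m-1}(X)+\e\big). \]
Letting \( \d\to0 \) and then \( \e\to0 \) yields \( \sp^m(C)\le\frac{2^{m-1}\a_m}{\a_{m-1}}\,r\,\sp^{m-1}(X)\le 2^m\frac{\a_m}{\a_{m-1}}\,r\,\sp^{m-1}(X) \).

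I do not expect a real obstacle; the delicate points are purely bookkeeping. The cover must consist of balls, so that it is genuinely \( \sp^m \) (and not merely \( \H^m \)) being estimated; and the limits must be taken as \( \d\to0 \) first, then \( \e\to0 \), with the radii \( 2\rho_i<\d \) of the covering balls controlling \( \sp^m_{2\d} \), so that the \( O(\d) \) errors — from the \( \rho_i^m \) terms and from the ``\( +1 \)'' endpoint ball in each tube — vanish. The one genuinely useful structural point is that \( \Phi_\l \) is a contraction: this is what keeps each tube \( C_i \) of radius \( \rho_i \), rather than allowing it to fan out as \( \l \) varies, and is exactly what makes the constant come out below \( 2^m\a_m/\a_{m-1} \).
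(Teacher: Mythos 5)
Your proof is correct. The paper gives no argument of its own for this lemma — it simply cites Lemma 6 of Reifenberg — and your direct covering argument is the natural one, with the essential structural point (that the contractions \( \Phi_\l \) are \( 1 \)-Lipschitz so each \( C_i \) stays in a tube of radius \( \rho_i \) rather than fanning out) handled cleanly, and the ball-count and limiting bookkeeping done carefully enough to respect the spherical-measure definition. As a minor bonus, spacing balls of radius \( 2\rho_i \) at intervals \( 2\rho_i \) along each central segment actually yields the sharper constant \( \frac{2^{m-1}\a_m}{\a_{m-1}} \), from which the stated bound with \( 2^m \) follows a fortiori.
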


	See Lemma 6 of \cite{reifenberg}. 

\section{Spanning Sets}
	\label{sec:spanning_sets} 
	\setcounter{thm}{0}
	Please refer to the introduction for our definition of ``span.'' Here we give some examples and prove some results needed to work with this definition. In this section, \( M \) is a compact oriented, embedded \( m-1 \)-dimensional submanifold. Unless otherwise noted, we assume \( 1\leq m\leq n \).
 
	\begin{prop}
		\label{prop:noreifenberg}
		For any group \( G \) of coefficients (compact or otherwise,) the kernels of the homomorphisms \( {\iota_j}_* : \check{H}_1(M;G)\to \check{H}_1(X_j;G) \) induced by the injection maps \( \iota_j: M\hookrightarrow X_j \), \( j=1,2,3 \) in Figure \ref{fig:catenoid} have a trivial intersection. Thus, there is no non-trivial collection of Reifenberg spanning sets which contains all three simultaneously. 
	\end{prop}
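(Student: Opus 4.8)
The plan is to compute each induced map $(\iota_j)_*$ on a fixed basis of $\check H_1(M;G)$ and then intersect the three kernels directly. Since $M$ is a compact $1$-manifold with three components $C_1,C_2,C_3$, we have $\check H_1(M;G)\cong G^{3}$, with the $i$-th summand generated by the fundamental class $[C_i]$; and since each $X_j$ is a finite CW complex, \v{C}ech homology agrees with singular homology, so ordinary computations apply. First I would fix an orientation of $M$, equivalently the ordered basis $[C_1],[C_2],[C_3]$, and work throughout with respect to it; because the three circles are coaxial, the natural choice makes every coefficient below equal to $+1$.

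Next I would read off, from Figure \ref{fig:catenoid}, the group $\check H_1(X_j;G)$ and the map $(\iota_j)_*$ for each $j$. The set $X_1$ is a cylinder carrying $C_1$ and $C_3$ as its two boundary circles and $C_2$ as an interior latitude circle; hence $\check H_1(X_1;G)\cong G$, generated by the core circle, and all three classes $[C_i]$ map to that generator, so $\ker(\iota_1)_*=\{(a,b,c)\in G^{3} : a+b+c=0\}$. Each of $X_2$ and $X_3$ is a disjoint union of a cylinder and a disc, so again $\check H_1(X_j;G)\cong G$: exactly one of $C_1,C_2,C_3$ bounds the disc component (its class maps to $0$) while the remaining two are the ends of the cylinder (their classes map to the generator). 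Thus $\ker(\iota_2)_*$ and $\ker(\iota_3)_*$ are each cut out by a single equation of the form $(\text{one coordinate})+(\text{another coordinate})=0$, and — since $X_2\ne X_3$ forces different circles to bound the discs — the two equations involve different pairs of coordinates. After relabeling if needed, these kernels are $\{a+b=0\}$ and $\{b+c=0\}$ (any other choice of two distinct pairs gives the same conclusion below).

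I would then intersect: $(a,b,c)\in\bigcap_{j=1}^{3}\ker(\iota_j)_*$ iff $a+b+c=0$, $a+b=0$ and $b+c=0$. Subtracting the second equation from the first gives $c=0$; the third then gives $b=0$; and the second gives $a=0$. This elimination uses only the abelian group operations, with no division, so it is valid for every coefficient group $G$, compact or not; hence $\bigcap_{j=1}^{3}\ker(\iota_j)_*=\{0\}$, which is the first assertion. For the ``Thus'': a collection of Reifenberg spanning sets is specified by a group $G$ together with a subgroup $L\le\check H_1(M;G)$, with $X$ in the collection exactly when $L\subseteq\ker(\iota_X)_*$, and the collection is non-trivial precisely when $L\ne 0$. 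If $X_1,X_2,X_3$ all lie in it, then $L\subseteq\bigcap_{j=1}^{3}\ker(\iota_j)_*=\{0\}$, so $L=0$ and the collection is trivial.

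I do not expect a serious obstacle here beyond bookkeeping: the only points requiring care are reading the three maps $(\iota_j)_*$ off the figure with a single consistent orientation of $M$, and observing that the three resulting relations consist of one equation coupling all of $a,b,c$ together with two equations coupling different pairs — a configuration whose only common solution is $a=b=c=0$ in an arbitrary abelian group. No analytic input is involved.
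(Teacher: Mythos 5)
Your proof is correct and follows essentially the same route as the paper: identify $\check{H}_1(M;G)\cong G^{3}$ and $\check{H}_1(X_j;G)\cong G$, read the maps $(\iota_j)_*$ off the figure as $(a,b,c)\mapsto a+b+c$, $a+b$, $b+c$, and observe that the three kernels meet only at $0$. The extra remarks you include — that \v{C}ech and singular homology agree here, and that the elimination avoids division and so works over any abelian $G$ — are sound justifications the paper leaves implicit, but they do not change the argument.
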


	\begin{proof}
		The relevant groups are: \( \check{H}_1(M;G)\simeq G\oplus G\oplus G \); \( \check{H}_1(X_j;G)\simeq G \), \( j=1,2,3 \). The homomorphism \( {\iota_1}_* \) is given by \( {\iota_1}_*(a,b,c)=a+b+c \), likewise \( {\iota_2}_*(a,b,c)=a+b \), and \( {\iota_3}_*(a,b,c)=b+c \). Thus, if \( (a,b,c) \) is in the kernel of all three homomorphisms, then \( a=b=c=0 \).
	\end{proof}
	 
	\begin{lem}
		\label{prop:cone2}
		The cone \( C_q(M) \) spans \( M \) for all \( q \in \R^n \).
	\end{lem}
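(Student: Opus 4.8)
The goal is to show that the cone $C_q(M)$ over $M$ with vertex $q$ meets every simple link of $M$. Recall that a simple link $S$ is an embedded circle in $\R^n \setminus M$ with $|L(S, M_i)| = 1$ for exactly one component $M_i$ of $M$ and $L(S, M_j) = 0$ for the rest. The plan is to argue by contradiction: suppose $S$ is a simple link disjoint from $C_q(M)$, and derive that all the relevant linking numbers must vanish, contradicting $|L(S, M_i)| = 1$.

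First I would set up linking numbers via intersection theory. The cone $C_q(M) = \bigcup_{p \in M} [q,p]$ is (generically in the choice of $q$, or after a small perturbation of $S$ which does not change any linking number) a singular $(n-1)$-chain whose boundary, in the algebraic sense, is $M$ together with pieces on the cone's ``seams''; the key point is that $\partial C_q(M)$, as a cycle, is homologous to $M$ in $\R^n$ (indeed $C_q(M)$ exhibits each component $M_i$ as bounding, up to the lower-dimensional cone over $\partial M_i = \emptyset$, so in fact $\partial C_q(M) = M$ as chains once one is careful, since $M$ is closed). Then for any loop $S$ disjoint from $C_q(M)$, the linking number $L(S, M_i)$ equals the intersection number of $S$ with a Seifert-type chain bounding $M_i$; taking that chain to be the portion $C_q(M_i)$ of the cone (valid since $\partial C_q(M_i) = M_i$), and using $S \cap C_q(M) = \emptyset$, every intersection number $S \cdot C_q(M_i)$ is zero. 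Hence $L(S, M_i) = 0$ for all $i$, contradicting the definition of a simple link.

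The step I expect to require the most care is making the intersection-theoretic computation of the linking number rigorous when $C_q(M)$ is not a manifold: the cone has singularities at the vertex $q$ and along lower-dimensional strata, so one cannot naively invoke transversality. I would handle this either by (i) choosing $q$ outside a measure-zero bad set so that $S$ is disjoint from the singular set of $C_q(M)$ and transverse to its smooth part — but since we are \emph{assuming} $S$ is disjoint from \emph{all} of $C_q(M)$, transversality is automatic and the intersection number is simply $0$; or (ii) working at the level of homology: $C_q(M_i)$ defines a class in $H_{n-1}(\R^n, M_i)$ whose boundary is the fundamental class $[M_i] \in H_{n-2}(M_i)$, and $L(S, M_i)$ is by definition (Alexander duality) the image of $[S] \in H_1(\R^n \setminus M_i)$ under the map dual to capping with this relative class, which vanishes when $S$ lies in the complement of a representative of the relative cycle. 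Approach (ii) is cleaner and avoids genericity entirely. A minor auxiliary point is that one should note $C_q(M) \supseteq M$ is compact, so it is a legitimate element of $\Span(M)$ once the spanning property is verified — but compactness of $C_q(M)$ is clear since it is the continuous image of the compact set $[0,1] \times M$ under $(t,p) \mapsto (1-t)p + tq$.
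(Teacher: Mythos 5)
Your proposal is correct and gives a valid proof, but it takes a genuinely different route from the paper's. The paper works directly with the Gauss-map (degree) definition of linking: writing $L(M_i,N)$ as the degree of $f\colon M_i\times N\to S^{n-1}$, $(s,r)\mapsto (s-r)/\|s-r\|$, it observes that when $N$ is disjoint from $C_q(M)$ the map $f$ factors through $C_q(M)\times N$, and since the cone is contractible the K\"unneth theorem forces the induced map on $H_{n-1}(M_i\times N)$ to vanish, so the degree is zero. You instead use the Seifert-chain / intersection characterization: take $C_q(M_i)$ itself, with $\partial C_q(M_i)=M_i$, as the bounding chain, and note that disjointness kills the intersection number. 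Both proofs ultimately exploit the cone's homological triviality, but your route bypasses the Gauss map and K\"unneth argument. Your variant (ii) is the cleaner of your two and can be stated crisply: contractibility gives $\check H^{n-2}(C_q(M_i))=0$, so Alexander duality gives $H_1(\R^n\setminus C_q(M_i))=0$; since $S\subset \R^n\setminus C_q(M_i)$, the class $[S]$ already dies there, hence maps to $0$ in $H_1(\R^n\setminus M_i)$, i.e.\ $L(S,M_i)=0$. One small cleanup: your aside about the cone's boundary picking up contributions from the ``seams'' is unnecessary --- $C_q(M_i)$ is the push-forward of the singular chain $[0,1]\times M_i$ under the coning map, and since $\partial M_i=\emptyset$ the algebraic boundary is $M_i$ plus a degenerate (hence trivial) simplex at $q$, so $\partial C_q(M_i)=M_i$ with no correction terms. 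Finally, note that the paper's route has a side benefit it makes explicit in the remark following the lemma: the argument only uses that $H_{n-2}(M_i;\Z)\to H_{n-2}(C_q(M);\Z)$ is zero, which connects this linking-number spanning condition directly to the Reifenberg-style homological spanning conditions. Your argument generalizes too (to any compact set containing $M_i$ with vanishing $\check H^{n-2}$, or to any chain bounding $M_i$), but the paper's phrasing makes the bridge to the classical setup more transparent.
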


	\begin{proof}
		If not, there is a simple link \( N \) of \( M \) such that \( N\cap C_q(M)=\emptyset \). Let \( M_i \) be the connected component of \( M \) with \( L(M_i,N)\neq 0 \), and recall \( L(M_i,N) \) is the degree of the map
		\begin{align*} 
			f: M_i\times N &\to S^{n-1}\\
			(s,r) &\mapsto \frac{s-r}{\|s-r\|}.
		\end{align*}
		Since \( C_q(M) \) is disjoint from \( N \), the map \( f \) factors as 
		\[
			f=g\circ h: M_i\times N \mathop{\longrightarrow}^h C_q(M)\times N \mathop{\longrightarrow}^g S^{n-1}
		\]
		where \( h(s,r)=Id \) and \( g(p,r)=(p-r)/\|p-r\| \). So, it suffices to show that \( h_* \) is trivial on \( H_{n-1}(M_i\times N; \Z)\simeq H_{m-1}M_i \otimes H_{n-m} N \). Write \( h=\iota\times Id \), where \( \iota: M_i\to C_q(M) \) is given by \( \iota(s)=s \). Since \( \iota_* :H_{m-1} (M_i;\Z)\to H_{m-1}C_q(M);\Z) \) is itself trivial, it follows from naturality of the K\"{u}nneth formula that \( h_* =0\).
	\end{proof}

	In the above lemma, the only property of \( C_q(M) \) needed is that the map on homology \( i_*: H_{m-1}(M;\Z) \to H_{m-1} (C_q(M);\Z) \) induced by the inclusion \( \iota:M\hookrightarrow C_q(M) \) is trivial. So, any other compact set \( X \) containing \( M \) with that condition on homology will also span \( M \). Note that \(M\subset X \) if \( X \) spans \( M \). We can say slightly more, that \( M\subset \overline{X\setminus M} \). 
	
	Now suppose \( M \) is an \( n-2) \)-sphere in \( \R^n \). A compact set \( X \) spans \( M \) in the sense of Reifenberg's Theorem 2 of \cite{reifenberg} if \( X \) contains \( M \) and there is no retraction of \( X \) onto \( M \). The next proposition shows that spanning in the sense of Reifenberg's Theorem 2 implies spanning in our sense.
	
	\begin{prop}                   
		\label{prop:equivalences} 
		Suppose \(  M\subset \R^n \) is homeomorphic to an \( (n-2) \)-sphere and \( X\supset M \) is compact, of dimension \( ≤ n-1 \) (or sufficiently, of finite Hausdorff \( (n-1) \)-measure.) If there is no retraction \( X \to M \), then \( X \) spans \( M \).
 	\end{prop}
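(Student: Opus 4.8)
The plan is to prove the contrapositive: if \( X \) does not span \( M \), then there is a retraction \( X \to M \). Failure to span means there is a simple link \( S \) of \( M \)---an embedded circle in \( \R^n \setminus M \) with \( |L(S,M)| = 1 \)---with \( S \cap X = \emptyset \). Then \( S \) is a compact set disjoint from both \( X \) and \( M \), and \( X, M \subset U := \R^n \setminus S \). First I would build a retraction \( r : U \to M \); restricting it to \( X \subseteq U \) then gives a retraction \( X \to M \), the desired contradiction. Note the strategy uses nothing about \( X \) beyond \( X \subseteq U \).

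To build \( r \), the key point is that ``\( S \) is a simple link of \( M \)'' says precisely that the inclusion \( \iota : M \hookrightarrow U \) carries the fundamental class of \( M \cong S^{n-2} \) to a generator of \( H_{n-2}(U;\Z) \): by Alexander duality \( H_{n-2}(\R^n \setminus S;\Z) \) is infinite cyclic, the isomorphism to \( \Z \) being linking number with \( S \), and \( [M] \mapsto L(M,S) = \pm 1 \). From this I would produce a map \( h : U \to M \) with \( h \circ \iota \simeq \mathrm{id}_M \), in two cases. If \( n \ge 4 \), then \( S \) is unknotted (codimension at least three), so \( U \) is simply connected with \( \widetilde{H}_\ast(U;\Z) \) free and supported in degrees \( n-2 \) and \( n-1 \); hence by Hurewicz and Whitehead \( U \) is homotopy equivalent to \( S^{n-2} \vee S^{n-1} \) via a map restricting to \( \iota \) on the lower summand, and composing with the collapse \( S^{n-2} \vee S^{n-1} \to S^{n-2} \cong M \) gives \( h \) with \( h \circ \iota \simeq \mathrm{id}_M \). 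If \( n = 3 \), then \( M \cong S^1 \) is aspherical, so I would instead use \( [U,S^1] \cong H^1(U;\Z) \cong \operatorname{Hom}(H_1(U;\Z),\Z) \cong \Z \): a map \( h : U \to S^1 \cong M \) classifying a generator makes \( h \circ \iota \) a degree \( \pm 1 \) self-map of \( S^1 \), hence \( h \circ \iota \simeq \mathrm{id}_M \). In either case \( \iota : M \hookrightarrow U \) is a closed cofibration (a smooth closed submanifold of a manifold), so the homotopy extension property converts \( h \) into a genuine retraction \( r : U \to M \) with \( r \circ \iota = \mathrm{id}_M \).

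The hard part will be pinning down the homotopy type of \( U = \R^n \setminus S \), that is, controlling whether \( S \) is knotted. For \( n \ge 4 \) this is a non-issue: every circle in \( \R^n \) is unknotted and \( U \) is simply connected, so the Hurewicz/Whitehead computation is clean. For \( n = 3 \) the knot group of \( S \) can be arbitrarily complicated, and the point is to sidestep it entirely by exploiting only that \( S^1 \) is an Eilenberg--MacLane space, so that a retraction onto \( M \) is detected purely by \( H^1 \). A secondary technical point is to make the identification of ``\( |L(S,M)|=1 \)'' with ``\( [M] \) generates \( H_{n-2}(U;\Z) \)'' precise, matching the linking-number pairing to Alexander duality. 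Since \( X \) enters only through the inclusion \( X \subseteq U \), this argument does not in fact use the hypothesis on the dimension (or Hausdorff measure) of \( X \) for this implication.
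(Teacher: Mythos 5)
Your proof is correct and takes a genuinely different route from the paper's. The paper works on \( X \) directly: a simple link \( \eta \) disjoint from \( X \) defines a class in \( H_1(\R^n\setminus X) \), which Alexander duality converts to a \v{C}ech class in \( \check{H}^{n-2}(X) \) extending the fundamental cocycle of \( M \); the Hopf extension theorem---which is exactly where the hypothesis \( \dim X\le n-1 \) (or finite \( \H^{n-1} \) measure) enters---then realizes this cocycle as a retraction \( X\to M\cong S^{n-2} \). You instead produce the retraction on the entire complement \( U=\R^n\setminus S \) of the simple link and restrict to \( X\subset U \), which moves the burden from obstruction theory on \( X \) to controlling the homotopy type of \( U \). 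Your Hurewicz/Whitehead identification \( U\simeq S^{n-2}\vee S^{n-1} \) for \( n\ge 4 \) (valid since a smooth circle in codimension \( \ge 3 \) is unknotted, making \( U \) simply connected with free homology concentrated in degrees \( n-2 \) and \( n-1 \)), your \( K(\Z,1) \) argument for \( n=3 \), and the cofibration/HEP step that upgrades a left homotopy inverse of the inclusion into an honest retraction are all sound. As you observe, this buys the proposition with no dimensionality or measure hypothesis on \( X \) whatsoever---a strictly stronger statement---at the cost of a longer argument leaning on smooth unknotting of circles; the paper's Hopf-theorem route is three lines but genuinely needs the low-dimensionality of \( X \).
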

	
	\begin{proof} 
	If  \( X \) does not span \( M \), there exists a simple link \( \eta \) of \( M \) which does not intersect \( X \). The curve \( \eta \) defines a homology class in \( H_1(\R^n\setminus X) \) and therefore by Alexander duality a \v{C}ech cohomology class in \( H^{n-2}(X) \) which extends the fundamental cocycle of \( M \). Therefore there exists a retraction \( X\to M \) by the Hopf extension theorem.
	\end{proof}

	\begin{prop}
		\label{lem:unrectifiable}
		Let \( m=n-1 \). If \( X = Z \sqcup L \) is compact and spans \( M \) where \( Z \) is closed and \( L \) is purely \( m \)-unrectifiable, then \( Z \) also spans \( M \).
	\end{prop}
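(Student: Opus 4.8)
The plan is to prove the contrapositive: assuming $Z$ does not span $M$, I produce a simple link disjoint from $X=Z\sqcup L$, so that $X$ does not span $M$. Fix a simple link $S$ with $S\cap Z=\emptyset$; as $Z$ is closed and $S$ compact, $\delta:=\tfrac13\min\{d(S,Z),d(S,M)\}>0$. First I would approximate $S$ by a polygon: choose cyclically ordered points $p_0,\dots,p_{N-1},p_N=p_0$ on $S$ (in general position, with consecutive ones within $\delta$) so that $P_0:=\bigcup_{i=1}^{N}[p_{i-1},p_i]$ is an embedded closed polygon within Hausdorff distance $<2\delta$ of $S$. Then $P_0$ is disjoint from $Z$ and from $M$ and is homotopic to $S$ inside $\R^n\setminus M$, so it has the same linking numbers and is again a simple link. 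Next fix $\rho>0$ with $\rho<\tfrac12\min_i|p_{i-1}-p_i|$, small enough that for every choice of $q_i\in\O(p_i,\rho)$ the perturbed polygon $P(q_\bullet):=\bigcup_{i=1}^{N}[q_{i-1},q_i]$ (indices mod $N$) is still an embedded closed polygon within Hausdorff distance $<3\delta$ of $S$, hence still a simple link disjoint from $Z$. (These are all open conditions on the vertices, so such a $\rho$ exists.) It now suffices to find one $q_\bullet$ in the open box $\mathcal U:=\prod_{i=0}^{N-1}\O(p_i,\rho)$ with $P(q_\bullet)\cap L=\emptyset$.

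Suppose no such $q_\bullet$ exists, that is, $P(q_\bullet)\cap L\neq\emptyset$ for every $q_\bullet\in\mathcal U$. Then $\mathcal U=\bigcup_{i=1}^{N}A_i$ with $A_i:=\{q_\bullet\in\mathcal U:[q_{i-1},q_i]\cap L\neq\emptyset\}$, so some $A_{i_0}$ has positive Lebesgue measure; since $A_{i_0}$ involves only the coordinates $(q_{i_0-1},q_{i_0})$, Fubini gives that $B:=\{(x,y)\in\O(p_{i_0-1},\rho)\times\O(p_{i_0},\rho):[x,y]\cap L\neq\emptyset\}$ has positive $\H^{2n}$-measure. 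But because $\rho<\tfrac12|p_{i_0-1}-p_{i_0}|$ these balls are disjoint, so $\Lambda:(x,y)\mapsto(\text{the affine line through }x\text{ and }y)$ is a smooth submersion from $\O(p_{i_0-1},\rho)\times\O(p_{i_0},\rho)$ into the space of affine lines of $\R^n$, and, since $[x,y]\subset\Lambda(x,y)$, we have $B\subseteq\Lambda^{-1}(\mathcal L_L)$, where $\mathcal L_L:=\{\ell:\ell\cap L\neq\emptyset\}$. A smooth submersion pulls back a null set to a null set, so once we know $\mathcal L_L$ is null in the space of lines we get $\H^{2n}(B)=0$, contradicting the previous line. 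That $\mathcal L_L$ is null is exactly the Besicovitch--Federer projection theorem applied to the purely $(n-1)$-unrectifiable set $L$: its integralgeometric $(n-1)$-measure vanishes, equivalently almost every affine line of $\R^n$ misses $L$. This contradiction completes the proof.

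The real content --- and the only genuinely hard step --- is this last measure-theoretic input. That $L$ is Lebesgue-null in $\R^n$ is automatic and useless here (an uncountable union of null slices can be everything); one needs pure unrectifiability to force the family of lines meeting $L$ to be null. The rest --- polygonal approximation, stability of the simple-link condition under small perturbations, the covering-plus-Fubini bookkeeping, and the submersion property of $(x,y)\mapsto\Lambda(x,y)$ --- is routine. I should note that the Besicovitch--Federer theorem is cleanest when $\H^{n-1}(L)$ is $\sigma$-finite, which holds in every application in this paper (there $X$ lies in a competitor of finite $\sp^{n-1}$-measure); the general case reduces to it.
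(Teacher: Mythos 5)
Your proof is correct and reaches the conclusion by a genuinely different route than the paper. Both arguments rest on the Besicovitch--Federer projection theorem, but the surrounding geometry differs. The paper fixes a simple link disjoint from $Z$, chooses a tubular neighborhood $T$ disjoint from $Z$, pulls $L\cap T$ back to the trivializing cylinder over $[0,2\pi]$, applies Besicovitch--Federer to the pulled-back set to produce an open arc $\rho\subset T$ missing $L$ whose endpoints lie in a common fiber, and then applies the projection theorem a second time in a small sub-tube around the segment joining those endpoints in order to close $\rho$ into a simple link disjoint from $X$. You instead polygonally approximate the simple link, observe that the simple-link property and disjointness from $Z$ and from $M$ are stable under small vertex perturbations, and reduce everything to showing that the set of vertex pairs $(x,y)$ with $[x,y]\cap L\neq\emptyset$ is Lebesgue-null; this in turn follows from pulling back the null family of lines meeting $L$ (Besicovitch--Federer again, in its integral-geometric form) along the submersion $(x,y)\mapsto\textrm{line}(x,y)$. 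Your version is arguably more transparent at the one delicate point: the paper's appeal to ``almost every orthogonal projection'' inside the trivialized normal-bundle cylinder leaves the reader to work out which family of projections is actually being varied and how a generic fiber yields a \emph{section}, whereas your parametrization by the space of affine lines, with explicit Fubini and covering bookkeeping, makes the application of the projection theorem unambiguous. One caveat shared by both arguments: Besicovitch--Federer requires $\mathcal{H}^{n-1}(L)$ to be (locally $\sigma$-)finite, which the proposition's hypotheses do not assert; you flag this, the paper's proof does not.
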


	\begin{proof}
		Suppose \( N \) is a simple link of \( M \) and \( N \cap Z = \emptyset \). Choose a tubular neighborhood \( T \) of \( N \) with \( Z \cap T = \emptyset \). Parameterize \( N \) by \( \Theta: [0,2\pi]\to N\simeq S^1 \), where \( \Theta(x) \) gives the \( \theta \)-coordinate of \( x\in N\simeq S^1\subset \R^2 \) in polar coordinates, where \( S^1\ \) is the unit circle in \( R^2 \). The pullback bundle of the tubular neighborhood via \( \Theta \) is isomorphic to a cylinder \( C \). Let \( \tilde{\Theta}:C\to N \) denote the resulting map, which is a diffeomorphism except at the endpoints, where the map is 2-1. Let \( H=\tilde{\theta}^{-1}(L\cap T) \). Since \( \tilde{theta} \) is Lipschitz, it follows that \( H \) is also purely \( m \)-unrectifiable, and so by the Besicovitch-Federer projection theorem (\cite{mattila} 18.1,) almost every orthogonal projection \( \pi:H \to K \) where \( K \) is an \( m \)-plane sends \( H \) to a set with Lebesgue \( m \)-measure zero. Thus, there exists an arc \( \rho\subset T \) which is disjoint from \( L \cap T \), whose endpoints \( p, q \) lie in the same fiber of \( T \) and which is given as a smooth section of the normal bundle to \( N \), except at the endpoints, where the section is discontinuous.

		Let \( \s \subset D \) be the line segment joining \( p  \) and \( q \). Since \( X \) is closed and \( \rho \cap X = \emptyset \), there exists \( \e>0 \) such that \( \O(p,\e) \cup \O(q,\e)\subset T \) is disjoint from \( L \). Let \( T'\subset T \) be the \( \e \)-neighborhood of \( \s \), and apply the projection theorem again to find a line segment \( \s' \) disjoint from \( L \cap T \) with one endpoint in \( \O(p,\e) \) and the other in \( \O(q,\e) \). Finally connect the appropriate endpoints of \( \rho \) and \( \s' \) inside \( \O(p,\e) \) and \( \O(q,\e) \) to create a simple link of \( M \) which is disjoint from \( X \).
	\end{proof} 
	
	\begin{defn}
		\label{def:competitor}
		Suppose \( X \) is compact and spans \( M \). A \emph{\textbf{competitor of \( X \) with respect to \( M \)}} is a set \( \phi(X) \) where \( \phi: \R^n \to \R^n \) is a Lipschitz map that is the identity on \( B^c \) and \( \phi(B) \subset B \), where \( B \subset \R^n \) is some closed ball disjoint from \( M \). The map \( \phi \) is called a \emph{\textbf{deformation of \( X \)}}.
	\end{defn}

	\begin{thm}
		\label{thm:lipspan}  
		If \( m=n-1 \) and \( X \subset \R^n \) is compact and spans \( M \), and \( \phi(X) \) is a competitor of \( X \), then \( \phi(X) \) is compact and spans \( M \).
	\end{thm}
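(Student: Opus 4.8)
Compactness of $\phi(X)$ is immediate ($\phi$ is continuous and $X$ compact), so the content is that $\phi(X)$ spans $M$, and I would argue this by contradiction. Suppose some simple link $S$ of $M$ satisfies $S\cap\phi(X)=\emptyset$; after orienting, assume $L(S,M_1)=1$ and $L(S,M_j)=0$ for $j\neq1$. Record two preliminary observations. Since $X$ spans $M$ we have $M\subseteq X$; and since $B\cap M=\emptyset$ while $\phi$ is the identity on $B^{c}$, continuity forces $\phi$ to be the identity on $\overline{B^{c}}=\R^n\setminus\mathring B\supseteq M$, so $\phi|_M=\mathrm{id}_M$ and hence $M=\phi(M)\subseteq\phi(X)$ as well. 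Also $M\subseteq X$ gives $\R^n\setminus X\subseteq\R^n\setminus M$, so linking numbers of cycles lying in $\R^n\setminus X$ are well defined and invariant under homotopies within $\R^n\setminus X$.

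The heart of the proof is to transport the reason ``$S$ misses $\phi(X)$'' across $\phi$ onto $X$. Via Alexander duality, $[S]\in H_1(\R^n\setminus\phi(X))$ corresponds to a \v{C}ech class $\xi\in\check{H}^{n-2}(\phi(X);\Z)$ whose restriction to $M$ is the cocycle $\mu_1\in\check{H}^{n-2}(M;\Z)$ evaluating to $1$ on the fundamental class of $M_1$ and to $0$ on those of the other components (this is precisely the statement that $S$ is a simple link). Let $\eta\in\check{H}^{n-2}(X;\Z)$ be the image of $\xi$ under the map induced by the continuous surjection $\phi|_X\colon X\to\phi(X)$. Because the composite of the inclusion $M\hookrightarrow X$ with $\phi|_X$ is the inclusion $M\hookrightarrow\phi(X)$ (using $\phi|_M=\mathrm{id}_M$), the class $\eta$ again restricts to $\mu_1$ on $M$. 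Dualizing back, $\eta$ is represented by a $1$-cycle in $\R^n\setminus X$ whose linking numbers with $M_1,\dots,M_k$ are $1,0,\dots,0$; since $n\geq3$, a general-position perturbation inside the open set $\R^n\setminus X$ turns it into a disjoint union of embedded circles (and preserves linking numbers, as $\R^n\setminus X\subseteq\R^n\setminus M$), and tubing these together along arcs in $\R^n\setminus X$ produces a single embedded circle $S'\subseteq\R^n\setminus X$ with $L(S',M_1)=1$ and $L(S',M_j)=0$ — a simple link of $M$ disjoint from $X$, contradicting that $X$ spans $M$. (The same transport can be seen geometrically: perturb $S$ to be transverse to $\partial B$, so $S\cap\bar B$ is a finite union of arcs whose endpoints already lie in $\R^n\setminus X$, and reroute each such arc through $\bar B$ keeping its endpoints; since $\bar B$ is simply connected and disjoint from $M$, linking numbers are unchanged.)

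The step that needs genuine care — the main obstacle — is this last one: promoting the $1$-cycle (equivalently, rerouting all of $S\cap\bar B$ simultaneously) to an honest \emph{single} embedded simple link avoiding $X$, i.e. joining the circles by tubes that stay in $\R^n\setminus X$ without disturbing the linking profile. This amounts to controlling how the connected components of $\R^n\setminus X$ distribute the relevant homology class, and is essentially the assertion — valid for $m=n-1$, $n\geq3$ — that ``span'' defined by single-component simple links agrees with ``span'' allowed to use multi-component links. Everything else is routine (functoriality of \v{C}ech cohomology, Alexander duality, transversality in dimension $\geq3$); the hypotheses ``$\phi=\mathrm{id}$ on $B^{c}$'', ``$\phi(B)\subseteq B$'', and ``$B\cap M=\emptyset$'' enter only through $\phi|_M=\mathrm{id}_M$ and $M\subseteq\phi(X)$.
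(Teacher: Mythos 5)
Your reduction via $\phi|_M=\mathrm{id}_M$ is correct, and the Alexander-duality transport does produce a \v{C}ech class $\eta\in\check{H}^{n-2}(X;\Z)$ restricting correctly to $M$, hence a $1$-cycle in $\R^n\setminus X$ with linking profile $(1,0,\dots,0)$. But the step you yourself flag as the ``main obstacle'' is a genuine gap, not a routine finish: the definition of span in this paper requires a \emph{single embedded circle}, and a homology class in $\R^n\setminus X$ with the right linking profile need not be representable by one. The tubing you propose requires arcs in $\R^n\setminus X$ joining the component circles, but those circles could lie in distinct connected components of $\R^n\setminus X$, in which case no such arcs exist. Your ``geometric'' restatement (reroute each arc of $S\cap\bar B$ through $\bar B$ keeping endpoints) has exactly the same missing content: nothing yet explains why the rerouted arcs can be chosen to miss $X$ and to miss each other. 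Reducing to the assertion that single-component and multi-component span agree does not close the gap either, since that equivalence is itself unproved (and is treated in the paper as a genuinely separate variant of the definition).

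The paper's proof fills precisely this hole, and does so by a different mechanism with no Alexander duality at all. It smooths $\phi$ to $\hat\phi$, arranges $\eta$ to meet $\fr\,B$ transversally in radial segments, and takes a tubular neighborhood $T$ of $\eta(S^1)$ with $\bar T\cap\hat\phi(X)=\emptyset$. For each arc $\eta_i$ of $\eta(S^1)\cap B$ with fiber projection $\pi_i:T_i\to D$, it pulls a regular fiber back through $\hat\phi$ and applies the inverse function theorem for manifolds with boundary to $\psi_i=\pi_i\circ\hat\phi$ on a compact $P_i\subset\hat\phi^{-1}(T_i)$. This produces, in each $\hat\phi^{-1}(T_i)$, exactly one embedded arc $\beta_i$ joining the two required endpoints. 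Because $T_i\cap\hat\phi(X)=\emptyset$, the preimage $\hat\phi^{-1}(T_i)$ is automatically disjoint from $X$, so each $\beta_i$ misses $X$; the $\beta_i$ are pairwise disjoint because $\hat\phi$ maps them into disjoint $T_i$'s; and they assemble inside $T$ into a single embedded circle $\beta$ regularly homotopic to $\eta$, hence a simple link, giving the contradiction. In short, the arcs are not merely asserted to exist in $X^c$ — they are \emph{manufactured} as preimages under $\hat\phi$ of sets that demonstrably avoid $\hat\phi(X)$. That construction is what your proposal is missing.
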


	\begin{proof}
		If not, then there is a simple link \( \eta \) of \( M \) such that \( \eta(S^1) \cap \phi(X)=\emptyset \). Let \( B=B_r \) be a closed ball of radius \( r \) satisfying \( M\cap B=\emptyset \), \( \phi\equiv Id \) on \( B^c \) and \( \phi(B)\subseteq B \). Expanding \( B \) slightly, we may also assume \( \phi\equiv Id \) on an \( \e \)-neighborhood of \( \fr\,B \) and that \( \phi(B_{r-\e})\subseteq B_{r-\e} \). We can assume without loss of generality that \( \phi \) is smooth: Approximate \( \phi \) uniformly to within \( \min\{\e/2, d(\eta(S^1),\phi(X))/2 \} \) by a smooth function \( \tilde{\phi} \). Let \( \{f,g\} \) be a partition of unity subordinate to \( \{\mathring{B}_{r-\e/2}, B_{r-\e}^c\} \), and define \( \hat{\phi}=f\cdot\tilde{\phi}+g\cdot Id \). Then \( \hat{\phi} \) is smooth, \( \hat{\phi}(X)\cap \eta(S^1)=\emptyset \), \( \hat{\phi}\equiv Id \) on \( B^c \) and \( \hat{\phi}(B)\subseteq B \). Finally, expand \( B \) slightly again and we may assume \( \hat{\phi}\equiv Id \) on an \( \e' \)-neighborhood of \( \fr\,B \) and that \( \hat{\phi}(B_{r-\e'})\subseteq B_{r-\e'} \).

		By compactness, there exists a tubular neighborhood \( T \) of \( \eta(S^1) \) so that \( \bar{T}\cap \hat{\phi}(X)=\emptyset \). Since we may perturb \( \eta \) within \( T \), let us assume without loss of generality that the intersection \( \eta(S^1)\cap \fr\,B \) is transverse. By considering a possibly smaller \( \e' \) we may assume slightly more, that within the \( \e' \)-neighborhood of \( \fr\,B \), the curve \( \eta(S^1) \) consists of radial line segments. So, the set \( \eta(S^1)\cap B \) consists of a finite collection of arcs \( \{\eta_1, \dots, \eta_N\} \), each of which intersects \( \fr\,B \) radially. Note that \( T\cap \mathring{B} \) consists of pairwise disjoint neighborhoods \( T_i \) of the arcs \( \eta_i \) (minus their endpoints.) For each \( i \), let \( \pi_i: T_i \to D \) be the projection onto the normal \( m \)-disc \( D \) determined by the tubular neighborhood \( T \) and some trivialization of the normal bundle of \( \eta(S^1) \). Consider the smooth maps \( \psi_i:=\pi_i\circ\hat{\phi} : W_i\to D \) where \( W_i:=\hat{\phi}^{-1}(T_i) \). For each \( i \) fix a regular value \( x_i \) of \( \psi_i \) and consider the compact sets \( Y_i:=\pi_i^{-1}(x_i)\cap B_{r-\e'/2} \). Since \( \hat{\phi} \) is proper, each preimage \( Z_i:=\hat{\phi}^{-1}(Y_i) \) is also compact as a subset of \( \R^n \).

		For each \( i \), there exists by compactness of \( Z_i \) a smooth compact \( n \)-manifold with boundary \( P_i\subset W_i \), such that \( Z_i \subset P_i \), and such that both line segments constituting \( \pi_i^{-1}(x_i) \setminus B_{r-\e'} \) meet \( \p P_i \) transversally and at one point each. Now \( x_i \) is still a regular value of the restricted map \( \psi_i |_{P_i}:P_i\to D\). But also \( x_i \) is a regular value of \( \psi_i|_{\p P_i} \), since \( \psi_i|_{\p P_i}^{-1}(x_i)= (\pi_i^{-1}(x_i) \setminus B_{r-\e'})\cap \p P_i \), and this intersection is transverse and contained in a region on which \( \hat{\phi}\equiv Id \). So, by the inverse function theorem for manifolds with boundary (\cite{hirsch}, Theorem 4.1) the set \( \psi_i|_{P_i}^{-1}(x_i) \) is a collection of circles and arcs that meet the boundary \( \p P_i \) ``neatly.'' Since by construction the set \( \psi_i|_{\p P_i}^{-1}(x_i) \) consists of exactly two points, there is exactly one arc \( \beta_i \) in \( \psi_i^{-1}(x_i) \), and it joins the two points. Moreover, the arcs \( \beta_i \) are pairwise disjoint, since any point in an intersection of two must get mapped by \( \hat{\phi} \) into disjoint tubular neighborhoods.

		Furthermore, the arcs \( \beta_i \) terminate inside the tubular neighborhood \( T \) of \( \eta \). So, we may smoothly extend each \( \beta_i \) within \( T \), linking the arcs together to form an embedding \( \beta: S^1\to M^c \). The curve \( \beta \) is also a simple link of \( M \), since \( \beta \) is by construction regularly homotopic to \( \eta \). Thus, \( \b(S^1)\cap X \neq \emptyset \), and so \( \hat{\phi}(\b(S^1))\cap \hat{\phi}(X)\neq \emptyset \). But \( \hat{\phi}(\b(S^1))\subset T \), yielding a contradiction.
	\end{proof}

	The \emph{\textbf{core}} of \( X\subset \R^n \) is defined by \[ X^*:= \{p \in X \,|\, \H^m(X \cap \O(p,r)) > 0 \text{ for all } r > 0 \}. \] The core \( X^* \) of a closed set \( X \) is closed, and \( \H^m(X\setminus X^*) = 0 \). The definition of core and its properties are unaltered if \( \H^{n-1} \) is replaced by \( \sp^{n-1} \). We say \( X \) is \emph{\textbf{reduced}} if \( X = X^* \).

	\begin{lem}
		\label{lem:corespan}
		Let \( m=n-1 \). If  \( X \subset \R^n \) is compact and spans \( M \), then the core \( X^* \) is compact and spans \( M \).
	\end{lem}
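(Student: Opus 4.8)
The plan is to deduce the lemma from Proposition \ref{lem:unrectifiable} by exhibiting $X^*$ as the closed part of a decomposition of $X$ whose other part is purely $(n-1)$-unrectifiable. Concretely, I would set $Z := X^*$ and $L := X \setminus X^*$, so that $X = Z \sqcup L$ as sets, and then check that this decomposition meets the hypotheses of that proposition.

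The two structural facts needed are immediate from the surrounding text. Since $X$ is compact it is closed, and by the remark just before the lemma the core of a closed set is closed; as a closed subset of the compact set $X$, the core $X^* = Z$ is therefore compact. Next, that same remark records $\H^{n-1}(X \setminus X^*) = 0$ (and likewise for $\sp^{n-1}$), and any subset of an $\H^{n-1}$-null set meets every Lipschitz image of $\R^{n-1}$, equivalently every $(n-1)$-rectifiable set, in an $\H^{n-1}$-null set; hence $L = X \setminus X^*$ is purely $(n-1)$-unrectifiable. With $m = n-1$, $X = Z \sqcup L$ compact and spanning $M$ by hypothesis, $Z$ closed, and $L$ purely $(n-1)$-unrectifiable, Proposition \ref{lem:unrectifiable} yields that $Z = X^*$ spans $M$, which together with the compactness just noted is the full assertion.

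I do not anticipate a genuine obstacle: the substantive work has been front-loaded into Proposition \ref{lem:unrectifiable}, whose proof does the job of converting a simple link disjoint from $Z$ into a nearby simple link disjoint from all of $X$ via the Besicovitch--Federer projection theorem, and the properties of the core are quoted verbatim from the text preceding the lemma. The only points to keep straight are that the spanning property is being transferred from $X$ \emph{downward} to the closed part $X^*$ rather than upward, and that pure $(n-1)$-unrectifiability of $L$ is a cost-free consequence of $\H^{n-1}(L) = 0$, requiring no separation or closedness hypothesis on $L$ itself.
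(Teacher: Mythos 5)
Your argument is correct: setting $Z = X^*$ and $L = X\setminus X^*$, compactness of $X^*$ is immediate (a closed subset of the compact set $X$), the observation $\H^{n-1}(L)=0$ forces $L$ to be purely $(n-1)$-unrectifiable, and Proposition~\ref{lem:unrectifiable} then delivers that $Z=X^*$ spans $M$. The paper, however, does not route through that proposition; it argues directly and more economically. It takes a simple link $N$ of $M$ together with a tubular neighborhood $T$ of $N$ whose closure misses $X^*$, so that $\H^{n-1}(X\cap T)=0$. Projecting $T$ onto the normal $(n-1)$-disk $D$ via the Lipschitz bundle projection $\pi$ gives $\H^{n-1}(\pi(X\cap T))=0$, hence there exists $x\in D\setminus\pi(X\cap T)$, and the fiber $\pi^{-1}(x)$ is a simple link of $M$ disjoint from all of $X$, a contradiction. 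The difference in approach is real: the paper exploits the stronger fact that $X\setminus X^*$ is $\H^{n-1}$-\emph{null}, not merely purely unrectifiable, so the Besicovitch--Federer projection theorem that powers Proposition~\ref{lem:unrectifiable} is never needed; one normal projection already misses an entire fiber outright, with no need to assemble an arc section as in the proof of that proposition. Your reduction is valid and shorter on paper, but it invokes heavier machinery than the lemma actually requires, whereas the paper's version is self-contained and elementary.
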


	\begin{proof}
		If not, there is a simple link \( N \) of \( M \) and a tubular \( \e \)-neighborhood \( T \) of \( N \) whose closure is disjoint from \( X^* \). So, \( \H^m(X \cap T ) = 0 \). Let \( \pi: T \to D \) be the projection onto the normal \( m \)-disk \( D \) determined by the tubular neighborhood \( T \) and some trivialization of the normal bundle of \( N \). Then \( \H^m(\pi(X\cap T))=0 \), and so there exists a point \( x\in D\setminus (\pi(X\cap T)) \). Then \( \pi^{-1}(x) \) is a simple link of \( M \) missing \( X \), giving a contradiction.
	\end{proof}
	
	The next lemma is an easy consequence of the definition of the Hausdorff metric:

	\begin{lem}
		\label{lem:hausdorffspan}
		If \( X_i\subset \R^n \) is compact and spans \( M \) for each \( i \) and \( X_i\to X \) in the Hausdorff metric, then \( X \) spans \( M \).
	\end{lem}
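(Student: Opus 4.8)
The plan is to argue by contradiction, using only the definition of the Hausdorff metric together with the elementary fact that two disjoint compact sets are a positive distance apart. Suppose $X$ does not span $M$. Then, by the definition of spanning, there is a simple link $N$ of $M$ --- a circle embedded in $\R^n\setminus M$ with linking number $\pm 1$ around one component of $M$ and $0$ around the others --- such that $N\cap X=\emptyset$.

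First I would record that $N$ and $X$ are compact (the limit $X$ of a Hausdorff-convergent sequence of compact sets is itself compact, so the statement is well posed), whence $\delta:=d(N,X)>0$. The key observation is that this one link $N$ already serves as a witness against all but finitely many of the $X_i$: by the definition of Hausdorff convergence $X_i\to X$ there is an $i_0$ such that $X_i\subset \O(X,\delta/2)$ for all $i\ge i_0$, while every point $p\in N$ satisfies $d(p,X)\ge\delta>\delta/2$, so $N\cap \O(X,\delta/2)=\emptyset$. Hence $N\cap X_i=\emptyset$ for every $i\ge i_0$, which contradicts the hypothesis that each $X_i$ spans $M$ (spanning forces every simple link of $M$, in particular $N$, to meet $X_i$). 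Therefore $X$ spans $M$.

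I do not anticipate any genuine obstacle. The only two points needing even a moment's care are that the link $N$ can be held fixed rather than allowed to vary with $i$, and that the relevant half of the Hausdorff-metric condition is the inclusion $X_i\subset\O(X,\e)$ rather than $X\subset\O(X_i,\e)$; both are immediate from the definitions, which is why the lemma is described as an easy consequence of the Hausdorff metric.
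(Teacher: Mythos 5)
Your argument is correct and is exactly the ``easy consequence of the definition of the Hausdorff metric'' that the paper alludes to without writing out: fix a simple link $N$ disjoint from the putative limit $X$, use compactness of $N$ and $X$ to get a positive separation $\delta$, and then note that Hausdorff convergence eventually forces $X_i\subset\O(X,\delta/2)$, so $N$ misses $X_i$ as well, contradicting that $X_i$ spans $M$. The two careful points you flag --- that a single fixed link suffices, and that only the half of the Hausdorff condition $X_i\subset\O(X,\e)$ is used --- are indeed the only things worth saying; the proof is complete.
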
 

	\begin{lem}
		\label{lem:cutandcone}
		Let \( m=n-1 \). Suppose \( X\subset \R^n \) is compact and spans \( M \).  Let \( p\in \R^n \), \( r>0 \) and suppose \( \O(p,r)\cap M=\emptyset \).  If \( p'\in \bar{\O}(p,r) \), then \( X' := (X \cap \O(p,r)^c) \cup C_{p'}(X \cap \fr\,\O(p,r)) \) is compact and spans \( M \).
	\end{lem}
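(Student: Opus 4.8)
The plan is to argue by contradiction directly from the linking‑number definition of ``span,'' rather than via Theorem~\ref{thm:lipspan}: note that $X'$ is in general \emph{not} a competitor of $X$, since a continuous map cannot inflate the slice $A:=X\cap\fr\,\O(p,r)$ into the full cone $C_{p'}(A)$, so \ref{thm:lipspan} does not apply. Compactness of $X'$ is routine ($X\cap\O(p,r)^c$ is closed and bounded, and $C_{p'}(A)$ is the continuous image of the compact set $A\times[0,1]$ under $(a,t)\mapsto (1-t)p'+ta$). For the spanning claim, suppose $N$ is a simple link of $M$ with $N\cap X'=\emptyset$; then $N\cap X\subset\O(p,r)$ (since $N$ misses $X\cap\O(p,r)^c$) and $N\cap C_{p'}(A)=\emptyset$. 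I want to manufacture from $N$ a simple link disjoint from $X$, contradicting that $X$ spans $M$.

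The idea is to push $N$ radially outward toward $\fr\,\O(p,r)$ from the cone vertex $p'$. Write $B:=\bar{\O}(p,r)$, and (after a harmless preliminary perturbation ensuring $p'\notin N$, which is automatic once $A\neq\emptyset$ because then $p'\in C_{p'}(A)\subset X'$) let $g(x)$ denote the point where the ray from $p'$ through $x$ leaves $B$; thus $g$ is continuous on $B\setminus\{p'\}$ and $g|_{\fr\,\O(p,r)}=\mathrm{id}$. Set $H_t:=\mathrm{id}$ off $\O(p,r)$ and $H_t(x):=(1-t)x+t\,g(x)$ on $B\setminus\{p'\}$ (the two formulas agree on $\fr\,\O(p,r)$, so $H_t$ is well defined and jointly continuous in $(t,x)$). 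I would then check three facts. (i) For $t<1$ the map $H_t$ is injective on $\R^n\setminus\{p'\}$ — it carries each ray from $p'$ to itself, strictly monotonically in the radial coordinate because $t<1$, and is the identity outside — so $H_t(N)$ remains an embedded circle. (ii) For $t<1$ and $x\in\O(p,r)$ one has $|H_t(x)-p|<r$, since $\tau\mapsto|(1-\tau)x+\tau g(x)-p|$ is convex on $[0,1]$ with value $<r$ at $\tau=0$ and value $r$ at $\tau=1$; hence, as $M\cap B\subset\fr\,\O(p,r)$ by hypothesis, the whole homotopy $\{H_t(N)\}_{0\le t\le s}$ lies in $\R^n\setminus M$ for every $s<1$, so $H_s(N)$ is again a simple link of $M$ with the same linking numbers as $N$. (iii) The terminal image $H_1(N)=g(N\cap B)\cup(N\setminus\O(p,r))$ is disjoint from $X$: the part $N\setminus\O(p,r)$ misses $X$ because $N\cap X\subset\O(p,r)$, and $g(N\cap B)\subset\fr\,\O(p,r)\setminus A$ — if $g(x)\in A$ then $x\in[p',g(x)]=[p',a]\subset C_{p'}(A)$, contradicting $N\cap C_{p'}(A)=\emptyset$ — whereas $X\cap\fr\,\O(p,r)=A$.

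The finish is a limiting argument: $\sup_x|H_s(x)-H_1(x)|\le 2r(1-s)\to 0$ as $s\to 1$, so $H_s(N)\to H_1(N)$ in the Hausdorff metric; since $X$ is compact and $H_1(N)\cap X=\emptyset$, choosing $s<1$ close enough to $1$ makes $H_s(N)$ disjoint from $X$ as well. Then $H_s(N)$ — after, if necessary, an arbitrarily small final smoothing that changes neither its linking numbers nor its disjointness from $X$ — is a simple link of $M$ missing $X$, which contradicts that $X$ spans $M$. Hence $X'$ spans $M$.

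I expect the main obstacle to be getting one and the same $H_s(N)$ to be simultaneously an embedded circle, disjoint from $M$, and disjoint from $X$. Pushing all the way to $\fr\,\O(p,r)$ (i.e.\ $t=1$) instantly gives disjointness from $X$ and even evicts the curve from $\O(p,r)$, but $H_1$ is not injective and, because $M$ may touch $\fr\,\O(p,r)$, the curve could land on $M$; so one cannot push to the sphere and must stop at some $t=s<1$, where embeddedness and the avoidance of $M$ come for free from the hypothesis $M\cap\O(p,r)=\emptyset$, recovering disjointness from $X$ only in the Hausdorff limit. Verifying the convexity and ray‑monotonicity claims behind (i) and (ii), and checking that the preliminary perturbation of $N$ and the final smoothing preserve everything needed, are the remaining routine points.
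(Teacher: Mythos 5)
Your proof is correct and follows essentially the same route as the paper's: project $N$ radially from $p'$ toward $\fr\,\O(p,r)$, use the cone argument to show the limit $H_1(N)$ is disjoint from $X$, and recover a genuine simple link disjoint from $X$ by stopping at $s<1$ and invoking compactness, contradicting that $X$ spans $M$. The paper phrases the approximation step via the time-$1$ flow of a smooth radial vector field rather than your explicit convex interpolation, and it establishes $X\cap\fr\,\O(p,r)\neq\emptyset$ as a separate preliminary (so that $p'\in X'$ automatically forces $p'\notin N$) where you instead allow a small perturbation of $N$ when that slice is empty; these are cosmetic differences of the same argument.
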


	\begin{proof}
		It is clear that \( X' \) is compact. If \( X' \) does not span \( M \), then let \( \eta \) be a simple link of \( M \) such that \( \eta(S^1)\cap X'=\emptyset \). Then \( \eta(S^1)\cap X\subset \O(p,r) \). First, we observe that \( X\cap \fr\, \O(p,r)\neq \emptyset \). If not, there is by compactness an \( \e \)-neighborhood \( T \) of \( \fr\,\O(p,r) \) disjoint from \( X \), so we may construct a diffeomorphism of \( \O(p,r) \) which fixes \( \fr\,\O(p,r) \) and sends \( \eta \) to a simple link \( \eta' \) of \( M \) such that \( \eta'(S^1)\cap \O(p,r)\subset T \). Thus, \( \eta'(S^1)\cap X =\emptyset \), yielding a contradiction.
		
		This implies that \( C_{p'}(X\cap \fr\,\O(p,r))\neq \emptyset \), and in particular, \( p'\in X' \). Thus, \( p'\notin \eta(S^1) \). Let \( \d>0 \) such that \( \O(p',2\d)\cap \eta(S^1)=\O(p',2\d)\cap M=\emptyset \). Let \( \rho: \bar{\O}(p',\d)^c\to \R^n \) be the identity on \( \O(p,r)^c \) and elsewhere the radial projection away from \( p' \) and onto the frontier of \( \O(p,r) \). By definition of \( C_{p'} \), it is enough to show \( \rho(\eta(S^1)) \) intersects \( X \).
		
		Suppose not. Since \( \rho(\eta(S^1)) \) is compact, there is an \( \e' \)-neighborhood \( T' \) of \( \rho(\eta(S^1)) \) such that \( T'\cap X=\emptyset \). Let \( V \) be a smooth radial vector field on \( \R^n \), with center point \( p' \), supported in \( \O(p,r) \), and normalized so that if \( \phi \) is the time-\( 1 \) flow of \( V \), then \( \phi(\eta(S^1))\subset T' \). Then \( \phi\circ\eta \) is a simple link of \( M \) disjoint from \( X \), a contradiction.
	\end{proof}
	
\section{Film chains}
	\label{sec:film_chains}  
	\setcounter{thm}{0}
	Throughout the rest of this paper, fix \( Y, U, M \) be as in \S\ref{sub:constructions_with_curves}.  Let \( \Span^*(M,U) \) denote the collection of reduced compact subsets of \( U \) with finite \( \sp^{n-1} \)-measure and which span \( M \). 
	
	\begin{defn}
		\label{def:filmchain}
		A \emph{\textbf{film chain}} is an element \( S \in \hB_{n-1}^2(U) \) satisfying
		\begin{enumerate}   
			\item\label{def:filmchain:item:1} \( \p S = P_Y \tM \);
			\item\label{def:filmchain:item:2} \( \k_Y S \in \C(U) \);
			\item\label{def:filmchain:item:3} \( \mu_{\k_Y S}=\sp^{n-1}\lfloor_X \) for some \( X \in \Span^*(M,U) \).
		\end{enumerate}
		 Let \( \F(M, Y, U) \) denote the collection of all film chains\footnote{The choice of \( Y \), as a smooth line field, is often naturally determined, as in \ref{fig:Figures_YProblem2} and \ref{fig:moebiusdipole}.}. 
	\end{defn}
		   
	   Dipole surfaces (see \cite{plateau10}) are examples of film chains. 
	\begin{thm}
		\label{thm:setchaincorrespondance}
		If \( X \in \Span^*(M,U) \), then there exists a unique film chain \( S_X \in \F(M, Y, U) \) with \( \supp(S_X) = X \). Moreover, \( S_X \) satisfies \( \supp(\k_Y S_X) = \supp(S_X) \) and \(\supp(\p S_X) = M \).
	\end{thm}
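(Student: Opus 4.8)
The plan is to construct $S_X$ by hand from the data $(\sp^{n-1}\lfloor_X,\ \tM,\ Y)$ and then read off every assertion from Corollary \ref{cor:key}, Lemma \ref{lem:fc}, and the isomorphism $\Psi$ of Proposition \ref{prop:whitney}. Since $X\in\Span^*(M,U)$ is compact, reduced, and of finite $\sp^{n-1}$-measure, the measure $\mu:=\sp^{n-1}\lfloor_X$ is a finite Borel measure on $U$ whose support is the closed set $X$, so $\mu\in\M(U)$; let $T:=\Psi^{-1}(\mu)\in\C(U)\subset\hB_n^1(U)$ be the associated positive $n$-chain, for which $\mu_T=\mu$ and $\supp(T)=\supp(\mu)=X$ by Proposition \ref{prop:whitney}. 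Define
\[ S_X\ :=\ \p T + E_Y\tM\ \in\ \hB_{n-1}^2(U). \]
I would then verify the three clauses of Definition \ref{def:filmchain}. For (a): $\p S_X=\p\p T+\p E_Y\tM=\p E_Y\tM$, and since $M$ is a closed manifold $\p\tM=0$, so $P_Y\tM=E_Y\p\tM+\p E_Y\tM=\p E_Y\tM=\p S_X$ by Definition \ref{def:preder}. For (b): $\k_Y S_X=\k_Y\p T+\k_Y E_Y\tM$; the second summand vanishes exactly as in the proof of Lemma \ref{lem:fc}, and for the first one notes $\k_Y T\in\hB_{n+1}^1(U)=0$ (since $\Lambda^{n+1}\R^n=0$ forces $\A_{n+1}(U)=0$), so the cone identity \eqref{eq:cone} gives $\k_Y\p T=T-\p\k_Y T=T$; hence $\k_Y S_X=T\in\C(U)$ (identifying $\hB_n^1(U)$ with its injective image in $\hB_n^2(U)$ via Proposition \ref{prop:inclusion}). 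Clause (c) is then immediate: $\mu_{\k_Y S_X}=\mu_T=\sp^{n-1}\lfloor_X$ with $X\in\Span^*(M,U)$. Thus $S_X\in\F(M,Y,U)$.

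For the support assertions, Lemmas \ref{lem:supportboundary} and \ref{lem:gamm} give $\supp(S_X)\subset\supp(\p T)\cup\supp(E_Y\tM)\subset X\cup M=X$, using $M\subset X$. To obtain equality, and simultaneously $\supp(\k_Y S_X)=\supp(S_X)$, I would apply Corollary \ref{cor:key} to $S_X$: its hypotheses hold because $\p S_X=P_Y\tM$, because $\supp(\k_Y S_X)=\supp(T)=X\subset U$ has empty interior (a set of finite $\sp^{n-1}$-measure is Lebesgue-null), and because $X$ spans $M$, whence $M\subset\overline{X\setminus M}$. This yields $\supp(S_X)=\supp(\k_Y S_X)=X$, and $\supp(\p S_X)=\supp(P_Y\tM)=M$ by Lemma \ref{lem:gamm}.

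Uniqueness proceeds along the same lines. If $S'\in\F(M,Y,U)$ also satisfies $\supp(S')=X$, write $T':=\k_Y S'\in\C(U)$, so that $\mu_{T'}=\sp^{n-1}\lfloor_{X'}$ for some $X'\in\Span^*(M,U)$. Corollary \ref{cor:key} applied to $S'$ (its hypotheses hold as before, using that $X'$ is reduced, so $\supp(\sp^{n-1}\lfloor_{X'})=X'$, and that $X'$ spans $M$) gives $X=\supp(S')=\supp(\k_Y S')=X'$; hence $\mu_{T'}=\mu_T$, and $T'=T$ by injectivity of $\Psi$. Applying Lemma \ref{lem:fc} to $S'$ and to $S_X$ then forces $S'=\p\k_Y S'+E_Y\tM=\p T+E_Y\tM=S_X$.

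The theorem is essentially an assembly of the earlier results, so its substance has already been carried out in Corollary \ref{cor:key} and Lemma \ref{lem:fc}. The two places that require genuine care are the dimensional vanishing $\k_Y T=0$, which is precisely what makes $\p$ a right inverse to $\k_Y$ in top degree and thereby pins down $T=\k_Y S_X$, and the verification that the hypotheses of Corollary \ref{cor:key} are met — in particular that $X$ has empty interior and that the spanning property can be sharpened to $M\subset\overline{X\setminus M}$ — since it is exactly this that upgrades the trivial inclusion $\supp(S_X)\subseteq X$ to an equality.
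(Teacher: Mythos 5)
Your proposal is correct and follows essentially the same route the paper takes: define $S_X := \p T + E_Y\tM$ with $T = \Psi^{-1}(\sp^{n-1}\lfloor_X)$, verify the three clauses of Definition~\ref{def:filmchain} via $P_Y\tM = \p E_Y\tM$ (since $\p\tM=0$), $\k_Y T = 0$ in top degree, and the cone identity, then read off the support equalities from Corollary~\ref{cor:key}, Lemma~\ref{lem:gamm}, and Proposition~\ref{prop:whitney}, and obtain uniqueness from Lemma~\ref{lem:fc} together with injectivity of $\Psi$. The only small difference is that you make explicit (via Corollary~\ref{cor:key} applied to a putative second solution $S'$) the intermediate step that $\supp(S')=X$ forces $\mu_{\k_Y S'} = \sp^{n-1}\lfloor_X$ before invoking $\Psi$'s injectivity, which the paper's uniqueness paragraph assumes more tersely; that extra care is sound and does not change the argument.
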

 
	\begin{proof}
		Let \( J \in \C(U) \) correspond to the measure \( \sp^{n-1}\lfloor_X \), and set \( S_X=\p J+E_Y\tM \). Then \( \p S_X=P_Y\tM \) since \( P_Y=\p E_Y \). As in the proof of Lemma \ref{lem:fc}, we have \( \k_Y S_X=\k_Y(\p J+E_Y \tM)=\k_Y \p J = J \), since \( \k_Y \p + \p \k_Y = Id \), and \( J \) is top dimensional, showing \( \k_Y J = 0 \). This establishes existence. For uniqueness, suppose \( S_X' \) also satisfies \( \p S_X' = P_Y\tM \) and \( \mu_{\k_Y S_X'}=\sp^{n-1}\lfloor_X \). Then by Proposition \ref{prop:whitney}, \( \k_Y S_X'=\k_Y S_X \). Thus, \( S_X=\p \k_Y S_X + \k_Y \p S_X = \p \k_Y S_X' + \k_Y P_Y\tM = S_X' \).

		To see that \(\supp(S_X) = \supp(\k_Y S_X) = X \), we apply Corollary \ref{cor:key} and Proposition \ref{prop:whitney} \ref{samesupports}, noting that \( \supp(\k_Y S_X)=\supp(\sp^{n-1}\lfloor_X)=X \) since \( X \) is closed and reduced. The last equality is Lemma \ref{lem:gamm}.
 	\end{proof}

	By Theorem \ref{thm:setchaincorrespondance}, there is a 1-1 correspondence between \( \F(M, Y, U) \) and \( \Span^*(M,U) \). Also by Theorem \ref{thm:setchaincorrespondance}, film chains \( S\in \F(M, Y, U) \) automatically satisfy \( \supp(S)\in \Span^*(M,U) \), \( \supp(S) = \supp(\k_Y S) \) and \( \supp(\p S) = M \).

	It is with this construction in mind that we define a continuous area functional on \( \hB_{n-1}^2(U) \):
	\begin{defn}
		\label{def:area}
	 	For \( S \in \hB_{n-1}^2(U) \), define \[ \A^{n-1}(S) = \A_Y^{n-1}(S) := \cint_{\k_Y S} dV. \]
	\end{defn}
	
	According to Corollary \ref{cor:key} and Proposition \ref{prop:whitney} \ref{samenorms},
	\begin{equation} 
		\label{eq:sp}
		\sp^{n-1}(\supp(S)) =  \A^{n-1}(S)
	\end{equation}
	for all \( S\in \F(M, Y, U) \).
	
	\begin{defn}
		Let \[ \frak{m} := \inf\{\sp^{n-1}(X): X \in \Span^*(M,U)\}. \] We say that a sequence \( \{X_k\} \subset \Span^*(M,U) \) is \emph{\textbf{minimizing}} if \( \lim \sp^{n-1}(X_k) = \frak{m} \).  
	\end{defn}

	Note that by Lemma \ref{lem:2}, \( \frak{m} \) is independent of our choice of \( U \). Also, \eqref{def:filmchain:item:3} implies that
	\begin{equation}
		\label{eq:frakm}
		\frak{m}  = \inf\{\A^{n-1}(S): S\in \F(M, Y, U)\}.
	\end{equation}
	
	A sequence of film chains \( \{S_k\} \subset \F(M,Y, U) \) is \emph{\textbf{\( \A^{n-1} \)-minimizing}} if \( \A^{n-1}(S_k) \to \frak{m} \).

	\begin{prop}
		\label{prop:minimalarea}
		There exists a constant \( a_0 > 0 \) such that \( \A^{n-1}(S) > a_0 \) for all \( S \in \F(M, Y, U) \).
	\end{prop}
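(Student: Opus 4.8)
The plan is first to reduce the statement to a geometric lower bound on spanning sets, and then to establish that bound by a single Lipschitz projection. By Theorem~\ref{thm:setchaincorrespondance}, every \( S \in \F(M,Y,U) \) satisfies \( \supp(S) \in \Span^*(M,U) \), so \( \supp(S) \) is a compact set spanning \( M \); and by \eqref{eq:sp} together with the inequality \( \H^{n-1} \le \sp^{n-1} \) from \S\ref{sec:standard_results} we have \( \A^{n-1}(S) = \sp^{n-1}(\supp(S)) \ge \H^{n-1}(\supp(S)) \). Hence it suffices to produce a constant \( 2a_0 > 0 \), depending only on \( M \), such that \( \H^{n-1}(X) \ge 2a_0 \) for every compact \( X \) that spans \( M \); one then takes the \( a_0 \) of the statement.

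To build the required family of simple links I would use that \( M \) has trivial normal bundle (see \S\ref{sub:constructions_with_curves}): fix a diffeomorphism \( \Phi \colon M \times D \to N \) of \( M \) times the open \( 2 \)-disk \( D \) of some radius \( \e > 0 \) onto a tubular neighborhood \( N \) of \( M \), with \( \Phi(x,0) = x \). For \( x \in M \) and \( 0 < r < \e \), the circle \( \g_{x,r} := \Phi(\{x\} \times \fr\, D_r) \), where \( D_r \) is the disk of radius \( r \), bounds the \( 2 \)-disk \( \Delta_{x,r} := \Phi(\{x\} \times \bar D_r) \), and \( \Delta_{x,r} \cap M = \{x\} \) with the intersection transverse (because \( \Phi \) is a diffeomorphism and \( \{x\} \times \bar D_r \) meets \( M \times \{0\} \) transversely in \( \{(x,0)\} \)). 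Consequently \( L(\g_{x,r}, M_i) = \pm 1 \) for the component \( M_i \ni x \), and \( L(\g_{x,r}, M_j) = 0 \) for \( j \ne i \) (since \( \Delta_{x,r} \) lies in the part of \( N \) over \( M_i \)); thus each \( \g_{x,r} \) is a simple link of \( M \), and since \( X \) spans \( M \), \( X \cap \g_{x,r} \ne \emptyset \).

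Now fix radii \( 0 < \e_0 < \e_1 < \e \) and a unit vector \( u_0 \in \R^2 \), put \( A := \Phi(M \times \{\, \e_0 \le |v| \le \e_1 \,\}) \), and consider the map \( F \colon A \to N \) given by \( \Phi(x,v) \mapsto \Phi(x, |v|\, u_0) \). Since \( A \) is a compact region bounded away from the core \( M \) (on which \( v \mapsto |v|u_0 \) is singular), \( F \) is Lipschitz with some constant \( L \) that depends only on \( \Phi, u_0, \e_0, \e_1 \) and \emph{not} on \( X \); extend \( F \) to a Lipschitz self-map of \( \R^n \) with a comparable constant. For every \( (x,r) \in M \times [\e_0,\e_1] \), any point of \( X \cap \g_{x,r} \) lies in \( A \) and is carried by \( F \) to \( \Phi(x, r u_0) \); therefore \( F(X \cap A) \supseteq \Sigma \), where \( \Sigma := \Phi(M \times \{\, t u_0 : \e_0 \le t \le \e_1 \,\}) \) is a compact \( (n-1) \)-dimensional \( C^1 \) submanifold with boundary, so \( c_0 := \H^{n-1}(\Sigma) \) is a finite positive constant depending only on \( M, \Phi, u_0, \e_0, \e_1 \). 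Applying Lemma~\ref{lem:2} to \( F \) gives \( c_0 \le \H^{n-1}(F(X \cap A)) \le L^{n-1} \H^{n-1}(X \cap A) \le L^{n-1} \H^{n-1}(X) \), hence \( \H^{n-1}(X) \ge c_0 L^{-(n-1)} =: 2a_0 > 0 \). Combined with the first paragraph, \( \A^{n-1}(S) \ge 2a_0 > a_0 \) for all \( S \in \F(M,Y,U) \), as claimed.

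The step I expect to require the most care is the assertion that the Lipschitz constant \( L \) can be chosen independently of \( X \): this is precisely why the argument is run inside the annular shell \( A \), which stays a definite distance from the singular core \( M \), rather than in the full tubular neighborhood, where the collapsing map degenerates. Everything else — checking that the meridians \( \g_{x,r} \) have the stated linking numbers, and that \( \Sigma \) has positive \( \H^{n-1} \)-measure — is routine.
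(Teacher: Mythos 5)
Your proof is correct, but it uses a genuinely different geometric construction from the paper's. The paper fixes a \emph{single} simple link \( N = \gamma(S^1) \) of \( M \), takes a tubular \( \e \)-neighborhood \( T \) of \( N \) whose closure misses \( M \), and projects \( T \) onto its normal \( (n-1) \)-disk \( D \): the key observation is that \( \rho(X \cap T) = D \), because a missed point of \( D \) would pull back to a fiber circle that is a simple link disjoint from \( X \). The Lipschitz bound from Lemma~\ref{lem:2} then gives \( \sp^{n-1}(X) \ge \frac{1}{C}\a_{n-1}\e^{n-1} \). You instead work inside a tubular neighborhood of \( M \) itself, use the whole two-parameter family of meridian circles \( \g_{x,r} \) as simple links, and collapse each meridian to a point via the map \( \Phi(x,v)\mapsto\Phi(x,|v|u_0) \) restricted to the annular shell \( A \), so that the image of \( X \cap A \) covers the \( (n-1) \)-dimensional annulus \( \Sigma \). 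Both arguments rest on the same two ingredients — Lemma~\ref{lem:2} and the fact that missed fibers produce simple links disjoint from \( X \) — so neither is logically ``deeper.'' The paper's version is leaner in its prerequisites: it needs only one simple link and one small tubular neighborhood, and it works directly with \( \sp^{n-1} \), which is the quantity actually appearing in \( \A^{n-1} \). Your version requires the global tubular neighborhood of \( M \) (available here because \( M \) has trivial normal bundle, a fact the paper already uses in \S\ref{sub:constructions_with_curves}) and an extra passage through \( \H^{n-1}\le\sp^{n-1} \); in exchange it produces a lower bound of the attractive form \( \H^{n-1}(\Sigma)/L^{n-1} \), where \( \Sigma \) is an explicit annular \( (n-1) \)-manifold over all of \( M \), which makes the dependence of \( a_0 \) on the geometry of \( M \) more transparent. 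One small point worth noting: when extending \( F \) from \( A \) to a Lipschitz self-map of \( \R^n \), you should cite Kirszbraun (or a componentwise McShane extension with the resulting \( \sqrt{n} \) factor absorbed into \( L \)), since Lemma~\ref{lem:2} as stated requires the map to be defined on a neighborhood of \( X \cap A \).
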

		
	\begin{proof}  
		Let \( N=\g(S^1) \) be a simple link of \( M \) and let \( T \) be an \( \e \)-tubular neighborhood of \( N \) whose closure is disjoint from \( M \). Let \( D \) be the \( (n-1) \)-disk of radius \( \e \) and \( \rho: T \to D \) the canonical projection given by \( T \) and some trivialization of the normal bundle of \( N \). It follows from Lemma \ref{lem:2} that there exists a constant \( C > 0 \) such that if \( B\subset T \), then \( \sp^{n-1}(\rho(B))\le C\sp^{n-1}(B) \).

		Suppose \( X \) is compact and spans \( M \). Then \( \rho(X\cap T)=D \), otherwise the preimage \( \rho^{-1}(x) \) of a point \( x\in D\setminus \rho(X\cap T) \) would be a simple link of \( M \), and missing \( X \). Putting this together, we get \( \sp^{n-1}(X)\ge \sp^{n-1}(X\cap T)\ge \frac{1}{C}\sp^{n-1}(\rho(X\cap T))=\frac{1}{C}\a_{n-1} \e^{n-1} \). The result follows from \eqref{eq:sp}, setting \( a_0=\frac{1}{C}\a_{n-1} \e^{n-1} \).
 	\end{proof}   	

	Let \( c_0 = \diam(U) \vol(M) \), where \( \vol(M) \) is the volume of \( M \) and \( \diam(U) \) is the diameter of \( U \).

	\begin{prop}
		\label{lem:conespanner}
		There exists \( S \in \F(M, Y, U) \) such that \( \A^{n-1}(S) \le c_0 \).
	\end{prop}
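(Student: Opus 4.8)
The plan is to take for $S$ the film chain associated to the core of a cone over $M$. Fix any point $q\in U$ and let $C:=C_q(M)$ be the inward cone over $M$ with vertex $q$. Since $U$ is convex and contains both $q$ and $M$, every segment $[q,m]$ with $m\in M$ lies in $U$, so $C\subset U$; and $C$ is the continuous image of the compact set $M\times[0,1]$ under $\Phi(m,t):=(1-t)q+tm$, hence compact. By Lemma \ref{prop:cone2}, $C$ spans $M$, and by Lemma \ref{lem:corespan} so does its core $X:=C^{*}$, which is compact, reduced, and contained in $U$. Therefore, as soon as we verify $\sp^{n-1}(X)<\i$, we will have $X\in\Span^{*}(M,U)$, and Theorem \ref{thm:setchaincorrespondance} produces a film chain $S=S_X\in\F(M,Y,U)$ with $\supp(S)=X$; by \eqref{eq:sp}, $\A^{n-1}(S)=\sp^{n-1}(X)$. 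So the entire proposition reduces to the single estimate $\sp^{n-1}(C)\le c_0$.

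To obtain that estimate I would first note that $C=\Phi(M\times[0,1])$ is a Lipschitz image of the $(n-1)$-rectifiable set $M\times[0,1]$, hence is itself $(n-1)$-rectifiable, so that $\sp^{n-1}(C)=\H^{n-1}(C)$ (cf.\ \S\ref{sec:standard_results}). Then I would bound $\H^{n-1}(C)$ by the area formula: at $(m,t)$ with $t>0$ one has $\partial_t\Phi=m-q$ and the differential of $\Phi$ in the $M$-directions sends $v\in T_mM$ to $tv$, so the approximate $(n-1)$-Jacobian satisfies $J\Phi(m,t)\le t^{\,n-2}|m-q|\le t^{\,n-2}\diam(U)$. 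Integrating gives $\H^{n-1}(C)\le\int_M\bigl(\int_0^1 t^{\,n-2}\,dt\bigr)\diam(U)\,d\H^{n-2}=\tfrac{1}{n-1}\diam(U)\vol(M)\le c_0$, since $\vol(M)=\H^{n-2}(M)$ and $\tfrac{1}{n-1}\le 1$ for $n\ge 2$.

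The one genuine subtlety is the sharpness of the constant: the spherical cone inequality of Lemma \ref{lem:5} carries an extra factor $2^{\,n-1}\a_{n-1}/\a_{n-2}$, which exceeds $1$, so it does not by itself give the bound $c_0$; one really needs the rectifiability remark to replace $\sp^{n-1}$ by $\H^{n-1}$ on $C$, combined with the explicit Jacobian computation above rather than a black-box cone estimate. Everything else is immediate from results already in hand: $C\subset U$ by convexity, $C$ and hence $X$ span $M$ (Lemmas \ref{prop:cone2} and \ref{lem:corespan}), $X$ is reduced with $\sp^{n-1}(X)\le\sp^{n-1}(C)$ by monotonicity, and the passage from the set $X$ to the film chain $S_X$ with $\A^{n-1}(S_X)=\sp^{n-1}(X)$ is Theorem \ref{thm:setchaincorrespondance} together with \eqref{eq:sp}.
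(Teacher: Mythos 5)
Your proposal is correct and follows the same route as the paper: take the cone \( C_q(M) \), note it spans \( M \) by Lemma \ref{prop:cone2}, pass to a reduced compact spanning set in \( U \), and invoke Theorem \ref{thm:setchaincorrespondance} together with \eqref{eq:sp}. The paper's own proof simply asserts the bound \( \sp^{n-1}(C_q(M)) \le \diam(U)\,\vol(M) \) and the equality \( C_q(M)^* = C_q(M) \) without justification; you do a bit more work by taking the core \( X = C^* \) (so you need only monotonicity rather than the equality) and by deriving the measure bound explicitly via rectifiability of the Lipschitz image \( C = \Phi(M\times[0,1]) \), the identification \( \sp^{n-1}(C)=\H^{n-1}(C) \), and the area formula with the Jacobian estimate \( J\Phi \le t^{n-2}|m-q| \). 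Your observation that Lemma \ref{lem:5} alone, with its extra factor \( 2^{n-1}\a_{n-1}/\a_{n-2} \), would not give the clean constant \( c_0 \) (and its polyhedral version does not apply directly to \( M \)) is a correct and useful point that the paper glosses over.
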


	\begin{proof}
		We know \( C_q(M) \) spans \( M \) by Lemma \ref{prop:cone2}. Since \( \sp^{n-1}C_q(M) \le R \cdot \vol(M) < \i \), we may apply Theorem \ref{thm:setchaincorrespondance} to the set \( C_q(M)^*= C_q(M)\) to obtain a film chain \( S = S_{C_q(M)} \). Furthermore, \( \A^{n-1}(S) = \sp^{n-1}(C_q(M)) \le c_0 \).
	\end{proof}

	\begin{defn}
		\label{def:t2def}
		Define \[ \T(M, Y, U) := \{S \in \hB_{n-1}^2(U): S = \lim_{i \to \i} S_i,\,\, S_i \in \F(M, Y, U),\, \A^{n-1}(S_i) \le c_0 \text{ for all } i \} \] and let \( \C(U,c_0) := \{ T \in \C(U) : M(T) \le c_0 \} \).
	\end{defn}

	\begin{prop}
		\label{prop:t2complete}
		If \( S \in \T(M, Y, U) \), then

 	 	\begin{enumerate}
			\renewcommand{\theenumi}{(\alph{enumi})}
			\renewcommand{\labelenumi}{\theenumi}
			\makeatletter
			\renewcommand{\p@enumii}{\theenumi}
			\makeatother
 			\item \label{item:bddarea} \( \A^{n-1}(S) \le c_0 \);
			\item \label{item:bdry} \( \p S = P_Y \tM \);
			\item \label{item:positive} \( \k_Y S\in \C(U, c_0) \);
			\item \label{item:support} \( \supp(\k_Y S) \) spans \( M \).
		\end{enumerate}
	\end{prop}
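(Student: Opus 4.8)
The plan is to transport the convergence $S=\lim_i S_i$ in $\hB_{n-1}^2(U)$ through the continuous operators of \S\ref{sec:differential_chains} and then extract the two support assertions from the chain--measure correspondence of \S\ref{sub:positive_borel_measures}. Since $\k_Y=\k_F$ is continuous $\hB_{n-1}^2(U)\to\hB_n^2(U)$ (the estimate following Definition \ref{def:cone}) and $\p$ is continuous $\hB_{n-1}^2(U)\to\hB_{n-2}^3(U)$ (Theorem \ref{thm:D}), we have $\k_Y S_i\to\k_Y S$ and $\p S_i\to\p S$. Pairing the first convergence against the smooth bounded form $dV\in\B_n^2(U)$ and using joint continuity of $\cint$ gives
\[
\A^{n-1}(S)=\cint_{\k_Y S}dV=\lim_i\cint_{\k_Y S_i}dV=\lim_i\A^{n-1}(S_i)\le c_0,
\]
which is \ref{item:bddarea}; and since $\p S_i=P_Y\tM$ for every $i$ by Definition \ref{def:filmchain} and this element does not depend on $i$, continuity of $\p$ gives $\p S=P_Y\tM$, which is \ref{item:bdry}.

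For \ref{item:positive}: each $\k_Y S_i$ is positive, so Corollary \ref{cor:positivesequence} (with $r=2$) shows that $\k_Y S$ is positive and, writing $\k_Y S_i=u_n^{1,2}J_i'$ and $\k_Y S=u_n^{1,2}J'$ with $J_i',J'\in\hB_n^1(U)$ positive, that a subsequence $J_{i_j}'\to J'$ in $\hB_n^1(U)$. Its mass is controlled by Lemma \ref{lem:fdV}: $M(\k_Y S)=\cint_{\k_Y S}dV=\A^{n-1}(S)\le c_0$ by part \ref{item:bddarea}. It remains to see that $\supp(\k_Y S)\subset U$ (so that $\k_Y S$ genuinely lies in $\C(U)$, rather than being merely supported in $\overline U$); this is extracted from Lemma \ref{lem:converge} together with the fact that each approximant support $\supp(\k_Y S_i)=\supp(S_i)\in\Span^*(M,U)$ is a compact subset of $U$ and the uniform area bound. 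Combining these points gives $\k_Y S\in\C(U,c_0)$.

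For \ref{item:support} we argue by contradiction, in the spirit of Proposition \ref{prop:minimalarea}. Suppose some simple link $N=\g(S^1)$ of $M$ misses $\supp(\k_Y S)$, and let $T$ be a tubular $\e$-neighborhood of $N$, with canonical projection $\rho:T\to D$ onto the normal $(n-1)$-disc, chosen so that $\overline T\cap\supp(\k_Y S)=\emptyset$ and $T$ is $\k_Y S$-compatible (Definition \ref{def:compatible}), i.e. $\mu_{\k_Y S}(\fr\,T)=0$; then $\mu_{\k_Y S}(T)=0$. Applying Proposition \ref{prop:maggi} along the subsequence $J_{i_j}'\to J'$ of part \ref{item:positive} gives $\mu_{\k_Y S_{i_j}}(T)\to\mu_{\k_Y S}(T)=0$. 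On the other hand, by Definition \ref{def:filmchain} and Theorem \ref{thm:setchaincorrespondance} we have $\mu_{\k_Y S_{i_j}}=\sp^{n-1}\lfloor_{X_{i_j}}$ with $X_{i_j}:=\supp(S_{i_j})$ spanning $M$; hence $\rho(X_{i_j}\cap T)=D$, for otherwise a fibre of $\rho$ would be a simple link missing $X_{i_j}$. Since $\rho$ is Lipschitz, Lemma \ref{lem:2} then yields $\sp^{n-1}(X_{i_j}\cap T)\ge C^{-1}\sp^{n-1}(D)=C^{-1}\a_{n-1}\e^{n-1}>0$ for a fixed constant $C$, contradicting $\mu_{\k_Y S_{i_j}}(T)=\sp^{n-1}(X_{i_j}\cap T)\to 0$.

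Parts \ref{item:bddarea} and \ref{item:bdry} are immediate once the relevant operators are seen to be continuous, and the mass and positivity in \ref{item:positive} fall out of the positive-cone lemmas. The real content is \ref{item:support}: convergence in $\hB^2$ provides only the one-sided support statement of Lemma \ref{lem:converge} --- points of $\supp(\k_Y S)$ are limits of points of $\supp(S_i)$, but the $\supp(S_i)$ may sprout thin tentacles reaching into $T$ that disappear in the limit (cf. Figure \ref{fig:tentacles}) --- so Hausdorff convergence of supports is unavailable, and one must instead pass to the mass measures, use their weak convergence, and close the argument with the quantitative projection bound that powers Proposition \ref{prop:minimalarea}. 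The support containment needed in \ref{item:positive} is the second point that is not purely formal.
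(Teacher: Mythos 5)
Your proposal is correct in its overall shape and, for parts \ref{item:bddarea}, \ref{item:bdry}, \ref{item:positive}, coincides with the paper's proof: the paper also runs continuity of $\k_Y$, $\p$, $\cint$ for \ref{item:bddarea}--\ref{item:bdry}, and for \ref{item:positive} it performs the diagonal extraction of positive Dirac chains and invokes Corollary \ref{cor:positivecone} plus Lemma \ref{lem:fdV}; your direct appeal to Corollary \ref{cor:positivesequence} simply packages the same two steps. Where you diverge is part \ref{item:support}. The paper does not pass through the measure $\mu_{\k_Y S}$ or Proposition \ref{prop:maggi} at all: it fixes a bump function $f$ with $f\equiv 1$ on $\O(N,\e/2)$, $\supp(f)\subset T$, $f\ge 0$, and computes
\[
0=\cint_{\k_Y S}f\,dV=\lim_i\cint_{\k_Y S_i}f\,dV\ge\limsup_i\sp^{n-1}(\supp(S_i)\cap\O(N,\e/2))\ge \a_{n-1}(\e/2)^{n-1}>0,
\]
where the vanishing of the left side comes straight from the definition of support (Definition \ref{def:support}) and the right-hand lower bound from the projection estimate of Proposition \ref{prop:minimalarea}. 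This is the same contradiction you reach, but it is logically lighter: it needs $\mu_{\k_Y S_i}$ only for the approximants $S_i\in\F$, where it is supplied by Definition \ref{def:filmchain}\ref{def:filmchain:item:3}, and never needs $\k_Y S$ itself to lie in $\C(U)$. Your Portmanteau route, by contrast, presupposes $\k_Y S\in\C(U)$ so that $\mu_{\k_Y S}$ and the notion of $\k_Y S$-compatible set make sense, which is precisely the $\supp(\k_Y S)\subset U$ point you flag in part \ref{item:positive} as ``not purely formal.'' That flag is a good instinct, but the sketch does not actually close it (Lemma \ref{lem:converge} alone gives convergence of approximant support points \emph{to} $\supp(\k_Y S)$, not containment of $\supp(\k_Y S)$ in $U$), so as written part \ref{item:support} depends on a step you acknowledge is incomplete. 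Replacing your step 3--4 with the paper's test-function pairing removes that circularity and is the cleaner way to finish. (You can also drop the requirement that $T$ be a continuity set: $\overline T\cap\supp(\k_Y S)=\emptyset$ already forces $\mu_{\k_Y S}(\overline T)=0$, so $T$ is automatically one.)
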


	\begin{proof}
		Choose \( S_i \to S \) in \( \T(M, Y, U) \) with \( S_i \in \F(M, Y, U) \) and \( \A^{n-1}(S_i) \le c_0 \). Parts \ref{item:bddarea} and \ref{item:bdry} follow from continuity of \( \k \), \( \p \) and \( \cint \). A diagonal sequence of positive Dirac chains \( A_{i,j} \) approximating \( \k_Y S_i \) shows \( \k_Y(S) \) is positive. Since \( U \) is bounded, Corollary \ref{cor:positivecone} shows\footnote{Here we think of \( \hB_n^1(U) \) as a subspace of \( \hB_n^2 \) via the canonical injection map \( u_n^{1,2} \).} that \( \k_Y S\in \hB_n^1(U) \) and hence by Lemma \ref{lem:fdV} that \( M(\k_Y S)=\int_{\k_Y S}dV=\lim_{i\to\i}M(\k_Y S_i)\le c_0 \).

		Proof of \ref{item:support}: If not, there is a simple link \( N=\eta(S^1) \) of \( M \) and an \( \e \) tubular neighborhood \( T=\O(N,\e) \) of \( N \) such that \( \supp(\k_Y S) \cap T = \emptyset \). Let \( f: U \to \R \) be a smooth function satisfying \( f(x) = 1 \) for \( x\in \O(N,\e/2) \), \( \supp(f)\subset T \) and \( f \ge 0 \). Then by Proposition \ref{prop:whitney} \ref{sameintegrals},
		\begin{align*}
			\cint_{\k_Y S} f dV = \lim_{i \to \i} \cint_{\k_Y S_i} f dV &\ge \limsup_{i \to \i} \mu_{\k_Y S_i} (\O(N,\e/2))\\
			&= \limsup_{i \to \i} \sp^{n-1}(\supp(S_i) \cap \O(N,\e/2))\\
			&\ge \a_{n-1}(\e/2)^{n-1},
		\end{align*}
		where the last inequality follows since \( \supp(S_i) \) spans \( M \). This yields a contradiction, since \( f dV \) is supported away from \( \supp(\k_Y S) \).
	\end{proof}
 
	\begin{thm}
		\label{thm:compactC}
		\( \C(U, c_0) \subset \hB_n^1(U) \) is compact.
	\end{thm}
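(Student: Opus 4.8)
The plan is to establish \emph{sequential} compactness of \( \C(U,c_0) \) in the Banach space \( \hB_n^1(U) \); the two engines are the compact linking map \( u_n^{0,1} \) of Theorem \ref{thm:compactinclusion} and the identity \( \|J\|_{B^1}=\cint_J dV=M(J) \), valid for positive \( J\in\hB_n^1(U) \) by Lemma \ref{lem:fdV}. Let \( \{T_i\}\subset\C(U,c_0) \). Each \( T_i \) is positive, so \( \|T_i\|_{B^1}=\cint_{T_i}dV=M(T_i)\le c_0 \), and hence \( \{T_i\} \) is bounded in \( \hB_n^1(U) \). This boundedness by itself is not enough, since Theorem \ref{thm:compactinclusion} refers to \( \hB_n^0(U) \) rather than \( \hB_n^1(U) \); so the first real step is to pass to Dirac-chain approximants carrying a uniform \( B^0 \)-bound.

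For each \( i \), since \( T_i \) is positive we may choose a positive Dirac chain \( A_i\in\A_n(U) \) with \( \|A_i-T_i\|_{B^1}<1/i \). Because \( dV\in\B_n^1(U) \), the functional \( J\mapsto\cint_J dV \) is \( B^1 \)-continuous, and since \( \|A_i\|_{B^0}=\cint_{A_i}dV \) for positive Dirac chains, we get \( \|A_i\|_{B^0}\le c_0+\|dV\|_{B^1} \) for all \( i \). Thus \( \{A_i\} \) is a bounded subset of \( \hB_n^0(U) \). As \( U \) is bounded, Theorem \ref{thm:compactinclusion} makes \( u_n^{0,1} \) compact, so there is a subsequence with \( u_n^{0,1}(A_{i_j})\to K \) in \( \hB_n^1(U) \) for some \( K \). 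Since \( u_n^{0,1} \) is the identity on \( \A_n(U) \) and \( \|A_{i_j}-T_{i_j}\|_{B^1}\to 0 \), it follows that \( T_{i_j}\to K \) in \( \hB_n^1(U) \).

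It remains to check \( K\in\C(U,c_0) \). The chain \( K \) is a \( B^1 \)-limit of the positive Dirac chains \( A_{i_j} \), hence positive (Corollary \ref{cor:positivesequence}); and by Lemma \ref{lem:fdV} together with \( B^1 \)-continuity of integration against \( dV \), \[ M(K)=\cint_K dV=\lim_{j}\cint_{T_{i_j}}dV=\lim_{j}M(T_{i_j})\le c_0. \] Finally, \( \supp(K) \) is automatically a closed subset of \( \overline U \), and to see \( \supp(K)\subset U \) one uses that along the subsequence \( \mu_{T_{i_j}}\to\mu_K \) weakly with \( \supp(\mu_{T_{i_j}})=\supp(T_{i_j}) \) (Propositions \ref{prop:whitney} and \ref{prop:maggi}); this is the one genuinely delicate point, and it is where the placement of the fixed compact hull \( h(M)\subset U \) in the ambient set-up is used. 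Hence \( K\in\C(U,c_0) \), and \( \C(U,c_0) \) is compact.

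I expect the main obstacle to be exactly the passage from a \( B^1 \)-mass bound to \( B^1 \)-precompactness: Theorem \ref{thm:compactinclusion} cannot be applied to the \( T_i \) directly, so one must approximate by Dirac chains and, crucially, use positivity to convert the mass bound into a uniform \( B^0 \)-bound via Lemma \ref{lem:fdV} — without positivity there is no such conversion. A secondary point is verifying that the support of the limit does not escape to \( \fr U \).
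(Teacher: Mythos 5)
Your proof takes essentially the same approach as the paper's: both arguments convert the $B^1$-mass bound $M(T)\le c_0$ into a uniform $B^0$-bound on Dirac-chain approximants via positivity and Lemma \ref{lem:fdV}, and then feed this into the compact linking map of Theorem \ref{thm:compactinclusion}. The paper phrases it as total boundedness (observing that $\C(U,c_0)$ lies in the closure of $u_n^{0,1}$ applied to the $B^0$-ball of radius $c_0$, a totally bounded set) while you phrase it as sequential compactness by subsequence extraction; these are equivalent in a Banach space.

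One remark on your closing paragraph. You are right to single out $\supp(K)\subset U$ as the delicate residual point in showing that $K$ lands in $\C(U)$ rather than merely in $\hB_n^1(U)$, but the sketch you offer does not close it. Proposition \ref{prop:whitney} presupposes $K\in\C(U)$, i.e.\ $\supp(K)\subset U$, so invoking $\mu_K$ there is circular; and weak convergence of the $\mu_{T_{i_j}}$ does not by itself prevent mass from accumulating at $\fr\,U$. The appeal to $h(M)$ is also misplaced: $\C(U,c_0)$ is defined with no reference to $M$, so nothing about $h(M)$ is available at this level of generality. That said, the paper's own proof asserts closedness by verifying only positivity and the mass bound of the limit, not the support condition, so your treatment does not fall short of what the paper actually does; it simply makes visible a point that the paper leaves implicit.
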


	\begin{proof}
		The set \( \C(U,c_0) \) is closed since the limit \( J \) in the \( B^1 \) norm of a sequence \( J_i \) of positive chains is again positive, and \( M(J)=\lim_{i\to\infty} M(J_i)\le c_0 \) by Lemma \ref{lem:fdV}. By Theorem \ref{thm:compactinclusion} the \( u_n^{0,1} \) image \( I \) of the ball of radius \( c_0 \) in \( \hB_n^0 \) is totally bounded. Since \( \C(U,c_0) \) is a subset of the closure of \( I \), it is therefore compact.
	\end{proof}

	\begin{cor}
		\label{cor:com}
		\( \k_Y(\T(M, Y, U))\subset \hB_n^1(U) \) is compact.
	\end{cor}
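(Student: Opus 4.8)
The plan is to realize $\k_Y(\T(M,Y,U))$ as a \emph{closed} subset of the compact set $\C(U,c_0)\subset\hB_n^1(U)$; since a closed subset of a compact set is compact, this is all that is required. By Proposition~\ref{prop:t2complete}~\ref{item:positive}, every $S\in\T(M,Y,U)$ has $\k_Y S\in\C(U,c_0)$, so $\k_Y(\T(M,Y,U))\subseteq\C(U,c_0)$ (in particular the image really does lie in $\hB_n^1(U)$), and $\C(U,c_0)$ is compact in $\hB_n^1(U)$ by Theorem~\ref{thm:compactC}.

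The substance is to show $\k_Y(\T(M,Y,U))$ is closed in $\hB_n^1(U)$. First I would record the identity
\[
	S \;=\; \p\k_Y S + E_Y\tM \qquad\text{for every } S\in\T(M,Y,U),
\]
obtained just as in Lemma~\ref{lem:fc} and Theorem~\ref{thm:setchaincorrespondance}: by Proposition~\ref{prop:t2complete}~\ref{item:bdry} we have $\p S = P_Y\tM = \p E_Y\tM$ (the second equality because $\p\tM = 0$), so applying $\k_Y\p + \p\k_Y = Id$ (Equation~\eqref{eq:cone}) together with $\k_Y E_Y\tM = 0$ gives $S = \p\k_Y S + \k_Y\p S = \p\k_Y S + \k_Y\p E_Y\tM = \p\k_Y S + E_Y\tM$. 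Now suppose $T_i = \k_Y S_i \to T$ in $\hB_n^1(U)$ with $S_i\in\T(M,Y,U)$. Substituting $\k_Y S_i = T_i$ into the identity, $S_i = \p T_i + E_Y\tM$; since boundary $\p:\hB_n^1(U)\to\hB_{n-1}^2(U)$ is continuous (Theorem~\ref{thm:D}) and $T_i\to T$, we get $S_i \to \p T + E_Y\tM =: S$ in $\hB_{n-1}^2(U)$. The set $\T(M,Y,U)$ is closed in $\hB_{n-1}^2(U)$, since by Definition~\ref{def:t2def} it is exactly the set of limits in the metrizable space $\hB_{n-1}^2(U)$ of sequences drawn from the fixed subset $\{S\in\F(M,Y,U):\A^{n-1}(S)\le c_0\}$; hence $S\in\T(M,Y,U)$. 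Finally
\[
	\k_Y S \;=\; \k_Y\p T + \k_Y E_Y\tM \;=\; \k_Y\p T \;=\; T - \p\k_Y T \;=\; T,
\]
using $\k_Y E_Y\tM = 0$, Equation~\eqref{eq:cone}, and $\k_Y T = 0$ ($T$ being top-dimensional, exactly as in Theorem~\ref{thm:setchaincorrespondance}; here, as in Proposition~\ref{prop:t2complete}, $\hB_n^1(U)$ is viewed inside $\hB_n^2(U)$ via $u_n^{1,2}$). Thus $T = \k_Y S\in\k_Y(\T(M,Y,U))$, which establishes closedness and hence the corollary.

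I do not expect a genuine obstacle here: the corollary is essentially Theorem~\ref{thm:compactC} combined with the bookkeeping identity $S = \p\k_Y S + E_Y\tM$, which is the direct analogue of what is already proved for film chains. The only mild points to be careful about are that $\T(M,Y,U)$ is closed (immediate from its definition as a set of limits in a Banach space) and the routine checking of how $\p$, $E_Y$ and the cone operator $\k_Y$ compose on $\tM$ and on top-dimensional chains. One could alternatively observe that $S\mapsto\k_Y S$ and $T\mapsto\p T + E_Y\tM$ are mutually inverse continuous maps between $\T(M,Y,U)$ and $\k_Y(\T(M,Y,U))$ and transfer compactness that way, but the ``closed subset of a compact set'' route above is cleanest.
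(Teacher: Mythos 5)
Your proof is correct and follows essentially the same route as the paper: reduce to showing $\k_Y(\T(M,Y,U))$ is closed inside the compact set $\C(U,c_0)$, then use $\p T_i = S_i - E_Y\tM$ and continuity of $\p$ together with closedness of $\T(M,Y,U)$ to recover a preimage of the limit. The only cosmetic difference is that you verify $\k_Y S = T$ by a direct algebraic computation with the cone identity, whereas the paper implicitly gets it from continuity of $\k_Y$ and uniqueness of limits; both are fine.
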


	\begin{proof} 
		By Proposition \ref{prop:t2complete} \ref{item:positive} and Theorem \ref{thm:compactC} it suffices to show \( \k_Y(\T(M, Y, U)) \) is closed. Suppose \( \{T_i\} \subset \k_Y(\T(M, Y, U)) \) and \( T_i \to T \). Write \( T_i = \k_Y(S_i) \) where \( S_i \in \T(M, Y, U) \). Then \( \p T_i= S_i-E_Y\tM \) by Proposition \ref{prop:t2complete} \ref{item:bdry} and Lemma \ref{lem:fc}, so \( S_i \to \p T+E_Y\tM =: S \in \T(M, Y, U)\) since \( \T(M, Y, U) \) is closed. It follows that \( T=\k_Y S\in \k_Y(\T(M, Y, U)) \).
	\end{proof}

	\begin{cor}
		\label{cor:existenceagain}
		There exists \( S_0 \in \T(M, Y, U) \) with \( \A^{n-1}(S_0) = \frak{m} \). 
	\end{cor}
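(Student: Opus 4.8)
The plan is to run the direct method for the continuous functional $\A^{n-1}$ over $\T(M,Y,U)$, drawing all the compactness from Corollary \ref{cor:com} (equivalently from Theorem \ref{thm:compactC}). The first thing to record is that $\A^{n-1}$ is continuous on all of $\hB_{n-1}^2(U)$: by Definition \ref{def:cone} and the estimate following it, $\k_Y=\k_F\colon\hB_{n-1}^2(U)\to\hB_n^2(U)$ is a continuous linear map, and since $dV\in\B_n^2(U)$ the pairing $J\mapsto\cint_J dV$ is a continuous linear functional on $\hB_n^2(U)$ by Proposition \ref{thm:isomorphism}; hence $S\mapsto\A^{n-1}(S)=\cint_{\k_Y S}dV$ is continuous. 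This two-sided continuity is precisely what the differential-chain topology was built to supply, and it is the reason the argument is short.

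Next I would produce a good minimizing sequence. By \eqref{eq:frakm}, $\frak{m}=\inf\{\A^{n-1}(S):S\in\F(M,Y,U)\}$, and by Proposition \ref{lem:conespanner} there is a cone film chain $S^{\ast}\in\F(M,Y,U)$ with $\A^{n-1}(S^{\ast})\le c_0$, so $\frak{m}\le c_0$. Choose $S_i\in\F(M,Y,U)$ with $\A^{n-1}(S_i)\to\frak{m}$; replacing each term having $\A^{n-1}(S_i)>c_0$ by $S^{\ast}$ (this alters only finitely many terms when $\frak{m}<c_0$, and is harmless when $\frak{m}=c_0$, since then $\A^{n-1}(S^{\ast})=\frak{m}$), we may assume $\A^{n-1}(S_i)\le c_0$ for all $i$. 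Then each $S_i$ lies in $\T(M,Y,U)$, so $\k_Y S_i\in\k_Y(\T(M,Y,U))$, which is a compact subset of $\hB_n^1(U)$ by Corollary \ref{cor:com}. Passing to a subsequence, $\k_Y S_{i_j}\to T_0$ in $\hB_n^1(U)$.

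Now I would recover the limit of the chains themselves. Since each film chain $S_i$ satisfies $\p S_i=P_Y\tM$, Lemma \ref{lem:fc} gives $S_i=\p\k_Y S_i+E_Y\tM$, so by continuity of $\p$,
\[
S_{i_j}=\p\k_Y S_{i_j}+E_Y\tM\ \longrightarrow\ \p T_0+E_Y\tM=:S_0
\]
in $\hB_{n-1}^2(U)$. Because $S_{i_j}\in\F(M,Y,U)$ with $\A^{n-1}(S_{i_j})\le c_0$ and $S_{i_j}\to S_0$, Definition \ref{def:t2def} shows $S_0\in\T(M,Y,U)$. Finally, continuity of $\A^{n-1}$ yields $\A^{n-1}(S_0)=\lim_j\A^{n-1}(S_{i_j})=\frak{m}$, which is the assertion.

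No genuine obstacle remains once Corollary \ref{cor:com} is in hand — all the real work sits in the strong compactness Theorem \ref{thm:compactinclusion} and the constructions of Section \ref{sec:film_chains} — but two bookkeeping points deserve care: (i) arranging the a priori bound $\A^{n-1}(S_i)\le c_0$ so that the $S_i$ genuinely lie in $\T(M,Y,U)$ and their $\k_Y$-images in the compact set of Corollary \ref{cor:com}, which is handled via Proposition \ref{lem:conespanner}; and (ii) passing from convergence of the positive $n$-chains $\k_Y S_{i_j}$ back to convergence of the $(n-1)$-chains $S_{i_j}$, which is immediate from $\k_Y\p+\p\k_Y=\mathrm{Id}$ together with $\k_Y E_Y\tM=0$.
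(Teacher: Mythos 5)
Your proof is correct and fills in exactly the argument the paper leaves implicit: Corollary \ref{cor:existenceagain} is stated with no proof, as an immediate consequence of Corollary \ref{cor:com}, and the intended reasoning is precisely the direct method you run — take a minimizing sequence in $\F$ with the a priori bound $\A^{n-1}\le c_0$ supplied by Proposition \ref{lem:conespanner}, extract a convergent subsequence via compactness of $\k_Y(\T(M,Y,U))$, reconstruct the limiting $(n-1)$-chain by $S_0=\p T_0+E_Y\tM$ using Lemma \ref{lem:fc} and continuity of $\p$, and conclude by continuity of $\A^{n-1}$. One small economy worth noting: if $\frak{m}<c_0$ the tail of any minimizing sequence already satisfies $\A^{n-1}(S_i)\le c_0$, so your replacement-by-$S^\ast$ bookkeeping can be dropped in favor of simply discarding finitely many initial terms; the separate $\frak{m}=c_0$ case is, as you observe, trivial.
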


	\begin{thm}
		\label{thm:convexhull}
		If \( S_0 \in \T(M, Y, U) \) is  \( \A^{n-1} \)-minimizing, then\footnote{Recall \( h(M) \) is the convex hull of \( M \).} \( \supp(\k_Y S_0)\subset h(M) \).
	\end{thm}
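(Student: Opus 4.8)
The plan is to use nearest-point (metric) projections onto closed convex sets. If $K\subset\R^n$ is closed and convex, the projection $\pi_K\colon\R^n\to K$ is $1$-Lipschitz and fixes $K$ pointwise; and—this is the crux, because our ``span'' is an intersection condition against \emph{all} simple links, so the classical convex-hull arguments (retracting onto $K$, naturality of homology, push-forward of currents) do not transfer—it preserves spanning: \emph{if $M\subset K$ and $X\subset\R^n$ is compact and spans $M$, then $\pi_K(X)$ spans $M$}. To see this, for $t\in[0,1)$ set $H_t:=(1-t)\,\mathrm{id}+t\,\pi_K$. Since $\pi_K$ is firmly nonexpansive ($\langle\pi_K(x)-\pi_K(y),\,x-y\rangle\ge 0$), $H_t$ is injective, and it is proper, so by invariance of domain it is a self-homeomorphism of $\R^n$; it fixes $M$ pointwise and is isotopic to the identity via $s\mapsto H_{st}$. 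Hence $H_t$ carries each simple link of $M$ to an embedded circle in $\R^n\setminus M$ that is homotopic to it there, so of equal linking numbers, i.e.\ to another simple link, and $H_t^{-1}$ is a two-sided inverse of the same kind; thus $H_t(X)$ spans $M$ iff $X$ does. As $t\uparrow 1$, $\sup_{x\in X}|H_t(x)-\pi_K(x)|=(1-t)\sup_{x\in X}\mathrm{dist}(x,K)\to 0$, so $H_t(X)\to\pi_K(X)$ in the Hausdorff metric, and Lemma~\ref{lem:hausdorffspan} finishes the claim.

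Granting this, I would deduce the theorem as follows. Write $\pi:=\pi_{h(M)}$ and note $h(M)\subset U$ because $U$ is convex with $M\subset U$. Projecting a minimizing sequence of film chains, taking cores, and using Theorem~\ref{thm:setchaincorrespondance}, Corollary~\ref{cor:com}, and Lemma~\ref{lem:converge} already produces a minimizer whose $\k_Y$-support lies in $h(M)$; the real work is the same statement for an arbitrary minimizer $S_0$. Let $Z:=\supp(\k_Y S_0)$, which spans $M$ by Proposition~\ref{prop:t2complete}\ref{item:support}, and suppose $Z\not\subset h(M)$. Pick $p^\ast\in Z$ farthest from $h(M)$, let $P$ be the supporting hyperplane of $h(M)$ at $p_0:=\pi(p^\ast)$ orthogonal to $v:=(p^\ast-p_0)/|p^\ast-p_0|$, and fix $0<c<|p^\ast-p_0|$, so the half-space $K_c=\{x:\langle x-p_0,v\rangle\le c\}$ contains $h(M)$ in its interior. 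Write $S_0=\lim S_i$ with $S_i\in\F(M,Y,U)$, $\A^{n-1}(S_i)\le c_0$; then $\A^{n-1}(S_i)=\sp^{n-1}(\supp S_i)\to\frak{m}$ by continuity of $\A^{n-1}$, and since $p^\ast\in\supp(\mu_{\k_Y S_0})$, Proposition~\ref{prop:maggi} gives a $\delta>0$ with $\sp^{n-1}(\supp S_i\cap\{x:\langle x-p_0,v\rangle>c\})\ge\delta$ for all large $i$. Folding $\supp S_i$ onto $K_c$ by $\pi_{K_c}$ keeps it spanning $M$ (by the claim) and does not increase $\sp^{n-1}$ (Lemma~\ref{lem:2}); because this fold collapses the $v$-direction it annihilates the collar near $P$ where the surface is transverse to $v$, and a quantitative form of this—extracted at a level $c$ where the slice $\supp S_i\cap\{x:\langle x-p_0,v\rangle=c\}$ has small $\sp^{n-2}$-measure, via the slicing inequality Lemma~\ref{lem:4} and the cone estimate Lemma~\ref{lem:5}—yields, for large $i$, a compact set spanning $M$ with $\sp^{n-1}$ strictly below $\frak{m}$. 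Since $\frak{m}$ does not depend on the ambient bounded convex open set, this contradicts the definition of $\frak{m}$, so $Z\subset h(M)$.

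The main obstacle is the spanning-preservation claim: because spanning is tested against every simple link, one cannot invoke a retraction or naturality of homology, and the mechanism is that the metric projection, although very far from injective, is a uniform limit of ambient isotopies of $\R^n$ fixing $M$, each of which plainly permutes the simple links. A secondary difficulty is the final quantitative step—turning ``an excursion beyond a supporting hyperplane costs a transverse collar'' into an honest strict decrease of $\sp^{n-1}$—which must be done without assuming any rectifiability of $Z$ in advance.
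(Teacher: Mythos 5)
Your key observation — that the metric projection $\pi_{K}$ onto a closed convex set $K\supset M$ preserves spanning, because $H_t=(1-t)\mathrm{id}+t\pi_K$ is a self-homeomorphism of $\R^n$ isotopic to the identity rel $M$ for $t<1$, and $H_t(X)\to\pi_K(X)$ in the Hausdorff metric so Lemma~\ref{lem:hausdorffspan} applies — is correct and is a genuinely useful fact (it would, e.g., give immediately that \emph{some} minimizer lies in $h(M)$). But your overall route diverges from the paper's and has an acknowledged gap precisely where the paper's argument is designed to avoid one.

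The paper does not use a projection at all. It constructs a global diffeomorphism $G$ (the time-$1$ flow of a radial vector field vanishing on $h(M)$, with magnitude nonincreasing along flow lines) that is $1$-Lipschitz everywhere, strictly contracting by a fixed factor $s<1$ on the complement of an $\e$-neighborhood of $h(M)$, and fixes $M$ pointwise. Because $G$ is a diffeomorphism isotopic to the identity rel $M$, spanning is preserved trivially (no Hausdorff-limit step needed), and because it is uniformly $s$-contracting on the bad region, Lemma~\ref{lem:2} gives a clean quantitative drop
\[
\sp^{n-1}(G(X_i)) \le \sp^{n-1}(X_i) - (1-s^{n-1})\,\sp^{n-1}(X_i\cap C).
\]
The same test-function/weak-convergence argument you use then produces a $\d>0$ with $\sp^{n-1}(X_i\cap\O(x,\e))>\d$ for the approximating film chains $S_i$, and the strict inequality $\sp^{n-1}(G(X_i))<\frak{m}$ for large $i$ is immediate. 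No slicing, no cone estimate, no rectifiability issues.

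Your metric projection $\pi_{K_c}$, by contrast, is only $1$-Lipschitz; it is not strictly contracting, and indeed on a piece of surface parallel to $v$ it is measure-preserving. So the ``annihilates the collar'' heuristic does not yield a strict decrease without further work, and the slicing-plus-cone patch you sketch is not carried out. It is also not clear that it \emph{can} be carried out along the lines you indicate: the slicing inequality bounds $\int_{c}^{d}\sp^{n-2}(x_i(\cdot,t))\,dt$ by $\sp^{n-1}(X_i\cap\{>c\})$, and this last quantity is bounded \emph{below} by your $\d>0$; to obtain a level with both a small slice and a definite amount of measure beyond it you would need an estimate much closer to Lemma~\ref{lem:haircut} (which involves the full machinery of Lemmas~\ref{lem:8}--\ref{lem:9} and the constant $K_n$), and you would additionally have to verify that the cone replacement preserves spanning (compare Lemma~\ref{lem:cutandcone}). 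This is not a small missing detail; it is the heart of the proof, and the paper sidesteps it entirely by engineering a single diffeomorphism with a built-in strict contraction factor $s$. In short: your spanning-preservation claim is a correct and noteworthy side result, but the quantitative strict-decrease step is a genuine gap, and the paper's diffeomorphism construction is a fundamentally different and more efficient mechanism for closing it.
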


	\begin{proof}
		Let \( S_i\to S_0 \), where \( \{S_i\}\subset \F(M, Y, U) \) and set \( X_i:=\supp(S_i) \). If \( x\in \supp(\k_Y S_0) \) and \( x\notin h(M) \), let \( \e>0 \) be small enough so that \( \O(x,3\e)\cap h(M)=\emptyset \). Let \( f\in \B_0^1(U) \) be supported in \( \O(x,\e) \) such that \( \cint_{\k_Y S_0} f\,dV >0 \). Taking positive and negative parts\footnote{Recall \( \B_0^1 \) consists of bounded Lipschitz functions.} of \( f \) and rescaling, we can assume without loss of generality that \( 0\le f \le 1 \). By continuity, there exists \( \d>0 \) and \( N \) such that \( \cint_{\k_Y S_i}f\,dV>\d \) for all \( i>N \). Thus by Proposition \ref{prop:whitney} \ref{sameintegrals}, \[ 0<\d< \cint_{\k_Y S_i} f\,dV=\int_U f d\mu_{\k_Y S_i}\le \mu_{\k_Y S_i}(\O(x,\e)) = \sp^{n-1}(X_i\cap \O(x,\e)) \] for all \( i>N \).
		
		Let \( A=h(M) \), \( B=\O(h(M),\e)\setminus A \) and \( C=U\setminus (A\cup B) \). Fix \( p\in A \) and let \( G:\R^n\to \R^n \) be a diffeomorphism given by the \( (t=1) \)-flow of a vector field \( V \) which is everywhere pointing towards \( p \) (except where it is zero), and such that
		\begin{itemize}
			\item \( V\lfloor_A\equiv 0 \);
			\item There is some \( 0<s<1 \) such that if \( y\in C \), then \( G(y)=sy+(1-s)p \);
			\item The magnitude of \( V \) is non-increasing along flow-lines.
		\end{itemize}
		Then the Lipschitz constant of \( G \) is equal to \( 1 \), since the divergence of \( V \) is everywhere non-positive, and \( G(U)\subset U \) since \( U \) is convex. Since \( G \) is a diffeomorphism, it follows from Lemma \ref{lem:corespan} that \( \tilde{X_i}:=(G(X_i))^*\in \Span^*(M,U) \). Let \( \{\tilde{S_i}\}\subset \F(M, Y, U) \) be the corresponding sequence of film chains given by Theorem \ref{thm:setchaincorrespondance}. By Lemma \ref{lem:2}, we have:
		\begin{align*}
			\A^{n-1}(\tilde{S_i})=\sp^{n-1}(G(X_i))&=\sp^{n-1}(G(X_i\cap A))+\sp^{n-1}(G(X_i\cap B))+\sp^{n-1}(G(X_i\cap C))\\
			&\le\sp^{n-1}(X_i\cap A)+\sp^{n-1}(X_i\cap B)+s^{n-1}\sp^{n-1}(X_i\cap C)\\
			&=\sp^{n-1}(X_i)-(1-s^{n-1})\sp^{n-1}(X_i\cap C)\\
			&\le\A^{n-1}(S_i)-(1-s^{n-1})\sp^{n-1}(X_i\cap \O(x,\e))\le \A^{n-1}(S_i)-(1-s^{n-1})\d,
		\end{align*}
		thus contradicting the minimality of the sequence \( \{S_i\} \).
	\end{proof}

	So, in particular, \( \k_Y S_0 \in \C(U) \), so we may define its corresponding measure \( \mu_{\k_Y S_0} \). Unfortunately, this is not the end of the story, since \( S_0 \) is not a priori an element of \( \F(M, Y, U) \). There are two problems to take care of:

	\begin{enumerate}
		\item We don't know that \( \supp(S_0) \) spans \( M \). This will be resolved by Corollary \ref{cor:key} upon fixing the next problem:
		\item The Hausdorff spherical \( (n-1) \)-measure of \( \supp(\k_Y S_0) \) may not be equal to \( \frak{m} \) (indeed it might not be finite at all,) and the measure \( \mu_{\k_Y S_0} \) may not be \( \sp^{n-1}\lfloor_X \) for some \( X\in \Span^*(M,U) \). We prove in the next section that this cannot happen.
	\end{enumerate}

\section{Minimizing sequences}
	\label{sec:minimizing_sequences}
	\setcounter{thm}{0}
	Our main goal of this section is to prove:
	
	\begin{thm}
		\label{thm:measureequalsm}
		If \( S_0 \in \T(M, Y, U) \) is \( \A^{n-1} \)-minimizing, then \( \sp^{n-1}(\supp(S_0)) = \frak{m} \). Moreover, \( S_0\in \F(M, Y, U) \).
	\end{thm}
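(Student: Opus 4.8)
The strategy is to show that the measure $\mu := \mu_{\k_Y S_0}$ equals $\sp^{n-1}$ restricted to its support $X_0 := \supp(\k_Y S_0)$ and that $\sp^{n-1}(X_0) = \frak{m}$; granted this, the three defining conditions of a film chain follow and the rest drops out. Fix an $\A^{n-1}$-minimizing sequence $\{S_i\} \subset \F(M,Y,U)$ with $S_i \to S_0$ in $\hB_{n-1}^2(U)$ and $\A^{n-1}(S_i) \le c_0$. Continuity of $\A^{n-1}$ gives $\A^{n-1}(S_0) = \lim_i \A^{n-1}(S_i) = \frak{m}$, so by Theorem \ref{thm:convexhull} and Proposition \ref{prop:whitney} the measure $\mu$ is finite with $\mu(U) = \frak{m}$ and $X_0 = \supp(\mu) \subset h(M)$.

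First I would apply the hair-cutting Theorem \ref{thm:haircut} to replace $\{S_i\}$ by an $\A^{n-1}$-minimizing sequence, still denoted $\{S_i\}$, which converges to $S_0$ both in $\hB_{n-1}^2(U)$ and in the Hausdorff metric of supports, so that $X_i := \supp(S_i) \in \Span^*(M,U)$ satisfies $X_i \to X_0$; this is the step that amputates the tentacles of Figure \ref{fig:tentacles}, which otherwise obstruct passage to the Hausdorff limit. Then Lemma \ref{lem:hausdorffspan} shows $X_0$ spans $M$. Arranging in addition that $\{S_i\}$ is regular minimizing in Reifenberg's sense, Theorem \ref{thm:lsc2} yields lower semicontinuity of Hausdorff spherical measure: $\sp^{n-1}(X_0) \le \liminf_i \sp^{n-1}(X_i) = \frak{m}$, so in particular $\sp^{n-1}(X_0) < \infty$. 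By Lemma \ref{lem:corespan} the core $X_0^*$ lies in $\Span^*(M,U)$, whence $\frak{m} \le \sp^{n-1}(X_0^*) = \sp^{n-1}(X_0) \le \frak{m}$; thus $\sp^{n-1}(X_0) = \frak{m}$, and finiteness forces $X_0$ to have empty interior in $\R^n$.

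It remains to identify $\mu$ with $\sp^{n-1}\lfloor_{X_0}$. Since the total masses already agree, $\mu(U) = \frak{m} = \sp^{n-1}(X_0)$, it suffices to show that $\mu \ll \sp^{n-1}\lfloor_{X_0}$ with density $1$ at $\sp^{n-1}$-a.e. point of $X_0$. For the upper density bound I would combine the weak convergence $\sp^{n-1}\lfloor_{X_i} = \mu_{\k_Y S_i} \rightharpoonup \mu$ of Proposition \ref{prop:maggi}, the Hausdorff convergence $X_i \to X_0$, and the $\sp^{n-1}$ upper-density-one property of Lemma \ref{lem:3}. For the matching lower bound $\liminf_{r \downarrow 0} \mu(\bar{\O}(p,r))/(\a_{n-1}r^{n-1}) \ge 1$ at $\sp^{n-1}$-a.e. $p \in X_0$, I would run a cut-and-cone competitor argument: were the density below $1$ on a set of positive $\sp^{n-1}$-measure, then at a suitable radius the cone replacement of Lemma \ref{lem:cutandcone}, combined with the slicing inequality Lemma \ref{lem:4} and the cone estimate Lemma \ref{lem:5}, would produce a set in $\Span^*(M,U)$ of $\sp^{n-1}$-measure strictly below $\frak{m}$, contradicting $\sp^{n-1}(X_0) = \frak{m}$. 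With $\mu = \sp^{n-1}\lfloor_{X_0}$ in hand, $X_0$ is reduced, so the third defining property of a film chain holds with $X = X_0$, while Proposition \ref{prop:t2complete} supplies $\p S_0 = P_Y \tM$ and $\k_Y S_0 \in \C(U)$; hence $S_0 \in \F(M,Y,U)$. Finally, $X_0$ has empty interior and $M \subset \overline{X_0 \setminus M}$ because $X_0$ spans $M$, so Corollary \ref{cor:key} gives $\supp(S_0) = \supp(\k_Y S_0) = X_0$, and then \eqref{eq:sp} gives $\sp^{n-1}(\supp(S_0)) = \A^{n-1}(S_0) = \frak{m}$.

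The hard part is the passage to the limit set itself: simultaneously controlling the tentacles so that the supports converge in the Hausdorff metric (the hair-cutting theorem, which uses most of the paper) while keeping the sequence regular minimizing so that lower semicontinuity applies, and then upgrading the weak convergence of the associated measures to the pointwise density identity. The density lower bound is where the deformation theorem \ref{thm:lipspan}, the cut-and-cone constructions, and the cone measure estimates are all brought to bear; the remainder is bookkeeping with the correspondences established in Sections \ref{sub:positive_borel_measures} and \ref{sec:film_chains}.
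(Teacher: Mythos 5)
Your proof proposal tracks the paper closely through the first half: you invoke Theorem \ref{thm:haircut} to get Hausdorff convergence of supports, upgrade to a Reifenberg regular minimizing sequence, apply Theorem \ref{thm:lsc2} to get $\sp^{n-1}(X_0) \le \frak{m}$, and close the sandwich with the core and Lemma \ref{lem:corespan}. (You use Lemma \ref{lem:hausdorffspan} to see that $X_0$ spans, where the paper instead uses Proposition \ref{prop:t2complete}\ref{item:support}; both work.) The outcome $\sp^{n-1}(\supp(S_0)) = \frak{m}$ is correctly obtained.

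Where you diverge, and where there is a gap, is the identification $\mu_{\k_Y S_0} = \sp^{n-1}\lfloor_{X_0}$. You propose to establish pointwise density one of $\mu$ at $\sp^{n-1}$-a.e.\ point of $X_0$. The lower density bound you describe is, in effect, a re-derivation of Proposition \ref{prop:beta} ($\b \ge 1$), which you could simply cite; combined with Proposition \ref{prop:maggi}\,(b) it gives $\mu(\bar\O(p,r)) \ge \limsup_i \sp^{n-1}(X_i(p,r)) \ge \a_{n-1}r^{n-1}$ on $\G(X_0,M,U)$. But the \emph{upper} density bound, as you sketch it, does not follow from the three ingredients you cite. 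Lemma \ref{lem:3} bounds the upper density of a fixed set at $\sp^{n-1}$-a.e.\ of \emph{its own} points; it says nothing about the density of the weak-$*$ limit measure $\mu$ at points of the Hausdorff limit $X_0$, and the Portmanteau inequality $\mu(\O(p,r)) \le \liminf_i \sp^{n-1}(X_i \cap \O(p,r))$ relates $\mu$ to the $X_i$, whose density at $p \in X_0$ is not controlled by Lemma \ref{lem:3}. Moreover the upper density bound is superfluous: once you know $\mu(U) = \frak{m} = \sp^{n-1}(X_0)$, a \emph{one}-sided inequality on open sets forces equality of Radon measures. The paper exploits exactly this: apply Theorem \ref{thm:lsc2} separately to a compatible dyadic cube $Q$ and to its complement; since the two liminfs add to at most $\lim \sp^{n-1}(X_k) = \frak{m} = \sp^{n-1}(X_0 \cap Q) + \sp^{n-1}(X_0 \cap Q^c)$, neither inequality can be strict, so $\sp^{n-1}(X_0 \cap Q) = \lim \sp^{n-1}(X_k \cap Q) = \lim \mu_{\k_Y S_k}(Q) = \mu_{\k_Y S_0}(Q)$ by Proposition \ref{prop:maggi}\,(c); a Whitney decomposition and outer regularity then identify the two measures globally. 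Your approach can be repaired by dropping the upper density estimate and running the lower bound through a Vitali covering (as in Theorem \ref{thm:lsc2}) to get $\sp^{n-1}\lfloor_{X_0}(W) \le \mu(W)$ for open $W$, then invoking total mass equality; but as written the upper density step is a genuine gap.
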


	\textbf{Notation}:
		All convergence of sets in this section will be in the Hausdorff distance. If \( M\subset U\subset \R^n \) and \( X\subset \R^n \), let \( \G(X,M,U) \) be the set of closed balls which are disjoint from \( M \), are contained in \( U \), and whose centers lie in \( X \). Let \( \G_c(X,M,U)\subset \G(X,M,U) \) be the collection of those balls \( B \in \G(X,M,U) \) for which \( \H^{n-2}\lfloor_X(\fr\, B)<\infty \). Note that Lemma \ref{lem:4} implies that if \( \H^{n-1}(X)<\infty \), then almost all\footnote{more precisely, for any center point and almost all radii} balls in \( \G(X,M,U) \) are in \( \G_c(X,M,U) \).

	\begin{defn}
		\label{def:beta}
		For \( \{X_k\}_{k \ge 0} \) a sequence of subsets of \( \R^n \), let  \[ \b = \b(\{X_k\},M,U) := \inf\left\{\liminf_{k \to \i} \frac{\sp^{n-1}(X_k(p,r))}{\a_{n-1} r^{n-1}} : \bar{\O}(p,r) \in \G(X_0,M,U) \right\}. \]
	\end{defn}
	
	We will show that if \( \b > 0 \), then \( \sp^{n-1}(X_0\setminus M) < \i \) (Theorem \ref{thm:finite},) and if \( \b \ge 1 \), then \( \sp^{n-1}(X_0 \cap W) \le \liminf \sp^{n-1}(X_k \cap W) \) for all open \( W \subset U \) (Theorem \ref{thm:lsc2}.) 
 	\begin{defn}
 		\label{def:rmin}

		Let \( \e_k \to 0 \) be a strictly decreasing sequence of non-zero reals, and \( M\subset U\subset \R^n \). A sequence \( \{X_k\} \), \( k\geq 1 \), is called \emph{\textbf{(uniformly) Reifenberg \( \{\e_k\}\)-regular}} if there exists \( \frak{a} > 0 \) such that for each \( k \) and \( \bar{\O}(p,r) \in \G(X_k,M,U) \) with \( r > \e_k \), \[ \sp^{n-1}(X_k(p,r)) \ge \frak{a} r^{n-1}. \] 
		
		If \( \e_k = 2^{-k} \), we say \( \{X_k\} \) is \emph{\textbf{Reifenberg regular}}.

 	\end{defn}

	Observe that if \( \{X_k\} \) is Reifenberg \( \{\e_k\} \)-regular, then so too is any subsequence. We shall show in Proposition \ref{lem:2*} that if \( \{X_k\} \subset \Span^*(M,U) \) is Reifenberg regular, and \( X_k \to X_0 \), then there exists a subsequence such that \( \b > 0 \), and in Proposition \ref{prop:beta} that \( \b \ge 1 \) for a further subsequence.

	\subsection*{Overview of the proof of Theorem \ref{thm:measureequalsm}}
		\label{sub:lower_regular_minimizing_sequences}

		\begin{enumerate}[1.]
			\item In \S\ref{sub:haircuts} we use our Compactness Theorem \ref{cor:com} to find an \( \A^{n-1} \)-minimizing sequence \( S_k \to S_0 \) in \( \T(M, Y, U) \) where \( \{S_k\}\subset \F(M, Y, U) \). Let \( X_k := \supp(\k_Y S_k) \), \( k \ge 0 \). In \S \ref{sub:haircuts} Theorem \ref{thm:haircut} we ``cut hairs'' and create a modified minimizing sequence \( \{\hat{X}_k\} \subset \Span^*(M,U) \) which converges to \( X_0 \) in the Hausdorff metric. We then apply Theorem \ref{thm:setchaincorrespondance} to create a corresponding sequence \( \{\hat{S}_k\}\subset \F(M, Y, U) \) of film chains. We use the compactness result of Corollary \ref{cor:com} to find a convergent subsequence \( \hat{S}_{k_i} \to \hat{S}_0 \in \T(M, Y, U) \), and use Proposition \ref{prop:whitney} to show that \( \hat{S}_0 = S_0 \). We may assume without loss of generality, then, that our original sequence  \( S_k \to S_0 \) satisfies  \( X_k \to X_0 \) as in Theorem \ref{thm:haircut}.

			\item In \S\ref{sub:lrm} we show that there exists a subsequence \( \{S_{k_i} \} \) of \( \{S_k\} \) whose supports \( \{X_{k_i} \} \) form a Reifenberg regular sequence. We may again assume that our original sequence \( \{S_k\} \) has this property, too. From Proposition \ref{prop:lrmin} we deduce that \( \b(\{X_k\},M,U) > 0 \) which implies that \( \sp^{n-1}(X_0) < \i \).
 			\item In \S\ref{sub:lower_semicontinuity_of_hausdorff_measure} we find a subsequence \( \{S_{k_i}\} \) whose supports are \( \e \)-uniformly concentrated in a disk \( \bar{\O}(p,r) \) where  \( \frac{\sp^{n-1}(X_{k_i}(p,r))}{\a_{n-1} r^{n-1}} \) is close to the infimum of \( \b(\{X_{k_i}\},M,U) \). This is sufficient to prove Proposition \ref{prop:beta}: If \( \{X_k\} \) is a Reifenberg regular minimizing sequence, there exists a subsequence \( \{X_{k_i}\} \) with \( \b(\{X_{k_i}\},M,U) \ge 1 \).  Lower semicontinuity of Hausdorff spherical measure for Reifenberg regular minimizing sequences Theorem \ref{thm:lsc2} follows easily. It is also easy to deduce from this that \( \sp^{n-1}(X_0) = \frak{m} \). That \( S_0\in \F(M, Y, U) \) follows shortly thereafter.
		\end{enumerate}

	 \subsection{Convergence in the Hausdorff metric}
		\label{sub:haircuts}
		Lemmas \ref{lem:8}-\ref{lem:haircut} are used to modify a spanning set \( X \in \Span^*(M,U) \) inside a closed ball \( \bar{\O}(p,r) \) disjoint from \( M \). These are followed by Lemma \ref{lem:gridsmall} which modifies \( X \) inside a closed cube. These lemmas will assist us in our proofs of Theorem \ref{thm:haircut} and Lemma \ref{lem:epsilonbound}. Lemma \ref{lem:8} is an adaptation of Lemma 8 in \cite{reifenberg}, p. 15.    

		\begin{lem}
			\label{lem:8}   
			There exists a constant \( 1\leq K_n<\i \) such that if \( X \in \Span^*(M,U) \) and \( \bar{\O}(p,r) \in \G_c(X,M,U) \), then there exists a compact set \( Z\subset \R^n \) spanning \( M \) satisfying
			\begin{enumerate}
				\item\label{lem:8:item:1} \( Z \cap \O(p,r)^c = X \cap \O(p,r)^c \);
				\item\label{lem:8:item:2} \( Z(p,r) \subset h(x(p,r)) \);
				\item\label{lem:8:item:3} \( Z(p,r) \subset \O(x(p,r), K_n \sp^{n-2}(x(p,r))^{1/(n-2)}) \); and
				\item\label{lem:8:item:4} \( \sp^{n-1}(Z(p,r)) \le K_n \sp^{n-2}(x(p,r))^{(n-1)/(n-2)}. \)
			\end{enumerate}
		\end{lem}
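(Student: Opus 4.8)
The plan is to leave $X$ unchanged outside $\O(p,r)$ and to replace $X\cap\bar\O(p,r)$ by an explicit ``cone filling'' of the slice $x(p,r):=X\cap\fr\,\O(p,r)$, assembled from small cones at the length scale $\ell:=s^{1/(n-2)}$, where $s:=\sp^{n-2}(x(p,r))$ is finite by the definition of $\G_c(X,M,U)$. The degenerate cases are handled separately: if $s=0$ (which includes $x(p,r)=\emptyset$) I would take $Z:=X\cap\O(p,r)^c$, whose spanning property comes down to pushing any simple link off the $\sp^{n-2}$-null set $x(p,r)$ on $\fr\,\O(p,r)$ and then off the ball $\bar\O(p,r)$ (which is disjoint from $M$); and if $\ell\ge r$ I would take $Z:=(X\cap\O(p,r)^c)\cup C_{p'}(x(p,r))$ for a point $p'\in x(p,r)$, where Lemma~\ref{lem:cutandcone} gives spanning and Lemma~\ref{lem:5} gives (d) since $r\le\ell$. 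So assume $0<\ell<r$. Then fix a cubical grid of side $\ell$, let $\mathcal{Q}$ be the finite family of its closed cubes $Q$ meeting $x(p,r)$, choose $q_Q\in x(p,r)\cap Q$ for each $Q\in\mathcal{Q}$, write $Q^{*}$ for the concentric cube of side $3\ell$, and set
\[
 Z(p,r):=\bigcup_{Q\in\mathcal{Q}}C_{q_Q}\bigl(x(p,r)\cap Q^{*}\bigr),\qquad Z:=(X\cap\O(p,r)^c)\cup Z(p,r).
\]

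Properties (a)--(d) are then routine bookkeeping. Each $q_Q$ and each point of $x(p,r)\cap Q^{*}$ lies in $x(p,r)$, so every cone, hence $Z(p,r)$, lies in $h(x(p,r))\subset\bar\O(p,r)$; since $(X\cap\O(p,r)^c)\cap\bar\O(p,r)=x(p,r)\subset Z(p,r)$, we get $Z\cap\bar\O(p,r)=Z(p,r)$, which gives (a) and the inclusion (b). For (c): any point of $C_{q_Q}(x(p,r)\cap Q^{*})$ lies on a segment from $q_Q\in Q\subset Q^{*}$ to a point $y\in x(p,r)\cap Q^{*}$, hence at distance at most $\diam(Q^{*})=3\sqrt{n}\,\ell$ from $x(p,r)$. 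For (d): Lemma~\ref{lem:5} bounds $\sp^{n-1}(C_{q_Q}(x(p,r)\cap Q^{*}))$ by $3\sqrt{n}\,\ell\cdot 2^{n-1}\tfrac{\a_{n-1}}{\a_{n-2}}\sp^{n-2}(x(p,r)\cap Q^{*})$, and summing over $Q\in\mathcal{Q}$, using that $\{Q^{*}\}$ has overlap multiplicity at most a dimensional constant $c_n$, gives $\sp^{n-1}(Z(p,r))\le C_n\,\ell\,s=C_n\,s^{(n-1)/(n-2)}$. Taking $K_n$ to be the maximum of $4\sqrt{n}$, $C_n$, and the constants implicit in the degenerate cases yields (b)--(d).

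The real obstacle is showing that $Z$ spans $M$. The approach I would take mirrors Reifenberg's treatment of his Lemma~8 in \cite{reifenberg}, recast in terms of linking numbers via Lemma~\ref{lem:cutandcone} and Theorem~\ref{thm:lipspan}. One should note at the outset that $Z(p,r)$ is \emph{not} a Lipschitz image of $X\cap\bar\O(p,r)$ --- its measure is controlled by $\sp^{n-2}(x(p,r))$, not by $\sp^{n-1}(X\cap\bar\O(p,r))$, and for example a connected competitor can be replaced by a disconnected filling --- so spanning cannot be obtained merely by deforming $X$; one must argue directly on links. Suppose a simple link $N$ misses $Z$. Then $N$ misses both the exterior piece $X\cap\O(p,r)^c$ and the slice $x(p,r)$, so, after perturbing $N$ to be transverse to $\fr\,\O(p,r)$, the set $N\cap\bar\O(p,r)$ consists of finitely many arcs $\gamma_j$ with endpoints in $\fr\,\O(p,r)\setminus x(p,r)$ lying in $\bar\O(p,r)\setminus Z(p,r)$ (no closed components, since a simple link cannot lie inside the $M$-free ball). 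I would then homotope each $\gamma_j$ rel its endpoints, inside $\bar\O(p,r)$ and through $\bar\O(p,r)\setminus Z(p,r)$, onto an arc in $\fr\,\O(p,r)\setminus x(p,r)$, and push it slightly outward into the open set $\R^n\setminus X$ (which contains a neighborhood of $\fr\,\O(p,r)\setminus x(p,r)$ since $X\cap\fr\,\O(p,r)=x(p,r)$ and $X$ is closed); the result is a simple link disjoint from $X$ --- it agrees with $N$ outside a small neighborhood of $\bar\O(p,r)$, and the modification takes place in a neighborhood of $\bar\O(p,r)$ disjoint from $M$, so linking numbers are unchanged --- contradicting that $X$ spans $M$. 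The delicate point, and precisely where the trimmed cones $C_{q_Q}(x(p,r)\cap Q^{*})$ rather than full cones are needed, is to understand the topology of $\bar\O(p,r)\setminus Z(p,r)$ well enough that the two endpoints of each arc $\gamma_j$ land in the \emph{same} connected component of $\fr\,\O(p,r)\setminus x(p,r)$, so that the rel-endpoints homotopy onto the sphere is possible; this is the crux and the part I expect to require genuine care, and it is where the interplay of the convex-hull containment (b), Lemma~\ref{lem:cutandcone}, and Theorem~\ref{thm:lipspan} --- together with general position of $N$ --- must be used.
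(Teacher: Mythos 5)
Your proof of properties (a)--(d) is fine: the estimates from Lemma~\ref{lem:5}, the diameter bound $\diam(Q^*)=3\sqrt{n}\,\ell$, and the bounded-overlap counting all check out, and the degenerate cases are handled sensibly. But the part of the lemma that carries the actual mathematical content --- that $Z$ spans $M$ --- is not proved in your proposal; you identify it as ``the real obstacle'' and ``the crux'' and then explicitly stop short of carrying it out. That is exactly where the paper's proof has all its work, and your construction of $Z(p,r)$ diverges from the paper's at precisely the point where it matters for that argument.

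The paper does not invent a new cone filling: it takes $Z(p,r)$ to be Reifenberg's own surface from his Lemma~8, and properties (a)--(d) follow by citation. What the paper adds is the spanning argument, and this argument leans entirely on the \emph{hierarchical} structure of Reifenberg's cube decomposition: the link is first pushed radially to the $(n-1)$-skeleton of the grid, then inductively to $(n-2)$-, $(n-3)$-, \dots\ skeletons (using the $n=3$ case inside each face, by induction on $n$), until it lies in $\fr\,\O(p,r)\setminus x(p,r)$ and can be pushed off $X$. Your $Z(p,r)$ is a single-scale union of cones over overlapping triple cubes, with no nested skeleton structure, so there is no analogue of this inductive descent available. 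The ``$\pi_0$'' issue you flag (endpoints of the arcs $\gamma_j$ in the same component of $\fr\,\O(p,r)\setminus x(p,r)$) is not even the whole difficulty: you also need the rel-endpoints homotopy in $\bar\O(p,r)\setminus Z(p,r)$ to exist, i.e.\ a $\pi_1$ statement, and nothing in your construction gives a handle on the complement's fundamental group in the way Reifenberg's nested grid does. For $n=3$ the paper's $Z(p,r)$ is a \emph{disjoint} union of cones over pieces $x_i$ lying in cube interiors, so the cube faces form a ``safe'' codimension-one skeleton through which to deform the link; your overlapping triple-cube cones destroy that disjointness, and for $n>3$ there is no safe skeleton at all without the hierarchy. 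So the gap is genuine, and it is not a bookkeeping gap: it is unclear that your simplified $Z$ spans $M$ at all, and if it does, the proof would require an idea you have not supplied.

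One thing you do get right, and that is worth keeping, is the observation that Theorem~\ref{thm:lipspan} cannot be applied directly to pass from $X$ to $Z$, since $Z(p,r)$ is not a Lipschitz image of $X(p,r)$; the paper reserves Theorem~\ref{thm:lipspan} for later (Lemma~\ref{lem:9} and onward), and for Lemma~\ref{lem:8} itself argues on links directly, as you correctly anticipated one must.
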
 
 			
		\begin{proof}
			Let \( K_n \) be the constant \( K_{n-1}^n \) in Lemma 8 of \cite{reifenberg}, and let \( Z \) be the set defined as follows: let \( Z \setminus \bar{\O}(p,r) := X \setminus \bar{\O}(p,r) \) and let \( Z(p,r) \) be the ``surface'' defined in Lemma 8 of \cite{reifenberg}, setting \( A=x(p,r) \). Since \( Z(p,r) \) is contained in the convex hull of \( x(p,r) \), it follows that \( z(p,r)=x(p,r) \), and so \( Z \) satisfies \ref{lem:8:item:1}-\ref{lem:8:item:4}. It remains to show that \( Z \) spans \( M \). In dimension \( n=3 \) this is easiest to see: in this case, there is a grid of cubes and the set \( x(p,r) \) is a disjoint union of subsets \( x_i \), each contained in the interior of one such cube. The ``surface'' obtained by Reifenberg is then the disjoint union of the cones \( c_{p_i}(x_i) \), for some \( p_i\in x_i \). To see that \( Z \) spans in this case, suppose it does not. Then there exists a simple link \( N=\eta(S^1) \) of \( M \) disjoint from \( Z \). We modify the proof of Lemma \ref{lem:cutandcone} by projecting \( \eta \) radially to the frontier of each cube, then radially onto edges, and finally orthogonally in a coordinate onto the frontier of \( \O(p,r) \). The new curve is still a simple link of \( M \), but is now disjoint from \( X \) as well, giving a contradiction. 

			For \( n>3 \), the construction in Lemma 8 of \cite{reifenberg} is considerably more difficult, but the basic idea to showing \( Z \) spans \( M \) is the same: Using an inductive argument, Reifenberg ``chops'' \( \bar{\O}(p,r) \) into a number of \( n \)-cubes \( \frak{Q}=\{Q_i\}_{i=1}^N \), forming a cover of \( \bar{\O}(p,r) \) by \( n \)-cubes with mutually disjoint interiors. Pairs of opposite faces of each \( n \)-cube \( Q\in\frak{Q} \) are ordered by the coordinate vector to which they are perpendicular, and a face in the \( k \)-th pair lies in the same hyperplane as the corresponding face of any other \( n \)-cube neighboring \( Q \) in any of the remaining \( n-k \) directions. 

			For a given face \( F \) in the \( k \)-th pair of faces of an \( n \)-cube \( Q \), let \( \frak{N}(F) \) denote the maximal union, containing \( F \), of faces neighboring in the above manner\footnote{I.e., \( F\in \frak{N}(F) \), and if \( F'\in\frak{N}(F) \) is in the \( k \)-th pair of faces of some \( n \)-cube \( Q' \), then \( \frak{N}(F) \) also contains the corresponding face of any cube \( Q'' \) neighboring \( Q' \) in any of the remaining \( n-k \) directions. So, \( \frak{N}(F)=F \) when \( F \) is a face in the \( n \)-th pair of faces of \( Q \), and \( \frak{N}(F) \) is the entire hyperplane containing \( F \) when \( F \) is a face in the \( 1 \)-st pair.}. Let \[ \frak{N}_k := \{\frak{N}(F) : F \textrm{ is in the } k\textrm{-th pair of faces for some cube } Q \}. \] Likewise, let \( \cal{N}_k :=\cup F \), where the union is taken over all \( k \)-th faces \( F \) of the cubes \( Q\in \frak{Q} \).

			Now, let \( \eta \) be as above. We first project \( \eta \) radially to the frontier of each cube, so now \( \eta(S^1)\cap \O(p,r)\subset \cup_{s=1}^n \cal{N}_s \). Now suppose \( \eta(S^1)\cap \O(p,r) \) is contained in \( \cup_{s=1}^k \cal{N}_s \). Then by projecting \( \eta(S^1)\cap\frak{N}(F) \cap \O(p,r) \) onto the frontier\footnote{considered as a subset of the hyperplane containing \( \frak{N}(F) \)} of \( \frak{N}(F) \cap \bar{\O}(p,r) \), for each \( \frak{N}(F)\in \frak{N}_k \) (which we may do by induction on \( n \), starting at \( n=4 \), by the \( n=3 \) case of the proof,) it will follow that \( \eta(S^1)\cap \O(p,r) \) is contained in \( \cup_{s=1}^{k-1} \cal{N}_s \). Thus, by downward induction on \( k \), this will result in a new simple link of \( M \) whose intersection with \( \bar{\O}(p,r) \) lies entirely within \( \fr\,\O(p,r) \), and is thus disjoint from \( X \) by \ref{lem:8} \ref{lem:8:item:1}, giving a contradiction.
		\end{proof}
		
		The following lemma is an adaptation of Lemma 9 in \cite{reifenberg}, p. 18:
		\begin{lem}
			\label{lem:9}
			Suppose \( X \in \Span^*(M,U) \) and \( \bar{\O}(p,r) \in \G_c(X,M,U) \) with \[ \sp^{n-2}(x(p,r)) < r^{n-2}/(2K_n)^{n-2}. \] There exists a compact set \( \widetilde{X} \subset \R^n \) spanning \( M \) such that
			\begin{enumerate}
				\item\label{lem:9:item:1} \( \widetilde{X}\cap \bar{\O}(p,r)^c = X\cap \bar{\O}(p,r)^c \);
				\item\label{lem:9:item:2} \( \widetilde{X}(p,r) \subset \fr\,\O(p,r) \); and
				\item\label{lem:9:item:3} \( \sp^{n-1}(\widetilde{X}(p,r)) \le 2^{n-1}K_n \sp^{n-2}(x(p,r))^{(n-1)/(n-2)} \).
			\end{enumerate}
		\end{lem}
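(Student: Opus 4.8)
The plan is to combine Lemma \ref{lem:8} with a radial ``push onto the boundary sphere $\fr\,\O(p,r)$,'' using Theorem \ref{thm:lipspan} to preserve spanning and Lemma \ref{lem:2} to control size. First I would apply Lemma \ref{lem:8} to obtain a compact set $Z$ spanning $M$ with $Z\cap\O(p,r)^c=X\cap\O(p,r)^c$, with $Z(p,r)\subset h(x(p,r))\cap\O(x(p,r),\rho)$, and with $\sp^{n-1}(Z(p,r))\le K_n\sp^{n-2}(x(p,r))^{(n-1)/(n-2)}$, where $\rho:=K_n\sp^{n-2}(x(p,r))^{1/(n-2)}$. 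The standing hypothesis $\sp^{n-2}(x(p,r))<r^{n-2}/(2K_n)^{n-2}$ is precisely the statement $\rho<r/2$. Since $x(p,r)\subset\fr\,\O(p,r)$ and $h(x(p,r))\subset\bar{\O}(p,r)$, every point of $Z(p,r)$ lies within distance $\rho$ of $\fr\,\O(p,r)$ and inside $\bar{\O}(p,r)$, hence
\[
	Z(p,r)\ \subset\ W_0:=\{\,y\ :\ r-\rho<|y-p|\le r\,\},
\]
a thin shell bounded away from the center $p$. (If $\sp^{n-2}(x(p,r))=0$ then $Z(p,r)=\emptyset$ and $\widetilde X:=Z$ already works, so I may assume $\rho>0$.)

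Next I would build the deformation. Pick a nondecreasing Lipschitz function $m:[0,r]\to[0,r]$ with $m(t)=t$ for $t\le r/2$ and $m(t)=r$ for $r-\rho\le t\le r$ (possible since $\rho<r/2$), and define $\phi(p):=p$, $\phi(y):=p+m(|y-p|)(y-p)/|y-p|$ for $y\in\bar{\O}(p,r)\setminus\{p\}$, and $\phi(y):=y$ for $|y-p|\ge r$. Since $m(r)=r$ the two formulas agree on $\fr\,\O(p,r)$, so $\phi$ is a well-defined Lipschitz self-map of $\R^n$; it is the identity on $\bar{\O}(p,r)^c$ and, since $m\le r$, it maps $\bar{\O}(p,r)$ into itself. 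Thus $\phi$ is a deformation of $Z$ relative to the ball $B=\bar{\O}(p,r)$, which is disjoint from $M$, so Theorem \ref{thm:lipspan} gives that $\widetilde X:=\phi(Z)$ is compact and spans $M$. Conclusion \ref{lem:9:item:1} then follows, since $\phi$ fixes $\bar{\O}(p,r)^c$ pointwise and maps $\bar{\O}(p,r)$ into itself, so $\widetilde X\cap\bar{\O}(p,r)^c=Z\cap\bar{\O}(p,r)^c=X\cap\bar{\O}(p,r)^c$ by Lemma \ref{lem:8} \ref{lem:8:item:1}; moreover $\widetilde X(p,r)=\phi(Z(p,r))$.

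For the remaining conclusions I would note that on the slightly larger shell $W:=\{\,y:r-\rho\le|y-p|\le r+\rho\,\}$ the profile $m$ is constant equal to $r$, so there $\phi$ coincides with the radial projection $\pi:y\mapsto p+r(y-p)/|y-p|$ onto $\fr\,\O(p,r)$. Since $Z(p,r)\subset W_0\subset W$, we get $\widetilde X(p,r)=\pi(Z(p,r))\subset\fr\,\O(p,r)$, which is \ref{lem:9:item:2}. An elementary estimate (comparing $|a-b|^2$ with the squared length of the difference of the unit vectors $(a-p)/|a-p|$ and $(b-p)/|b-p|$) shows $\pi$ is Lipschitz on $\{\,|y-p|\ge r-\rho\,\}$ with constant $r/(r-\rho)$, and $r/(r-\rho)<2$ as $\rho<r/2$. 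Since $\pi$ maps the compact shell $W$ into itself and $Z(p,r)$ has an open $\e$-neighborhood contained in $W$, Lemma \ref{lem:2} yields $\sp^{n-1}(\widetilde X(p,r))=\sp^{n-1}(\pi(Z(p,r)))\le(r/(r-\rho))^{n-1}\sp^{n-1}(Z(p,r))<2^{n-1}\sp^{n-1}(Z(p,r))\le2^{n-1}K_n\sp^{n-2}(x(p,r))^{(n-1)/(n-2)}$, which is \ref{lem:9:item:3}.

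The delicate part is the double role played by $\phi$: it must be an honest Lipschitz deformation supported in $\bar{\O}(p,r)$ (so that Theorem \ref{thm:lipspan} applies and \ref{lem:9:item:1} holds), while on the shell where $Z(p,r)$ actually lives it must be exactly the radial projection $\pi$, whose Lipschitz constant is $<2$. Both can be arranged only because the size hypothesis is equivalent to $\rho<r/2$: this simultaneously leaves room on $[r/2,r-\rho]$ for the interpolating profile $m$ and confines $Z(p,r)$ to the outer shell on which $\phi=\pi$. Everything else is routine bookkeeping with Lemmas \ref{lem:8} and \ref{lem:2}.
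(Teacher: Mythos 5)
Your argument is essentially the same as the paper's: apply Lemma \ref{lem:8} to get $Z$, observe that the hypothesis on $\sp^{n-2}(x(p,r))$ confines $Z(p,r)$ to the outer annulus $\bar{\O}(p,r)\cap\O(p,r/2)^c$, radially project $Z(p,r)$ onto $\fr\,\O(p,r)$ via a Lipschitz deformation, invoke Theorem \ref{thm:lipspan} to preserve spanning, and use Lemma \ref{lem:2} to get the $2^{n-1}$ factor in \ref{lem:9:item:3}. The only cosmetic difference is that the paper directly asserts a Lipschitz extension of the radial projection with constant $2$ (identity outside the ball, $y\mapsto p+2(y-p)$ on the inner half-ball, radial projection on the annulus), whereas you interpolate with a profile $m$ that is the identity near $p$ — both work, since $Z$ is disjoint from $\O(p,r/2)$ anyway.
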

 
		\begin{proof}
			Let \( Z \) be as in Lemma \ref{lem:8}. Lemma \ref{lem:8} \ref{lem:8:item:3} implies \( Z(p,r) \subset \overline{\O(p,r)} \cap \O(p,r/2)^c \). So, we can radially project \( Z(p,r) \) onto \( \fr\,\O(p,r) \), and there exists a Lipschitz extension \( \pi: \R^n\to \R^n \) of this projection with Lipschitz constant \( 2 \), \( \pi\lfloor_{\O(p,r)^c}\equiv Id \), and \( \pi(\bar{\O}(p,r))\subset \bar{\O}(p,r) \). Then \( \widetilde{X} := \pi Z \) is compact and spans \( M \) by Theorem \ref{thm:lipspan}. Lemmas \ref{lem:2} and \ref{lem:8}\ref{lem:8:item:4} yield
			\begin{align*}
				\sp^{n-1}(\widetilde{X}(p,r)) = \sp^{n-1}(\pi(Z(p,r))) \leq 2^{n-1} \sp^{n-1}(Z(p,r)) \le 2^{n-1} K_n \sp^{n-2}(x(p,r))^{(n-1)/(n-2)}.
			\end{align*} 
		\end{proof}
The statement of the next lemma is essentially (2) and (3) on p. 24 of \cite{reifenberg}.
 
		\begin{lem}
			\label{lem:halfradius}
			Suppose \( X \in \Span^*(M,U) \) and \( \bar{\O}(p,r) \in \G(X,M,U) \) with \[ \sp^{n-1}(X(p,r)) \le \frac{r^{n-1}}{\left(2(n-1)\right)^{n-1}\left(2^n K_n\right)^{n-2}}. \] Suppose \( W \subset (r/2,r) \) has full Lebesgue measure. There exists \( r' \in W \) such that \[ \sp^{n-2}(x(p,r'))^{(n-1)/(n-2)} \le \frac{1}{2^n K_n} \int_0^{r'} \sp^{n-2} (x(p,t))dt. \]
		\end{lem}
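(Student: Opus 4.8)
The plan is to argue by contradiction and to reduce the statement to a Bernoulli‑type differential inequality for $F(t) := F(p,t) = \int_0^t \sp^{n-2}(x(p,s))\,ds$, whose a.e.\ derivative is $g(t) := \sp^{n-2}(x(p,t))$. Note first that $g \in L^1(0,r)$, since by Lemma \ref{lem:4} (slicing $X(p,r)$ by the distance to $p$) and $X \in \Span^*(M,U)$ we have $\int_0^r g \le \sp^{n-1}(X(p,r)) \le \sp^{n-1}(X) < \infty$; thus $F$ is absolutely continuous and nondecreasing on $[0,r]$. Write $B := r^{n-1}/\big((2(n-1))^{n-1}(2^nK_n)^{n-2}\big)$ for the right‑hand side of the hypothesis, and set $c := (2^nK_n)^{-(n-2)/(n-1)}$, so that $c^{\,n-1} = (2^nK_n)^{-(n-2)}$ and $B = \big(cr/(2(n-1))\big)^{n-1}$.

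Suppose, for contradiction, that no $r' \in W$ satisfies the asserted inequality, i.e.\ that $g(r')^{(n-1)/(n-2)} > \frac{1}{2^nK_n}F(r')$, equivalently $g(r') > c\,F(r')^{(n-2)/(n-1)}$, for every $r' \in W$. Since $W$ has full Lebesgue measure in $(r/2,r)$ and $F' = g$ a.e., this gives $F'(t) > c\,F(t)^{(n-2)/(n-1)}$ for a.e.\ $t \in (r/2,r)$; in particular $g > 0$ a.e.\ on $(r/2,r)$, whence $F(t) > 0$ for every $t \in (r/2,r]$. Now $t \mapsto F(t)^{1/(n-1)}$ is absolutely continuous on $[r/2,r]$ --- being the composition of the monotone absolutely continuous function $F$ with $s \mapsto s^{1/(n-1)}$, which is absolutely continuous --- and at a.e.\ $t \in (r/2,r)$ its derivative equals $\frac{1}{n-1}F(t)^{-(n-2)/(n-1)}F'(t) > \frac{c}{n-1}$. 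Integrating over $[r/2,r]$ and using $F(r/2)^{1/(n-1)} \ge 0$ together with the strict pointwise inequality on the interval $(r/2,r)$ of positive length,
\[
F(r)^{1/(n-1)} \;\ge\; \int_{r/2}^{r}\frac{F'(t)}{(n-1)\,F(t)^{(n-2)/(n-1)}}\,dt \;>\; \frac{cr}{2(n-1)},
\]
so that $F(r) > \big(cr/(2(n-1))\big)^{n-1} = B$.

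On the other hand, Lemma \ref{lem:4} gives $F(r) = \int_0^r \sp^{n-2}(x(p,t))\,dt \le \sp^{n-1}(X(p,r)) \le B$, the last inequality being the hypothesis of the lemma --- a contradiction. Hence some $r' \in W$ satisfies $\sp^{n-2}(x(p,r'))^{(n-1)/(n-2)} \le \frac{1}{2^nK_n}\int_0^{r'}\sp^{n-2}(x(p,t))\,dt$, as claimed.

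I expect the only genuinely delicate point to be the integration step: one must justify that $F^{1/(n-1)}$ is absolutely continuous, so that the fundamental theorem of calculus applies, and that the strictness of the differential inequality is not lost upon integrating --- in particular in the degenerate case $F(r/2)=0$, i.e.\ $g=0$ a.e.\ on $(0,r/2)$. The observation that resolves this is that under the negated hypothesis $g$ is automatically positive a.e.\ on $(r/2,r)$, so $F$ is strictly positive throughout $(r/2,r]$ and the differential inequality may be integrated across the entire interval $(r/2,r)$ with strictness preserved; everything else is bookkeeping of the constant $K_n$ from Lemma \ref{lem:8} and a single application of the coarea‑type estimate Lemma \ref{lem:4}.
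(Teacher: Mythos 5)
Your proof is correct and follows essentially the same approach as the paper's: negate the conclusion to obtain a Bernoulli-type differential inequality $F' > c\,F^{(n-2)/(n-1)}$ a.e.\ on $(r/2,r)$ for $F(t)=\int_0^t\sp^{n-2}(x(p,s))\,ds$, integrate it across $[r/2,r]$ using absolute continuity of $F^{1/(n-1)}$, and then contradict the hypothesis via the slicing inequality of Lemma~\ref{lem:4}. Your added care in justifying that $F>0$ on $(r/2,r]$ (so the Bernoulli substitution is well defined) and that strictness survives the integration is a welcome clarification of a point the paper leaves implicit, but it is the same argument.
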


		\begin{proof}
			Suppose there is no such \( r' \). Let \( F(p,s):=\int_0^s \sp^{n-2}(x(p,t))dt \). By the Lebesgue Differentiation Theorem, \[ \frac{\frac{d}{ds}F(p,s)}{F(p,s)^{(n-2)/(n-1)}} > \frac{1}{(2^n K_n)^{(n-2)/(n-1)}} \] for almost every \( r/2 < s < r \). Integrating, this implies
			\begin{align} 
				\label{lem:halfradius:eq:1}
				\int_{r/2}^r \frac{d}{ds}\left(F(p,s)^{1/(n-1)}\right) ds > \frac{r}{2(n-1)\left(2^n K_n\right)^{(n-2)/(n-1)}}.
			\end{align}
			Since \( F(p,s) \) is increasing and absolutely continuous, the function \( F(p,s)^{1/(n-1)} \) is also absolutely continuous, and so the left hand side of \eqref{lem:halfradius:eq:1} is equal to \( F(p,r)^{1/(n-1)} - F(p,r/2)^{1/(n-1)} \) by the Fundamental Theorem for Lebesgue Integrals. Lemma \ref{lem:4} gives \[ \sp^{n-1}(X(p,r)) \ge F(p, r) > \frac{r^{n-1}}{\left(2(n-1)\right)^{n-1}\left(2^n K_n\right)^{n-2}}, \] contradicting our initial assumption.
		\end{proof}
The statement of the next lemma is essentially lines (2) and (6) on p. 24 of \cite{reifenberg}.
		\begin{lem}
			\label{lem:haircut}
			Suppose \( X \in \Span^*(M,U) \) and \( \bar{\O}(p,r) \in \G(X,M,U) \) with \[ \sp^{n-1}(X(p,r)) \le \frac{r^{n-1}}{\left(2(n-1)\right)^{n-1}\left(2^n K_n\right)^{n-2}}. \] If \( W \subset (r/2,r) \) has full Lebesgue measure, then there exist \( r/2 < r' < r \) such that \( r'\in W \), and a compact set \( \hat{X}\subset \R^n \) spanning \( M \) such that
			\begin{enumerate}
				\item \( \hat{X} \cap \bar{\O}(p,r')^c = X \cap \bar{\O}(p,r')^c \);
				\item \( \hat{X}(p,r') \subset \fr\,\O(p,r') \); and
				\item\label{lem:haircut:item:3} \( \sp^{n-1}(\hat{X}(p,r')) \le \sp^{n-1}(X(p,r'))/2. \)
			\end{enumerate}
		\end{lem}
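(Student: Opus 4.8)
The plan is to obtain Lemma~\ref{lem:haircut} by composing Lemma~\ref{lem:halfradius} with Lemma~\ref{lem:9}: the first selects a radius $r'$ at which the spherical slice $x(p,r')$ is small compared with the accumulated slice sizes $\int_0^{r'}\sp^{n-2}(x(p,t))\,dt$, and the second then flattens $X$ onto $\fr\,\O(p,r')$ at a cost controlled by that slice.

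First I would shrink $W$ once more. Since $X\in\Span^*(M,U)$ has finite $\sp^{n-1}$-measure, the remark following the definition of $\G_c$ (which rests on Lemma~\ref{lem:4}) guarantees $\bar{\O}(p,t)\in\G_c(X,M,U)$ for all but a Lebesgue-null set of $t\in(r/2,r)$; let $W'$ be the intersection of $W$ with this full-measure set, so that $W'\subset(r/2,r)$ still has full measure. Every $\bar{\O}(p,t)$ with $t<r$ lies in $\G(X,M,U)$ automatically, since $p\in X$ and $\bar{\O}(p,t)\subset\bar{\O}(p,r)$ is disjoint from $M$ and contained in $U$.

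Next I would apply Lemma~\ref{lem:halfradius} to $X$, $\bar{\O}(p,r)$ and $W'$ --- its size hypothesis is exactly the hypothesis of the present lemma --- to obtain $r'\in W'$ with
\[
\sp^{n-2}(x(p,r'))^{(n-1)/(n-2)}\ \le\ \frac{1}{2^nK_n}\int_0^{r'}\sp^{n-2}(x(p,t))\,dt .
\]
By Lemma~\ref{lem:4} the integral on the right is at most $\sp^{n-1}(X(p,r'))\le\sp^{n-1}(X(p,r))$, so in particular $\sp^{n-2}(x(p,r'))^{(n-1)/(n-2)}\le\frac{1}{2^nK_n}\sp^{n-1}(X(p,r))$. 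Substituting the hypothesised bound on $\sp^{n-1}(X(p,r))$ and using $r'>r/2$, a short estimate with the constants gives $\sp^{n-2}(x(p,r'))<(r')^{n-2}/(2K_n)^{n-2}$, which is precisely the hypothesis needed to invoke Lemma~\ref{lem:9} for the ball $\bar{\O}(p,r')\in\G_c(X,M,U)$. That lemma yields a compact $\hat{X}$ spanning $M$ with $\hat{X}\cap\bar{\O}(p,r')^c=X\cap\bar{\O}(p,r')^c$ and $\hat{X}(p,r')\subset\fr\,\O(p,r')$ --- conclusions (1) and (2) --- together with $\sp^{n-1}(\hat{X}(p,r'))\le 2^{n-1}K_n\,\sp^{n-2}(x(p,r'))^{(n-1)/(n-2)}$.

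Finally I would combine this with the inequality above coming from Lemma~\ref{lem:halfradius} and Lemma~\ref{lem:4}:
\[
\sp^{n-1}(\hat{X}(p,r'))\ \le\ 2^{n-1}K_n\cdot\frac{1}{2^nK_n}\,\sp^{n-1}(X(p,r'))\ =\ \tfrac12\,\sp^{n-1}(X(p,r')),
\]
which is conclusion (3), and $r'\in W'\subset W$ as required. The one genuinely fiddly point --- and the step I expect to be the main (if minor) obstacle --- is the constant-bookkeeping that verifies the hypothesis of Lemma~\ref{lem:9} at the radius $r'$; it goes through because the dimensional factor $(2(n-1))^{n-1}(2^nK_n)^{n-2}$ appearing in the hypothesis dominates $(4K_n)^{n-2}$ for every relevant $n$ (equivalently, $(n-1)2^{n-1}>1$), so the required inequality holds strictly with room to spare. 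Everything else is a verbatim application of the two earlier lemmas.
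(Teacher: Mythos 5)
Your proposal is correct and follows the paper's own proof essentially verbatim: select $r'$ via Lemma~\ref{lem:halfradius} and Lemma~\ref{lem:4}, verify the size hypothesis of Lemma~\ref{lem:9} by the same constant calculation (which indeed reduces to $(n-1)2^{n-1}>1$), then apply Lemma~\ref{lem:9} and chain the two inequalities to get the factor $\tfrac12$. The only point where you go beyond the paper's wording is in explicitly intersecting $W$ with the full-measure set of radii for which $\bar{\O}(p,t)\in\G_c(X,M,U)$ — the paper leaves this implicit when it applies Lemma~\ref{lem:9} — and making that step explicit is a small but genuine clarification.
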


		\begin{proof}
			By Lemmas \ref{lem:halfradius} and \ref{lem:4}, there exists \( r' \in W \) such that
			\begin{align}
				\label{lem:haircut:eq:1}
				\sp^{n-2}(x(p,r'))^{(n-1)/(n-2)} \le \frac{1}{2^n K_n}\int_0^{r'}\sp^{n-2} (x(p,t))dt \le \frac{1}{2^n K_n}\sp^{n-1}(X(p,r')).
			\end{align}
			Thus, \( \sp^{n-2}(x(p,r'))^{1/(n-2)} \le \frac{r}{2^{n+1}(n-1)K_n}<r'/(2K_n) \), and so we may apply Lemma \ref{lem:9} to get the required set \( \hat{X} \), since \eqref{lem:haircut:eq:1} and Lemma \ref{lem:9}\ref{lem:9:item:3} give
			\begin{align*}
				\sp^{n-1}(\hat{X}(p,r')) \le 2^{n-1}K_n\sp^{n-2}(x(p,r'))^{(n-1)/(n-2)} \le \frac{1}{2}\sp^{n-1}(X(p,r')).
			\end{align*}
		\end{proof}

The statement of the next lemma is essentially lines (14) and (17) on p. 25 of \cite{reifenberg}.

		\begin{lem}
			\label{lem:gridsmall}
			Suppose \( X \in \Span^*(M,U) \) and \( Q \) is a closed \( n \)-cube of side length \( \ell \) disjoint from \( M \), with \[ \sp^{n-1}(X \cap \mathring{Q}) < \frac{\ell^{n-1}}{\left(4(n-1)\right)^{n-1}\left(2^n K_n\right)^{(n-2)}}. \] Then there exists a compact set \( X' \) spanning \( M \) such that
			\begin{enumerate}
				\item \( X'\cap Q^c = X\cap Q^c \);
				\item \( X'\cap Q\subset \fr\, Q \);
				\item \( X'\cap Q = (X\cap \fr\, Q)\cup Y \),
			\end{enumerate}
			where \( Y\subset \fr Q \) satisfies
			\begin{align}
				\label{lem:gridsmall:estimate}
				\sp^{n-1}(Y)\leq (2\sqrt{n})^{n-1} \sp^{n-1}(X\cap \mathring{Q}).
			\end{align}
		\end{lem}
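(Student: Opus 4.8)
The plan is to reduce $Q$ to its inscribed ball, where Lemma~\ref{lem:haircut} is available, and then radially retract the cube onto its boundary. Let $p$ be the centre of $Q$. The inscribed ball $\bar\O(p,\ell/2)$ lies in $Q$ and meets $\fr Q$ only in the $2n$ face-centres, so $\sp^{n-1}(X(p,\ell/2))\le\sp^{n-1}(X\cap\mathring Q)$, and the hypothesis is precisely the hypothesis $\sp^{n-1}(X(p,\ell/2))<(\ell/2)^{n-1}\bigl((2(n-1))^{n-1}(2^nK_n)^{n-2}\bigr)^{-1}$ of Lemma~\ref{lem:haircut} applied to this ball (which, like $Q$, we may take to lie in $U$; and the requirement there that the centre lie in $X$ is never used — that proof works with the slices $x(p,t)$ only). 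Applying Lemma~\ref{lem:haircut} with $r=\ell/2$ and $W$ the full-measure set of $t\in(\ell/4,\ell/2)$ with $\bar\O(p,t)\in\G_c(X,M,U)$ produces $r'\in(\ell/4,\ell/2)$ and a compact set $\hat X$ spanning $M$ with $\hat X=X$ off $\bar\O(p,r')$, with $\hat X\cap\O(p,r')=\emptyset$, and with $\sp^{n-1}(\hat X(p,r'))\le\tfrac12\sp^{n-1}(X\cap\bar\O(p,r'))$.

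The second step is to push everything left in $Q$ out to $\fr Q$. Let $g\colon\R^n\to\R^n$ be the identity on $Q^c$ and, on $Q$, the radial retraction of $Q$ onto $\fr Q$ from $p$; the singularity at $p$ is removed by any Lipschitz modification inside $\O(p,r')$ carrying $Q$ into $Q$, which is harmless since $\hat X$ misses $\O(p,r')$. Put $X':=g(\hat X)$. Since $g$ is the identity on $Q^c\subset\bar\O(p,r')^c$, carries $Q$ into $Q$, maps $Q\setminus\O(p,r')\supset\hat X\cap Q$ into $\fr Q$, and fixes $\fr Q$ pointwise, one reads off directly that $X'\cap Q^c=\hat X\cap Q^c=X\cap Q^c$, that $X'\cap Q\subset\fr Q$, and — writing $\hat X\cap Q=(X\cap\fr Q)\cup(\hat X\cap\mathring Q)$ — that $X'\cap Q=(X\cap\fr Q)\cup Y$ with $Y:=g(\hat X\cap\mathring Q)\subset\fr Q$; these are (a)--(c). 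Finally $X'$ spans $M$ by Theorem~\ref{thm:lipspan}, since $\hat X$ spans $M$ and $g$ is a Lipschitz deformation supported in the cube $Q$, which is disjoint from $M$; the proof of Theorem~\ref{thm:lipspan} applies with the ball $B$ replaced by $Q$ — nested balls becoming concentric nested cubes and the test link perturbed off the edges of $Q$ — exactly as the grid argument in the proof of Lemma~\ref{lem:8} already does.

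For the estimate, $\sp^{n-1}(\hat X\cap\mathring Q)=\sp^{n-1}(\hat X(p,r'))+\sp^{n-1}(X\cap(\mathring Q\setminus\bar\O(p,r')))\le\sp^{n-1}(X\cap\mathring Q)$ by the halving above, so by Lemma~\ref{lem:2}, $\sp^{n-1}(Y)=\sp^{n-1}(g(\hat X\cap\mathring Q))\le L^{n-1}\sp^{n-1}(X\cap\mathring Q)$, where $L$ is the Lipschitz constant of $g$ on a neighbourhood of $\hat X\cap\mathring Q$; thus \eqref{lem:gridsmall:estimate} follows once $L\le2\sqrt n=\operatorname{diam}Q/\operatorname{inrad}Q$. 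This last bound is the one genuinely delicate point. A computation gives $\|Dg(x)\|\le\tfrac{\ell\sqrt n}{2\,\|x-p\|_\infty}$, which is of order $n$, not $\sqrt n$, when $\|x-p\|_\infty$ is as small as $r'/\sqrt n$ (a ``corner direction''); so to secure $L\le2\sqrt n$ one needs $\hat X\cap Q$ to avoid not merely the Euclidean ball $\O(p,r')$ but the concentric subcube $\{\|x-p\|_\infty<\ell/4\}$. Clearing that without inflating the final constant — a second invocation of this very lemma would square $(2\sqrt n)^{n-1}$ — forces an induction on the dimension $n$, organised as the grid-and-projection argument already used in the proof of Lemma~\ref{lem:8}, and it is that induction which produces the precise exponent base $2\sqrt n$. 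The rest is bookkeeping with the set identities above.
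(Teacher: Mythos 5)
Your plan matches the paper's proof through every structural step: apply Lemma~\ref{lem:haircut} to the inscribed ball \( \bar\O(p,\ell/2) \) to obtain \( r'\in(\ell/4,\ell/2) \) and a modified spanning set \( \hat X \) missing \( \O(p,r') \), radially project from \( p \) onto \( \fr Q \), and read off (a)--(c). The point on which you diverge is that the paper simply asserts ``the Lipschitz constant of the projection is bounded above by \( 2\sqrt n \)'' and then applies Lemma~\ref{lem:2}, whereas you scrutinize that claim --- and your scrutiny is warranted. For \( g(x)=p+\tfrac{\ell/2}{\|x-p\|_\infty}(x-p) \) one computes at a smooth point that \( \|Dg(x)\|_{\mathrm{op}}=\tfrac{\ell\|x-p\|}{2\|x-p\|_\infty^2} \); on \( Q\setminus\O(p,\ell/4) \) this is maximized near the main diagonal at Euclidean distance \( \approx\ell/4 \), where it approaches \( 2n \), not \( 2\sqrt n \). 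The bound \( 2\sqrt n \) is correct only after removing the concentric \emph{subcube} \( \{\|x-p\|_\infty<\ell/4\} \), exactly as you say. So as written the paper's radial-projection step yields \( (2n)^{n-1} \), and the base \( 2\sqrt n \) in \eqref{lem:gridsmall:estimate} is not justified by the argument given.

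Where I would push back is on your proposed repair. The inductive ``grid-and-projection'' scheme you sketch is not what the paper does (the paper does no induction here), and it is more machinery than is called for. The constant \( (2\sqrt n)^{n-1} \) is never used sharply: its only consumer is \eqref{eq:face} in the proof of Theorem~\ref{thm:haircut}, which merely needs \emph{some} dimensional constant \( c(n) \) with \( (2c(n)+1)\rho_j<\tfrac12\sp^{n-1}(F) \), and the definition of \( \rho_j \) (with its factor \( (2^nK_n)^{n-2}(4(n-1))^{n-1} \)) comfortably absorbs \( c(n)=(2n)^{n-1} \) for every \( n\ge3 \), the only regime in which the paper operates. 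So the economical correction is just to replace \( 2\sqrt n \) by \( 2n \) in the statement and proof, not to mount an induction to rescue a sharp constant that nothing downstream requires. Your parenthetical remark that the hypothesis \( p\in X \) built into \( \G(X,M,U) \) is not actually used in the chain Lemma~\ref{lem:halfradius}~--~Lemma~\ref{lem:9}~--~Lemma~\ref{lem:haircut} is a genuine, if minor, clarification: the paper applies Lemma~\ref{lem:haircut} at the cube's centre without comment, and that centre need not lie in \( X \).
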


		\begin{proof}
			Let \( p \) be the center point of \( Q \). Then \( \bar{\O}(p,\ell/2) \subset Q \) satisfies the conditions of Lemma \ref{lem:haircut}, and so there exists \( \ell/4 < r < \ell/2 \) and a compact set \( \hat{X} \) spanning \( M \) such that
			\begin{itemize}
				\item \( \hat{X} \cap \bar{\O}(p,r)^c = X \cap \bar{\O}(p,r)^c \);
				\item \( \hat{X}(p,r) \subset \fr\, \O(p,r) \); and
				\item \( \sp^{n-1}(\hat{X}(p,r)) \leq \sp^{n-1}(X(p,r))/2. \)
			\end{itemize}
			The remaining part of \( \hat{X}\cap \mathring{Q} \) is contained in the region \( \mathring{Q} \cap \O(p,r)^c \), so we may radially project \( \hat{X}\cap \mathring{Q} \) from \( p \) to a set \( Y\subset \fr\, Q \), and the image \( X' \) of \( \hat{X} \) under this map spans \( M \) by Theorem \ref{thm:lipspan}. Moreover, since the Lipschitz constant of the projection is bounded above by \( 2\sqrt{n} \), by Lemma \ref{lem:2} and Lemma \ref{lem:haircut} \ref{lem:haircut:item:3}, we have
			\begin{align*}
				\sp^{n-1}(Y) &\le (2\sqrt{n})^{n-1}\sp^{n-1}(\hat{X} \cap \mathring{Q})\\
				&= (2\sqrt{n})^{n-1}\left[\sp^{n-1}(\hat{X}\cap \mathring{Q}\cap (\bar{\O}(p,r)^c))+\sp^{n-1}(\hat{X}(p,r))\right]\\
				&\leq (2\sqrt{n})^{n-1} \sp^{n-1}(X\cap \mathring{Q}).
			\end{align*}
		\end{proof}

		\begin{defn}
			\label{def:binarysubdivision}
			A collection \( \frak{S} \) of closed \( n \)-cubes is a \emph{\textbf{dyadic subdivision of \( \R^n \)}} if \( \frak{S}=\sqcup_{k\in \Z}\frak{S}_k \), where each \( \frak{S}_k \) is a cover of \( \R^n \) by \( n \)-cubes of side length \( 2^{-k} \) that intersect only on faces, and such that \( \frak{S}_{k+1} \) is a refinement of \( \frak{S}_k \).
		\end{defn}

		Unlike the above sequence of lemmas, the following theorem is an entirely new result:

		\begin{thm}
			\label{thm:haircut}
			If \( S_0 \in \T(M, Y, U) \) is \( \A^{n-1} \)-minimizing, then there exists a sequence \( \{S_k\}\subset \F(M, Y, U) \) such that \( S_k \to S_0 \) in \( \hB_{n-1}^2(U) \), \( \k_Y S_k \to \k_Y S_0 \) in \( \hB_n^1(U) \), \( \supp(\k_Y S_k) \to \supp(\k_Y S_0) \) and \( \sp^{n-1}(\supp(\k_Y S_k)) \to  \frak{m} \)  
 		\end{thm}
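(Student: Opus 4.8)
The plan is to take an arbitrary approximating sequence for $S_0$ furnished by the definition of $\T(M,Y,U)$, ``cut its hairs'' at finer and finer dyadic scales so that its supports collapse onto $X_0:=\supp(\k_Y S_0)$, and then use compactness of positive chains together with the bijection $\Psi$ of Proposition \ref{prop:whitney} to recognize that the modified film chains still converge to $S_0$. Concretely, I would fix $T_i\to S_0$ in $\hB_{n-1}^2(U)$ with $T_i\in\F(M,Y,U)$ and $\A^{n-1}(T_i)\le c_0$. Since the functional $S\mapsto\cint_{\k_Y S}dV$ is continuous, $\A^{n-1}(T_i)\to\A^{n-1}(S_0)=\frak{m}$, and writing $X_i:=\supp(T_i)=\supp(\k_Y T_i)\in\Span^*(M,U)$, Theorem \ref{thm:setchaincorrespondance} gives $\mu_{\k_Y T_i}=\sp^{n-1}\lfloor_{X_i}$ with $\sp^{n-1}(X_i)\to\frak{m}$. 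The positive chains $\k_Y T_i$ lie in $\C(U,c_0)$, converge to $\k_Y S_0$ in $\hB_n^2(U)$, and lie in a relatively compact subset of $\hB_n^1(U)$ (Theorem \ref{thm:compactC}); since $u_n^{1,2}$ is injective (Proposition \ref{prop:inclusion}) this forces $\k_Y T_i\to\k_Y S_0$ in $\hB_n^1(U)$, whence $\sp^{n-1}\lfloor_{X_i}\rightharpoonup\mu_{\k_Y S_0}$ weakly by Proposition \ref{prop:maggi}. By Theorem \ref{thm:convexhull}, $X_0\subset h(M)$, a compact subset of the convex open set $U$, so I would fix a compact convex body $K$ with $h(M)\subset\mathring K\subset K\subset U$ and replace each $X_i$ by its image under the $1$-Lipschitz nearest-point retraction of $\R^n$ onto $K$: this preserves spanning (Theorem \ref{thm:lipspan}, Lemma \ref{lem:corespan}), does not increase $\sp^{n-1}$, and moves only the part of $X_i$ outside $\mathring K$, whose mass vanishes since $\mu_{\k_Y S_0}(\overline{U\setminus\mathring K})=0$; so I may assume $X_i\subset K$ for all $i$.

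The quantitative engine is that \emph{the $\sp^{n-1}$--mass of $X_i$ away from $X_0$ tends to zero}: for any closed $F\subset U$ disjoint from $X_0$, $\limsup_i\sp^{n-1}(X_i\cap F)\le\mu_{\k_Y S_0}(F)=0$ by Proposition \ref{prop:maggi}. I would fix a dyadic subdivision of $\R^n$, chosen so that each $X_i$ meets the union of all cube faces in a $\sp^{n-1}$--null set (true for a.e.\ translate), and for each $k$ let $\frak{Q}_k$ be the finite family of dyadic $k$-cubes $Q\subset K$ with $Q\cap X_0=\emptyset$ (each disjoint from $M$, since $M\subset X_0$). Choosing $\e_k\downarrow0$ and $i_1<i_2<\cdots$ so that for all $i\ge i_k$ the set $X_i$ has mass below the Lemma \ref{lem:gridsmall} threshold in every $Q\in\frak{Q}_k$, has mass $<\e_k$ in $K\setminus\O(X_0,2^{-k})$, and has $|\sp^{n-1}(X_i)-\frak{m}|<\e_k$ — all achievable for $i$ large because the far mass vanishes and $\frak{Q}_k$ is finite — I would hair-cut $X_{i_k}$ by iterating the cuts of Lemmas \ref{lem:gridsmall}, \ref{lem:9} and \ref{lem:haircut} over all dyadic scales $\le k$. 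Two features make this work: where $X\cap\fr\,\O(p,r)=\emptyset$, Lemma \ref{lem:8} simply \emph{discards} the entire content of $\bar\O(p,r)$ rather than relocating it; and where material must be relocated onto a skeleton, the strict decrease of $\sp^{n-1}$ in Lemma \ref{lem:haircut} makes the successive sweeps terminate. The outcome should be a compact set $\hat X_k$ spanning $M$ with $\hat X_k\subset\O(X_0,c_n2^{-k})$ and $\sp^{n-1}(\hat X_k)\le\sp^{n-1}(X_{i_k})+C_n\e_k$. Taking cores, $\hat X_k^*\in\Span^*(M,U)$, and the film chain $S_k:=S_{\hat X_k^*}\in\F(M,Y,U)$ of Theorem \ref{thm:setchaincorrespondance} has $\supp(\k_Y S_k)=\hat X_k^*$, $\mu_{\k_Y S_k}=\sp^{n-1}\lfloor_{\hat X_k}$, and $\frak{m}\le\sp^{n-1}(\supp(\k_Y S_k))=\sp^{n-1}(\hat X_k)\le\frak{m}+(C_n+1)\e_k\to\frak{m}$; moreover $\hat X_k^*\subset\O(X_0,c_n2^{-k})$, while for $p\in X_0$ one has $\mu_{\k_Y S_0}(\O(p,\e))>0$, hence $\mu_{\k_Y S_k}(\O(p,\e))>0$ for large $k$ by Proposition \ref{prop:maggi}, so $\O(p,\e)$ meets $\hat X_k^*$. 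Thus $\supp(\k_Y S_k)\to X_0$ in the Hausdorff metric and $\sp^{n-1}(\supp(\k_Y S_k))\to\frak{m}$.

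To finish, $\mu_{\k_Y S_k}=\sp^{n-1}\lfloor_{\hat X_k}$ differs from $\sp^{n-1}\lfloor_{X_{i_k}}$ by a measure of total mass $\le C_n\e_k\to0$, so $\mu_{\k_Y S_k}\rightharpoonup\mu_{\k_Y S_0}$ as well; the $\k_Y S_k$ lie in a relatively compact subset of $\hB_n^1(U)$, and any $\hB_n^1$--limit $T$ of a subsequence has $\mu_T=\mu_{\k_Y S_0}$ (Proposition \ref{prop:maggi}), hence $T=\k_Y S_0$ by injectivity of $\Psi$. So $\k_Y S_k\to\k_Y S_0$ in $\hB_n^1(U)$, a fortiori in $\hB_n^2(U)$, and since $\p$ is continuous, $S_k=\p\k_Y S_k+E_Y\tM$, and $S_0=\p\k_Y S_0+E_Y\tM$ (Lemma \ref{lem:fc}, using $\p S_k=\p S_0=P_Y\tM$), I conclude $S_k\to S_0$ in $\hB_{n-1}^2(U)$. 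The hard part will be the construction of $\hat X_k$ in the second paragraph: one must remove \emph{every} piece of $X_{i_k}$ lying at positive distance from $X_0$ — the ``tentacles'' of Figure \ref{fig:tentacles} — while letting the total size grow by only $o(1)$. A single-scale cut merely projects such material onto cube faces, where a persistent shrinking speck of positive but vanishing measure could survive and spoil Hausdorff convergence to $X_0$; it is precisely the vanishing of the $\sp^{n-1}$--mass away from $X_0$ that allows one to cut in balls whose spheres miss $X_{i_k}$ (deleting rather than moving that material) and that lets the measure drop of Lemma \ref{lem:haircut} terminate the remaining sweeps. Everything else is bookkeeping with weak convergence of measures and the compactness theorem.
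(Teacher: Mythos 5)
Your overall framework is the same as the paper's: take an approximating sequence $T_i\to S_0$ in $\F(M,Y,U)$, upgrade $\k_Y T_i\to\k_Y S_0$ to $\hB_n^1$-convergence using compactness of positive chains, observe via Proposition~\ref{prop:maggi} that the $\sp^{n-1}$-mass of $X_i:=\supp(\k_Y T_i)$ away from $X_0$ tends to zero, perform a dyadic-grid hair-cut to produce modified spanning sets whose supports collapse onto $X_0$, and then use the compactness theorem plus injectivity of $\Psi$ to identify the limit of the corresponding film chains as $S_0$. The opening and closing parts of your argument are essentially identical to the paper's (your full-sequence convergence of $\k_Y T_i$ via relative compactness plus injectivity of $u_n^{1,2}$ is a minor, valid variant of Corollary~\ref{cor:positivesequence}; your retraction onto a compact convex body $K$ is unnecessary but harmless, since $T_i\in\F(M,Y,U)$ already forces $X_i\subset U$ and Theorem~\ref{thm:convexhull} gives $X_0\subset h(M)\subset\subset U$, which is all the paper needs to ensure that the scale-$j$ cubes meeting $X_0$ lie in $U$ for $j$ large).

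The genuine gap is exactly where you flag it: the construction of $\hat X_k$. Your sketch proposes to ``iterate the cuts of Lemmas \ref{lem:gridsmall}, \ref{lem:9} and \ref{lem:haircut} over all dyadic scales $\le k$'' and to rely on the halving of mass in Lemma~\ref{lem:haircut} to ``terminate the sweeps,'' but this is a different mechanism from the paper's and you do not show it yields the crucial containment $\hat X_k\subset\O(X_0,c_n 2^{-k})$. The paper works at a \emph{single} scale $j$: it applies Lemma~\ref{lem:gridsmall} once to every scale-$j$ cube $Q$ disjoint from $X_0$, clearing $\mathring Q$ and depositing a controlled amount of material on the faces $\fr\,Q$; it then applies Federer--Fleming--type deformations (Theorem~\ref{thm:lipspan}) to push the material on every face $F$ not contained in the cube-neighborhood $U_j$ of $X_0$ down onto the $(n-2)$-skeleton of $F$ (possible because $\sp^{n-1}(Y_j\cap F)<\tfrac12\sp^{n-1}(F)$); and it then takes the \emph{core}. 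The decisive observation is that after the push, $Z_j\cap U_j^c$ lies in a union of $(n-2)$-skeletons, which have zero $\sp^{n-1}$-measure, so the core operation $Z_j\mapsto Z_j^*$ removes all of it and gives $Z_j^*\subset U_j\subset\O(X_0,\sqrt{n}\,2^{-j+1})$. Your proposal mentions ``relocating material onto a skeleton'' but does not supply the step that makes the stray material disappear, namely the $(n-2)$-dimensional skeleton projection followed by reduction to the core; without it, as you yourself note, a shrinking speck of positive $\sp^{n-1}$-mass could survive on a cube face far from $X_0$ and spoil Hausdorff convergence. This is the heart of the theorem, and it needs to be supplied, not just flagged.
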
  

		\begin{proof}
			Let \( T_k \to S_0 \) in \( \hB_{n-1}^2(U) \) with \( \{ T_k\} \subset \F(M, Y, U) \). Since \( \{ \k_Y T_k\}\subset \C(U) \) (Definition \ref{def:filmchain} \ref{def:filmchain:item:3}) and \( \k_Y T_k \to \k_Y S_0 \) in \( \hB_n^2(U) \) (Definition \ref{def:cone},) it follows from Corollary \ref{cor:positivesequence} that there exists a subsequence \( \k_Y T_{k_i} \to \k_Y S_0 \) in \( \hB_n^1(U) \). So, let us assume without loss of generality that this is the case, that \( T_k\to S_0 \) in \( \hB_{n-1}^2(U) \) with \( \{ T_k\}\subset \F(M, Y, U) \) and \( \k_Y T_k\to \k_Y S_0 \) in \( \hB_n^1(U) \). Let \( X_k = \supp(\k_Y T_k) \) for \( k \ge 0 \).

			Let \( \frak{S}=\sqcup_{j\in \Z}\frak{S}_j \) be a dyadic subdivision of \( \R^n \) and let \( \frak{D}_j=\{Q\in \frak{S}_j : Q\cap X_0=\emptyset, Q\cap U \neq \emptyset \} \),  let \( \frak{d}_j< \i \) be the cardinality of \( \frak{D}_j \) and let \( \frak{N}_j=\{Q\in \frak{S}_j: Q\cap X_0 \neq \emptyset \} \). Note that \( \frak{D}_j\sqcup \frak{N}_j \) covers \( \bar{U} \). By Theorem \ref{thm:convexhull}, there exists \( N \in \N \) such that if \( j\geq N \) and \( Q\in \frak{N}_j \), then \( Q\subset U \). So, \( U_j:=\cup_{Q \in \frak{N}_j} Q \) is a neighborhood of \( X_0 \), and \( U_j\subset \O(X_0,\sqrt{n}2^{-j+1}) \). If \( j\geq N \), then \( U_j\subset U \). For the rest of this proof, let us assume \( j\geq N \).

			Since \( \supp(\mu_{\k_Y S_0})=X_0 \) by Proposition \ref{prop:whitney} \ref{samesupports}, it follows that \( \mu_{\k_Y S_0}(Q) = 0 \) for all \( Q\in\frak{D}_j \). In particular, \( \mu_{\k_Y S_0}(\fr\,Q) = 0 \), so we may apply Proposition \ref{prop:maggi} to deduce \( \lim_{k \to \i}\mu_{\k_Y T_k}(Q) = \mu_{\k_Y S_0}(Q) = 0 \). Since \( T_k \in \F(M, Y, U) \), we have \( \mu_{\k_Y T_k}(Q) = \sp^{n-1}\lfloor_{X_k}(Q) \), hence \( \lim_{k \to \i}\sp^{n-1}\lfloor_{X_k}(Q) = 0 \).

			Let \( \rho_j := \min\{2^{-(n-1)j}/((2^n K_n)^{(n-2)}(4(n-1))^{n-1}),\,\, 2^{-j}/\frak{d}_j\} \). Since \( 0<\frak{d}_j<\infty \), there exists \( N_j\in \N \) such that if \( j \ge N_j \) then \( \sp^{n-1}(X_j \cap Q) < \rho_j \) for all \( Q\in \frak{D}_j \). In particular,
			\begin{equation}
				\label{thm:haircut:eq:1}
				\sum_{Q \in \frak{D}_j} \sp^{n-1}(X_{N_j} \cap Q) < 2^{-j}.
			\end{equation}
			for each \( j \). By Lemma \ref{lem:gridsmall} there exists a compact set \( Y_j\subset \R^n \) spanning \( M \), equal to \( X_{N_j} \) outside \( \cup_{Q\in \frak{D}_j}Q \), such that \( Y_j\cap \mathring{Q}=\emptyset \) for each \( Q\in \frak{D}_j \) and
			\begin{equation}
				\label{eq:face}
				\sp^{n-1}(Y_j \cap F) \le (2(2\sqrt{n})^{n-1}+1) \rho_j
			\end{equation}
			for each face \( F \) of \( Q\in \frak{D}_j \). (This is because each face \( F \) is shared by at most two adjacent cubes in \( \frak{D}_j \).)

			Let \( Q\in \frak{D}_j \) and suppose \( F \) is a face of \( Q \) not contained in \( U_j \). Since \( \sp^{n-1}(Y_j \cap F) < 2^{-j(n-1)-1}=1/2\sp^{n-1}(F) \) and \( Y_j\cap F \) is closed, there exists a finite sequence of deformations (see Definition \ref{def:competitor}) \( \{\phi_i\} \) of \( Y_j \) which sends \( Y_j\cap F \) to the \( (n-2) \)-skeleton of \( F \). Repeating this process for each face \( F \) of each \( Q\in \frak{D}_j \) such that \( F \) is not contained in \( U_j \), we get a compact set \( Z_j \) spanning \( M \) by Theorem \ref{thm:lipspan}.

			Furthermore, the core \( Z_j^* \) is contained in \( U_j\subset U \), since \( Z_j\cap U_j^c \) is a subset of the \( (n-2) \)-skeletons of cubes in \( \frak{D}_j \). By \eqref{thm:haircut:eq:1}, Lemma \ref{lem:gridsmall}\eqref{lem:gridsmall:estimate}, and \eqref{eq:face}, we have
			\begin{align*}
				\sp^{n-1}(Z_j^*)\leq \sp^{n-1}(Z_j)\leq \sp^{n-1}(Y_j) &\leq \sp^{n-1}(X_{N_j}) + (2\sqrt{n})^{n-1} \sum_{Q \in \frak{D}_j} \sp^{n-1}(X_{N_j}\cap \mathring{Q})\\
				&\leq \sp^{n-1}(X_{N_j}) + (2\sqrt{n})^{n-1} 2^{-j} < \i. 
			\end{align*}
			In particular, \( Z_j^*\in \Span^*(M,U) \) by Lemma \ref{lem:corespan} and \( \sp^{n-1}(Z_j^*)\to \frak{m} \).

			Now, for each \( j\geq 1 \) let \( S_j' \) be the film chain corresponding to \( Z_j^* \) in Theorem \ref{thm:setchaincorrespondance}. Since \( \{S_j'\}\subset \T(M, Y, U) \), it follows from Corollary \ref{cor:com} that there exists a subsequence \( \{S_{j_i}'\} \) converging to some \( S_0' \) in \( \T(M, Y, U) \). As in the beginning of the proof, by taking a further subsequence, we can also ensure that \( \k_Y S_{j_i}'\to \k_Y S_0' \) in \( \C(U) \).

			We show \( \k_Y S_0' = \k_Y S_0 \), and hence \( S_0' = S_0 \) by Proposition \ref{prop:t2complete} \ref{item:bdry}. Let \( X_0'=\supp(\k_Y S_0') \). Since \( Z_{j_i}^* \subset U_j  \) and \( U_j\subset \O(X_0,\sqrt{n}2^{-j+1}) \), it follows from Lemma \ref{lem:converge} that \( X_0'\subset X_0\subset U \). Therefore, \( \k_Y S_0'\in \C(U) \), and we may apply Proposition \ref{prop:whitney} to get a corresponding measure \( \mu_{\k_Y S_0'} \). It suffices to show \( \mu_{\k_Y S_0'}=\mu_{\k_Y S_0} \).

			 Let \( \frak{S}' \) be a new dyadic subdivision of \( \R^n \) such that each cube \( Q\in\frak{S}' \) is \( (\k_Y S_0) \)-compatible\footnote{See Definition \ref{def:compatible}} and \( (\k_Y S_0') \)-compatible. Fix \( Q\in\frak{S}' \). Then by Proposition \ref{prop:maggi},
			\begin{align*}  
				\mu_{\k_Y S_0'}(Q)=\lim_{i\to\infty} \mu_{\k_Y S_{j_i}'}(Q)=\lim_{i\to\infty} \sp^{n-1}(Z_{j_i}^*\cap Q)&\leq \lim_{i\to\infty} \sp^{n-1}(X_{j_i}\cap Q)+ (2\sqrt{n})^{n-1}  2^{-j_i}\\
				&=\lim_{i\to\infty} \mu_{\k_Y T_{j_i}}(Q)=\mu_{\k_Y S_0}(Q).
			\end{align*}
			Likewise, by \eqref{thm:haircut:eq:1}, \[ \mu_{\k_Y S_0}(Q)=\lim_{i\to\infty} \sp^{n-1}(X_{j_i}\cap Q)\leq \lim_{i\to\infty} \sp^{n-1}(Z_{j_i}^*\cap Q)+2^{-j_i} = \mu_{\k_Y S_0'}(Q). \] Now if \( W\subset \R^n \) is open, by taking a Whitney decomposition of \( W \) using cubes from \( \frak{S}' \) we conclude that \( \mu_{\k_Y S_0'}(W)=\mu_{\k_Y S_0}(W) \). Since both measures are finite Borel measures on \( \R^n \) and hence Radon measures, outer regularity proves the two measures are equal. 

			Finally, since \( \supp(\k_Y S_{j_i}')=Z_{j_i}^*\subset U_k\subset \O(\supp(\k_Y S_0),\sqrt{n}2^{-j+1}) \), it follows from Lemma \ref{lem:converge} and compactness of \( \supp(\k_Y S_0) \) that \( \supp(\k_Y S_{j_i}') \) converges to \( \supp(\k_Y S_0) \).
		\end{proof}

	\subsection{Lower density bounds}
		\label{sub:lrm}
		The first three results of this section strengthen Theorem \ref{thm:haircut} so that it produces \( S_k \to S_0 \) where the sequence \( \{\supp(\k_Y S_k)\} \) is Reifenberg regular, along with its other properties (see Proposition \ref{prop:lrmin} for a formal statement.)  

		Lemma \ref{lem:epsilonbound} is essentially (19) on p. 26 of \cite{reifenberg}.
		\begin{lem}
			\label{lem:epsilonbound} 
 			Suppose that \( \{X_k\} \subset \Span^*(M,U) \) is minimizing and \( X_k \to X_0 \). Then there exists a subsequence \( k_i\to\i \) such that
			\begin{equation}
				\label{lem:epsilonbound:2}
				\sp^{n-1}(X_{k_i}(p,r)) > \frac{2^{(-i-1)(n-1)}}{\left(2(n-1)\right)^{n-1}\left(2^n K_n\right)^{n-2}} \,\,\, \mbox{ for all }\,\,\, \bar{\O}(p,r) \in \G(X_{k_i},M,U) \mbox{ and } r>2^{-i-1}.
			\end{equation}
		\end{lem}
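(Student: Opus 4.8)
The plan is to argue by contradiction and build the subsequence greedily: it suffices to show that for every fixed $i\ge 1$ there are infinitely many indices $k$ for which $\sp^{n-1}(X_k(p,r))>c\,\e^{n-1}$ for all $\bar\O(p,r)\in\G(X_k,M,U)$ with $r>\e$, where $\e=2^{-i-1}$ and $c=\big((2(n-1))^{n-1}(2^nK_n)^{n-2}\big)^{-1}$, since then one may pick $k_1<k_2<\cdots$ one at a time. So fix $i$ and suppose instead that for all $k$ beyond some $K$ there is a ball $\bar\O(p_k,r_k)\in\G(X_k,M,U)$ with $r_k>\e$ and $\sp^{n-1}(X_k(p_k,r_k))\le c\,\e^{n-1}$. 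Since $\bar\O(p_k,\e)\subset\bar\O(p_k,r_k)$ is also in $\G(X_k,M,U)$ and $\sp^{n-1}(X_k(p_k,\e))\le c\,\e^{n-1}$, we may simply take $r_k=\e$.

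First I would apply Lemma \ref{lem:haircut} to $X_k$ on the ball $\bar\O(p_k,\e)$ (its hypothesis holds because $\sp^{n-1}(X_k(p_k,\e))\le c\,\e^{n-1}$), with $W=(\e/2,\e)$. This yields $r_k'\in(\e/2,\e)$ and a compact set $\hat X_k$ spanning $M$ with $\hat X_k=X_k$ off $\bar\O(p_k,r_k')$, with $\hat X_k(p_k,r_k')\subset\fr\,\O(p_k,r_k')$, and with $\sp^{n-1}(\hat X_k(p_k,r_k'))\le\tfrac12\sp^{n-1}(X_k(p_k,r_k'))$. Since $\hat X_k\subset U$ and $\sp^{n-1}(\hat X_k)<\infty$, Lemma \ref{lem:corespan} gives $\hat X_k^*\in\Span^*(M,U)$, so $\sp^{n-1}(\hat X_k^*)\ge\frak{m}$; splitting the measure along $\fr\,\O(p_k,r_k')$ gives $\sp^{n-1}(\hat X_k^*)\le\sp^{n-1}(\hat X_k)\le\sp^{n-1}(X_k)-\tfrac12\sp^{n-1}(X_k(p_k,r_k'))$. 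Hence
\[ \sp^{n-1}\big(X_k(p_k,r_k')\big)\ \le\ 2\big(\sp^{n-1}(X_k)-\frak{m}\big)\ \longrightarrow\ 0, \]
because $\{X_k\}$ is minimizing. Thus at a scale $r_k'\in(\e/2,\e)$ bounded away from $0$ the ``bad'' balls are nearly empty of $X_k$.

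Then I would derive a contradiction from this near-emptiness. Passing to a subsequence, $p_k\to p$ and $r_k'\to\bar r\in[\e/2,\e]$; since $X_k\to X_0$ in the Hausdorff metric and $p_k\in X_k$, the point $p$ lies in $X_0$, and for each $\rho<\bar r$ the ball $\bar\O(p,\rho)$ is eventually contained in $\bar\O(p_k,r_k')$, hence lies in $\G(X_0,M,U)$ and satisfies $\sp^{n-1}(X_k\cap\bar\O(p,\rho))\to0$. Using that $\sp^{n-1}(X_k\cap\bar\O(p_k,r_k'))\to0$, for $k$ large the hypotheses of Lemmas \ref{lem:halfradius} and \ref{lem:9} are met (they involve only the vanishing slice/ball measures against the fixed lower bound $r_k'>\e/2$ on the radius), so $X_k$ can be replaced inside a slightly smaller concentric ball by a set carried by the corresponding sphere and of arbitrarily small $\sp^{n-1}$-measure; transporting the face-collapsing deformations from the proof of Theorem \ref{thm:haircut} through a bi-Lipschitz identification of a spherical shell with a cubical shell of comparable size, this thin spherical layer is collapsed onto an $(n-2)$-dimensional skeleton. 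The outcome is a compact $\widetilde X_k$ spanning $M$, contained in $U$, with $\widetilde X_k^*\in\Span^*(M,U)$ and $\sp^{n-1}(\widetilde X_k^*)\le\sp^{n-1}(X_k)+o(1)\to\frak{m}$, and with $\widetilde X_k$ disjoint from a fixed open ball $\O(p,\delta)$, $\delta>0$. Taking a Hausdorff limit of a subsequence of the $\widetilde X_k^*$ and invoking Lemma \ref{lem:hausdorffspan} and the minimality of $\frak{m}$, one arranges the bookkeeping so that the limiting spanning set must contain $p$ and yet be disjoint from $\O(p,\delta)$, which is absurd.

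The step I expect to be the main obstacle is this last one: showing that near-emptiness of a ball of definite scale is incompatible with $\{X_k\}$ being a minimizing sequence. A single haircut reclaims only measure of order $\sp^{n-1}(X_k\cap\bar\O(p_k,r_k'))$, which is already going to $0$, so no single modification forces a contradiction; the contradiction must come either from driving the ball genuinely empty by a Federer--Fleming-type projection — the spherical analogue of the cube constructions in Lemmas \ref{lem:8}--\ref{lem:gridsmall}, so that the reclaimed measure becomes a \emph{definite} positive amount — or from following the modified minimizing sequence to its Hausdorff limit and exhibiting there a competitor spanning $M$ with measure below $\frak{m}$. Ensuring that the constants in Lemmas \ref{lem:haircut}, \ref{lem:9} and \ref{lem:gridsmall} line up so that every intermediate hypothesis holds at the dyadic scales $2^{-j}$ is the routine-but-delicate bookkeeping that makes the argument work, following (19) on p.~26 of \cite{reifenberg}.
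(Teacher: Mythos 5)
Your opening moves match the paper's: negate, normalize the bad radius to \(\e = 2^{-i-1}\), invoke Lemma \ref{lem:haircut} to produce \(\hat X_k\) and \(r_k'\in(\e/2,\e)\), and from \(\sp^{n-1}(\hat X_k)\ge\frak m\) (since \(\hat X_k\) spans \(M\)) deduce \(\sp^{n-1}(X_k(p_k,r_k'))\le 2\left(\sp^{n-1}(X_k)-\frak m\right)\to 0\). The route you then take does not close. You propose to drive \(X_k\) entirely out of a fixed ball \(\O(p,\delta)\) via a spherical Federer--Fleming construction and then argue from the Hausdorff limit of the modified sequence that ``the limiting spanning set must contain \(p\) and yet be disjoint from \(\O(p,\delta)\).'' But there is no reason the limit must contain \(p\): the modified sets \(\widetilde X_k\) avoid \(\O(p,\delta)\) by construction, hence so does any Hausdorff limit, and Lemma \ref{lem:hausdorffspan} only guarantees the limit spans \(M\). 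Spanning sets are perfectly free to avoid a ball around a point of \(X_0\); nothing forces the limit to contain \(p\). So even granting all the delicate shell-collapsing details, no contradiction emerges. This is the genuine gap, and you flag it yourself.

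The paper closes the argument with a tool you never invoke: weak convergence of the positive chains \(\k_Y S_k\to\k_Y S_0\) in \(\C(U)\). Since \(p_{k_j}\to p\), the point \(p\) lies in \(X_0=\supp(\k_Y S_0)=\supp(\mu_{\k_Y S_0})\) (Proposition \ref{prop:whitney} \ref{samesupports}), so \(\mu_{\k_Y S_0}(\bar\O(p,r))>0\) for any \((\k_Y S_0)\)-compatible ball with \(r<\e/2\). Since \(\mu_{\k_Y S_k}=\sp^{n-1}\lfloor_{X_k}\) (Definition \ref{def:filmchain}), Proposition \ref{prop:maggi} gives \(\sp^{n-1}(X_{k_j}(p,r))\to\mu_{\k_Y S_0}(\bar\O(p,r))>0\); and for \(j\) large one has \(\bar\O(p,r)\subset\bar\O(p_{k_j},r_{k_j}')\), so \(\sp^{n-1}(X_{k_j}(p_{k_j},r_{k_j}'))\ge\sp^{n-1}(X_{k_j}(p,r))\) is bounded \emph{below} by a positive constant. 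This directly contradicts your own observation that this quantity tends to zero, and finishes the lemma without any new projection scheme: plugging the lower bound into \(\sp^{n-1}(\hat X_{k_j})\le\sp^{n-1}(X_{k_j})-\tfrac12\sp^{n-1}(X_{k_j}(p_{k_j},r_{k_j}'))\) gives \(\sp^{n-1}(\hat X_{k_j})<\frak m\). Note the extra structure needed — that the \(X_k\) are supports of film chains \(S_k\) with \(\k_Y S_k\to\k_Y S_0\) in \(\C(U)\) — is supplied by Theorem \ref{thm:haircut} and Proposition \ref{prop:lrmin}, which is why this lemma is stated and used only in that context; Hausdorff convergence of the sets alone does not yield weak convergence of the restricted spherical measures, and that is the ingredient your argument is missing.
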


		\begin{proof}
			If not, there exist \( N_1 \) and \( N_2 >0 \) such that for all \( k\geq N_1\), there exists \( \bar{\O}(p_k,r_k)\in \G(X_k,M,U) \) with \( r_k>2^{-N_2-1} \) such that \[ \sp^{n-1}(X_k(p_k,r_k)) \leq \frac{2^{(-N_2-1)(n-1)}}{\left(2(n-1)\right)^{n-1}\left(2^n K_n\right)^{n-2}}. \] Since \( U \) is bounded, there exists a subsequence \( p_{k_j}\to p\in X_0 \). By Lemma \ref{lem:haircut}, there exist \( 2^{-N_2-2} < r_{k_j}' < r_{k_j} \) and a compact spanning set \( \hat{X}_{k_j} \) satisfying the conclusions of Lemma \ref{lem:haircut}. In particular,
			\begin{equation}
				\label{eq:epbound1}
				\sp^{n-1}(\hat{X}_{k_j}(p_{k_j},r_{k_j}')) \le \frac{1}{2}\sp^{n-1}(X_{k_j}(p_{k_j},r_{k_j}')).
			\end{equation}

			Let \( \bar{\O}(p,r)\in \G(X_0,M,U) \) be \( (\k_Y S_0) \)-compatible, with \( r<2^{-N_2-2} \). Then for \( j \) large enough, \( \bar{\O}(p,r)\subset \bar{\O}(p_{k_j},r_{k_j}') \). Since \( p\in X_0 \), Propositions \ref{prop:maggi} and \ref{prop:whitney} \ref{samesupports} imply \[ 0 < \mu_{\k_Y S_0}(\bar{\O}(p,r)) = \lim_{j\to\i} \sp^{n-1}(X_{k_j}(p,r)). \] Thus, for \( j \) large enough,
			\begin{equation}
				\label{eq:epbound2}
				0 < \frac{1}{2} \mu_{\k_Y S_0}(\bar{\O}(p,r)) < \sp^{n-1}(X_{k_j}(p,r)) \leq \sp^{n-1}(X_{k_j}(p_{k_j}, r_{k_j}')).  			
			\end{equation}
			But since \( X_{k_j}\cap\bar{\O}(p_{k_j},r_{k_j}')^c =\hat{X}_{k_j}\cap\bar{\O}(p_{k_j},r_{k_j}')^c \), we may use \eqref{eq:epbound1} and \eqref{eq:epbound2} to deduce
			\begin{align*}
				\sp^{n-1}(\hat{X}_{k_j})&=\sp^{n-1}(\hat{X}_{k_j}(p_{k_j},r_{k_j}'))+\sp^{n-1}(X_{k_j}\cap\bar{\O}(p_{k_j},r_{k_j}')^c)\\
				&\le \frac{1}{2} \sp^{n-1}(X_{k_j}(p_{k_j},r_{k_j}')) + \sp^{n-1}(X_{k_j}\cap\bar{\O}(p_{k_j},r_{k_j}')^c)\\
				&=\sp^{n-1}(X_{k_j})-\frac{1}{2} \sp^{n-1}(X_{k_j}(p_{k_j},r_{k_j}'))\\
				&<\sp^{n-1}(X_{k_j})-\frac{1}{4} \mu_{\k_Y S_0}(\bar{\O}(p,r)).
			\end{align*}
			Since \( \sp^{n-1}(X_{k_j})\to \frak{m} \), we have \( \sp^{n-1}(\hat{X}_{k_j})<\frak{m} \) for \( j \) large enough, a contradiction.
		\end{proof}
The next Lemma is essentially (20) on p. 26 of \cite{reifenberg}.
		\begin{lem}
			\label{prop:prelrmin}
			Suppose that \( \{X_k\} \subset \Span^*(M,U) \) is minimizing and \( X_k \to X_0 \). Then there exists a subsequence \( \{X_{k_i}\} \) and a constant \( \frak{a} > 0 \) such that \[ F_{k_i}(p,r) \geq \frak{a} r^{n-1} \] for all \( i>0 \), \( \bar{\O}(p,r) \in \G(X_{k_i},M,U) \) and \( r > 2^{-i} \).
		\end{lem}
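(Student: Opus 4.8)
The plan is to upgrade the single-scale estimate of Lemma~\ref{lem:epsilonbound} to a uniform-over-all-large-scales estimate by a diagonal/iteration argument together with the observation that a lower density bound at scale $r$ can be transferred to a lower bound on the sliced quantity $F_k(p,r) = \int_0^r \sp^{n-2}(x_k(p,t))\,dt$ at a comparable scale. First I would recall that by Lemma~\ref{lem:epsilonbound} there is a subsequence, which I relabel $\{X_{k_i}\}$, with
\[
\sp^{n-1}(X_{k_i}(p,r)) > \frac{2^{(-i-1)(n-1)}}{\left(2(n-1)\right)^{n-1}\left(2^n K_n\right)^{n-2}}
\]
for every $\bar{\O}(p,r)\in\G(X_{k_i},M,U)$ with $r>2^{-i-1}$. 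Call the constant on the right $c_n 2^{-(i+1)(n-1)}$. The point is that this already gives a genuine lower bound of the form $\sp^{n-1}(X_{k_i}(p,r))\ge \frak{a}'r^{n-1}$ whenever $r\in(2^{-i-1},2^{-i}]$, because then $r^{n-1}\le 2^{-i(n-1)}\le 2^{n-1}\cdot 2^{-(i+1)(n-1)}$, so one may take $\frak{a}'=c_n 2^{-(n-1)}$ on that one dyadic annulus of scales. The issue is covering \emph{all} $r>2^{-i}$ with a single constant, and converting $\sp^{n-1}$-bounds into $F_{k_i}$-bounds.

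Next I would handle larger radii by a covering/subdivision argument. Given $\bar{\O}(p,r)\in\G(X_{k_i},M,U)$ with $r>2^{-i}$, choose the integer $\ell\ge 0$ with $2^{-i}<2^{-\ell}r_0\le\cdots$; more concretely, I would cover a fixed fraction of $\bar{\O}(p,r)$ by disjoint balls $\bar{\O}(q_j,\rho)$ with $q_j\in X_{k_i}$, $\rho$ comparable to $\min\{r, \operatorname{dist}(p,M)\}/10$ and $\rho>2^{-i-1}$, each contained in $\bar{\O}(p,r)$ and disjoint from $M$, using a Vitali-type packing so that the number of such balls is $\gtrsim (r/\rho)^{n-1}$ (this uses that $X_{k_i}$, being a spanning set, meets every normal fiber of a link in $\bar{\O}(p,r)$, hence is ``spread out'' at every scale down to $\sim 2^{-i}$; alternatively one packs along a fixed $(n-1)$-plane direction). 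Applying the per-ball estimate to each $\bar{\O}(q_j,\rho)$ and summing disjoint contributions yields $\sp^{n-1}(X_{k_i}(p,r))\ge (r/\rho)^{n-1}\cdot c_n(\rho/2)^{n-1}\gtrsim r^{n-1}$, with a constant depending only on $n$. Combined with the previous paragraph this gives $\sp^{n-1}(X_{k_i}(p,r))\ge \frak{a}_1 r^{n-1}$ for all $\bar{\O}(p,r)\in\G(X_{k_i},M,U)$ with $r>2^{-i}$, for a dimensional constant $\frak{a}_1$.

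Finally I would pass from the $\sp^{n-1}$-lower bound to the $F_{k_i}$-lower bound. For $\bar{\O}(p,r)\in\G(X_{k_i},M,U)$ with $r>2^{-i}$, apply the estimate just obtained at the radius $r/2>2^{-i-1}$ (shrinking $i$ by one in the relabelling if needed), together with the slicing inequality Lemma~\ref{lem:4} written in integrated form: by Lemma~\ref{lem:4}, $F_{k_i}(p,r)=\int_0^r\sp^{n-2}(x_{k_i}(p,t))\,dt$ and $\sp^{n-1}(X_{k_i}(p,s))\ge F_{k_i}(p,s)$ is the wrong direction, so instead I use the monotone rearrangement argument from the proof of Lemma~\ref{lem:halfradius}: if $F_{k_i}(p,r)$ were smaller than $\frak{a}r^{n-1}$ then, exactly as in Lemma~\ref{lem:haircut}/\ref{lem:halfradius}, one could find a good radius $r'\in(r/2,r)$, $r'>2^{-i-1}$, and perform the haircut of Lemma~\ref{lem:9} to produce a competitor $\hat{X}_{k_i}$ with $\sp^{n-1}(\hat X_{k_i}(p,r'))\le \tfrac12\sp^{n-1}(X_{k_i}(p,r'))$, which (since $\sp^{n-1}(X_{k_i}(p,r'))\ge\frak{a}_1(r')^{n-1}$ is bounded below but also, from $F_{k_i}$ being tiny, the set is ``thin'') contradicts minimality just as at the end of the proof of Lemma~\ref{lem:epsilonbound}. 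More directly: the chain of inequalities in Lemma~\ref{lem:haircut:eq:1} shows $F_{k_i}(p,r')\le \sp^{n-1}(X_{k_i}(p,r'))$, and conversely if $F_{k_i}(p,r)$ is too small the Lemma~\ref{lem:halfradius} computation forces $\sp^{n-1}(X_{k_i}(p,r))$ to be small, contradicting $\frak{a}_1 r^{n-1}$; hence $F_{k_i}(p,r)\ge\frak{a}r^{n-1}$ with $\frak{a}=c(n)\frak{a}_1$. Taking $\frak{a}$ to be this final dimensional multiple of $\frak{a}_1$ completes the proof. I expect the main obstacle to be the packing step in the second paragraph — making precise that a spanning set is quantitatively spread out at every scale above $2^{-i}$ so that one can pack $\gtrsim(r/\rho)^{n-1}$ disjoint sub-balls each inheriting the scale-$\rho$ density bound — and correctly bookkeeping the subsequence relabelling so that one fixed constant $\frak{a}$ works for all $i$ and all $r>2^{-i}$ simultaneously.
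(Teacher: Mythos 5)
Your proposal has a genuine gap, and the main culprit is exactly the step you flag as suspect. The packing argument in your second paragraph has no foundation: to pack $\gtrsim (r/\rho)^{n-1}$ disjoint balls $\bar{\O}(q_j,\rho)$ with centers $q_j\in X_{k_i}$ inside $\bar{\O}(p,r)$, you would need $X_{k_i}$ to already be quantitatively spread out at scale $\rho$ inside $\bar{\O}(p,r)$ — which is precisely the lower density bound you are trying to prove, so the reasoning is circular. The spanning property alone does not give this: a compact spanning set can have a very thin, nearly one-dimensional tentacle passing through a ball, in which case the number of disjoint sub-balls with centers on it grows only like $r/\rho$, not $(r/\rho)^{n-1}$. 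The needed lower bound on how spread out a minimizer must be is a consequence of near-minimality, and your packing step does not invoke minimality at all. Consequently your third paragraph, which relies on the conclusion of the second (``since $\sp^{n-1}(X_{k_i}(p,r'))\ge\frak{a}_1(r')^{n-1}$ is bounded below'') as an input, also does not go through, and the claimed contradiction via Lemma~\ref{lem:9} / Lemma~\ref{lem:halfradius} requires a smallness hypothesis on $\sp^{n-1}(X(p,r))$ that is not available.

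The paper's proof is structurally different and closes exactly this gap. Rather than first proving a lower bound on $\sp^{n-1}(X_{k_i}(p,r))$ and then passing to $F_{k_i}$, it derives a differential inequality for $F_{k_i}$ directly, using minimality at every intermediate scale. The ingredients are: (i) assume WLOG $\sp^{n-1}(X_k)\le\frak{m}+\e_k$ with $\e_k$ taken to be half the lower bound in Lemma~\ref{lem:epsilonbound}; (ii) for a.e.\ $s\in(2^{-i-1},r)$, use Lemma~\ref{lem:8} to build a competitor $Z_{k_i}$ inside $\bar{\O}(p,s)$, so that minimality forces $\sp^{n-1}(X_{k_i}(p,s))-\e_{k_i}\le\sp^{n-1}(Z_{k_i}(p,s))\le K_n\sp^{n-2}(x_{k_i}(p,s))^{(n-1)/(n-2)}$; (iii) use Lemma~\ref{lem:epsilonbound} not as a density lower bound in its own right, but only to absorb the $\e_{k_i}$ error, giving $F_{k_i}(p,s)\le 2K_n\sp^{n-2}(x_{k_i}(p,s))^{(n-1)/(n-2)}$; (iv) since $\frac{d}{ds}F_{k_i}(p,s)=\sp^{n-2}(x_{k_i}(p,s))$ a.e., this is the differential inequality $\frac{d}{ds}F_{k_i}(p,s)^{1/(n-1)}\ge\frac{1}{(n-1)}(2K_n)^{-(n-2)/(n-1)}$, which is integrated from $2^{-i-1}$ to $r$ and, using $r-2^{-i-1}>r/2$, yields the stated bound with $\frak{a}=(2(n-1))^{-(n-1)}(2K_n)^{-(n-2)}$. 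The essential point you are missing is that one must use the competitor construction of Lemma~\ref{lem:8} (i.e.\ minimality) at every radius $s\in(2^{-i-1},r)$ to control $\sp^{n-2}(x_{k_i}(p,s))$ from below by a power of $F_{k_i}(p,s)$, rather than trying to bootstrap a one-scale bound by packing.
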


		\begin{proof} 
			Since \( \sp^{n-1}(X_k)\to \frak{m} \), let us assume without loss of generality that \[ \sp^{n-1}(X_k)\leq \frak{m} + \frac{2^{(-k-1)(n-1)-1}}{\left(2(n-1)\right)^{n-1}\left(2^n K_n\right)^{n-2}}. \] Let \( \{X_{k_i}\} \) be the subsequence determined by Lemma \ref{lem:epsilonbound}. Fix \( i \), and suppose \( \bar{\O}(p,r)\in \G(X_{k_i},M,U) \) and \( r>2^{-i} \). Let \( \bar{\O}(p,s) \in \G_c(X_{k_i},M,U) \) where \( r>s>2^{-i-1} \). Let \( Z_{k_i} \) be the set determined by Lemma \ref{lem:8} using \( X_{k_i} \) and \( \bar{\O}(p,s) \). Since \( \sp^{n-1}(Z_{k_i})\geq \frak{m} \) and \( Z_{k_i}\cap\bar{\O}(p,s)^c=X_{k_i}\cap\bar{\O}(p,s)^c \), we have
			\begin{equation}
				\label{eq:prel}
				\sp^{n-1}(X_{k_i}(p,s)) - \frac{2^{(-k_i-1)(n-1)-1}}{\left(2(n-1)\right)^{n-1}\left(2^n K_n\right)^{n-2}}\leq \sp^{n-1}(Z_{k_i}(p,s)). 		
			\end{equation}
			Thus, it follows from Lemmas \ref{lem:4}, \ref{lem:epsilonbound} and \ref{lem:8} that
			\begin{align*}
				F_{k_i}(p,s) \leq \sp^{n-1}(X_{k_i}(p,s)) &\leq 2\sp^{n-1}(X_{k_i}(p,s)) - \frac{2^{(-k_i-1)(n-1)}}{\left(2(n-1)\right)^{n-1}\left(2^n K_n\right)^{n-2}}\\
				&\leq 2\sp^{n-1}(Z_{k_i}(p,s)) \le 2 K_n \sp^{n-2}(x_{k_i}(p,s))^{(n-1)/(n-2)}.
			\end{align*}
			In other words, for almost every \( x\in (2^{-i-1},r) \), \[ \frac{\frac{d}{dx} F_{k_i}(p,x)}{\left(F_{k_i}(p,x)\right)^{(n-2)/(n-1)}}\geq (2K_n)^{-(n-2)/(n-1)}. \] Integrating, this implies \( F_{k_i}(p,r)^{1/(n-1)}-F_{k_i}(p,2^{-i-1})^{1/(n-1)} \ge \frac{1}{n-1}(2K_n)^{-(n-2)/(n-1)}(r-2^{-i-1}) \), and thus \[ F_{k_i}(p,r) \geq \frac{r^{n-1}}{(2(n-1))^{n-1}(2K_n)^{n-2}}. \]
			Setting \( \frak{a}=(2(n-1))^{-(n-1)}(2K_n)^{-(n-2)} \) completes the proof.
		\end{proof}

		Summarizing what we have so far, Theorem \ref{thm:haircut}, Proposition \ref{prop:prelrmin} and Lemma \ref{lem:4} imply:

		\begin{prop}
			\label{prop:lrmin}
			Suppose \( S_0\in \T(M, Y, U) \) is \( \A^{n-1} \)-minimizing. Then there exists a sequence \( S_k\to S_0 \) in \( \hB_{n-1}^2(U) \) where \( \{S_k\}\subset \F(M, Y, U) \) such that:
			\begin{enumerate}
				\item \( \k_Y S_k \to \k_Y S_0 \) in \( \C(U) \);
				\item \( X_k \to X_0 \) where \( X_k:=\supp(\k_Y S_k) \);
				\item \( \{X_k\} \subset \Span^*(M,U) \) is Reifenberg regular.
			\end{enumerate}
		\end{prop}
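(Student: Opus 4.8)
The plan is to assemble the three ingredients already cited. First, apply Theorem \ref{thm:haircut} to the \( \A^{n-1} \)-minimizing chain \( S_0 \): this produces a sequence \( \{S_k\} \subset \F(M,Y,U) \) with \( S_k \to S_0 \) in \( \hB_{n-1}^2(U) \), with \( \k_Y S_k \to \k_Y S_0 \) in \( \hB_n^1(U) \), with \( X_k := \supp(\k_Y S_k) \) converging in the Hausdorff metric to \( X_0 := \supp(\k_Y S_0) \), and with \( \sp^{n-1}(X_k) \to \frak{m} \). The first and second listed conclusions are then read off directly: each \( \k_Y S_k \) lies in \( \C(U) \) because \( S_k \) is a film chain (Definition \ref{def:filmchain} \ref{def:filmchain:item:2}), and \( \k_Y S_0 \in \C(U) \) by Theorem \ref{thm:convexhull} together with Proposition \ref{prop:t2complete} \ref{item:positive}, so the \( \hB_n^1(U) \)-convergence is convergence in the convex cone \( \C(U) \); the convergence of supports is exactly the third assertion of Theorem \ref{thm:haircut}.

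It remains to arrange the Reifenberg regularity. Here I would first observe that each \( X_k = \supp(\k_Y S_k) \) belongs to \( \Span^*(M,U) \) by Theorem \ref{thm:setchaincorrespondance}, and that \( \sp^{n-1}(X_k) \to \frak{m} = \inf\{\sp^{n-1}(X) : X \in \Span^*(M,U)\} \); hence \( \{X_k\} \) is a minimizing sequence converging in the Hausdorff metric to \( X_0 \). Applying Proposition \ref{prop:prelrmin} extracts a subsequence \( \{X_{k_i}\} \) and a constant \( \frak{a} > 0 \) with \( F_{k_i}(p,r) \ge \frak{a} r^{n-1} \) for every \( i \), every \( \bar{\O}(p,r) \in \G(X_{k_i},M,U) \) and every \( r > 2^{-i} \). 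Since Lemma \ref{lem:4} gives \( F_{k_i}(p,r) = \int_0^r \sp^{n-2}(x_{k_i}(p,t))\,dt \le \sp^{n-1}(X_{k_i}(p,r)) \), we obtain \( \sp^{n-1}(X_{k_i}(p,r)) \ge \frak{a} r^{n-1} \) on the same range.

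Finally I would relabel the subsequence \( \{S_{k_i}\} \) as \( \{S_i\} \) and correspondingly \( \{X_{k_i}\} \) as \( \{X_i\} \); the estimate above then reads \( \sp^{n-1}(X_i(p,r)) \ge \frak{a} r^{n-1} \) for all \( \bar{\O}(p,r) \in \G(X_i,M,U) \) with \( r > 2^{-i} \), which is precisely the assertion that \( \{X_i\} \) is Reifenberg regular in the sense of Definition \ref{def:rmin} with \( \e_i = 2^{-i} \). Passing to a subsequence preserves every convergence supplied by Theorem \ref{thm:haircut}, both in the \( \hB \)-topologies and in the Hausdorff metric, so the relabelled sequence \( \{S_i\} \) enjoys all three conclusions at once. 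There is really no obstacle here: all the work has been done in Theorem \ref{thm:haircut} (the hair-cutting construction) and Proposition \ref{prop:prelrmin} (the lower density bound), and the only points deserving a moment's care are the index bookkeeping needed to match the normalization \( \e_k = 2^{-k} \) in the definition of Reifenberg regularity and the observation that each listed property is inherited by subsequences — both entirely routine.
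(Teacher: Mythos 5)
Your proposal is correct and follows the same route as the paper, which gives no separate proof but simply states that Proposition \ref{prop:lrmin} is implied by Theorem \ref{thm:haircut}, Proposition \ref{prop:prelrmin}, and Lemma \ref{lem:4}. You have merely filled in the bookkeeping the paper leaves implicit: Theorem \ref{thm:haircut} supplies the sequence \(\{S_k\}\) with the two convergences and the minimizing property \(\sp^{n-1}(X_k)\to\frak{m}\); Proposition \ref{prop:prelrmin} combined with the slicing inequality of Lemma \ref{lem:4} (\(F_{k_i}(p,r)\le\sp^{n-1}(X_{k_i}(p,r))\)) gives the lower density bound \(\sp^{n-1}(X_{k_i}(p,r))\ge\frak{a}r^{n-1}\) for \(r>2^{-i}\); and relabelling the subsequence turns this into Reifenberg regularity with \(\e_k=2^{-k}\) while preserving the other two conclusions, since both \(B^1\)- and Hausdorff-convergence pass to subsequences.
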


		The next result is essentially Lemma 2* on p. 27 of \cite{reifenberg}. Recall \( \b \) from Definition \ref{def:beta}.
		\begin{prop}
			\label{lem:2*}
			If \( \{X_k\} \) is a Reifenberg regular sequence of compact subsets of \( U \) with finite \( \sp^{n-1} \)-measure and \( X_k\to X_0 \), then \[ \b \geq \frak{a}/\a_{n-1} > 0. \]
		\end{prop}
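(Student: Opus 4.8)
The plan is to reduce the infimum defining \( \b \) (Definition \ref{def:beta}) to a single application of Reifenberg regularity (Definition \ref{def:rmin}), after correcting for the fact that the balls appearing in \( \b \) are centered at points of \( X_0 \), whereas the Reifenberg condition is phrased in terms of balls centered at points of \( X_k \).

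First I would fix an arbitrary ball \( \bar{\O}(p,r) \in \G(X_0,M,U) \); if \( \G(X_0,M,U) = \emptyset \) there is nothing to prove, as then \( \b = \i \geq \frak{a}/\a_{n-1} \). Since the center of such a ball satisfies \( p \in X_0 \) and \( X_k \to X_0 \) in the Hausdorff metric, one may choose \( p_k \in X_k \) with \( p_k \to p \) (for instance, take \( p_k \) to be a nearest point of the compact set \( X_k \) to \( p \)). Next, fix \( 0 < r' < r \). For all \( k \) sufficiently large we have simultaneously \( \|p_k - p\| < r - r' \) and \( 2^{-k} < r' \), the latter because \( r' \) is a fixed positive number. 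The first inequality gives \( \bar{\O}(p_k,r') \subseteq \bar{\O}(p,r) \), so \( \bar{\O}(p_k,r') \) is disjoint from \( M \) and contained in \( U \); together with \( p_k \in X_k \) this shows \( \bar{\O}(p_k,r') \in \G(X_k,M,U) \). Since moreover \( r' > 2^{-k} \), Reifenberg regularity yields \( \sp^{n-1}(X_k \cap \bar{\O}(p_k,r')) \geq \frak{a}(r')^{n-1} \). Because \( X_k \cap \bar{\O}(p_k,r') \subseteq X_k \cap \bar{\O}(p,r) = X_k(p,r) \), monotonicity of \( \sp^{n-1} \) gives \( \sp^{n-1}(X_k(p,r)) \geq \frak{a}(r')^{n-1} \) for all large \( k \), hence \( \liminf_{k \to \i} \sp^{n-1}(X_k(p,r)) \geq \frak{a}(r')^{n-1} \). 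Letting \( r' \uparrow r \) gives \( \liminf_{k \to \i} \sp^{n-1}(X_k(p,r)) \geq \frak{a} r^{n-1} \), i.e. \( \liminf_{k \to \i} \frac{\sp^{n-1}(X_k(p,r))}{\a_{n-1} r^{n-1}} \geq \frak{a}/\a_{n-1} \). Taking the infimum over all \( \bar{\O}(p,r) \in \G(X_0,M,U) \) then yields \( \b \geq \frak{a}/\a_{n-1} \), which is positive since \( \frak{a} > 0 \).

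I do not expect a serious obstacle here: the entire content is the ``shrink-and-recenter'' step, whose only delicate point is checking that the perturbed ball \( \bar{\O}(p_k,r') \) still belongs to \( \G(X_k,M,U) \) and has radius exceeding \( 2^{-k} \), both of which hold automatically once \( k \) is large because \( r' \) is fixed and positive. The final passage \( r' \uparrow r \) is what recovers the sharp constant \( \frak{a}/\a_{n-1} \) rather than a constant depending on \( r' \).
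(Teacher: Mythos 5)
Your proposal is correct and follows essentially the same route as the paper's proof: both arguments use Hausdorff convergence to recenter a ball $\bar{\O}(p,r)$ with $p\in X_0$ as a slightly shrunken ball $\bar{\O}(p_k,r')\subset\bar{\O}(p,r)$ with $p_k\in X_k$, apply Reifenberg regularity (valid once $r'>2^{-k}$), use monotonicity of $\sp^{n-1}$, and then recover the sharp constant by letting the shrinking parameter tend to its limit ($r'\uparrow r$ in your version, $\d\to 1$ in the paper's, with $r'=\d r$). No gaps.
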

 
		\begin{proof}
			Let \( \bar{\O}(p,r) \in \G(X_0,M,U) \) and \( 0<\d<1 \). For sufficiently large \( k \), there exists \( \bar{\O}(p_k,r_k)\in \G(X_k,M,U) \) such that \[ 2^{-k}<\d \cdot r < r_k < r \] and \( \bar{\O}(p_k,r_k)\subset \bar{\O}(p,r) \). By the definition of Reifenberg regular \( \sp^{n-1}(X_k(p,r))\geq \sp^{n-1}(X_k(p_k,r_k))\geq \frak{a}\, r_k^{n-1} >\frak{a}\, \d^{n-1}\, r^{n-1} \) and thus \( \b\geq \frak{a} \,\d^{n-1}/\a_{n-1} \). Now let \( \d \to 1 \).
		\end{proof}
		
		\begin{thm}
			\label{thm:finite}
			Suppose \( M\subset U\subset \R^n \), and \( \{X_k\} \) is a sequence of compact subsets of \( U \) with \( \b>0 \). If \( X_k\to X_0 \subset\subset U \) and \( \liminf_{k \to \i}\sp^{n-1}(X_k) =: \frak{c} < \i \), then \[ \sp^{n-1}(X_0\setminus M) < \i. \]
		\end{thm}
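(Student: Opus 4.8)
\emph{The plan.} The strategy is the one Reifenberg uses for (19)--(20) of \cite{reifenberg}: I would bound $\sp^{n-1}$ of the part of $X_0$ lying at a fixed positive distance from $M$ by a constant independent of that distance, and then let the distance tend to $0$. Concretely, for $\eta>0$ set $A_\eta:=\{p\in X_0:\mathrm{dist}(p,M)\ge\eta\}$. Since $M$ is compact, $X_0\setminus M=\bigcup_{j\ge 1}A_{1/j}$ is an increasing union of compact (hence Borel) sets, so by continuity from below of the Borel measure $\sp^{n-1}$ the theorem reduces to producing a constant $C$, not depending on $\eta$, with $\sp^{n-1}(A_\eta)\le C$.

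\emph{Setting up the cover.} Fix $\eta>0$ and $\d>0$. Because $X_0\subset\subset U$, the number $d_U:=\mathrm{dist}(X_0,U^c)$ is positive; for $p\in A_\eta$ and any $0<r<\min\{\eta,d_U\}$ the ball $\bar{\O}(p,r)$ is disjoint from $M$, contained in $U$, and centered at a point of $X_0$, i.e. $\bar{\O}(p,r)\in\G(X_0,M,U)$, so Definition \ref{def:beta} gives
\[ \liminf_{k\to\i}\frac{\sp^{n-1}(X_k(p,r))}{\a_{n-1}r^{n-1}}\ge\b . \]
The family $\{\bar{\O}(p,r):p\in A_\eta,\ 0<r<\min\{\eta,d_U,\d/5\}\}$ covers $A_\eta$, and I would invoke the Vitali $5r$-covering lemma (see, e.g., \cite{mattila}, \S 2) to extract a countable pairwise-disjoint subfamily $\{B_i=\bar{\O}(p_i,r_i)\}$ with $A_\eta\subset\bigcup_i\bar{\O}(p_i,5r_i)$ and each $5r_i<\d$.

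\emph{The density estimate and conclusion.} Each $B_i$ lies in $\G(X_0,M,U)$, and the sets $X_k(p_i,r_i)=X_k\cap B_i$ are pairwise-disjoint Borel subsets of $X_k$; so for every finite $N$, using superadditivity of $\liminf$ and disjointness,
\[ \sum_{i=1}^{N}\b\,\a_{n-1}r_i^{n-1}\;\le\;\liminf_{k\to\i}\sum_{i=1}^{N}\sp^{n-1}(X_k\cap B_i)\;\le\;\liminf_{k\to\i}\sp^{n-1}(X_k)=\frak{c}. \]
Letting $N\to\i$ gives $\sum_i\a_{n-1}r_i^{n-1}\le\frak{c}/\b$. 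Since $\{\bar{\O}(p_i,5r_i)\}$ is a $\d$-cover of $A_\eta$ by balls,
\[ \sp_\d^{n-1}(A_\eta)\le\sum_i\a_{n-1}(5r_i)^{n-1}=5^{n-1}\sum_i\a_{n-1}r_i^{n-1}\le\frac{5^{n-1}\frak{c}}{\b}. \]
Letting $\d\downarrow 0$ yields $\sp^{n-1}(A_\eta)\le 5^{n-1}\frak{c}/\b=:C$, independent of $\eta$, and then $\eta\downarrow 0$ finishes the proof.

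\emph{Where the work is.} I do not expect a serious obstacle. The two points requiring care are (i) checking that the balls returned by the covering lemma genuinely lie in $\G(X_0,M,U)$ --- this is exactly where $\mathrm{dist}(\cdot,M)\ge\eta$ on $A_\eta$ and the compact containment $X_0\subset\subset U$ are used, and it forces the upper bound on the radii to depend on $\eta$ (harmless, since the final bound does not); and (ii) the interchange $\sum_i\liminf_k\le\liminf_k\sum_i$ combined with disjointness of the $X_k\cap B_i$, which is what lets the single finite quantity $\liminf_k\sp^{n-1}(X_k)=\frak{c}$ absorb the whole sum. Replacing Vitali by the Besicovitch covering theorem would trade $5^{n-1}$ for a dimensional constant but is not needed.
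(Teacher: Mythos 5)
Your proof is correct and follows essentially the same argument as the paper's: a disjoint family in $\G(X_0,M,U)$, the density lower bound from $\b$, Vitali $5r$-covering, and sending the radius bound and then the distance-to-$M$ parameter to zero. The only (harmless) slip is that with $5r_i<\d$ the balls $\bar\O(p_i,5r_i)$ have diameter $<2\d$, so you actually bound $\sp^{n-1}_{2\d}(A_\eta)$ rather than $\sp^{n-1}_{\d}(A_\eta)$ — this vanishes in the limit $\d\downarrow0$, and the paper sidesteps it by writing $\sp^{n-1}_{10\d}$.
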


		\begin{proof}
			Suppose \( \{\bar{\O}(p_i, r_i)\}_{i\in I} \subset \G(X_0,M,U) \) is a collection of disjoint balls. If \( J\subset I \) is finite, then by the definition of \( \b \),
			\begin{align*}
				\b\sum_{j\in J} \a_{n-1} r_j^{n-1} \leq \sum_{j\in J} \liminf_{k \to \i} \sp^{n-1}(X_k(p_j,r_j)) &\leq \liminf_{k \to \i} \sum_{j\in J} \sp^{n-1}(X_k(p_j,r_j))\\
				&\leq \liminf_{k \to \i}\sp^{n-1} (X_k) = \frak{c}.
			\end{align*}
			Since \( I \) is necessarily countable, it follows from Proposition \ref{lem:2*} that
			\begin{equation}
				\label{44}
				\a_{n-1} \sum_{i\in I}r_i^{n-1} \leq \frac{\frak{c}}{\b}.
			\end{equation}
			Now fix \( \d \) and \( \d' \) such that \( \d' > \d > 0 \) and \( \O(X_0,\d)\subset U \). Then the subcollection of \( \G(X_0,M,U) \) consisting of balls of radius \( r<\d \) covers \( X_0 \cap \O(M,\d')^c \). So, by the Vitali Covering Lemma and \eqref{44}, it follows that \[ \sp_{10\d}^{n-1}(X_0 \cap \O(M,\d')^c) \le 5^{n-1} \frac{\frak{c}}{\b}. \] Letting \( \d \to 0 \) and then \( \d' \to 0 \), we deduce that \[ \sp^{n-1}(X_0 \setminus M) < \i. \]
		\end{proof}

	\subsection{Lower semicontinuity of Hausdorff spherical measure}
		\label{sub:lower_semicontinuity_of_hausdorff_measure}
 		
		In this section we establish Theorem \ref{thm:lsc2} which yields lower semicontinuity of Hausdorff spherical measure for Reifenberg regular minimizing sequences of compact spanning sets.  The proof follows easily once we establish that  \( \b \ge 1 \).

		We first reproduce four technical Lemmas \ref{lem:7}-\ref{lem:4*} adapted from \cite{reifenberg}. The next result, which is modeled after Lemma 7 in \cite{reifenberg}, p. 12, makes use of a modified cone construction to fit with our definition of a spanning set:
		\begin{lem}
			\label{lem:7}
			Let \( Z \subset U \) be reduced, compact, \( \sp^{n-1}(Z) < \i \), and \( \bar{\O}(p,r) \in \G_c(Z,M,U) \). If \( P_0\in \O(p,r) \) and \( z(p,r)=\sqcup_{j=1}^N \mathit{z_j} \), where \( \mathit{z_j}\subset \O(P_0,r_j) \) for some \( r_j \), \( j=1,\dots,N \), then for each \( \e > 0 \), there exists a reduced, compact set \( \widehat{Z} \subset U \) such that:
			\begin{enumerate}
				\item \( \widehat{Z} \setminus \O(p,r) = Z \setminus \O(p,r) \);
				\item \( \sp^{n-1}(\widehat{Z}(p,r)) \leq (1+ \e) \sum_{j=1}^N\frac{r_j}{n-1}\sp^{n-2}(\mathit{z_j}) \);
				\item \( \widehat{Z}(p,r) \subset h(z(p,r) \cup P_0) \);
				\item There exists an \( (n-1) \)-dimensional polyhedron \( P \subseteq \widehat{Z}(p,r) \) such that \[ \sp^{n-1}(\widehat{Z}(p,r) \setminus P) < \e. \]
			\end{enumerate}
			Furthermore, if \( Z \) spans \( M \), then \( \widehat{Z} \) spans \( M \).
		\end{lem}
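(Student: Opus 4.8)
The plan is to mimic Reifenberg's Lemma 7 construction but to verify the spanning property using Theorem \ref{thm:lipspan} rather than a homological argument. First I would set up the cone construction: on the complement of $\O(p,r)$ leave $Z$ unchanged, and inside $\bar{\O}(p,r)$ replace $Z(p,r)$ by a union of cones $C_{P_0}(\mathit{z_j})$ over the pieces $\mathit{z_j}$ of the slice $z(p,r)=Z\cap\fr\,\O(p,r)$, with common vertex $P_0$. Because $\bar{\O}(p,r)\in\G_c(Z,M,U)$ we have $\sp^{n-2}(z(p,r))<\infty$, so Lemma \ref{lem:5} gives the size bound $\sp^{n-1}(C_{P_0}(\mathit{z_j}))\le \frac{r_j}{n-1}\sp^{n-2}(\mathit{z_j})$ up to the normalizing constant; here one must be slightly careful, since the naked cone estimate in Lemma \ref{lem:5} carries a factor $2^{n-1}\a_m/\a_{m-1}$ rather than $\tfrac1{n-1}$. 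The $(1+\e)$ in item (2) and the polyhedral approximation in item (4) are obtained exactly as in Reifenberg: approximate each $\mathit{z_j}$ from outside by a finite polyhedral complex $P_j$ in $\fr\,\O(P_0,r_j)$ (or in a slightly enlarged sphere) whose $\sp^{n-2}$-measure exceeds that of $\mathit{z_j}$ by less than a prescribed amount, cone off $P_j$ from $P_0$ to get the polyhedron $P\subseteq\widehat{Z}(p,r)$, and absorb the difference $\widehat{Z}(p,r)\setminus P$ into the $\e$-error. Item (3) is immediate from the definition of the cone, since $C_{P_0}(\mathit{z_j})\subset h(\mathit{z_j}\cup\{P_0\})\subset h(z(p,r)\cup\{P_0\})$. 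Finally $\widehat{Z}$ is compact since $z(p,r)$ is compact and coning over a compact set with a fixed vertex is compact, and after passing to the core $\widehat{Z}^*$ we may assume $\widehat{Z}$ reduced by Lemma \ref{lem:corespan} (which preserves spanning).

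For the spanning claim, suppose $Z$ spans $M$; I want to show $\widehat{Z}$ spans $M$. The point is that $\widehat{Z}$ is obtained from $Z$ by a finite composition of deformations in the sense of Definition \ref{def:competitor}, all supported in the closed ball $\bar{\O}(p,r)$, which is disjoint from $M$ since $\bar{\O}(p,r)\in\G_c(Z,M,U)\subset\G(Z,M,U)$. Concretely: the radial retraction of $\bar{\O}(p,r)\setminus\{P_0\}$ onto $\fr\,\O(p,r)$ followed by the cone map toward $P_0$ realizes $Z(p,r)\mapsto \bigcup_j C_{P_0}(\mathit{z_j})$; the map that does this is Lipschitz away from $P_0$, and as in the proof of Lemma \ref{lem:cutandcone} one handles the singularity at $P_0$ by a small-ball argument (any simple link missing $\widehat{Z}$ can be pushed off a neighborhood of $P_0$ by a compactly supported radial flow before applying the retraction). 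Thus $\widehat{Z}=\phi(Z)$ for a Lipschitz $\phi$ that is the identity on $\bar{\O}(p,r)^c$ and maps $\bar{\O}(p,r)$ into itself, so Theorem \ref{thm:lipspan} gives that $\widehat{Z}$ spans $M$. The polyhedral approximation step also only alters $\widehat{Z}$ inside $\bar{\O}(p,r)$ and can be arranged as a deformation (or one can simply approximate so that the approximate still contains enough of the cone to span, then invoke Lemma \ref{lem:corespan}), so spanning is preserved throughout.

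The main obstacle I expect is the spanning verification at the cone vertex $P_0$: the cone map is not Lipschitz on all of $\R^n$ because it is singular at $P_0$, so one cannot directly invoke Theorem \ref{thm:lipspan} with $\phi$ the literal cone map. The fix is the same trick used in Lemma \ref{lem:cutandcone}: given a simple link $\eta$ of $M$ with $\eta(S^1)\cap\widehat{Z}=\emptyset$, first observe $P_0\in\widehat{Z}$ (provided $z(p,r)\ne\emptyset$, which holds by the argument in Lemma \ref{lem:cutandcone} that $Z\cap\fr\,\O(p,r)\ne\emptyset$), so $\eta$ avoids a small ball $\O(P_0,2\d)$; use a compactly supported radial vector field centered at $P_0$ to flow $\eta$ off a tubular neighborhood of the radial retraction of $\eta$, producing a new simple link disjoint from $Z$, contradiction. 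A secondary bookkeeping obstacle is reconciling the cone size constant $\tfrac{r_j}{n-1}$ in (2) with what Lemma \ref{lem:5} actually provides; this is either absorbed into the constants as Reifenberg does, or (2) should be read with the normalization of Lemma \ref{lem:5}, and the polyhedral inner approximation $P$ in (4) is precisely what lets one use the sharp $\tfrac{1}{n}\H^{n-1}$-type cone estimate for polyhedra (the second bound in Lemma \ref{lem:5}) on the polyhedral part while dumping the remainder into $\e$. Everything else — compactness, reducedness via cores, the containment (3), and the weak-continuity/measure manipulations — is routine given the earlier results.
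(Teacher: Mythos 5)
Your high-level plan — cone the slice from an interior vertex, bound the area using Lemma \ref{lem:5}, and verify spanning by a radial-projection argument as in Lemma \ref{lem:cutandcone} — is in the right family, but it misses the actual content of Reifenberg's Lemma 7 in two ways that matter. The surface $\widehat{Z}(p,r)$ used in the paper is \emph{not} the single cone $\cup_j C_{P_0}(\mathit{z_j})$ from a fixed vertex. As you yourself note, the general cone estimate of Lemma \ref{lem:5} carries a constant of order $2^{n-1}\a_{n-1}/\a_{n-2}$, not $\tfrac1{n-1}$, so a single cone cannot give item (b) for any $\e$; and your proposed fix — polyhedrally approximate each $\mathit{z_j}$ and cone off the approximant, "absorbing the difference into $\e$" — is exactly the crux of the lemma, not a routine bookkeeping step. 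A compact subset $\mathit{z_j}$ of a sphere with finite $\sp^{n-2}$-measure need not be contained in any polyhedron of comparable measure (it can be dense in a region of the sphere), so the cone over a polyhedral approximant need not cover the cone over $\mathit{z_j}$, and if it does not, your $\widehat{Z}$ need not span. What Reifenberg actually does is a nested, multi-level coning: cover the slice by balls $\bar{\O}(P_i,\frak{q}_i)$ using Lemmas \ref{lem:1} and \ref{lem:3}, cone off the pieces of the slice lying on the boundary spheres of these balls (where Lemmas \ref{lem:4}, \ref{lem:5}, and \ref{lem:6} furnish both the sharp $\tfrac1{n-1}$ constant on the polyhedral part and the $\e$-control on the remainder), and recurse inside the balls. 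The paper cites this construction for items (a)--(d) and supplies only the spanning argument.

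Second, the paper's spanning argument does \emph{not} invoke Theorem \ref{thm:lipspan}: as you correctly observe, the cone map is not Lipschitz through its vertex, so $\widehat{Z}$ is not literally $\phi(Z)$ for a competitor $\phi$, and the paper does not try to cast it that way. Instead it runs a Lemma \ref{lem:cutandcone}-style radial projection \emph{inductively on the nesting levels} of Reifenberg's construction: given a simple link $\eta$ avoiding $\widehat{Z}$, project the part of $\eta$ inside $E_0:=\O(p,r)\setminus\cup_iC_{P_i}\left(\fr\,\O(p,r)\cap\bar{\O}(P_i,\frak{q}_i)\right)$ radially from $P_0$ onto $\fr\,E_0$, then repeat the same move inside each $\bar{\O}(P_i,\frak{q}_i)$ at the next level, and so on; by induction the resulting simple link lands in $\O(p,r)^c$ while remaining disjoint from $\widehat{Z}$, hence from $Z$, a contradiction. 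Your fallback ("push $\eta$ off a small ball around $P_0$ and then radially retract") is the single-level version of this move, and it would suffice for the single-cone set you built — but that set does not satisfy (b), and for Reifenberg's nested $\widehat{Z}$ you need the full induction through the levels, which you have not worked out.
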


		\begin{proof}
			Let \( \widehat{Z}(p,r) \) be the surface determined by \( A=x(p,r) \) in Lemma 7 of \cite{reifenberg}. Then \( \widehat{Z}:=\widehat{Z}(p,r)\cup (Z\setminus \bar{\O}(p,r)) \) satisfies (a)-(d). The proof uses Lemmas \ref{lem:1}, \ref{lem:3}, \ref{lem:4}, \ref{lem:5}, and \ref{lem:6}.  Assume \( Z \) spans \( M \). To see that \( \widehat{Z} \) spans \( M \), we use a modification of the argument in Lemma \ref{lem:cutandcone}. Suppose \( N=\eta(S^1) \) is a simple link of \( M \) and is disjoint from \( \widehat{Z} \).
			
			First project the portion of \( \eta \) inside \[ E_0:=\O(p,r) \setminus \cup_i C_{P_i} \left( \fr\,\O(p,r)\cap \bar{\O}(P_i, \frak{q}_i) \right) \] radially away from \( P_0\in E_0 \) onto \( \fr\, E_0 \), where \( P_i \) and \( \frak{q}_i \) are defined in Reifenberg's proof. Repeating this initial step as the inductive step for each \( X_i\subset \bar{\O}(P_i,\frak{q}_i) \) as defined in Reifenberg's proof, we obtain by induction a new simple link \( \eta' \) of \( M \) with \( \eta'(S^1)\subset \O(p,r)^c \) disjoint from \( \widehat{Z} \), hence also disjoint from \( Z \), yielding a contradiction.
	 	\end{proof}
		
		The following is an adaptation of Lemma 1* of \cite{reifenberg}, p. 27.	 

		\begin{lem}
			\label{lem:1*}
	Suppose  \( X \in \Span^*(M,U) \) and \( \sp^{n-1}(X) \le \frak{m} + \d \) for some \( \d > 0 \). If \( \bar{\O}(p,r)\subset U\setminus M \), then \[ \sp^{n-1}(X(p,r)) \leq \frac{r}{n-1} \sp^{n-2}(x(p,r)) + \d. \]
		\end{lem}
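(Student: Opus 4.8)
**Proof proposal for Lemma \ref{lem:1*}.**

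The plan is to compare $X$ with the competitor obtained by replacing $X$ inside $\bar{\O}(p,r)$ with a cone. First I would observe that by Lemma \ref{lem:4} the slicing inequality $\int_0^r \sp^{n-2}(x(p,t))\,dt \le \sp^{n-1}(X(p,r)) < \infty$ holds, so $\sp^{n-2}(x(p,t)) < \infty$ for a.e.\ $t\in(0,r)$; in particular almost every sphere $\fr\,\O(p,t)$ gives $\bar{\O}(p,t)\in \G_c(X,M,U)$. I then pick $r' < r$ close to $r$ with $\bar{\O}(p,r')\in\G_c(X,M,U)$ and with $\sp^{n-2}(x(p,r'))$ as small as I like along a full-measure set, and I cone $X\cap \fr\,\O(p,r')$ from the center $p$: set $X' := (X\cap \O(p,r')^c)\cup C_p(x(p,r'))$. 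By Lemma \ref{lem:cutandcone}, $X'$ is compact and spans $M$, and since $\bar{\O}(p,r')\subset U$ and $U$ is convex, $X'\subset U$. Passing to the core $X'^*$ (Lemma \ref{lem:corespan}) keeps us in $\Span^*(M,U)$, so $\frak{m}\le \sp^{n-1}(X'^*) \le \sp^{n-1}(X')$.

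Next I estimate $\sp^{n-1}(X')$. Outside $\bar{\O}(p,r')$ we have $X' = X$, so $\sp^{n-1}(X'\setminus\bar{\O}(p,r')) = \sp^{n-1}(X\setminus\bar{\O}(p,r'))$; inside, $X'(p,r') = C_p(x(p,r'))$ and Lemma \ref{lem:5} (or more precisely the sharper polyhedral-style bound, but here the cone bound $\sp^{n-1}(C_p(X)) \le r 2^{n-1}\frac{\a_{n-1}}{\a_{n-2}}\sp^{n-2}(X)$ suffices in spirit; for the clean constant $\frac{r}{n-1}$ one uses the refined cone estimate as in Lemmas 5–6 of \cite{reifenberg} applied to the rectifiable slice $x(p,r')$) gives $\sp^{n-1}(C_p(x(p,r'))) \le \frac{r'}{n-1}\sp^{n-2}(x(p,r'))$. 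Hence
\[
\frak{m} \le \sp^{n-1}(X\setminus \bar{\O}(p,r')) + \frac{r'}{n-1}\sp^{n-2}(x(p,r')).
\]
Since $\sp^{n-1}(X) \le \frak{m} + \d$ and $\sp^{n-1}(X) = \sp^{n-1}(X\setminus\bar{\O}(p,r')) + \sp^{n-1}(X(p,r'))$, subtracting yields
\[
\sp^{n-1}(X(p,r')) \le \frac{r'}{n-1}\sp^{n-2}(x(p,r')) + \d.
\]
Finally I let $r' \uparrow r$ along a sequence of compatible radii; the left side tends to $\sp^{n-1}(X(p,r))$ since $\sp^{n-1}$ is a Borel measure and $X(p,r') \uparrow X\cap \O(p,r)$ with $\sp^{n-1}(X\cap\fr\,\O(p,r))=0$ for a suitable $r$ (or one passes to a cofinal sequence of such radii and uses that the estimate is stable), and on the right side $r'\to r$ while $\liminf \sp^{n-2}(x(p,r')) \le \sp^{n-2}(x(p,r))$ along an appropriate full-measure sequence. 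This gives the claimed bound.

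The main obstacle I anticipate is getting the precise constant $\frac{r}{n-1}$ rather than the cruder $r2^{n-1}\a_{n-1}/\a_{n-2}$ from Lemma \ref{lem:5}: this requires the refined cone volume estimate for the $(n-2)$-rectifiable slice $x(p,r')$ (which, being a.e.\ a slice of a finite-$\sp^{n-1}$ set, is $(n-2)$-rectifiable with $\sp^{n-2}=\H^{n-2}$ on it), exactly as in Lemmas 5 and 6 of \cite{reifenberg}; the secondary subtlety is the passage $r'\uparrow r$, which must be done along a cofinal set of radii that are simultaneously $\G_c$-admissible and nearly minimize $\sp^{n-2}(x(p,\cdot))$, using Lemma \ref{lem:4} to control the integral. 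Everything else — spanning of the competitor, the core reduction, containment in $U$ — is routine given Lemmas \ref{lem:cutandcone}, \ref{lem:corespan}, and \ref{lem:5}.
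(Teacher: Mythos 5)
Your overall strategy — compare $X$ against a competitor obtained by excising $X\cap\bar{\O}(p,\cdot)$ and replacing it with a coned-off surface, then subtract the common exterior contributions — is precisely the strategy the paper uses. The structure of the estimate is also right: spanning of the competitor via Lemma~\ref{lem:cutandcone}, core reduction via Lemma~\ref{lem:corespan}, and the cancellation argument giving $\sp^{n-1}(X(p,\cdot)) \le \sp^{n-1}(\text{replacement}) + \delta$.

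The gap is in your step (5), the cone estimate with the sharp constant $\frac{r}{n-1}$. You invoke the naive set-theoretic cone $C_p(x(p,r'))$ and appeal to Lemma~\ref{lem:5}, but that lemma gives the clean constant $\frac{r}{m}$ \emph{only when the base is an $(m-1)$-dimensional polyhedron}; for a general compact base the paper's Lemma~\ref{lem:5} gives only the cruder bound $r\,2^{m}\frac{\a_m}{\a_{m-1}}\sp^{m-1}$, which is far too big to close the argument. You try to rescue this by asserting that the slice $x(p,r')$ is $(n-2)$-rectifiable, but this is not justified: the hypothesis is merely $X\in\Span^*(M,U)$, i.e.\ finite $\sp^{n-1}$-measure, which does not force $X$ (or its spherical slices) to be rectifiable — indeed the paper explicitly treats purely unrectifiable pieces elsewhere (Proposition~\ref{lem:unrectifiable}). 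The paper avoids this entirely by not using the naive cone: it applies Lemma~\ref{lem:7} (the adaptation of Reifenberg's Lemma~7), which builds a more refined replacement surface $\widehat{X}(p,r)$ — using Vitali covers, the upper-density Lemma~\ref{lem:3}, slicing Lemma~\ref{lem:4}, and the polyhedral cone estimate applied to polyhedral approximating pieces — achieving $\sp^{n-1}(\widehat{X}(p,r)) \le (1+\e)\frac{r}{n-1}\sp^{n-2}(x(p,r))$ for an \emph{arbitrary} finite-measure slice. That construction (taking $N=1$, $P_0=p$, $r_1=r$) is exactly the missing ingredient in your proof.

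Two smaller remarks. First, your limiting argument $r'\uparrow r$ is avoidable: if $\bar{\O}(p,r)\notin\G_c(X,M,U)$ then $\sp^{n-2}(x(p,r))=\infty$ and the stated inequality is vacuous, so one may simply assume $\bar{\O}(p,r)\in\G_c(X,M,U)$ and work with $r$ directly, which is what the paper does. Second, even granting rectifiability, extending the $\frac{r}{m}$ cone bound from polyhedra to rectifiable sets would itself require a nontrivial approximation argument that you would need to supply; Lemma~\ref{lem:7} packages exactly that technical work.
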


		\begin{proof}
			If \( \bar{\O}(p,r) \notin \G_c(X,M,U) \), we are done. Otherwise, let \( \e>0 \) and let \( \widehat{X} \in \Span^*(M,U) \) be defined as in Lemma \ref{lem:7}. Then \[ \sp^{n-1}(X(p,r)) \leq \sp^{n-1}(\widehat{X}(p,r)) + \d \leq (1+\e) \frac{r}{n-1} \sp^{n-2}(x(p,r)) + \d. \]
		\end{proof}

		The following is an adaptation of Lemma 3* of \cite{reifenberg}, p. 28.   
		\begin{lem}
			\label{lem:3*}
			Suppose \( \{X_k\} \subset \Span^*(M,U) \) is minimizing with \( X_k \to X_0 \). Then \[ \b = \inf \left\{\liminf_{k \to \i} \frac{F_k(p,r)}{\a_{n-1} r^{n-1}} : \bar{\O}(p,r) \in \G(X_0,M,U)\right\}. \]
		\end{lem}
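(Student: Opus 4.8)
Write $\b'$ for the infimum appearing on the right-hand side. The inequality $\b'\le\b$ is immediate from Lemma \ref{lem:4}: for every $\bar{\O}(p,r)\in\G(X_0,M,U)$ one has $F_k(p,r)\le\sp^{n-1}(X_k(p,r))$, so the per-ball liminf for $F_k$ is at most that for $\sp^{n-1}(X_k(\cdot))$, and passing to infima over balls preserves this. The substance of the lemma is the reverse inequality $\b\le\b'$, and for this the plan is to extract from the minimality of $\{X_k\}$ a Reifenberg-type monotonicity formula linking the slice-integral $F_k$ to the measure $\sp^{n-1}(X_k(\cdot))$.

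Set $\delta_k:=\sp^{n-1}(X_k)-\frak{m}\to 0$ and fix a ball $\bar{\O}(p,r)\in\G(X_0,M,U)$, so that $\bar{\O}(p,t)\in\G(X_0,M,U)$ for all $0<t\le r$. Lemma \ref{lem:1*} gives $\sp^{n-1}(X_k(p,t))\le\frac{t}{n-1}\sp^{n-2}(x_k(p,t))+\delta_k$ for every such $t$, while Lemma \ref{lem:4} gives $\frac{d}{dt}F_k(p,t)=\sp^{n-2}(x_k(p,t))$ a.e.\ and $\frac{d}{dt}\sp^{n-1}(X_k(p,t))\ge\sp^{n-2}(x_k(p,t))$ a.e.\ (apply the slicing inequality to annuli centred at $p$ and differentiate). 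Together these force a differential inequality $\varphi\le\frac{t}{n-1}\varphi'$, valid where the numerator is positive, for each of
\[
\varphi_k^F(t):=\frac{F_k(p,t)-\delta_k}{\a_{n-1}t^{n-1}},\qquad
\varphi_k^S(t):=\frac{\sp^{n-1}(X_k(p,t))-\delta_k}{\a_{n-1}t^{n-1}},
\]
so that $\varphi_k^F$ and $\varphi_k^S$ are nondecreasing there. (The function $\sp^{n-1}(X_k(p,\cdot))$ is only of bounded variation, but its jumps, occurring at the countably many radii for which $\fr\,\O(p,t)$ carries positive $\sp^{n-1}$-mass of $X_k$, are upward, so the normalized function is still monotone.) Rewriting Lemma \ref{lem:1*} in terms of $\varphi_k^F$ also yields the pointwise bound $\varphi_k^S(t)\le\varphi_k^F(t)+\frac{t}{n-1}(\varphi_k^F)'(t)$ for a.e.\ $t\le r$.

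The decisive step is to integrate this last bound against $dt/t$ over $(a,r)$ for a fixed $a\in(0,r)$. Using $\int_a^r\varphi_k^S(t)\,\frac{dt}{t}\ge\varphi_k^S(a)\ln(r/a)$ and $\int_a^r\varphi_k^F(t)\,\frac{dt}{t}\le\varphi_k^F(r)\ln(r/a)$ (monotonicity), together with $\frac{1}{n-1}\int_a^r(\varphi_k^F)'(t)\,dt\le\frac{1}{n-1}\varphi_k^F(r)$, one gets $\varphi_k^S(a)\le\bigl(1+\frac{1}{(n-1)\ln(r/a)}\bigr)\varphi_k^F(r)$, hence
\[
\frac{\sp^{n-1}(X_k(p,a))}{\a_{n-1}a^{n-1}}\le\Bigl(1+\frac{1}{(n-1)\ln(r/a)}\Bigr)\frac{F_k(p,r)}{\a_{n-1}r^{n-1}}+\frac{\delta_k}{\a_{n-1}a^{n-1}}.
\]
Taking $\liminf_k$ along a subsequence realizing $\liminf_k F_k(p,r)/(\a_{n-1}r^{n-1})$ kills the $\delta_k$-term; since $\bar{\O}(p,a)\in\G(X_0,M,U)$ the left-hand liminf is $\ge\b$, so $\b\le\bigl(1+\frac{1}{(n-1)\ln(r/a)}\bigr)\liminf_k F_k(p,r)/(\a_{n-1}r^{n-1})$ for every $a\in(0,r)$. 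Letting $a\to 0^+$ sends the constant to $1$, and taking the infimum over all admissible balls gives $\b\le\b'$.

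The main obstacle I anticipate is the degenerate range of small radii where $F_k(p,t)$ or $\sp^{n-1}(X_k(p,t))$ fails to exceed $\delta_k$, so that $\varphi_k^F$ or $\varphi_k^S$ is not defined or not monotone and the integration above is not licit. This is handled by a dichotomy: if $F_k(p,a)\le\delta_k$ for infinitely many $k$ and some admissible $\bar{\O}(p,a)$, then $\b'=0$, and choosing by a Chebyshev argument (using $\int_0^a\sp^{n-2}(x_k(p,t))\,dt=F_k(p,a)\le\delta_k$) a radius $t_k\in(a/2,a)$ with $\sp^{n-2}(x_k(p,t_k))$ of order $\delta_k/a$ and then invoking Lemma \ref{lem:1*} forces $\sp^{n-1}(X_k(p,a/2))=O(\delta_k)\to0$ along that subsequence (here $n\ge3$ makes the relevant constant $\le1$), whence $\b=0=\b'$; otherwise $F_k(p,t)>\delta_k$ for every admissible sub-radius and all large $k$, and the argument above applies verbatim. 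The remaining routine point — the bounded-variation calculus behind the monotonicity of $\varphi_k^S$ — is straightforward once one notes that $\sp^{n-1}(X_k(p,\cdot))$ is nondecreasing; the analytic heart of the proof, the monotonicity formula and its integration, is clean once Lemmas \ref{lem:1*} and \ref{lem:4} are in hand.
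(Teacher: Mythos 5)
Your proof is correct, but it takes a genuinely different (and considerably longer) route than the paper. The paper avoids any explicit monotonicity formula and any bounded-variation calculus: it simply writes \( \a_{n-1}r^{n-1}=\int_0^r (n-1)\a_{n-1}t^{n-2}\,dt \), uses the definition of \( \b \) pointwise in \( t \) to bound \( (n-1)\a_{n-1}t^{n-2}\le \frac{n-1}{t\b}\liminf_k\sp^{n-1}(X_k(p,t)) \), applies Lemma \ref{lem:1*} to pass to \( \liminf_k\sp^{n-2}(x_k(p,t)) \), and then a single application of Fatou's lemma turns the integral of liminfs into \( \frac{1}{\b}\liminf_k F_k(p,r) \). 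This gives \( \b\a_{n-1}r^{n-1}\le\liminf_k F_k(p,r) \) for each admissible ball, and the converse inequality is, as you note, immediate from Lemma \ref{lem:4}. Your approach instead establishes that the normalized quantities \( \varphi_k^F \) and \( \varphi_k^S \) are nondecreasing, integrates the pointwise bound \( \varphi_k^S\le\varphi_k^F+\frac{t}{n-1}(\varphi_k^F)' \) against \( dt/t \), and then needs a dichotomy to rule out the degenerate radii where \( F_k(p,a)\le\delta_k \). Both routes are driven by the same two ingredients, Lemma \ref{lem:1*} and the slicing inequality \( F_k\le\sp^{n-1}(X_k(p,\cdot)) \); what the paper's route buys you is that Fatou replaces the differentiation entirely, so you never have to touch the singular and jump parts of \( \sp^{n-1}(X_k(p,\cdot)) \) at all. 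Your remark that the jumps of the normalized function are ``still upward'' is correct but does deserve the one extra sentence it implicitly needs: since \( \sp^{n-1}(X_k(p,\cdot)) \) is nondecreasing and \( t^{1-(n-1)} \) is positive and \( C^1 \), the singular part of the Stieltjes measure of \( \varphi_k^S \) is \( t^{1-(n-1)} \) times a nonnegative singular measure, so \( (\varphi_k^S)'\ge 0 \) a.e.\ does suffice. One last small point, common to both proofs: the manipulation divides by \( \b \), so strictly speaking one should note that the case \( \b=0 \) is trivial because \( \b'\le\b \) always holds and both are nonnegative; your dichotomy handles this explicitly, while the paper leaves it implicit.
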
 

		\begin{proof}
			Fix \( \bar{\O}(p,r) \in \G(X_0,M,U) \). By Lemma \ref{lem:4}, for almost every \( t\in(0,r) \), the ball \( \bar{\O}(p,t) \in \G_c(X_k,M,U) \) for all \( k > 0 \). Let \( \e_k\to 0 \) such that \( \sp^{n-1}(X_k)\leq \frak{m}+\e_k \). Using the definition of \( \b \) and Lemma \ref{lem:1*} we have
			\begin{align*}
				\a_{n-1} r^{n-1} = \int_0^r \frac{n-1}{t}\a_{n-1} t^{n-1} dt &\leq \int_0^r \frac{n-1}{t\b} \liminf_{k\to\i} \sp^{n-1}(X_k(p,t))dt \\
				&= \int_0^r \frac{n-1}{t \b} \liminf_{k\to\i}(\sp^{n-1}(X_k(p,t))- \e_k) dt\\
				&\leq \frac{1}{\b} \int_0^r \liminf_{k\to\i} \sp^{n-2}(x_k(p,t))dt\\
				&\leq \frac{1}{\b} \liminf_{k\to\i} \int_0^r \sp^{n-2}(x_k(p,t))dt\\
				&=\frac{1}{\b} \liminf_{k\to\i} F_k(p,r).
			\end{align*}
			Thus, by Lemma \ref{lem:4}, \[ \b \leq \inf_{\bar{\O}(p,r) \in \G(X_0,M,U)}\left\{\liminf_{k\to\i} \frac{F_k(p,r)}{\a_{n-1} r^{n-1}} \right\} \leq \inf_{\bar{\O}(p,r) \in \G(X_0,M,U)} \left\{\liminf_{k\to\i} \frac{\sp^{n-1}(X_k(p,r))}{\a_{n-1} r^{n-1}} \right\} = \b. \]
		\end{proof}

		The following lemma is an adaptation of Lemma 4* of \cite{reifenberg}, p. 28.  
		\begin{lem}
			\label{lem:4*}
			Suppose \( \{X_k\} \subset \Span^*(M,U) \) is a Reifenberg regular minimizing sequence with  \( X_k\to X_0 \). 
			If \( 0<r_1 < r_2 \) and \( \bar{\O}(p, r_2) \in \G(X_0,M,U) \), then \[ \liminf_{k_i\to\i} \frac{F_{k_i}(p, r_1)}{\a_{n-1} r_1^{n-1}} \leq \liminf_{k_i\to\i} \frac{F_{k_i}(p, r_2)}{\a_{n-1} r_2^{n-1}} \] and \[ \limsup_{k_i \to \i} \frac{F_{k_i}(p, r_1)}{\a_{n-1} r_1^{n-1}} \leq \limsup_{k_i\to\i} \frac{F_{k_i}(p, r_2)}{\a_{n-1} r_2^{n-1}} \] for every subsequence of integers \( k_i\to \i \).
		\end{lem}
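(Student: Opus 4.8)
The plan is to compare $X_k$, inside a ball $\bar{\O}(p,t)$, with the cone over its spherical slice $x_k(p,t)=X_k\cap\fr\,\O(p,t)$; since $\{X_k\}$ is minimizing this yields a differential inequality for $t\mapsto F_k(p,t)$ whose integration over $[r_1,r_2]$ gives exactly the claimed monotonicity, up to an error term that vanishes in the limit. First I would note that, because $\bar{\O}(p,r_2)\in\G(X_0,M,U)$, we have $\bar{\O}(p,r)\subset\bar{\O}(p,r_2)\subset U$ and $\bar{\O}(p,r)\cap M=\emptyset$ for every $0<r\le r_2$, so $\bar{\O}(p,r)\subset U\setminus M$; hence Lemma~\ref{lem:1*} is applicable to each $X_k$ at every such radius. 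Set $\e_k:=\sp^{n-1}(X_k)-\frak{m}$ (add $2^{-k}$ if one insists on strict positivity in Lemma~\ref{lem:1*}); since $\{X_k\}$ is minimizing, $\e_k\to 0$, and $\sp^{n-1}(X_k)\le\frak{m}+\e_k$.

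For the core of the argument, fix $k$. By Lemma~\ref{lem:4} the function $t\mapsto\sp^{n-2}(x_k(p,t))$ lies in $L^1([0,r_2])$ (its integral over $[0,r]$ is at most $\sp^{n-1}(X_k(p,r))<\infty$), so $F_k(p,\cdot)$ is absolutely continuous on $[0,r_2]$ with $\frac{d}{dr}F_k(p,r)=\sp^{n-2}(x_k(p,r))$ for a.e.\ $r$. Combining $F_k(p,r)\le\sp^{n-1}(X_k(p,r))$ (Lemma~\ref{lem:4}) with the cone bound $\sp^{n-1}(X_k(p,r))\le\frac{r}{n-1}\sp^{n-2}(x_k(p,r))+\e_k$ (Lemma~\ref{lem:1*}) gives, for a.e.\ $r\in(0,r_2]$, the inequality $F_k(p,r)-\e_k\le\frac{r}{n-1}\frac{d}{dr}F_k(p,r)$, equivalently
\[ \frac{d}{dr}\!\left(\frac{F_k(p,r)}{r^{n-1}}\right)\ \ge\ -\frac{(n-1)\,\e_k}{r^{n}}\qquad\text{for a.e. }r\in[r_1,r_2]. \]
Since $r\mapsto r^{-(n-1)}$ is smooth on $[r_1,r_2]$ and $F_k(p,\cdot)$ is absolutely continuous there, so is $r\mapsto F_k(p,r)/r^{n-1}$, and integrating the last inequality over $[r_1,r_2]$ yields
\[ \frac{F_k(p,r_2)}{r_2^{n-1}}\ \ge\ \frac{F_k(p,r_1)}{r_1^{n-1}}-\e_k\!\left(\frac{1}{r_1^{n-1}}-\frac{1}{r_2^{n-1}}\right)\ \ge\ \frac{F_k(p,r_1)}{r_1^{n-1}}-\frac{\e_k}{r_1^{n-1}}. \]

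Finally I would divide through by $\a_{n-1}$ and specialize to the given subsequence $k=k_i$. Since $\e_{k_i}\to 0$, the term $\e_{k_i}/(\a_{n-1}r_1^{n-1})$ is a null sequence, so adding it to $F_{k_i}(p,r_1)/(\a_{n-1}r_1^{n-1})$ changes neither its $\liminf$ nor its $\limsup$ as $i\to\infty$; passing to $\liminf_{i\to\infty}$ and to $\limsup_{i\to\infty}$ in the displayed inequality therefore gives the two assertions of the lemma. The argument is a close variant of the one already used in the proof of Proposition~\ref{prop:prelrmin}, so I do not expect any serious obstacle; the points that need a little care are the a.e.\ differentiation of $F_k(p,\cdot)$ together with the absolute continuity of the normalized quantity on $[r_1,r_2]$ (so that the Fundamental Theorem of Calculus for Lebesgue integrals is legitimately applied) and the verification that $\bar{\O}(p,r)\subset U\setminus M$ for all $r\le r_2$, which is what makes the cone comparison of Lemma~\ref{lem:1*} available at every intermediate radius. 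Reifenberg regularity of $\{X_k\}$ is not actually needed for this lemma beyond what the hypotheses already include; it is the minimizing property that forces the error term to zero.
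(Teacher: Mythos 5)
Your proof is correct, and it takes a genuinely different and somewhat cleaner route than the paper. The paper's proof divides the differential inequality \( F_k(p,r)-\e_k \le \tfrac{r}{n-1}\tfrac{d}{dr}F_k(p,r) \) by \( F_k(p,r) \) to obtain a lower bound on the logarithmic derivative \( \tfrac{d}{dr}\ln F_k \), and integrating this requires a positive lower bound on \( F_k(p,r) \) for \( r\ge r_1 \) in order to control the \( \e_k/F_k \) term. That lower bound is supplied by Lemma \ref{lem:3*} together with Lemma \ref{lem:2*}, which in turn is where Reifenberg regularity enters the paper's argument; the resulting estimate carries a multiplicative error factor \( \exp(-c\,\e_k) \). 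You instead compare \( F_k(p,r)/r^{n-1} \) directly, getting \( \tfrac{d}{dr}\bigl(F_k(p,r)/r^{n-1}\bigr)\ge -(n-1)\e_k/r^n \), and integrate this to obtain an additive error of size \( \e_k/r_1^{n-1} \), which vanishes along any subsequence without needing any lower bound on \( F_k \). The upshot is that your argument dispenses with Lemmas \ref{lem:2*} and \ref{lem:3*} and with the Reifenberg-regularity hypothesis entirely (as you observe), which is a legitimate streamlining: the hypothesis is in the statement because that is how the lemma is applied downstream, but your proof shows it is not needed to establish this monotonicity. The absolute-continuity and a.e.-differentiation bookkeeping is handled correctly, and the passage to \( \liminf/\limsup \) along a subsequence with \( \e_{k_i}\to 0 \) is sound.
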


		\begin{proof}
			By Lemma \ref{lem:3*}, for sufficiently large \( k \), \( \b \a_{n-1} r_1^{n-1}/(n-1) \leq F_k(p,r) \mbox{ for all } r \geq r_1. \) Let \( \e_k\to 0 \) such that \( \sp^{n-1}(X_k)\leq \frak{m}+\e_k \). By Lemmas \ref{lem:1*}, \ref{lem:2*}, and \ref{lem:4}, for almost every \( r\in(r_1,r_2) \), \[ \frac{n-1}{r} \left(1 - \frac{\e_k}{\frac{1}{n-1}\b\a_{n-1} r_1^{n-1}} \right) \leq \frac{\frac{d}{dr} F_k(p,r)}{F_k(p,r)}, \] so integrating from \( r_1 \) to \( r_2 \), we get \[ \left(1 - \frac{\e_k}{\frac{1}{n-1}\b\a_{n-1} r_1^{n-1}}\right)\ln \frac{r_2^{n-1}}{r_1^{n-1}} \leq \ln \frac{F_k(p,r_2)}{F_k(p,r_1)}. \] In other words, \[ \exp \left(\frac{-\e_k}{\frac{1}{n-1}\b \a_{n-1} r_1^{n-1}} \ln \frac{r_2^{n-1}}{r_1^{n-1}} \right) \frac{F_k(p,r_1)}{\a_{n-1} r_1^{n-1}}\leq \frac{F_k(p,r_2)}{\a_{n-1} r_2^{n-1}}. \] The result follows since \( \e_k \to 0 \) as \( k \to \i \).
		\end{proof}

		\begin{defn}
			\label{def:uniform}  
			Let \( \e > 0 \). We say that a sequence of compact sets \( \{X_k\} \) is \emph{\textbf{ \( \e \)-uniform with respect to}} \( \bar{\O}(p',r') \in \G(X_0,M,U) \) if
			\begin{equation}
				\label{eq:6*a}
				\b \leq \liminf_{k\to\i} \frac{F_k(p,r)}{\a_{n-1} r^{n-1}} \leq \limsup_{k \to \i} \frac{F_k(p,r)}{\a_{n-1} r^{n-1}} \leq \b + \e
			\end{equation}
			and
			\begin{equation}
				\label{eq:6*b}
				\b \leq \liminf_{k\to\i} \frac{\sp^{n-1}(X_k(p,r))}{\a_{n-1} r^{n-1}} \leq \limsup_{k \to \i} \frac{\sp^{n-1}(X_k(p,r))}{\a_{n-1} r^{n-1}} \leq \b + \e
			\end{equation}
			for every \( \bar{\O}(p,r) \in \G(X_0,M,U) \) with \( \bar{\O}(p,r) \subset \bar{\O}(p',r') \).
		\end{defn}
 
 		\begin{lem}
			\label{lem:uniformexist}
			Suppose \( \{X_k\} \subset \Span^*(M,U) \) is a Reifenberg regular minimizing sequence with \( X_k\to X_0 \). For each \( \e > 0 \), there exists \( \bar{\O}(p',r') \in \G(X_0,M,U) \) such that \( \{X_k\} \) is \( \e \)-uniform with respect to \( \bar{\O}(p',r') \).
		\end{lem}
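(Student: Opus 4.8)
The plan is to extract the uniform ball by a contradiction/maximality argument built on the monotonicity of Lemma \ref{lem:4*}. The key point is that the quantities $\liminf_{k\to\i} F_k(p,r)/(\a_{n-1}r^{n-1})$ and $\limsup_{k\to\i} F_k(p,r)/(\a_{n-1}r^{n-1})$ (and their analogues with $\sp^{n-1}(X_k(p,r))$ in place of $F_k(p,r)$) are, by Lemma \ref{lem:4*}, nondecreasing in $r$ along any fixed center and any fixed subsequence, and are all bounded below by $\b$ by Lemma \ref{lem:3*} (together with Lemma \ref{lem:4}, which gives $F_k(p,r)\le \sp^{n-1}(X_k(p,r))$, so the $\limsup$'s are also finite since $\sp^{n-1}(X_k)\le\frak{m}+\e_k$). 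The idea is to find a center $p'$ and radius $r'$ at which the $\limsup$ has dropped to within $\e$ of $\b$, and then monotonicity forces the inequality \eqref{eq:6*a}–\eqref{eq:6*b} on every subball.

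First I would fix, using the definition of $\b$ (Definition \ref{def:beta}) and Lemma \ref{lem:3*}, a ball $\bar{\O}(q,s)\in\G(X_0,M,U)$ with $\liminf_{k\to\i} \sp^{n-1}(X_k(q,s))/(\a_{n-1}s^{n-1}) < \b + \e/2$; by Lemma \ref{lem:3*} we may equally take this with $F_k$ in place of $\sp^{n-1}(X_k(\cdot,\cdot))$. Next I pass to a subsequence $k_i\to\i$ realizing this $\liminf$ as an honest limit; along this subsequence, apply Lemma \ref{lem:4*} with the roles of $r_1<r_2$: for every $r_1 < s$ we get $\limsup_{k_i\to\i} F_{k_i}(q,r_1)/(\a_{n-1}r_1^{n-1}) \le \limsup_{k_i\to\i} F_{k_i}(q,s)/(\a_{n-1}s^{n-1})$. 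The problem is that the right side is a $\limsup$, not the $\liminf$ we controlled, so we cannot immediately conclude it is below $\b+\e$. To fix this, I would instead choose $\bar{\O}(q,s)$ more carefully: among all balls in $\G(X_0,M,U)$, take one at which the $\limsup_{k\to\i} \sp^{n-1}(X_k(\cdot,\cdot))/(\a_{n-1}r^{n-1})$ is close to its infimum over $\G(X_0,M,U)$ — but that infimum might exceed $\b$. The cleaner route is: pick $\bar{\O}(q,s)$ with $\liminf_k \le \b+\e/2$, pass to a realizing subsequence $\{k_i\}$, and then shrink: define $g(r):=\limsup_{i}\sp^{n-1}(X_{k_i}(q,r))/(\a_{n-1}r^{n-1})$ and $f(r):=\limsup_i F_{k_i}(q,r)/(\a_{n-1}r^{n-1})$, both nondecreasing on $(0,s]$ by Lemma \ref{lem:4*}. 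Since $g$ is monotone it has a limit as $r\downarrow 0$; I claim $\lim_{r\downarrow 0} g(r) \le \b + \e$. Indeed, if not, then for all small $r$, $g(r) > \b+\e$, so in particular $\limsup_i \sp^{n-1}(X_{k_i}(q,r))/(\a_{n-1}r^{n-1}) > \b+\e$; but by Lemma \ref{lem:3*} applied to the subsequence $\{X_{k_i}\}$ (still minimizing, still Reifenberg regular, still converging to $X_0$) the infimum over $\G(X_0,M,U)$ of $\liminf_i \sp^{n-1}(X_{k_i}(\cdot,\cdot))/(\a_{n-1}\cdot^{n-1})$ equals $\b$, which bounds $\liminf_i$ not $\limsup_i$, so this still isn't quite a contradiction.

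The honest main obstacle — the one I would spend the most care on — is precisely controlling the $\limsup$ by $\b+\e$ rather than just the $\liminf$. The resolution is an iterated subsequence extraction: start with a ball and a subsequence where $\liminf$ over that subsequence is below $\b+\e/4$ and is in fact realized; the monotone function $g(r)$ on $(0,s]$ then has $\lim_{r\downarrow 0}g(r)=:L\le \b+\e/4$ (if $\lim_{r\downarrow 0}g(r) > \b+\e/4$ then picking $r$ with $g(r)$ close to $L$ and passing to a further subsequence realizing $g(r)$ as a $\liminf$ contradicts that $\b$ is the infimum of $\liminf$'s, by Lemma \ref{lem:3*}; here one uses that we may always pass to a further subsequence of $\{k_i\}$ along which a given $\limsup$ becomes a limit, and such a further subsequence still satisfies the hypotheses of Lemma \ref{lem:3*}). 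Then choose $r'$ small enough that $g(r') \le \b+\e/2$ and $f(r')\le\b+\e/2$ (possible since both are monotone with limit $\le\b+\e/4$ at $0$, after enlarging the subsequence once more so that the relevant $\limsup$'s along the subsequence equal the genuine $\limsup_k$, or simply re-indexing), and set $p'=q$. For any $\bar{\O}(p,r)\subset\bar{\O}(p',r')$ in $\G(X_0,M,U)$, Lemma \ref{lem:4*} (with $r_1=r$, $r_2=r'$, or with suitable comparison balls, noting that a sub-ball of $\bar{\O}(p',r')$ is handled by first comparing $\bar{\O}(p,r)$ to $\bar{\O}(p,r')$ and that inclusion $\bar{\O}(p,r)\subset\bar{\O}(p',r')$ with both in $\G(X_0,M,U)$ lets us relate them via a chain of inclusions, using that $F_k$ is monotone in the radius for a fixed center) gives $\limsup_{k\to\i} \sp^{n-1}(X_k(p,r))/(\a_{n-1}r^{n-1}) \le \limsup_{k\to\i}\sp^{n-1}(X_k(p',r'))/(\a_{n-1}{r'}^{n-1}) \le \b+\e$, and likewise for $F_k$; the lower bounds by $\b$ are automatic from the definition of $\b$ (Definition \ref{def:beta}) and Lemma \ref{lem:3*}. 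This establishes \eqref{eq:6*a} and \eqref{eq:6*b}, hence $\{X_k\}$ is $\e$-uniform with respect to $\bar{\O}(p',r')$, completing the proof.
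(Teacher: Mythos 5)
Your proposal identifies the right difficulty (passing from $\liminf$ to $\limsup$ control) but does not resolve it, and two of the auxiliary steps you invoke along the way are not actually available.

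The central gap is that Definition \ref{def:uniform} asks for the bounds \eqref{eq:6*a}--\eqref{eq:6*b} over the \emph{full} sequence $\{X_k\}$, while your argument only ever controls $\limsup$'s along an auxiliary subsequence $\{k_i\}$ realizing $\liminf_k F_k(q,s)/(\a_{n-1}s^{n-1})$. There is no way to transfer a bound on $\limsup_i F_{k_i}(q,r)/(\a_{n-1}r^{n-1})$ to $\limsup_k F_k(q,r)/(\a_{n-1}r^{n-1})$: the latter may be realized by a completely disjoint subsequence, and a $\liminf$ bound at $(q,s)$ says nothing about that subsequence. Your parenthetical ``after enlarging the subsequence once more so that the relevant $\limsup$'s along the subsequence equal the genuine $\limsup_k$, or simply re-indexing'' is circular: enlarging $\{k_i\}$ to capture the full $\limsup$ at $(q,r)$ destroys the property that $F_{k_i}(q,s)/(\a_{n-1}s^{n-1})$ stays near $\b$, which is what you were using. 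So the argument as written only establishes a subsequence version of the lemma, which is not what is needed (e.g.\ in Proposition \ref{prop:beta} where the $\e$-uniform bound is applied to the sequence as given).

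Two further problems: (i) you assert that $g(r)=\limsup_i \sp^{n-1}(X_{k_i}(q,r))/(\a_{n-1}r^{n-1})$ is nondecreasing ``by Lemma \ref{lem:4*},'' but that lemma is stated and proved only for the $F_k$ quantities; monotonicity for the $\sp^{n-1}$ densities does not follow directly and, if true, requires an additional argument relating $\sp^{n-1}(X_k(p,r))$ to $F_k'(p,r)$ via Lemma \ref{lem:1*}. (ii) The handling of sub-balls with a different center is a sketch, not a proof: Lemma \ref{lem:4*} compares two radii at the \emph{same} center, and your ``chain of inclusions'' idea runs into the fact that $\bar{\O}(p,r')$ need not lie in $\G(X_0,M,U)$ when $p\ne p'$, and there is no containment between $\bar{\O}(p,r')$ and $\bar{\O}(p',r')$ to chain through. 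The paper itself does not close these gaps but defers to Reifenberg's (33) and (35) on pp.~35--36 of \cite{reifenberg}; a self-contained proof would need to import those arguments, which go beyond Lemma \ref{lem:4*} alone.
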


		\begin{proof} 
			The proof is straightforward and uses Lemma \ref{lem:4*}. Details can be found as (33) and (35) on pp. 35-36 of \cite{reifenberg} where \( p' = P_1 \) and \( r' = r_2 \).
		\end{proof}
		 
		The following lemma is an adaptation of Lemma 6* of \cite{reifenberg}, p. 32. It is a version of what is sometimes called the ``weak geometric lemma,'' but only holds for \( \bar{\O}(p',r') \in \G(X_0,M,U)\) for which \( \{X_k\} \) is \( \e \)-uniform.

		\begin{lem}[Weak geometric lemma]
			\label{lem:6*}
			Suppose \( \{X_k\} \subset \Span^*(M,U) \) is Reifenberg regular minimizing and \( X_k\to X_0 \). If \( \eta > 0 \), there exist \( \e_0 > 0 \) and \( v > 0 \) such that if  \( \{X_k\} \) is  \( \e_0 \)-uniform with respect to  \( \bar{\O}(p',r') \), then for each \( \bar{\O}(p,r) \subset \bar{\O}(p',r') \) with \( p \in X_0 \) there exists a point \( p^* \in X_0 \) and a hyperplane \( \Pi \) through \( p^* \) such that \( \bar{\O}(p^*, vr) \subset \bar{\O}(p, r) \) and \( X_0(p^*, vr) \subset \O(\Pi, \eta vr) \).
		\end{lem}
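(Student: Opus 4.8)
The plan is to argue by contradiction through a blow-up, following the scheme of Lemma 6* in \cite{reifenberg}. Fix $\eta > 0$ and suppose no admissible pair $(\e_0, v)$ exists. Applying the negation with $\e_0 = 1/j$ and $v = 3/4$, we obtain for each $j \ge 1$ a ball $\bar{\O}(p_j', r_j') \in \G(X_0, M, U)$ with respect to which $\{X_k\}$ is $(1/j)$-uniform, together with a ball $\bar{\O}(p_j, r_j) \subset \bar{\O}(p_j', r_j')$, $p_j \in X_0$, such that for every $p^* \in X_0$ with $\bar{\O}(p^*, (3/4)r_j) \subset \bar{\O}(p_j, r_j)$ (e.g.\ $p^* = p_j$) and every hyperplane $\Pi$ through $p^*$ one has $X_0(p^*, (3/4)r_j) \not\subset \O(\Pi, (3\eta/4)r_j)$. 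Rescale by $\Phi_j(x) := (x - p_j)/r_j$ and put $Z_j := \Phi_j(X_0) \cap \bar{\O}(0,1)$, noting that $0 \in Z_j$ and $\Phi_j(M) \cap \bar{\O}(0,1) = \emptyset$. By the Blaschke selection theorem, after passing to a subsequence, $Z_j \to Z_\infty$ in the Hausdorff metric for some compact $Z_\infty$ with $0 \in Z_\infty$.

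The heart of the proof is to show $Z_\infty$ lies in a hyperplane through the origin. For a ball $\bar{\O}(q, \rho) \subset \bar{\O}(p_j, r_j)$ with $q \in X_0$, the $(1/j)$-uniformity pins both the $\sp^{n-1}$-density ratio $\sp^{n-1}(X_k(q,\rho))/(\a_{n-1}\rho^{n-1})$ and the slice ratio $F_k(q,\rho)/(\a_{n-1}\rho^{n-1})$: their $\liminf_k$ is at least $\b/\a_{n-1}$ and their $\limsup_k$ at most $(\b+1/j)/\a_{n-1}$. Passing $k \to \infty$ and then $j \to \infty$ (after rescaling by $\Phi_j$) --- carried out through the slice functionals $F_k$, where Fatou's lemma supplies the needed semicontinuity, rather than through $\sp^{n-1}$ directly, whose lower semicontinuity under Hausdorff convergence fails in general and is only recovered later, for such sequences, in Theorem \ref{thm:lsc2} --- and combining the coarea inequality of Lemma \ref{lem:4}, the cone estimate of Lemma \ref{lem:1*} (valid for each $X_k$ since $\sp^{n-1}(X_k) \le \frak{m} + \d_k$ with $\d_k \to 0$), and the identifications of Lemmas \ref{lem:3*} and \ref{lem:4*}, one deduces that $\rho \mapsto \rho^{-(n-1)}\sp^{n-1}(Z_\infty(q,\rho))$ equals the constant $\b$ for every $q \in Z_\infty$ and every admissible $\rho$, with equality in both Lemma \ref{lem:4} and the cone comparison. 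The equality case of the latter (Lemmas \ref{lem:5} and \ref{lem:7}, via the polyhedral approximation in Lemma \ref{lem:7}(d)) forces $Z_\infty \cap \bar{\O}(q,\rho) = C_q(Z_\infty \cap \fr\,\O(q,\rho))$ for a.e.\ $\rho$; that is, $Z_\infty$ is a cone with vertex $q$ for \emph{every} $q \in Z_\infty$. A closed set that is a cone about each of its own points and has positive finite $\sp^{n-1}$-density must be an $(n-1)$-plane through $0$ intersected with $\bar{\O}(0,1)$ --- take affine hulls of finite subsets, and use the density to fix the dimension. Hence $Z_\infty \subset H$ for some hyperplane $H \ni 0$.

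With this the contradiction is immediate: since $Z_j \to Z_\infty \subset H$ in the Hausdorff metric, for $j$ large $Z_j \cap \bar{\O}(0, 3/4) \subset \O(H, 3\eta/4)$, i.e.\ $X_0(p_j, (3/4)r_j) \subset \O(\Phi_j^{-1}(H), (3\eta/4)r_j)$ with $\Phi_j^{-1}(H)$ a hyperplane through $p_j$, contradicting the choice of $\bar{\O}(p_j, r_j)$; so the lemma holds, e.g.\ with $v = 3/4$. The main obstacle is the limit passage in the middle paragraph: one must transport the pinned density --- and hence the conical rigidity --- to $Z_\infty$ without invoking the lower semicontinuity of $\sp^{n-1}$, which would be circular here, so the device (due to Reifenberg) is to keep every estimate on the slice functionals $F_k$, translating back to $\sp^{n-1}$ only through Lemmas \ref{lem:3*} and \ref{lem:4*}. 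The companion point requiring care is the equality-case analysis of Lemmas \ref{lem:4} and \ref{lem:5} that identifies the blow-up limit with a plane.
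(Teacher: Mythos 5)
Your proposal takes a genuinely different route from the paper's, which simply cites Reifenberg's Lemma 5*/6* and supplies clarifying notes on its direct geometric construction. That argument works with the sets \( X_k \) themselves: assuming a ball is not \( \eta \)-flat, one locates points \( P, Q \) witnessing the failure of flatness, builds an explicit cone via Lemma \ref{lem:7} centered at a suitably placed point \( P_0 \), and combines the slicing inequality (Lemma \ref{lem:4}) with the pinned density to derive a contradiction. No blow-up limit is ever taken, which is precisely how the argument avoids the measure-control problem you flag.

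Your blow-up scheme, by contrast, has a genuine gap in the middle paragraph. You pass to a Hausdorff limit \( Z_j \to Z_\infty \) and then claim the density ratio \( \rho^{-(n-1)}\sp^{n-1}(Z_\infty(q,\rho)) \) is pinned to \( \b \). But nothing in the sketch actually transports a measure bound to the set \( Z_\infty \). You correctly observe that lower semicontinuity of \( \sp^{n-1} \) under Hausdorff convergence would be circular (Lemma \ref{lem:6*} feeds Proposition \ref{prop:beta}, which feeds Theorem \ref{thm:lsc2}); but the proposed substitute — "keep every estimate on the slice functionals \( F_k \), translating back to \( \sp^{n-1} \) only through Lemmas \ref{lem:3*} and \ref{lem:4*}" — does not close the gap. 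Lemmas \ref{lem:3*} and \ref{lem:4*} relate \( \liminf_k F_k(p,\rho) \) to \( \b \) for the original sequence \( \{X_k\} \); they say nothing about the measure of a Hausdorff limit of rescalings of \( X_0 \), and Fatou's lemma applied to the \( F_k \) still produces statements about the \( X_k \), not about \( Z_\infty \). There is simply no mechanism here that controls \( \sp^{n-1}(Z_\infty) \) from below — which is the very phenomenon (tentacles, Figure \ref{fig:tentacles}) this machinery is built to defeat. A second, secondary gap: even granting pinned density, concluding that \( Z_\infty \cap \bar{\O}(q,\rho) \) is \emph{literally} the cone \( C_q(Z_\infty \cap \fr\,\O(q,\rho)) \) is a nontrivial equality-case rigidity statement. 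Lemma \ref{lem:7} gives an inequality and a polyhedral approximation, not an identification of when equality holds, and the step "equality forces cone" is asserted, not argued. Because the first gap is structural — you would essentially need the conclusion of Theorem \ref{thm:lsc2} to repair it — the approach as written does not go through; the paper's reliance on Reifenberg's direct competitor argument is not a stylistic preference but the way to avoid exactly this circularity.
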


		\begin{proof}
			The proof is identical to that of Lemma 6* of \cite{reifenberg}, which follows from Lemma 5*, with the exception that Equation (6) on p.30 of \cite{reifenberg} follows from Lemmas \ref{lem:4}, \ref{lem:7} and \ref{lem:cutandcone}. The reader interested in details can find them in \cite{reifenberg}. There are several ambiguities in his proof of Lemma 5*, however, so we provide the reader with some helpful notes: First, the sets \( \ell_n^\theta \) and \( C_\theta \) consist of those points whose joins to \( P \) make an an angle not greater than \( \theta \) with the line passing through the points \( P \) and \( Q \), not just the line segment \( PQ \). Second, the point \( P_0 \) should be on the opposite side of \( P \) from \( Q \), not between the two as stated. Third, Equation (6) is valid only for \( n \) large enough. Fourth, the square roots in Equation (6) arise from the law of cosines applied three different times, and the inequality follows from applying Lemma \ref{lem:7} using center point \( P_0 \).
		\end{proof}

		The next result is based on Lemma 12 of \cite{reifenberg}, p. 22. We shall only apply this to  \(  \bar{\O}(p',r') \) for which \( \{X_k\} \) is  \( \e \)-uniform and minimizing.  

		\begin{lem}[Squashing]
			\label{lem:12}
			Let \( X \in \Span^*(M,U) \) and \( \bar{\O}(q,r)\in \G(X,M,U) \). If \( \Pi \) is a hyperplane containing \( q \) and \( x(q,r) \subset \O(\Pi, \e r) \) for some \( \e<1/2 \), then either \[ \sp^{n-1}(X(q,r)) \ge \a_{n-1} r^{n-1} - 2^{2(n-1)} \frac{\a_{n-1}}{\a_{n-2}}  \e r\,\sp^{n-2}(x(q,r)) \] or there exists a compact set \( X' \) spanning \( M \) equal to \( X \) outside \( \bar{\O}(q,r) \) such that \[ \sp^{n-1}(X'(q,r)) \leq 2^{2(n-1)} \frac{\a_{n-1}}{\a_{n-2}} \e r\,\sp^{n-2}(x(q,r)). \]
		\end{lem}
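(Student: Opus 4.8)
The plan is to follow Lemma 12 of \cite{reifenberg}, the only substantive change being that spanning of the set we produce is read off from Theorem \ref{thm:lipspan} rather than from the homological argument used there. First I would normalize coordinates so that $q=0$ and $\Pi=\{x_n=0\}$; write $S=\fr\,\O(0,r)$, let $E=S\cap\Pi$ be the equatorial $(n-2)$-sphere, $D=\Pi\cap\bar{\O}(0,r)$ the flat equatorial $(n-1)$-disk (so $\sp^{n-1}(D)=\a_{n-1}r^{n-1}$), $C^{\pm}=S\cap\{\pm x_n\ge 0\}$ the two closed caps, and $a^{\pm}=(0,\dots,0,\pm r)$ the poles. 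The hypothesis $x(q,r)\subset\O(\Pi,\e r)$ says exactly that $X\cap S$ lies in the thin collar $N:=S\cap\O(\Pi,\e r)$ about $E$, which, since $\e<1/2$, stays away from fixed neighborhoods of $a^{\pm}$; hence the vertical chords $\ell_y:=(\{y\}+\R e_n)\cap\bar{\O}(0,r)$ over points $y\in D$ bounded away from $E$ have both endpoints on $S$ away from $N$, and so away from $x(q,r)$.

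Next I would form the dichotomy via the orthogonal projection $\pi\colon\R^n\to\Pi$, $\pi(x)=x-x_ne_n$, which is $1$-Lipschitz with $\pi(X(q,r))\subseteq D$, so $\sp^{n-1}(\pi(X(q,r)))\le\sp^{n-1}(X(q,r))$ by Lemma \ref{lem:2}. Put $b:=2^{2(n-1)}\frac{\a_{n-1}}{\a_{n-2}}\,\e r\,\sp^{n-2}(x(q,r))$. If $\sp^{n-1}(D\setminus\pi(X(q,r)))\le b$, then $\sp^{n-1}(X(q,r))\ge\a_{n-1}r^{n-1}-b$, which is the first alternative. Otherwise, since $y\notin\pi(X(q,r))$ is equivalent to $\ell_y\cap X=\emptyset$, a measure comparison on $D$ near $E$ — of the type carried out in Reifenberg's Lemma 12 — produces a point $y$ in the interior of $D$, bounded away from $E$, with $\ell_y\cap X=\emptyset$. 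I would then thicken $\ell_y$ to a tube $T$ disjoint from the closed set $X$, with $T\cap S$ two small disks lying in $C^+$ and $C^-$ away from $x(q,r)$.

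Using $T$, I would build, as in \cite{reifenberg} Lemma 12, a Lipschitz map $\phi\colon\R^n\to\R^n$ that is the identity off $\bar{\O}(q,r)$, carries $\bar{\O}(q,r)$ into itself, has Lipschitz constant at most $2$, equals the identity near $x(q,r)$, and maps $X(q,r)$ into a set $\Sigma$ obtained from the ``collar'' $\bigcup_{x\in x(q,r)}[x,\pi(x)]$ by controlled adjustments along $\fr\,\O(q,r)$. Since $\bar{\O}(q,r)\cap M=\emptyset$, the map $\phi$ is a deformation of $X$ in the sense of Definition \ref{def:competitor} with $B=\bar{\O}(q,r)$, so $X':=\phi(X)$ spans $M$ by Theorem \ref{thm:lipspan}, it agrees with $X$ off $\bar{\O}(q,r)$, and, combining the Lipschitz factor (Lemma \ref{lem:2}) with the cone-collar bound of Lemma \ref{lem:6} (available because $x(q,r)\subset\O(\Pi,\e r)$),
\[
\sp^{n-1}(X'(q,r))\ \le\ \sp^{n-1}(\Sigma)\ \le\ 2^{n-1}\cdot 2^{n-1}\frac{\a_{n-1}}{\a_{n-2}}\,\e r\,\sp^{n-2}(x(q,r))\ =\ b,
\]
which is the second alternative. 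The hard part will be the explicit construction of $\phi$ with a Lipschitz constant independent of $X$, and the accompanying verification — Reifenberg's Lemma 12 — that the free chord $\ell_y$ really can be found away from $E$ unless $X(q,r)$ already projects onto essentially all of $D$. I would either import that construction verbatim, as is done for Lemmas \ref{lem:8}--\ref{lem:gridsmall}, or reproduce it in the above coordinates, the only modification being that span-preservation comes from Theorem \ref{thm:lipspan}.
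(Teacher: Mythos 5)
The paper's proof does not use a Lipschitz deformation at all, and this is the substantive place where your plan diverges and — more importantly — runs into trouble. The paper defines $C$ to be the radial projection from $q$ onto $\fr\,\O(q,r)$ of the ``curtain'' $\cup_{x\in x(q,r)}I_x$, bounds $\sp^{n-1}(C)$ by Lemmas \ref{lem:2} and \ref{lem:6}, and then takes the dichotomy to be \emph{whether the cut-and-paste replacement set} $Y:=(X\setminus X(q,r))\cup C$ \emph{spans} $M$. If it does, $X'=Y^*$ gives the second alternative immediately, with no deformation and no invocation of Theorem \ref{thm:lipspan}. If it does not, a simple link $N$ disjoint from $Y$ is perturbed (arcs replaced by segments through $q$, endpoints slid along geodesics to poles, then to the points $a_N,a_S$ above and below a hypothetical uncovered point $a\in\Pi\cap\O(q,r)$) to produce a simple link disjoint from $X$, a contradiction; hence the orthogonal projection of $C\cup X(q,r)$ covers $\Pi\cap\O(q,r)$ and the first alternative follows.

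Your version instead posits a Lipschitz map $\phi$, equal to the identity off $\bar{\O}(q,r)$ and near $x(q,r)$, with Lipschitz constant at most $2$, that carries $X(q,r)$ into the collar set. That is the genuine gap: you give no construction of $\phi$, and such a $\phi$ cannot in general be built with an $X$-independent Lipschitz constant. Any natural construction (e.g. radial projection away from the free chord $\ell_y$, then onto the curtain or sphere) has Lipschitz constant blowing up like the reciprocal of $\operatorname{dist}(\ell_y,X)$, a quantity you do not control; and the mere existence of a $2$-Lipschitz $\phi$ with $\phi(X(q,r))\subset\Sigma$ is a much stronger statement than the measure inequality it is supposed to yield, because it constrains the \emph{image} set, not just its measure. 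The paper's replacement-set dichotomy sidesteps this completely: $Y$ is defined directly (no deformation), and spanning of $Y$, if it holds, is simply observed rather than imported from Theorem \ref{thm:lipspan}. Also note a structural point: your dichotomy (``is $\sp^{n-1}(D\setminus\pi(X(q,r)))\le b$?'') is phrased on the projection, whereas the paper's is phrased on whether $Y$ spans; the two are linked only through the non-trivial simple-link perturbation argument, which your sketch does not reproduce.
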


		\begin{proof}
			Let \( C \) be the radial projection from \( q \) onto \( \fr\,\O(q,r) \) of \( \cup_{x\in x(q,r)} I_x \), where \( I_x \) denotes the line segment joining \( x \) to its orthogonal projection on \( \Pi \). By Lemma \ref{lem:2} and Lemma \ref{lem:6}, \[ \sp^{n-1}(C)\leq 2^{2(n-1)} \frac{\a_{n-1}}{\a_{n-2}} \e r\,\sp^{n-2}(x(q,r)). \] There are two possibilities: Either \( Y:=(X\setminus X(q,r))\cup C \) spans \( M \), or it does not (see Figure \ref{fig:Lemma12}.)  
			\begin{figure}[htbp]              
				\centering
				\includegraphics[height=3in]{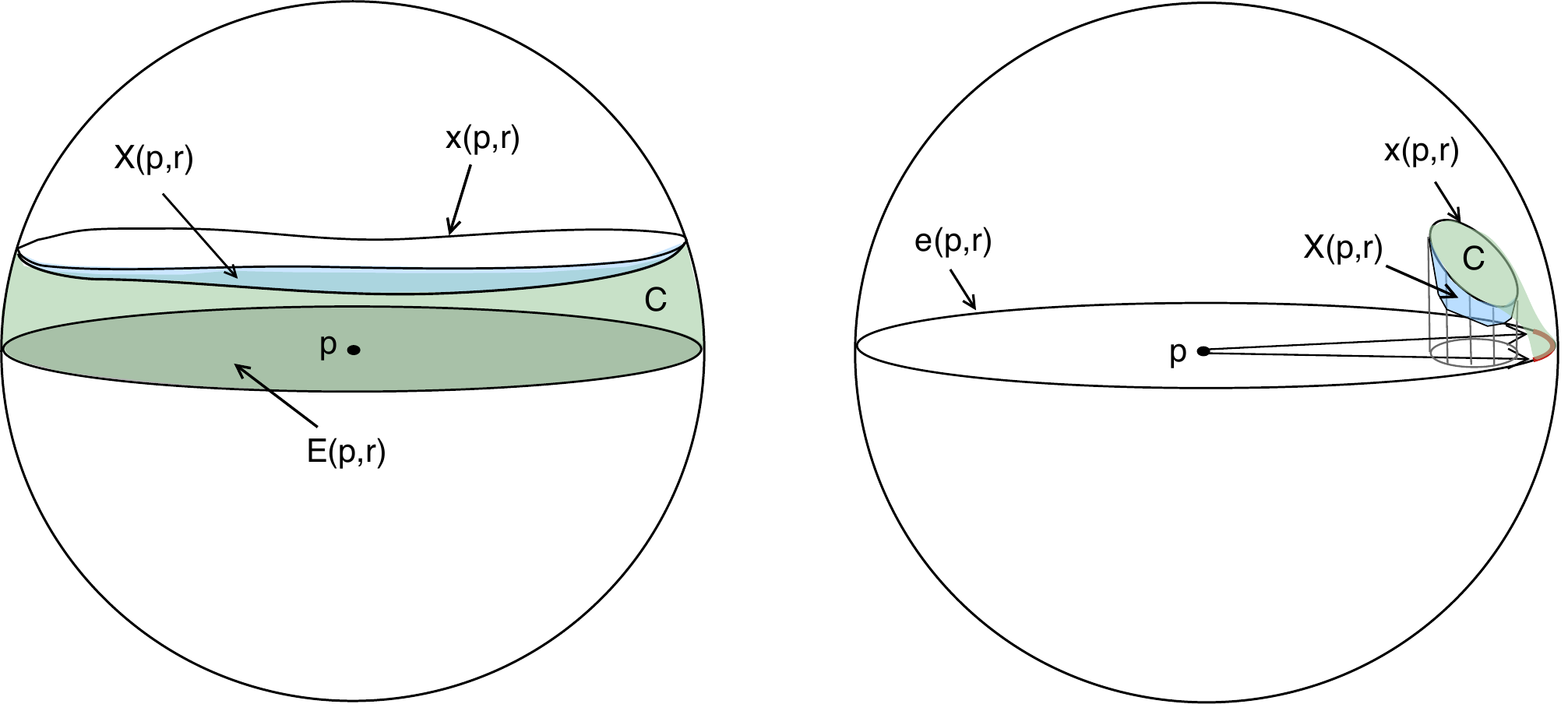}
				\caption{Two cases for Lemma \ref{lem:12}}
				\label{fig:Lemma12}
			\end{figure}
			Suppose \( Y \) does not span \( M \). We show the orthogonal projection of \( C\cup X(q,r) \) onto \( \Pi \) contains \( \Pi\cap \O(q,r) \), and thus by Lemma \ref{lem:1} the first conclusion of the lemma will be satisfied. Let \( N=\eta(S^1) \) be a simple link of \( M \) disjoint from \( Y \). Let us assume without loss of generality that the intersection \( N\cap \fr\,\O(q,r) \) is transverse. The intersection \( N\cap \bar{\O}(q,r) \) consists of a finite collection of arcs \( \{ \eta_i \} \) whose endpoints \( \{ p_i^1,p_i^2\} \) lie on \( \fr\,\O(q,r) \). Since \( C \subset \fr\,\O(q,r) \) we may replace each arc \( \eta_i\) with a pair of line segments, \( p_i^1q \) followed by \( qp_i^2 \) (so now, the curve \( \eta \) will only be piecewise smooth.) However, the intersection \( N\cap \bar{\O}(q,r) \) is now disjoint from \( \cup_{x\in x(q,r)} I_x \) as well.
			
			The hyperplane \( \Pi \) divides \( \bar{\O}(q,r) \) into two hemispheres; north with pole \( N \), and south with pole \( S \). Pick an endpoint \( p_i^j \), \( j=1,2 \), lying in some hemisphere with pole \( P \). Observe that for any \( x \) in the geodesic arc between \( p_i^j \) and \( P \), the line segment between \( x \) and \( q \) will be disjoint from \( \cup_{x\in x(q,r)} I_x \). This gives a regular homotopy of \( N \), and so we may assume without loss of generality that each \( p_i^j\in \{N,S\} \).
			
			Now, suppose the orthogonal projection of \( C\cup X(q,r) \) onto \( \Pi \) does not contain \( \Pi\cap \O(q,r) \). Let \( a\in \Pi\cap \O(q,r) \) be such a point and let \( a_N, a_S \) be the points in the northern and southern hemispheres of \( \fr\,\O(p,r) \), respectively, whose orthogonal projection onto \( \Pi \) is \( a \). By sliding the \( p_i^j \) along the geodesic arcs from \( N \) to \( a_N \) and \( S \) to \( a_S \), we can, as above, assume each \( p_i^j\in \{a_N,a_S\} \). Now replace the pair of segments \( p_i^1q \), \( qp_i^2 \) with the single segment \( p_i^1p_i^2 \) (it may be degenerate.) Smoothing the resulting curve, this gives a simple link \( N \) of \( M \) disjoint from \( X \), a contradiction.
			
		 	Finally, if \( Y \) does span \( M \), then the set \( X':= Y^* \) satisfies the second conclusion of the lemma.
		\end{proof}

		Then next result relies on the concept of uniform concentration which was introduced, although not formally defined, in \cite{reifenberg} (pp. 35-36 (39)-(41),) and later rediscovered in \cite{solimini}.

		\begin{prop}
			\label{prop:beta}
			Suppose \( \{X_k\} \subset \Span^*(M,U) \) is Reifenberg regular minimizing with \( X_k \to X_0 \). Then \[ \beta \geq 1. \]
		\end{prop}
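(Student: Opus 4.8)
\emph{Proof proposal.} The plan is to argue by contradiction, using the ``uniform concentration'' mechanism. Suppose $\b<1$, and fix a small $\eta>0$ (to be pinned down at the end). Feed $\eta$ to the weak geometric lemma (Lemma \ref{lem:6*}) to obtain $\e_0>0$ and $v>0$; shrinking $\e_0$ if necessary, also require $\e_0+C_n\eta(\b+1)<1-\b$, where $C_n$ is the dimensional constant appearing below. By Lemma \ref{lem:uniformexist} there is a ball $\bar{\O}(p',r')\in\G(X_0,M,U)$ with respect to which $\{X_k\}$ is $\e_0$-uniform in the sense of Definition \ref{def:uniform}; the upshot of $\e_0$-uniformity is that \emph{every} sub-ball of $\bar{\O}(p',r')$ centred on $X_0$ has $X_k$-density at most $\b+\e_0$ in the $\limsup$. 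Since $\b<1$, picking $\eta$ and $\e_0$ small makes this upper bound strictly smaller than the lower bound that squashing will produce on a flat ball, and that is the contradiction.

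Next I would extract a flat spot and select a good radius. Pick any $p\in X_0\cap\O(p',r')$ and $r$ small enough that $\bar{\O}(p,r)\subset\bar{\O}(p',r')$, and apply Lemma \ref{lem:6*} to $\bar{\O}(p,r)$: it produces $p^*\in X_0$ and a hyperplane $\Pi$ through $p^*$ with $\bar{\O}(p^*,vr)\subset\bar{\O}(p,r)$ and $X_0(p^*,vr)\subset\O(\Pi,\eta vr)$. Since $X_k\to X_0$ in the Hausdorff metric there are $p_k^*\in X_k$ with $p_k^*\to p^*$, and for all large $k$ the flatness passes to $X_k$: $X_k\cap\bar{\O}(p^*,vr/2)\subset\O(\Pi,2\eta vr)$. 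By Lemma \ref{lem:4}, $F_k(p^*,vr/2)\le\sp^{n-1}(X_k(p^*,vr/2))$, so $\e_0$-uniformity gives $\liminf_k F_k(p^*,vr/2)\le(\b+\e_0)\a_{n-1}(vr/2)^{n-1}$; by Fatou's lemma the same bound holds for $\int_{vr/4}^{vr/2}\liminf_k\sp^{n-2}(x_k(p^*,t))\,dt$, so a positive-measure set of radii $s\in(vr/4,vr/2)$ has $\liminf_k\sp^{n-2}(x_k(p^*,s))\le C_n(\b+\e_0)(vr)^{n-2}$. Using Lemma \ref{lem:4} once more, all but a measure-zero set of radii are simultaneously good for every $X_k$ (that is, $\bar{\O}(p^*,s)\in\G_c(X_k,M,U)$, with transverse intersections), so I fix such an $s$ and pass to a subsequence, still written $k$, along which $\sp^{n-2}(x_k(p^*,s))\to\ell\le C_n(\b+\e_0)(vr)^{n-2}$ and $\sp^{n-1}(X_k(p^*,s))\to L$ with $L\le(\b+\e_0)\a_{n-1}s^{n-1}$ by $\e_0$-uniformity; while Reifenberg regularity (Definition \ref{def:rmin}) applied to balls $\bar{\O}(p_k^*,s-o(1))\subset\bar{\O}(p^*,s)$ gives $L\ge\frak{a}\,s^{n-1}$.

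Finally I would apply the Squashing Lemma \ref{lem:12} to $X_k$ on $\bar{\O}(p^*,s)$, with the hyperplane $\Pi$ and slab parameter $\e=2\eta vr/s<1/2$, so that $x_k(p^*,s)\subset\O(\Pi,\e s)$ for large $k$ (after the routine recentring at $p_k^*\in X_k$, or noting the proof of Lemma \ref{lem:12} does not use that the centre lies in $X_k$). In the first alternative there is a compact $X_k'$ spanning $M$, equal to $X_k$ off $\bar{\O}(p^*,s)$, with $\sp^{n-1}(X_k'(p^*,s))\le 2^{2(n-1)}\tfrac{\a_{n-1}}{\a_{n-2}}(2\eta vr)\sp^{n-2}(x_k(p^*,s))\le C_n\eta(\b+\e_0)s^{n-1}$ for large $k$; then minimality forces $\frak{m}\le\sp^{n-1}(X_k')=\sp^{n-1}(X_k)-\sp^{n-1}(X_k(p^*,s))+\sp^{n-1}(X_k'(p^*,s))$, hence $L\le\lim_k(\sp^{n-1}(X_k)-\frak{m})+C_n\eta(\b+\e_0)s^{n-1}=C_n\eta(\b+\e_0)s^{n-1}$, contradicting $L\ge\frak{a}s^{n-1}$ once $\eta$ is small ($\frak{a}$ being independent of $\eta$). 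So for large $k$ the second alternative holds: $\sp^{n-1}(X_k(p^*,s))\ge\a_{n-1}s^{n-1}-2^{2(n-1)}\tfrac{\a_{n-1}}{\a_{n-2}}(2\eta vr)\sp^{n-2}(x_k(p^*,s))\ge\a_{n-1}s^{n-1}\bigl(1-C_n\eta(\b+\e_0)\bigr)$; letting $k\to\i$ and comparing with $L\le(\b+\e_0)\a_{n-1}s^{n-1}$ gives $1-C_n\eta(\b+\e_0)\le\b+\e_0$, i.e.\ $1\le\b+\e_0+C_n\eta(\b+\e_0)<\b+\e_0+C_n\eta(\b+1)<1$, a contradiction. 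Therefore $\b\ge1$.

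The step I expect to be the genuine obstacle is the bookkeeping that welds the weak geometric lemma — a statement about the limit $X_0$ — to the Squashing Lemma — a statement about the approximants $X_k$ — through Hausdorff convergence and the choice of a single good radius $s$ valid for (a subsequence of) all $k$, together with the harmless recentring from $p^*\in X_0$ to $p_k^*\in X_k$ needed so the relevant balls lie in $\G(X_k,M,U)$, and the careful packaging of the density comparisons via the monotonicity of Lemma \ref{lem:4*}. This is exactly the uniform concentration argument of (39)--(41) on pp.~35--36 of \cite{reifenberg}; since Lemmas \ref{lem:12} and \ref{lem:6*} already carry the modifications dictated by our linking-number notion of span, the remaining details are as in \cite{reifenberg}.
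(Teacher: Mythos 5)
Your proposal is correct and follows essentially the same route as the paper: obtain an $\e$-uniform ball from Lemma \ref{lem:uniformexist}, extract a flat spot at scale $vr$ from the weak geometric lemma \ref{lem:6*}, transfer the flatness to $X_k$ via Hausdorff convergence, slice to find a good radius with controlled $(n-2)$-measure, apply the Squashing Lemma \ref{lem:12} (ruling out the competitor alternative by minimality), and derive an inequality of the form $\b+\e \ge 1 - C_n\eta(\b+\e)$. The only surface differences are cosmetic: you run it by contradiction from $\b<1$ and then let $\e,\eta\to 0$ rather than taking limits directly, you slice in $(vr/4, vr/2)$ rather than $(vr/2, vr)$, and you are a bit more explicit than the paper about the subsequence extraction and the recentring from $p^*\in X_0$ to $p_k^*\in X_k$ — the last of which the paper glosses over and is a genuine, if minor, point worth making.
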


		\begin{proof}
			Let  \( \eta > 0 \). Obtain \( \e_0 \) and \( v \) from Lemma \ref{lem:6*}. By Lemma \ref{lem:uniformexist} for each \( 0 < \e < \e_0 \), there exists \( \bar{\O}(p',r') \in \G(X_0,M,U) \) such that  \( \{X_k\} \) is \( \e \)-uniform with respect to \(  \bar{\O}(p',r') \). According to Lemma \ref{lem:6*} for each \( \bar{\O}(p,r) \subset \bar{\O}(p',r') \) with \( p \in X_0 \) there exists \( p^* \in X_0 \) and a hyperplane \( \Pi \) through \( p^* \) such that \( \bar{\O}(p^*, vr) \subset \bar{\O}(p, r) \) and \( X_0(p^*, vr) \subset \O(\Pi_{p^*}, \eta vr) \).

			Next apply Lemma \ref{lem:12} to each \( \bar{\O}(p^*,v r)\in \G(X,M,U) \) and its hyperplane  \( \Pi \). We can rule out part (b) since \( \{X_k\} \) is minimizing. Since \( \Pi \) contains \( p^* \) and \( x(p^*, vr) \subset \O(\Pi_{p^*}, \eta vr) \), then part (a) implies

			\begin{align}
				\label{eq:a}
				\sp^{n-1}(X(p^*, vr)) \ge \a_{n-1} (vr)^{n-1} - 2^{2(n-1)} \frac{\a_{n-1}}{\a_{n-2}}  \e vr\,\sp^{n-2}(x(p^*, v r)).
			\end{align}

			Since \( \{X_{n_i}\} \) is \( \e \)-uniform with respect to \( X_0(p',r') \), it follows that
			\begin{align}
				\label{eq:vrb}
				\sp^{n-1}X_{n_i}(p^*, vr/2)/ \a_{n-1} (vr/2)^{n-1} > \b/2
			\end{align}
			and
			\begin{align}
				\label{eq:vra}
				F_{n_i}(p^*, vr) < (\b + 2 \e) \a_{n-1}(vr)^{n-1}
			\end{align}
			for sufficiently large \( i \).

			Then we can find \( \rho_i \) with \(vr/2 < \rho_i < vr \) such that
			\begin{align}
				\label{eq:rho}
				\sp^{n-2}(x_{n_i}(p^*, \rho_i)) \le 2(\b + 2\e)\a_{n-1}(vr)^{n-2}.
			\end{align}

			Then \eqref{eq:a} and \eqref{eq:rho} imply

			\begin{align*}
				\sp^{n-1}(X_k(p^*,\rho_i)) &\ge \a_{n-1} \rho_i^{n-1} - 2^{2(n-1)}  \frac{\a_{n-1}}{\a_{n-2}} 2 \eta vr \sp^{n-2}(x(p^*, \rho_i))\\
				&\ge \a_{n-1} \rho_i^{n-1} - 2^{2(n-1)+2}\,\, \frac{\a_{n-1}^2}{\a_{n-2}}\eta(\b + 2\e)(vr)^{n-1}.
 			\end{align*}

			Thus \( \sp^{n-1}(X_k(p^*,v r))/\a_{n-1} (vr)^{n-1} \ge 1 - 2^{2(n-1)+2} \,\, \frac{\a_{n-1}}{\a_{n-2}}\eta(\b + 2\e) \). (This establishes what some call ``\( \e \)-uniform concentration'' for \( \{X_k\} \) with respect to \( \O(p',r') \).) It follows that \( \sp^{n-1}(X_k(p^*,v r))/(\a_{n-1} (vr)^{n-1}) \le \b + \e \) by Definition \ref{def:uniform}. Hence \( \b + \e \ge 1 - 2^{2(n-1)+2} \,\, \frac{\a_{n-1}}{\a_{n-2}}\eta(\b + 2\e) \).

			Since this holds for all \( \e \) and \( \eta \), the result follows.
 		\end{proof}

		\begin{thm}
			\label{thm:lsc2}
			Suppose \( \{X_k\} \subset \Span^*(M,U) \) is Reifenberg regular minimizing and \( X_k \to X_0 \subset \subset U \). If \( W \subset \R^n \) is open, then \[ \sp^{n-1}(X_0 \cap W) \leq \liminf \sp^{n-1}(X_k \cap W). \]
		\end{thm}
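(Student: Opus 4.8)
The plan is to derive the theorem from the density lower bound $\b(\{X_k\},M,U)\ge 1$ (Proposition \ref{prop:beta}) via a Vitali covering argument, after two reductions. First, since $\{X_k\}$ is Reifenberg regular, Proposition \ref{lem:2*} gives $\b>0$; since the sequence is minimizing, $\liminf_{k}\sp^{n-1}(X_k)\le\frak{m}<\i$, so Theorem \ref{thm:finite} yields $\sp^{n-1}(X_0\setminus M)<\i$. As $\dim M=n-2$ we have $\sp^{n-1}(M)=0$, hence $\sp^{n-1}\lfloor_{X_0}$ restricted to $X_0\setminus M$ is a finite Borel measure and $\sp^{n-1}(X_0\cap W)=\sp^{n-1}(X_0\cap(W\setminus M))$. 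By continuity from below of this measure, it suffices to prove the stated inequality with $W$ replaced by an arbitrary open set $W'$ whose closure is a compact subset of $(W\setminus M)\cap U$, and then let such $W'$ exhaust $(W\setminus M)\cap U$ (which contains $X_0\cap(W\setminus M)$ since $X_0\subset U$).

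Fix such a $W'$ and $\e>0$. By Lemma \ref{lem:3}, for $\sp^{n-1}$-almost every $p\in X_0\cap W'$ there exist arbitrarily small $r>0$ with $\bar{\O}(p,r)\subset W'$ --- so in particular $\bar{\O}(p,r)\in\G(X_0,M,U)$ --- and $\sp^{n-1}(X_0\cap\bar{\O}(p,r))\le(1+\e)\a_{n-1}r^{n-1}$. These balls form a fine cover of a full-$\sp^{n-1}$-measure subset of $X_0\cap W'$, so the Vitali Covering Theorem (Lemma \ref{lem:1}, applied with $\sp^{n-1}$ in place of $\H^{n-1}$) provides a countable disjoint subfamily $\{\bar{\O}(p_i,r_i)\}_i$ with $\sp^{n-1}\bigl((X_0\cap W')\setminus\bigcup_i\bar{\O}(p_i,r_i)\bigr)=0$. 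Countable subadditivity then gives
\[
	\sp^{n-1}(X_0\cap W')\ \le\ \sum_i\sp^{n-1}\bigl(X_0\cap\bar{\O}(p_i,r_i)\bigr)\ \le\ (1+\e)\,\a_{n-1}\sum_i r_i^{n-1}.
\]

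Now invoke $\b\ge 1$ (Proposition \ref{prop:beta}): since each $\bar{\O}(p_i,r_i)\in\G(X_0,M,U)$, $\liminf_{k\to\i}\sp^{n-1}(X_k(p_i,r_i))\ge\a_{n-1}r_i^{n-1}$. For any finite index set $J$, the balls $\{\bar{\O}(p_i,r_i)\}_{i\in J}$ are pairwise disjoint and contained in $W'\subset W$, so $\sum_{i\in J}\sp^{n-1}(X_k(p_i,r_i))\le\sp^{n-1}(X_k\cap W)$ for every $k$; taking $\liminf_k$ and using superadditivity of $\liminf$ over a finite sum,
\[
	\a_{n-1}\sum_{i\in J}r_i^{n-1}\ \le\ \sum_{i\in J}\liminf_{k\to\i}\sp^{n-1}(X_k(p_i,r_i))\ \le\ \liminf_{k\to\i}\sp^{n-1}(X_k\cap W).
\]
Letting $J$ exhaust the full index set gives $\a_{n-1}\sum_i r_i^{n-1}\le\liminf_k\sp^{n-1}(X_k\cap W)$, and combining with the previous display, $\sp^{n-1}(X_0\cap W')\le(1+\e)\liminf_k\sp^{n-1}(X_k\cap W)$. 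Letting $\e\to 0$ and then $W'\uparrow(W\setminus M)\cap U$ completes the proof.

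All the real work is upstream: once $\b\ge 1$ is available from Proposition \ref{prop:beta} and the finiteness $\sp^{n-1}(X_0\setminus M)<\i$ from Theorem \ref{thm:finite}, the argument above is routine. The only points needing a little care are the excision of $M$ --- harmless because $\sp^{n-1}(M)=0$ --- and the requirement that the Vitali balls be simultaneously inside $W'\cap U$, disjoint from $M$, and compatible with the upper-density estimate of Lemma \ref{lem:3}; this is possible precisely because Lemma \ref{lem:3} furnishes such balls of arbitrarily small radius at $\sp^{n-1}$-a.e.\ point.
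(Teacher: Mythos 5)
Your proof is correct and uses the same essential engine as the paper's: a Vitali cover of almost all of $X_0\cap W$ by disjoint balls in $\G(X_0,M,U)$, the finiteness $\sp^{n-1}(X_0\setminus M)<\i$ from Theorem~\ref{thm:finite} to make Vitali available, the lower bound $\b\ge 1$ from Proposition~\ref{prop:beta} applied ball-by-ball, and superadditivity of $\liminf$ over the disjoint sum. The one place you diverge is in how you control $\sp^{n-1}(X_0\cap W)$ by $\a_{n-1}\sum_i r_i^{n-1}$: you invoke the upper density bound of Lemma~\ref{lem:3} to get $\sp^{n-1}(X_0\cap\bar{\O}(p_i,r_i))\le(1+\e)\a_{n-1}r_i^{n-1}$ and then sum, paying a $(1+\e)$ that you later send to zero. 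The paper instead chooses the Vitali balls with diameter $<\d$ and observes directly from the definition of the pre-measure $\sp^{n-1}_\d$ that $\sp^{n-1}_\d(X_0\cap W\cap\bigcup_i\bar{\O}(p_i,r_i))\le\sum_i\a_{n-1}r_i^{n-1}$, then lets $\d\to 0$; this bypasses Lemma~\ref{lem:3} and the $\e$ altogether. Your additional reduction to a compactly contained $W'\subset(W\setminus M)\cap U$ is also unnecessary (the paper simply takes balls disjoint from $M$ and contained in $W$, which exist at $\sp^{n-1}$-a.e.\ point since $\sp^{n-1}(M)=0$), but it is harmless and you keep $W$ on the right-hand side, so the exhaustion argument closes cleanly. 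Net: same proof, with the pre-measure bookkeeping replaced by a density estimate plus a limiting argument.
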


		\begin{proof}
			Let \( \d>0 \). Since \( \sp^{n-1}(M)=0 \) it follows from Theorem \ref{thm:finite} and Lemma \ref{lem:1} that we may cover \( \sp^{n-1} \) almost all of \( X_0 \cap W \) by a collection \( \{\bar{\O}(p_i, r_i)\}\subset \G(X_0,M,U) \) of disjoint balls of diameter \( 2r_i<\d \) and contained in \( W \). Thus by Proposition \ref{prop:beta},
			\begin{align*}
				\sp_\d^{n-1}(X_0\cap W)&= \sp_\d^{n-1}(X_0\cap W \cap \cup_i \bar{\O}(p_i, r_i))\\
				&\leq \sum_i \a_{n-1} r_i^{n-1}\\
				&\leq \sum_i \liminf_{k \to \i} \sp^{n-1}(X_k(p_i,r_i))\\
				&\leq \liminf_{k \to \i} \sum_i \sp^{n-1}(X_k(p_i,r_i))\\
				&\leq \liminf_{k \to \i} \sp^{n-1} (X_k \cap W).
			\end{align*}
		\end{proof}

		\begin{proof}[Proof of Theorem \ref{thm:measureequalsm}]
			Let \( S_0\in \T(M,Y,U) \) be \( \A^{n-1} \)-minimizing. By Proposition \ref{prop:lrmin} and Theorem \ref{thm:convexhull}, we may apply Theorem \ref{thm:lsc2}. We set \( W=\R^n \), and this yields, by \eqref{eq:frakm} p. \pageref{eq:frakm}, \[ \sp^{n-1}(\supp (\k_Y S_0)) \leq \frak{m}. \] In particular, \( \supp(\k_Y S_0) \) will have empty interior, and hence by Corollary \ref{cor:key}, \( \supp(\k_Y S_0)=\supp(S_0) \). On the other hand, by Theorem \ref{thm:convexhull} and Proposition \ref{prop:t2complete}\ref{item:support}, \( \supp(S_0)^*\in \Span^*(M,U) \) and hence \( \sp^{n-1}(\supp(S_0))= \frak{m} \). We next prove that \( S_0\in \F(M, Y, U) \).

			Pick a sequence \( \{S_k\} \) as in Proposition \ref{prop:lrmin} and a dyadic subdivision \( \frak{S} \) of \( \R^n \) such that each \( Q\in \frak{S} \) is \( \k_Y S_k \)-compatible\footnote{See Definition \ref{def:compatible}. This is possible by Proposition \ref{prop:t2complete}.} and \( X_k:=\supp(S_k) \)-compatible for all \( k\geq 0 \). Theorem \ref{thm:lsc2} implies \[ \sp^{n-1}(X_0 \cap Q) \leq \liminf \sp^{n-1}(X_k\cap Q) \] for all \( Q\in \frak{S} \). This is in fact an equality, for if \( \sp^{n-1}(X_0 \cap Q) < \liminf \sp^{n-1}(X_k\cap Q) \) for some \( Q\in \frak{S} \), then by Theorem \ref{thm:lsc2} applied to \( W=X_0\cap Q^c \),
			\begin{align*}
				\frak{m}=\sp^{n-1}(X_0) &= \sp^{n-1}(X_0\cap Q) + \sp^{n-1}(X_0\cap Q^c)\\
				&\leq \sp^{n-1}(X_0\cap Q) + \liminf \sp^{n-1}(X_k \cap Q^c)\\
				& < \liminf \sp^{n-1}(X_k \cap Q) + \liminf \sp^{n-1}(X_k \cap Q^c)\\
				&\leq \liminf \sp^{n-1}(X_k) = \frak{m}.
			\end{align*}

			So, by Proposition \ref{prop:maggi}, \( \sp^{n-1}\lfloor_{X_0}(Q)=\liminf \mu_{\k_Y S_k}(Q) = \mu_{\k_Y S_0}(Q) \). Using a Whitney decomposition, this equality extends to all open sets \( W \), and hence by outer regularity of the finite Borel measure \( \mu_{\k_Y S_0} \),
			\begin{equation}
				\label{eq:spmu}
				\sp^{n-1}\lfloor_{X_0} = \mu_{\k_Y S_0}.
			\end{equation}
			It follows from Corollary \ref{cor:key} and Proposition \ref{prop:whitney} \ref{samesupports} that \( X_0 \) is reduced. Thus, \( X_0\in \Span^*(M,U) \), and so by Proposition \ref{prop:t2complete} \ref{item:bdry}-\ref{item:positive} and Theorem \ref{thm:convexhull}, \( S_0\in \F(M, Y, U) \).
		\end{proof}

\section{Regularity}
	\label{sub:almgren_minimality}	
	Let \( \Span^*(M):=\cup_U\Span^*(M,U) \) where the union ranges over all convex open sets \( U \) containing \( M \). Likewise, let \( \F(M):=\cup_{U,Y}\F(M, Y, U) \) where the union ranges over all convex open sets \( U \) containing \( M \) and all vector fields \( Y \) as in \S\ref{sub:constructions_with_curves}. The following result follows from Corollary \ref{cor:existenceagain} and Theorem \ref{thm:measureequalsm}.
	
	\begin{thm}
		\label{thm:big}
		There exists an element \( S_0\in \F(M) \) such that \( \A^{n-1}(S_0)=\sp^{n-1}(\supp(S_0))=\frak{m} \).
	\end{thm}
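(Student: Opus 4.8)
The plan is to assemble the theorem directly from the existence result Corollary \ref{cor:existenceagain} and the structure result Theorem \ref{thm:measureequalsm}, with essentially no new work. First I would fix one convex, bounded open set $U \supset M$ together with a vector field $Y$ as in \S\ref{sub:constructions_with_curves}, so that the collections $\F(M,Y,U)$, $\T(M,Y,U)$ and the number $\frak{m}$ are all defined. Recall that $\frak{m}$ does not depend on the choice of $U$ by Lemma \ref{lem:2}, and that $\frak{m} = \inf\{\A^{n-1}(S) : S \in \F(M,Y,U)\}$ by \eqref{eq:frakm}; in particular $\frak{m}$ is the infimum of $\sp^{n-1}$ over all of $\Span^*(M)$.

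Next, Corollary \ref{cor:existenceagain} produces an $\A^{n-1}$-minimizer $S_0 \in \T(M,Y,U)$, i.e.\ $\A^{n-1}(S_0) = \frak{m}$. Applying Theorem \ref{thm:measureequalsm} to this $S_0$ delivers two conclusions simultaneously: $\sp^{n-1}(\supp(S_0)) = \frak{m}$, and $S_0 \in \F(M,Y,U) \subset \F(M)$. Since $S_0$ is now a genuine film chain, Theorem \ref{thm:setchaincorrespondance} gives $\supp(S_0) = \supp(\k_Y S_0)$, and the identity \eqref{eq:sp} gives $\A^{n-1}(S_0) = \sp^{n-1}(\supp(S_0))$. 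Chaining these equalities yields $\A^{n-1}(S_0) = \sp^{n-1}(\supp(S_0)) = \frak{m}$, which is exactly the assertion.

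The only points that need a moment's care are bookkeeping ones: that the infimum $\frak{m}$ computed inside a single fixed $U$ coincides with the global infimum over $\bigcup_U \Span^*(M,U)$ (this is precisely what Lemma \ref{lem:2} gives), and that the hypothesis "$\A^{n-1}$-minimizing in $\T(M,Y,U)$" required by Theorem \ref{thm:measureequalsm} is met — which it is, because $\F(M,Y,U) \subset \T(M,Y,U)$ and, by \eqref{eq:frakm} together with the definition of $\T(M,Y,U)$, the two collections share the same infimum of $\A^{n-1}$. There is no genuine analytic obstacle at this stage: the compactness argument of Corollary \ref{cor:com}, the hair-cutting Theorem \ref{thm:haircut}, the Reifenberg-regularity estimates of \S\ref{sub:lrm}, and the lower-semicontinuity Theorem \ref{thm:lsc2} have all already been absorbed into Corollary \ref{cor:existenceagain} and Theorem \ref{thm:measureequalsm}. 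So the \emph{main difficulty} of this particular statement is merely recognizing that nothing further is needed, and the proof is a short consequence of the two cited results.
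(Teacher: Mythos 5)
Your proposal is correct and takes exactly the route the paper does: Theorem \ref{thm:big} is stated as an immediate consequence of Corollary \ref{cor:existenceagain} and Theorem \ref{thm:measureequalsm}, and your unpacking of the bookkeeping (fixing $U$, $Y$, the $U$-independence of $\frak{m}$ via Lemma \ref{lem:2}, the equality \eqref{eq:sp}) is just the detail the paper leaves implicit.
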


	Thus, we have proved the bulk of our main theorem: there exists a size-minimizing element of \( \F(M) \). We now prove some regularity results for such size-minimizing elements.

	\begin{cor}
		\label{cor:almin}
		If \( S_0\in \F(M) \) is a size-minimizer and \( \bar{\O}(p,r) \) is disjoint from \( M \), then \( \supp(S_0)\cap \O(p,r) \) is \( (1, \delta) \)-restricted\footnote{See \cite{almgren} and \cite{almbulletin}, although we replace \( \H^{n-1} \) measure with \( \sp^{n-1} \) in Almgren's definition of \( (1,\delta) \)-restricted. Experts assure us that these regularity theorems hold for Hausdorff spherical measure as well as for Hausdorff measure. But just to be safe we know that \( X_0 \) is \( (n-1) \)-rectifiable since it is quasiminimal \cite{davidsemmes}, in which case the two measures are the same. If \( \phi \) is Lipschitz, the set \( \phi(X_0) \) is also \( (n-1) \)-rectifiable, and so \( \sp^{n-1}(\H(X_0))=\sp^{n-1}(\phi(X_0)) \).} with respect to \( \O(p,r)^c \) for all \( \d>0 \).
	\end{cor}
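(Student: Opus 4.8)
The plan is to read this off directly from the Deformation Theorem \ref{thm:lipspan} together with the minimality of $S_0$, without any further geometric analysis. Set $X_0 := \supp(S_0)$. By Theorem \ref{thm:big} and Theorem \ref{thm:measureequalsm}, $X_0$ is reduced, spans $M$, and satisfies $\sp^{n-1}(X_0) = \frak{m}$; recall moreover that by Lemma \ref{lem:2} the constant $\frak{m} = \inf\{\sp^{n-1}(X) : X\in\Span^*(M,W)\}$ is the same for every convex open $W\supseteq M$. To verify the $(1,\d)$-restricted property, I would take an arbitrary closed ball $B=\bar{\O}(q,s)\subset\O(p,r)$ (automatically disjoint from $M$, since $\bar{\O}(p,r)\cap M=\emptyset$) and an arbitrary Lipschitz map $\phi:\R^n\to\R^n$ with $\phi\equiv\mathrm{Id}$ on $B^c$ and $\phi(B)\subseteq B$, and show that $\sp^{n-1}(X_0\cap\O(p,r))\le\sp^{n-1}(\phi(X_0\cap\O(p,r)))$.

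First I would fit the competitor into an admissible ambient domain. Choose any convex open $U'\supseteq M\cup\bar{\O}(p,r)$, which exists because $M$ and $\bar{\O}(p,r)$ are compact. By Theorem \ref{thm:convexhull}, $X_0\subset h(M)\subset U'$, and since $\phi$ is the identity off $B$, $\phi(X_0)\subseteq X_0\cup B\subset U'$. The map $\phi$ is precisely a deformation of $X_0$ in the sense of Definition \ref{def:competitor}, so Theorem \ref{thm:lipspan} gives that $\phi(X_0)$ is compact and spans $M$; Lemma \ref{lem:2} gives $\sp^{n-1}(\phi(X_0))<\infty$, and Lemma \ref{lem:corespan} gives that the core $(\phi(X_0))^*$ is compact and spans $M$. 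Hence $(\phi(X_0))^*\in\Span^*(M,U')$, and therefore
\[
\sp^{n-1}(\phi(X_0)) = \sp^{n-1}\big((\phi(X_0))^*\big) \ge \frak{m} = \sp^{n-1}(X_0).
\]

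Next I would localize to $B$. Since $\phi\equiv\mathrm{Id}$ on $B^c$ and $\phi(B)\subseteq B$, one has $\phi(X_0)\cap B^c=X_0\cap B^c$ and $\phi(X_0)\cap B=\phi(X_0\cap B)$, so the displayed inequality becomes
\[
\sp^{n-1}(X_0\cap B) + \sp^{n-1}(X_0\cap B^c) \le \sp^{n-1}(\phi(X_0\cap B)) + \sp^{n-1}(X_0\cap B^c).
\]
Since $\sp^{n-1}(X_0\cap B^c)\le\frak{m}<\infty$, this term cancels, leaving $\sp^{n-1}(X_0\cap B)\le\sp^{n-1}(\phi(X_0\cap B))$. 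Because $B\subset\O(p,r)$ and $\phi$ fixes $\O(p,r)\setminus B$ pointwise, $X_0\cap\O(p,r)$ and $\phi(X_0\cap\O(p,r))$ agree off $B$, so adding back the common finite quantity $\sp^{n-1}\big((X_0\cap\O(p,r))\setminus B\big)$ yields $\sp^{n-1}(X_0\cap\O(p,r))\le\sp^{n-1}(\phi(X_0\cap\O(p,r)))$. As $B$ (of arbitrary radius) and $\phi$ were arbitrary, $X_0\cap\O(p,r)$ is $(1,\d)$-restricted with respect to $\O(p,r)^c$ for every $\d>0$.

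The whole argument is bookkeeping once Theorem \ref{thm:lipspan} is available; the only step that needs care is the one in the second paragraph, namely placing $\phi(X_0)$ inside some admissible $\Span^*(M,U')$ so that its size can legitimately be bounded below by $\frak{m}$ — this is exactly where the $U$-independence of $\frak{m}$ (Lemma \ref{lem:2}) and the convex-hull confinement $X_0\subset h(M)$ (Theorem \ref{thm:convexhull}) are used. No regularity estimate enters at this stage; the regularity assertions of the main theorem follow only afterwards, by feeding this minimality property into the regularity theorems of Almgren and (for $n=3$) Taylor cited in the statement.
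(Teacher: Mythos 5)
Your proposal is correct and follows essentially the same route as the paper: both reduce the $(1,\delta)$-restricted property to the observation that any competitor $\phi(X_0)$ has $(\phi(X_0))^*\in\Span(M)$ by Theorem \ref{thm:lipspan} and Lemma \ref{lem:corespan}, and hence $\sp^{n-1}(X_0)\leq\sp^{n-1}(\phi(X_0))$ by size-minimality. The paper's proof is terser — it states the global inequality and says this gives the result — whereas you carefully fill in two details the paper leaves implicit: (i) the localization step, canceling the common mass on $B^c$ to pass from $\sp^{n-1}(X_0)\le\sp^{n-1}(\phi(X_0))$ to $\sp^{n-1}(X_0\cap\O(p,r))\le\sp^{n-1}(\phi(X_0\cap\O(p,r)))$, and (ii) the need to place $(\phi(X_0))^*$ inside some $\Span^*(M,U')$ and invoke the $U$-independence of $\frak{m}$ (via Lemma \ref{lem:2}) and the convex-hull confinement (Theorem \ref{thm:convexhull}) so that the lower bound by $\frak{m}$ is legitimate. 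These are not new ideas but are worthwhile bookkeeping that the paper compresses into a single sentence.
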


	\begin{proof}
		Let \( X_0=\supp(S_0) \). Suppose \( \phi(X_0) \) is a competitor\footnote{See Definition \ref{def:competitor}.} of \( X_0 \) with respect to \( M \). By Theorem \ref{thm:lipspan} and Lemma \ref{lem:corespan}, \( \phi(X_0)^*\in \Span(M) \), and so putting this together we have \[ \sp^{n-1}(X_0)\leq \sp^{n-1}(\phi(X_0)). \] This gives the result, since every permissible deformation in the definition of restricted sets is of this type.
	\end{proof}
		
	It follows from Corollary \ref{cor:almin} and \cite{almgrenannals} (1.7) that the support of any size minimizer \( S_0\in \F(M) \), which we know exists by Theorem \ref{thm:big}, is almost everywhere a real analytic \( (n-1) \)-dimensional minimal submanifold of \( \R^n \). Soap film regularity for \( n=3 \) follows from \cite{taylor}, and this completes the proof of our main theorem\footnote{As a corollary we can deduce that the solutions for Jordan curves in \( \R^3 \) found in \cite{plateau10} are the same as the solutions in this paper and thus have the structure of a soap film. Suppose \( A_0 \) is a solution found as a limit of dipole surfaces in \( \R^3 \). Since dipole surfaces which span \( M \) are also film chains, then \( A_0 \) is a solution in \( \F(M) \). Conversely, if \( S_0 \) is a solution in \( \F(M) \), then it can be written as a series of embedded dipole surfaces (from Taylor) \( \sum_{i=1}^\i P_{V_i} \widetilde{\s_i} \) where \( V_i \) is a vector field whose component orthogonal to \( \s \) is unit. Let \( B_\e \) be the differential \( 2 \)-chain whose support is the boundary of the \( \e \)-tubular neighborhood of \( M \). Let \( X_\e \) be the unit vector field transverse to \( B_\e \). For sufficiently large \( N \), \( \sum_{i=1}^\i P_{V_i} \widetilde{\s_i} + P_{X_\e} B_\e \) is a dipole surface which spans \( M \). Its limit is \( S_0 \). This proves that \( S_0 \) obtained from Theorem \ref{thm:big} is a solution in \cite{plateau10}.}.

\addcontentsline{toc}{section}{References} 
\bibliography{bibliography.bib}{}

\providecommand{\bysame}{\leavevmode\hbox to3em{\hrulefill}\thinspace}
\providecommand{\MR}{\relax\ifhmode\unskip\space\fi MR }
\providecommand{\MRhref}[2]{%
  \href{http://www.ams.org/mathscinet-getitem?mr=#1}{#2}
}
\providecommand{\href}[2]{#2}
\begin{thebibliography}{LGM15}

\bibitem[All72]{allard}
William~K. Allard, \emph{On the first variation of a varifold}, Annals of
  Mathematics \textbf{95} (1972), no.~3, 417--491.

\bibitem[Alm65]{almgrenmimeo}
Frederick~J. Almgren, \emph{Theory of varifolds}, mimeographed notes, 1965.

\bibitem[Alm66]{varifolds}
\bysame, \emph{{Plateau's Problem: An Invitation to Varifold Geometry}},
  Benjamin, 1966.

\bibitem[Alm68]{almgrenannals}
\bysame, \emph{Existence and regularity almost everywhere of solutions to
  elliptic variational problems among surfaces of varying topological type and
  singularity structure}, Annals of Mathematics \textbf{87} (1968), no.~2,
  321--391.

\bibitem[Alm76]{almgren}
\bysame, \emph{Existence and regularity almost everywhere of solutions to
  elliptic variational problems with constraints}, vol.~4, Mem. Amer. Math.
  Soc., 1976.

\bibitem[Bes48]{besicovitch}
Abram~Samoilovitch Besicovitch, \emph{Parametric surfaces {III}: Surfaces of
  minimum area,}, Journal London Math. Soc. \textbf{23} (1948), 241--246.

\bibitem[Bes49]{besicovitchI}
Abram~Samoilovitch Besicovitch, \emph{Parametric surfaces {I}: Compactness},
  Proc. Cambridge Phil. Soc. \textbf{45} (1949), 1--13.

\bibitem[Bom79]{bombieri}
Enrico Bombieri, \emph{Recent progress in the theory of minimal surfaces},
  L'Enseignement Math{\'e}matique \textbf{25} (1979), 1--9.

\bibitem[Cac27]{caccio}
Renato Caccioppoli, \emph{Sulla quadratura delle superfici piane e curve}, Atti
  della Accademia Nazionale dei Lincei. Rendiconti. Classe di Scienze Fisiche,
  Matematiche e Naturali, \textbf{VI} (1927), 142--146.

\bibitem[Cou50]{courant}
Richard Courant, \emph{Dirichlet's principle, conformal mapping, and minimal
  surfaces,}, Interscience, 1950.

\bibitem[Dav03]{davidAQM}
Guy David, \emph{Limits of almgren quasiniminal sets}, Proceedings of the
  conference on Harmonic Analysis, Contemporary Mathematics series, vol. 320,
  Mount Holyoke, A.M.S., 2003.

\bibitem[Dav14]{david2014}
\bysame, \emph{Local regularity properties of almost- and quasiminimal sets
  with a sliding boundary condition}, arXiv eprints, January 2014.

\bibitem[dG61]{degiorgi}
Ennio de~Giorgi, \emph{Frontiere orientate di misura minima}, Sem. Mat. Scuola
  Norm. Sup. Pisa \textbf{Editrice Tecnico Scientifica} (1960-1961).

\bibitem[DM92]{solimini}
Solimini Dal~Maso, Morel, \emph{A variational method in image segmentation:
  existence and approximation results}, Acta Mathematica \textbf{168} (1992),
  no.~1, 89--151.

\bibitem[Dou31]{douglas}
Jesse Douglas, \emph{Solutions of the problem of {Plateau}}, Transactions of
  the American Mathematical Society \textbf{33} (1931), 263--321.

\bibitem[dP09]{pauwsize}
Thierry de~Pauw, \emph{Size minimizing surfaces with integral coefficients},
  Annales scientifiques de l'Ecole normale sup{\'e}rieure \textbf{42} (2009),
  no.~1, 37--101.

\bibitem[DS00]{davidsemmes}
Guy David and Stephen Semmes, \emph{Uniform rectifiability and quasiminimizing
  sets of arbitrary codimension}, vol. 144, American Mathematical Society,
  2000.

\bibitem[Fed69]{federer}
Herbert Federer, \emph{{Geometric Measure Theory}}, Springer, Berlin, 1969.

\bibitem[FF60]{federerfleming}
Herbert Federer and Wendell~H. Fleming, \emph{Normal and integral currents},
  The Annals of Mathematics \textbf{72} (1960), no.~3, 458--520.

\bibitem[Fol99]{folland}
Gerald Folland, \emph{Real analysis}, John Wiley and Sons, 1999.

\bibitem[Hara]{ravello}
Jenny Harrison, \emph{Ravello lecture notes, 2005}.

\bibitem[Harb]{plateau10arxiv}
\bysame, \emph{Soap film solutions to plateau's problem}, arXiv eprints.

\bibitem[Har93]{harrison1}
\bysame, \emph{Stokes' theorem on nonsmooth chains}, Bulletin of the American
  Mathematical Society \textbf{29} (1993), 235--242.

\bibitem[Har04]{plateau}
\bysame, \emph{On {Plateau}'s problem for soap films with a bound on energy},
  Journal of Geometric Analysis \textbf{14} (2004), no.~2, 319--329.

\bibitem[Har11]{OC2011}
\bysame, \emph{Operator calculus - the exterior differential complex},
  http://arxiv.org/abs/1101.0979 (2011), 59 pages.

\bibitem[Har14a]{OC}
\bysame, \emph{Operator calculus of differential chains and differential
  forms}, Journal of Geometric Analysis (to appear) \textbf{24} (2014), no.~1,
  271--297.

\bibitem[Har14b]{plateau10}
\bysame, \emph{Soap film solutions of {Plateau's} problem}, Journal of
  Geometric Analysis \textbf{24} (2014), 271--297.

\bibitem[Hir76]{hirsch}
Morris~W. Hirsch, \emph{Differential topology}, Springer-Verlag, Berlin, 1976.

\bibitem[HP12]{topological}
Jenny Harrison and Harrison Pugh, \emph{Topological aspects of differential
  chains}, Journal of Geometric Analysis \textbf{22} (2012), no.~3, 685--690.

\bibitem[HP13]{hpplateau}
\bysame, \emph{Existence and soap film regularity of solutions to {Plateau's}
  problem}, arXiv eprints, October 2013.

\bibitem[HP14]{cohomology}
J.~Harrison and H~Pugh, \emph{Spanning via cech cohomology}, arXiv eprints,
  December 2014.

\bibitem[HP15]{lipschitz}
Jenny Harrison and Harrison Pugh, \emph{Solutions to {Lipschitz} variational
  problems with cohomological spanning conditions}, arXiv eprints, May 2015.

\bibitem[Lag61]{lagrange}
Joseph-Louis Lagrange, \emph{Essai d'une nouvelle m{\'e}thode pour
  d{\'e}t{\'e}rminer les maxima et les minima des formules int{\'e}grales
  ind{\'e}finies}, Miscellanea Taurinensia (1760-1761), 335--362.

\bibitem[Leb02]{lebesgue}
Henri Lebesgue, \emph{Int{\'e}grale, longueur, aire}, Annali di Matematica
  (1902).

\bibitem[Lel12]{delellis}
Camillo~De Lellis, \emph{Allard's interior regularity theorem: An invitation to
  stationary varifolds}, 2012.

\bibitem[LGM15]{delellisandmaggi}
Camillo~De Lellis, Francesco Ghiraldin, and Francesco Maggi, \emph{A direct
  approach to {Plateau}'s problem}, Journal of the European Mathematical
  Society (2015), 1--17.

\bibitem[Mag12]{maggi}
Francesco Maggi, \emph{Sets of finite perimeter and geometric variational
  problems}, Cambridge University Press, 2012.

\bibitem[Mas59]{massey}
William Massey, \emph{On the normal bundle of a sphere imbedded in euclidean
  space}, Proc. Amer. Math. Soc. \textbf{10} (1959), no.~6, 959--964.

\bibitem[Mat99]{mattila}
Pertti Mattila, \emph{Geometry of sets and measures in {Euclidean} spaces:
  Fractals and rectifiability}, Cambridge University Press, 1999.

\bibitem[Pep03]{bubbles}
R.~Pepling, \emph{Soap bubbles}, Chemical and Engineering News \textbf{81}
  (2003), no.~17, 34.

\bibitem[PG00]{Podio-Guidugli}
Paolo Podio-Guidugli, \emph{A primer in elasticity}, Journal of Elasticity
  \textbf{58} (2000), 1--104.

\bibitem[Pla73]{plateauoriginal}
Joseph Plateau, \emph{Experimental and theoretical statics of liquids subject
  to molecular forces only}, Gauthier-Villars, 1873.

\bibitem[PRG15]{threeguys}
Guido~De Philippis, Antonio~De Rosa, and Francesco Ghiraldin, \emph{A direct
  approach to {Plateau}'s problem in any codimension}, arXiv eprints, January
  2015.

\bibitem[Pug09]{thesis}
Harrison Pugh, \emph{Applications of differential chains to complex analysis
  and dynamics}, Harvard senior thesis (2009).

\bibitem[Rad30]{rado}
Tibor Rad\'o, \emph{On {Plateau's} problem}, Annals of Mathematics \textbf{Vol
  31} (1930), no.~3, 457--469.

\bibitem[Rei60a]{reifenberg}
Ernst~Robert Reifenberg, \emph{Solution of the {Plateau} problem for
  m-dimensional surfaces of varying topological type}, Acta Mathematica
  \textbf{80} (1960), no.~2, 1--14.

\bibitem[Rei60b]{reifenbergbulletin}
\bysame, \emph{Solution of the {Plateau} problem for m-dimensional surfaces of
  varying topological type}, Bulletin of the American Mathematical Society
  \textbf{66} (1960), no.~4, 312--313.

\bibitem[Tay76]{taylor}
Jean Taylor, \emph{The structure of singularities in soap-bubble-like and
  soap-film-like minimal surfaces}, Annals of Mathematics \textbf{103} (1976),
  no.~2, 489--539.

\bibitem[War64]{ward}
D.J. Ward, \emph{A counterexample in area theory}, Mathematical Proceedings of
  the Cambridge Philosophical Society \textbf{60} (1964), no.~4, 821--845.

\bibitem[Whi57]{whitney}
Hassler Whitney, \emph{{Geometric Integration Theory}}, Princeton University
  Press, Princeton, NJ, 1957.

\end{thebibliography}
\bibliographystyle{amsalpha}  

\end{document}